\documentclass[11pt,a4paper,reqno]{amsart}
\usepackage[english]{babel}
\usepackage[applemac]{inputenc}
\usepackage[T1]{fontenc}
\usepackage{palatino}
\usepackage{amsmath}
\usepackage{amssymb}
\usepackage{amsthm}
\usepackage{amsfonts}
\usepackage{graphicx}
\usepackage{color}
\usepackage{vmargin}
\usepackage{mathtools}
\usepackage{esint}

\setmarginsrb{20mm}{20mm}{20mm}{20mm}{10mm}{10mm}{10mm}{10mm}

\usepackage[colorlinks = true, citecolor = black]{hyperref}
\newtheorem{theorem}{Theorem}[section]
\newtheorem{obs}{Observation}[section]

\newtheorem{cor}[theorem]{Corollary}
\newtheorem{prop}[theorem]{Proposition}

\newtheorem{lem}[theorem]{Lemma}

\theoremstyle{definition}
\newtheorem{defn}{Definition}[section]

\newtheorem{remark}[theorem]{Remark}

\newtheorem*{theorem*}{Theorem}
\newtheorem{ex}{Example}[section]
\newtheorem{rem}[theorem]{Remark}

\def\vint{\mathop{\mathchoice%
          {\setbox0\hbox{$\displaystyle\intop$}\kern 0.22\wd0%
           \vcenter{\hrule width 0.6\wd0}\kern -0.82\wd0}%
          {\setbox0\hbox{$\textstyle\intop$}\kern 0.2\wd0%
           \vcenter{\hrule width 0.6\wd0}\kern -0.8\wd0}%
          {\setbox0\hbox{$\scriptstyle\intop$}\kern 0.2\wd0%
           \vcenter{\hrule width 0.6\wd0}\kern -0.8\wd0}%
          {\setbox0\hbox{$\scriptscriptstyle\intop$}\kern 0.2\wd0%
           \vcenter{\hrule width 0.6\wd0}\kern -0.8\wd0}}%
          \mathopen{}\int}

\newcommand{\R}{\mathbb{R}}
\newcommand{\Rn}{{\mathbb R}^n}
\newcommand{\N}{\mathbb{N}}

\newcommand{\C}{\mathbb{C}}

\newcommand{\ep}{\epsilon}
\newcommand{\Om}{\Omega}

\newcommand{\ud}{\mathrm {d}}

\newcommand{\dist}{\operatorname{dist}}

\newcommand{\dcc}{{d_{CC}}}

\newcommand{\divGt}{{\rm div}_{G_2^\alpha}}

\definecolor{blau}{rgb}{0.1,0.0,0.9}
\definecolor{violet}{rgb}{0.54, 0.17, 0.89}
\newcommand{\blue}{\color{blau}}

\newcommand{\kom}[1]{}
\renewcommand{\kom}[1]{{\bf \blue /#1/}}

\newcounter{komcounter}
\numberwithin{komcounter}{section}

\def\XXint#1#2#3{{\setbox0=\hbox{$#1{#2#3}{\int}$}
		\vcenter{\hbox{$#2#3$}}\kern-.5\wd0}}

\makeatletter
\newcommand{\leqnomode}{\tagsleft@true\let\veqno\@@leqno}
\makeatother

\usepackage{hyperref}
\hypersetup{%
  colorlinks = true,
  linkcolor  = black
}

\DeclareMathOperator{\sgn}{sgn}
\begin{document}

\title[Harmonic mappings on Grushin planes, part II]{Harmonic mappings on Grushin planes, part II{\small$^*$}}
\author[T.\ Adamowicz]{Tomasz Adamowicz}
\address{T.A.: The Institute of Mathematics, Polish Academy of Sciences \\ ul. \'Sniadeckich 8, 00-656 Warsaw, Poland}
\email{tadamowi@impan.pl}
\author[K. Che\l mi\'nski ]{Krzysztof Che\l mi\'nski}
\author[M. Walicki ]{Marcin Walicki}
\address{K.Ch. and M.W.: Warsaw University of Technology, Faculty of Mathematics and Information Science \\ ul. Koszykowa 75, 00-662, Warsaw, Poland}
\email{krzysztof.chelminski@pw.edu.pl}
\email{marcin.walicki.dokt@pw.edu.pl}

\thanks{{\small$^*$}
The Authors declare that no AI system was employed in any part of the reasoning in the manuscript. }

\keywords{almost-Riemannian structure, Bochner formula, Grushin spaces, harmonic maps, Liouville theorems, second order regularity, subelliptic harmonic functions, weak Harnack estimates}
\subjclass[2010]{(Primary) 58E20 ; (Secondary) 53C17, 35H20}

\begin{abstract}
 We study harmonic mappings between the source domain in the $\alpha$-Grushin plane and the target domain in the $\beta$-Grushin plane for $\alpha$ not necessarily equal to $\beta$ and $\alpha\in [0,1)$. Our harmonic mappings arise as a Euler--Lagrange system of equations related to the Dirichlet energy and are studied in weak and strong forms. The key results enclose the second order regularity of harmonic mappings, the Bochner identity, the weak Harnack estimates for coordinate functions of a harmonic mapping, as well as the Caccioppoli estimates and the Liouville type theorem. Furthermore, we discuss several examples of (Grushin) harmonic mappings and, in particular, relate them to holomorphic mappings in the plane.
 
\end{abstract}

\maketitle

\tableofcontents

\section{Introduction}

The theory of harmonic mappings between Riemannian manifolds is nowadays a well-esta\-blished topic of study with several profound consequences in analysis and geometry, involving techniques of differential geometry, PDEs and the theory of function spaces. One of its fruitful generalizations is the investigation of harmonic mappings in the subriemannian setting (also in the almost-riemannian setting, e.g.~\cite{abs, bl}) where one of the key motivations comes from applications in control theory and neurobiology. There, the constraints of the movement in a model are reflected in the fact that a subbundle of the tangent bundle is not necessarily of constant rank. This may happen, for instance, when the vector fields that span the subbundle vanish along some singular set.  Furthermore, from a PDEs and physics perspective the subriemannian geometries arise in connection with the H\"ormander condition and the Chow theorem, see Montgomery~\cite{mon}. 

One of the particularly interesting and profound toy models of subriemannian geometry is the Grushin spaces. In this work we continue the studies of harmonic mappings from such spaces, initiated in~\cite{aww} for Euclidean target spaces, however in the more challenging and difficult setting of the subriemannian targets of Grushin spaces. The related system of Euler--Lagrange equations for harmonic mappings from an $n$-dimensional Grushin space to an $m$-dimensional one is quite technically involved to study due to the coupling of equations and the highly nontrivial structure of singular sets in the source and target domains, see~\eqref{system}. Therefore, we focus our attention on a simpler, yet still demanding setting of Grushin planes.  Already for harmonic mappings between Grushin planes the corresponding systems of PDEs, see~\eqref{system-EL} or~\eqref{str-system-EL}, lead to a challenging analysis of the regularity and geometry of solutions. Moreover, the analysis involving singular lines present in both the source and the target domains of a harmonic map require tedious work and a substantial adaptation of existing techniques.

Before presenting the main results and the structure of the paper, we would like to point at main features that distinguish our results from the ones in the literature, namely:
\smallskip
\\
\indent (1) the H\"older regularity of the vector fields defining the Grushin space rather than the Lipschitz one;

 (2) the weighted measure with the Muckenhoupt weight instead of the Lebesgue measure;
 
 (3) harmonic mappings in this work are given by the coupled system of equations with singular sets in both the source and the target domains of the map;
 
 (4) our results allow for studying mappings on the domains intersecting the singular line in the Grushin plane, but are new already for domains omitting the singular line.
\smallskip 
\\
\noindent We now briefly address those features and discuss in detail in the rest of the manuscript. Several results in the literature consider the Grushin structure given by the vector fields with the Lipschitz coefficients, see e.g. Ferrari--Valdinoci~\cite{fv, fv2} and Domokos--Manfredi~\cite{dm}.  However, as in our previous work~\cite{aww}, we focus our attention on the H\"older regularity case, with our primary example of the $\alpha$-Grushin plane $G_2^\alpha$ given by the vector fields
\begin{equation}\label{intro-vect} 
 X=\frac{\partial}{\partial x}\quad  Y=|x|^{\alpha}\frac{\partial}{\partial y},\quad 0\leq \alpha<1,
\end{equation}
see Example~\ref{ex12} below. Note that in such a case the H\"ormander condition fails. Furthermore, instead of studying PDEs in the Grushin planes assuming the standard Lebesgue measure, we consider Grushin planes equipped with the following natural weighted measure $\mu$:
\begin{equation*}
 \mu:= \frac{\ud x \ud y}{ |x|^{\alpha}}.
\end{equation*}
As shown in~\cite{aww} such a measure is an Ahlfors regular measure and, moreover, satisfies the $p$-Muckenhoupt condition for any $p>1$. The Grushin space equipped with the weighted measure and the H\"older regularity of coefficients of the vector fields turn out to be sufficient assumptions for a viable theory of the Sobolev spaces of functions and mappings in the spirit of works by Franchi-Serrapioni~\cite{fs87}, Franchi--Serrapioni--Serra-Cassano~\cite{fss}, see also Franchi--Haj\l asz--Koskela~\cite{fhk}.

 Since we view the Grushin planes as the metric measure spaces with the weighted measure, also the associated $2$-Dirichlet energy $E$ depends on the weight and for mappings $u=(u^1, u^2)$ from a domain in the $\alpha$-Grushin plane into the $\beta$-Grushin plane reads:
\begin{align*}
   E(u):=\frac{1}{2}\int_{\Omega} \|D_H u\|_{G_2^\beta}^2 \frac{\ud x\ud y}{|x|^\alpha}, \quad
 \|D_H u\|_{G_2^\beta}^2:=\left(\frac{\partial u^1}{\partial x}\right)^2+\frac{1}{|u^1|^{2\beta}}\left(\frac{\partial u^2}{\partial x}\right)^2+ |x|^{2\alpha}\left(\frac{\partial u^1}{\partial y}\right)^2+\frac{|x|^{2\alpha}}{|u^1|^{2\beta}}\left(\frac{\partial u^2}{\partial y}\right)^2
\end{align*}
Upon denoting by $\nabla_H v:=(\frac{\partial v}{\partial x}, |x|^{\alpha}\frac{\partial v}{\partial y})$ the so-called horizontal gradient of function $v$, the Euler--Lagrange system of equations corresponding to the energy $E(u)$ reads, cf.~\eqref{system-EL} and~\eqref{w-system-EL}:
\begin{equation}\label{intro-system-EL}
\begin{cases}
{\rm div}_{G_2^{\alpha}}\left(\frac{\nabla_H u^1}{|x|^\alpha}\right)+\beta \frac{|\nabla_H u^2|^2 u^1}{|x|^\alpha |u^1|^{2\beta+2}}=0 \\
{\rm div}_{G_2^{\alpha}}\left(\frac{\nabla_H u^2}{|x|^\alpha|u^1|^{2\beta}}\right)=0.
\end{cases}
\end{equation}
 The most challenging and technically involved part of analysis of harmonic mappings in~\eqref{intro-system-EL} comes from the weight $|x|^\alpha$ for the source domain of $u$ and $|x|^\beta$ for its target domain, respectively. In a consequence, the study of the behavior of $u$ becomes difficult close to the singular set in the source $\alpha$-Grushin plane (i.e., close to the $y$-axis). Similarly, an extra effort has to be taken to analyze $u$ when its target domain intersects the $y$-axis in the $\beta$-Grushin plane.

Solutions to the system~\eqref{intro-system-EL} will be called \emph{weak harmonic mappings}, see Definition~\ref{weak-solution}. A complementary approach to harmonic mappings between Grushin planes can be formulated if we consider harmonic mappings given by the system of equations in the non-divergence form. Such mappings will be called \emph{strong harmonic mappings}, see Definition~\ref{str-solution} and are particularly handy in the studies of examples of harmonic mappings, see Section~\ref{sect2}.

\subsection*{The outline of the paper. Main results} In the {\bf Preliminaries} we recall some of the basic definitions and results about Grushin spaces and, in particular, Grushin planes with Example~\ref{ex12} and Lemma~\ref{lem-Ahl-Muck} being the key ones for the following discussion in our manuscript.  {\bf Section~\ref{sect12}} is devoted to describing the setting of Sobolev spaces used throughout the work, in particular, we discuss the density result in Proposition~\ref{prop-dens}. The Dirichlet energy corresponding to mappings between Grushin planes and the derivation of the related Euler--Lagrange system of equations together with the existence of the minimizers are presented in {\bf Sections~\ref{sect13}-\ref{sect14}}. The key definitions for Grushin harmonic mappings are introduced in {\bf Sections~\ref{sect15} and~\ref{sect16}} and followed by the Caccioppoli estimates and the Liouville-type result. 

We decided to devote a separate section to discussing a variety of examples of Grushin-harmonic mappings, see {\bf Section 3}. According to the best of our knowledge such examples appear for the first time in the literature and are part of a strong motivation for further studies of harmonic mappings in the Grushin setting. In particular, we present three categories of examples and relate our mappings to Grushin-conformal mappings and holomorphic mappings.

Our core contributions are presented in {\bf Sections 4-6}. In Section 4 we introduce the second order Sobolev spaces $H^{2,2}$ and discuss the difference quotients method in the weighted setting. Then, in {\bf Lemma~\ref{lem-Sob-char}}, we provide a Grushin counterpart of the well-known Euclidean description of the Sobolev functions in $H^{2,2}$. However, note that the dependence of the weights on the point in the domain leads to significant technical challenges for the difference quotients technique in the Grushin setting. Upon completing the necessary preparations, we show the following result for the Grushin-harmonic mappings from an open set in $\alpha$-Grushin plane $G_2^\alpha$ into $\beta$-Grushin plane $G_2^\beta$. Moreover, we discuss some examples of harmonic mappings satisfying the assumptions of the theorem, see Example~\ref{ex-thm-H22} and relate our studies to Euclidean ones and our previous results, see Remark~\ref{rem44}. We refer to the introduction of Section 4 for a discussion on how our $H^{2,2}$-result differs from the corresponding result in~\cite{aww}, as well as for a description of the challenges arising when the target space of a map is a Grushin-plane instead of the Euclidean plane.

\begin{theorem}\label{thm-H22}
Let $\Om\subset G^{\alpha}_2$ be an open bounded set and $u=(u^1, u^2):\Om \to G^2_{\beta}$ be a weakly Grushin-harmonic map in $H^{1,2}_{loc}(\Om,G^{\beta}_2,\mu)$, i.e., $u$ satisfies the system of equations~\eqref{w-system-EL}, and  such that $\overline{u^1(\Om)}\cap S=\emptyset$.  Moreover, assume that $\alpha, \beta \in [0,1)$ and there exist 
 \begin{equation}
 p> \frac{2}{1-\alpha}\,\,\hbox{ and }\,\,\ \gamma \geq \max\left\{ \frac{2+p+p\alpha}{2\alpha},\, p(2-\alpha)\right\}\,(>2)\qquad \tag{H2-A} \label{ass0-thm-H22}
\end{equation}
 such that for any ball $B(R)\cap S\not=\emptyset$ it holds that:
 \leqnomode
\begin{align}
&\int_{B(R)} \frac{|\nabla_H u^2|^p}{|u^1|^{(\beta+1)p}} \ud \mu<\infty, \qquad
\int_{B(R)} \frac{|u^1_x|^{\frac{2p}{p-2}}}{|x|^{\frac{2\alpha}{p-2}}} \ud \mu<\infty \tag{H2-B} \label{ass1-thm-H22} \\
&\int_{B(R)} \frac{|D_H u|^p}{|x|^{\alpha(\gamma-1)}} \ud \mu<\infty \tag{H2-C}\label{ass2-thm-H22} \\
& \int_{B(R)} \frac{|u^1_x|^{\frac{p}{p-2}}}{|x|^{\frac{p}{p-2}}} \ud \mu<\infty. \tag{H2-D}\label{ass3-thm-H22}
\end{align}
If assumptions \eqref{ass0-thm-H22}-\eqref{ass3-thm-H22} hold, then the second order derivatives $X^2 u^1, XY u^1$ and $X^2 u^2, XY u^2$ are in $L^{2}_{loc}(\Om, \mu)$. 

\noindent If assumption~\eqref{ass1-thm-H22} holds for some $p>2$, then the second order derivatives $YXu^1, Y^2u^1$ and $YXu^2, Y^2u^2$ are in $L^{2}_{loc}(\Om, \mu)$. 

\noindent
Furthermore, if an open set $\Om'\subset \Om$ satisfies $\overline{\Om'}\cap S=\emptyset$, then it holds that $u^1, u^2 \in H^{2,2}(\Om', \R, \mu)$ only under the assumption~\eqref{ass1-thm-H22} holding for some $p>2$.
\end{theorem}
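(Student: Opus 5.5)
The plan is the Nirenberg difference quotient method, applied separately to each equation of the weak system~\eqref{w-system-EL}, in the weighted form of Section 4 and closed by Lemma~\ref{lem-Sob-char}. Fix $\Om''\Subset\Om$ and a cutoff $\eta\in C_0^\infty(\Om)$ with $\eta\equiv1$ on $\Om''$. Denote by $\Delta_h^y$ and $\Delta_h^x$ the difference quotients in the $y$- and $x$-directions. Because $\overline{u^1(\Om)}\cap S=\emptyset$, the function $1/|u^1|$ is bounded on $\Om$. The second equation is therefore uniformly elliptic in the Grushin sense, with coefficient $a:=|u^1|^{-2\beta}$ bounded above and below. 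The lower-order term in the first equation is controlled pointwise by $|\nabla_H u^2|^2$.

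\textbf{Step 1: the $y$-direction.} The weight $|x|^{-\alpha}$ and the field $Y=|x|^\alpha\partial_y$ do not depend on $y$, so $\Delta_h^y$ commutes with $X$, $Y$ and $\mu$. In the second equation I test with $\varphi=\Delta_{-h}^y(\eta^2\Delta_h^y u^2)$ and move the quotient onto the flux. The discrete product rule gives $\Delta_h^y(a\nabla_H u^2)=a(\cdot+he_2)\nabla_H\Delta_h^y u^2+(\Delta_h^y a)\nabla_H u^2$. The leading term yields $\int\eta^2|\nabla_H\Delta_h^y u^2|^2\,\ud\mu$. The error term contains $\Delta_h^y a\approx\beta|u^1|^{-2\beta-2}u^1\,\Delta_h^y u^1$; I estimate it by Hölder with exponents $p$ and $2p/(p-2)$, plus Young's inequality. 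This produces exactly the quantity $\int|\nabla_H u^2|^p|u^1|^{-(\beta+1)p}\,\ud\mu$ from~\eqref{ass1-thm-H22}, multiplied by $\|\eta\Delta^y_h u^1\|_{L^{2p/(p-2)}}$, which I handle through a weighted Sobolev embedding combined with an absorption.

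For $u^1$ I test similarly. The right-hand side $\beta|\nabla_H u^2|^2u^1|u^1|^{-2\beta-2}|x|^{-\alpha}$ is handled with the same Hölder pair, and this is where the second integral in~\eqref{ass1-thm-H22} enters. I then sum the two estimates and absorb. The result is a uniform-in-$h$ bound on $\nabla_H\Delta^y_h u^i$ in $L^2_{loc}(\mu)$, hence $XYu^i$ and $Y^2u^i$ are in $L^2_{loc}$ (through $X\partial_y=\partial_yX$ and Lemma~\ref{lem-Sob-char}). This needs only~\eqref{ass1-thm-H22} with some $p>2$. The statement lists $YXu^i$ and $Y^2u^i$; the bookkeeping of which mixed derivative is which follows the convention of Lemma~\ref{lem-Sob-char}, and I would match it there.

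\textbf{Step 2: the $x$-direction (main obstacle).} Now $\Delta_h^x$ does not commute with $Y$, and the weight $|x|^{-\alpha}$ shifts. The commutators produce terms $\frac{|x+h|^{\alpha}-|x|^{\alpha}}{h}\partial_y u$ and $\frac{|x+h|^{-\alpha}-|x|^{-\alpha}}{h}$, which behave like $|x|^{\alpha-1}$ and $|x|^{-\alpha-1}$ near $S$. My plan is to bound these by Hölder's inequality with the exponent $p>2/(1-\alpha)$, chosen so that $|x|^{-(1-\alpha)\cdot\frac{2p}{p-2}}$-type singularities remain $\mu$-integrable. The remaining factors then fall into the weighted integrability~\eqref{ass2-thm-H22}. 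The exponent $\gamma$ is tuned, via the two constraints in~\eqref{ass0-thm-H22}, to make the powers of $|x|$ match after Young's inequality. The term coming from differentiating $1/|x|^\alpha$ against $u^1_x$ is handled by~\eqref{ass3-thm-H22}.

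This is the step I expect to cost the most. The difficulty is checking the exponent arithmetic and ensuring that the difference quotients of the weight are dominated uniformly in $h$ (for instance by the Hardy–Littlewood maximal function, or by a direct estimate $|\Delta_h^x|x|^{\alpha}|\lesssim |x|^{\alpha-1}$ when $|h|<|x|/2$, with the region $|x|\le 2|h|$ treated separately). Once this is done, absorption gives uniform bounds on $X\Delta_h^x u^i$ and $Y\Delta_h^x u^i$, i.e.\ $X^2u^i$ and $XYu^i$ lie in $L^2_{loc}$.

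\textbf{Step 3: away from $S$.} If $\overline{\Om'}\cap S=\emptyset$, then $|x|$ is bounded above and below on $\Om'$. The weight and the field $Y$ are then smooth, with bounded commutator terms, so~\eqref{ass2-thm-H22}–\eqref{ass3-thm-H22} are unnecessary. Step 1 and Step 2 then only need~\eqref{ass1-thm-H22}, which gives $u^i\in H^{2,2}(\Om',\R,\mu)$ by Lemma~\ref{lem-Sob-char}.
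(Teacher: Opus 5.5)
Structurally your outline is the paper's proof. You test each equation of \eqref{w-system-EL} with $\Delta_{-h}(\phi^2\Delta_h u^i)$ in both directions. You add the two estimates so that the $\Delta_h\nabla_H u^2$ terms produced by the coupling term are absorbed. You pair $|\nabla_H u^2|^2$ with $(\Delta_h u^1)^2$ by H\"older with exponents $p/2$, $p/(p-2)$ against \eqref{ass1-thm-H22}. Near $S$, the weight quotients go to \eqref{ass2-thm-H22} with $\gamma$ from \eqref{ass0-thm-H22}, and the cross term $|\Delta_h^1u^1|\,|\Delta_h^1\varrho|/(\varrho(x+h)\varrho)$ goes to \eqref{ass3-thm-H22}. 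Away from $S$ only \eqref{ass1-thm-H22} is used.

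The genuinely different ingredient is how you bound $\|\eta\,\Delta_h u^1\|_{L^{2p/(p-2)}(\mu)}$. The paper reads it off the second integral in \eqref{ass1-thm-H22}; you use a Sobolev embedding and absorb into $\int\eta^2|\nabla_H\Delta_h u^1|^2\,\ud\mu$. The paper's route needs no smallness, but it only works where $\Delta_h u^1\approx u^1_x$, i.e.\ in the $x$-step. That is where the second integral belongs, not in your $y$-step, where you placed it. In the $y$-step $\Delta_h^2u^1\approx u^1_y$, which \eqref{ass1-thm-H22} does not control (the paper's \eqref{main-est2-H22-3} has $\Delta_h^1u^1$ at this point). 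So your route is what actually closes the $y$-step, using only the first integral of \eqref{ass1-thm-H22}. For it to work you must do three things:
(i) shrink the support of $\eta$ and cover, so the absorbed coefficient is small;
(ii) justify $HW^{1,2}_0\hookrightarrow L^{2p/(p-2)}(\mu)$ for $p$ near $2$, which is beyond the range $2(2+\alpha)/\alpha$ of Observation~\ref{Grushin sobolev embedding thm} (e.g.\ with $t=x_\alpha$ one has $\int|\nabla_Hv|^2\,\ud\mu=\int|\nabla_{t,y}v|^2\,\ud t\,\ud y$ and $\ud\mu=|x|^{-2\alpha}\,\ud t\,\ud y$ with $|x|^{-2\alpha}\in L^s_{loc}$ for some $s>1$);
(iii) control the cut-off terms $\int|\Delta_h^2u^i|^2|\nabla_H\eta|^2\,\ud\mu$.
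The terms in (iii) are not bounded by $\|u^i\|_{HW^{1,2}}$ near $S$, since $\Delta_h^2u^i\to u^i_y=|x|^{-\alpha}Yu^i$. Taking $\eta=\eta_1(x_\alpha)\eta_2(y)$ gives $|\nabla_H\eta|\lesssim|x|^\alpha$ and bounds them by $\int|Yu^i|^2\,\ud\mu$. This caveat applies to the paper's $y$-step as well.

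Two corrections. First, a uniform bound on $\nabla_H\Delta_h^2u^i$ yields $\partial_yXu^i$ and $\partial_yYu^i$, hence $YXu^i$ and $Y^2u^i$, not $XYu^i$. The difference $XYu^i-YXu^i=Tu^i=\varrho_xu^i_y$ is exactly what the $x$-step has to handle.

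Second, in Step 2 no choice of $p$ makes the weight singularity $(\Delta_h^1\varrho/\varrho(x+h))^2\sim|x|^{-2}$ integrable after H\"older. In particular $|x|^{-(1-\alpha)\frac{2p}{p-2}}$ is never $\mu$-integrable near $S$. The singularity is compensated entirely by the weight in \eqref{ass2-thm-H22}. The paper moves $\varrho(x+h)^{-2\gamma/p}$ onto $|D_Hu(x+h,y)|^2$ and applies H\"older with $p/2$, $p/(p-2)$ in $\ud x\,\ud y$, so that the first factor is \eqref{ass2-thm-H22}. It then bounds $\big((\Delta_h^1\varrho)^2/\varrho\big)^{p/(p-2)}\varrho(x+h)^{2(\gamma-p)/(p-2)}$ uniformly in $h$ via $|x+h|^{1-\alpha}|\Delta_h^1\varrho|\le c(\alpha)$. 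This is where the lower bounds on $\gamma$ enter. The condition $p>2/(1-\alpha)$ is what makes the leftover factor $|x|^{-\alpha p/(p-2)}$, coming from $\ud\mu=\ud x\,\ud y/\varrho$, locally integrable.

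You also need, as the paper does through (DQ2), that $\Delta_{-h}^1(\phi^2\Delta_h^1u^i)$ is an admissible test function when the ball meets $S$. This is not automatic, since $\Delta_h^1$ does not commute with $Y$ there.
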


The results of Section 4 give foundations for the studies in {\bf Section 5}, where we discuss the Bochner identity for harmonic mappings between the Grushin planes. Let
\[
\|D^2_H u\|^2:=\|\nabla_H^2 u^1\|_{H}^2+\frac{\|\nabla_H^2 u^2\|_{H}^2}{|u^1|^{2\beta}},
\]
denote the norm of Hessian matrix of $u$, see~\eqref{def-hessian-fun}. Moreover, see Definition~\ref{defns-Tcom} of the  commutator $T$.

\begin{theorem}[Bochner formula]\label{thm-Bochner}
 Let $\alpha \in [0,1)$ and $\Om\subset G_2^{\alpha}$ be an open set. Suppose that $u:\Om\to G_2^\beta$ such that $u\in H^{2,2}_{loc}(\Om, \mu; G_2^\beta)$ is a weak harmonic map, i.e., $u$ satisfies Definition~\ref{weak-solution} in $\Om$
as well as on any ball $B(R)\Subset \Om\setminus S$ we have that
\begin{equation}
\int_{B(R)} \frac{|\nabla_H u^2|^p}{|u^1|^{(\beta+1)p}} \ud \mu<\infty, \qquad
\int_{B(R)} |\nabla_H u^1|^{\frac{2p}{p-2}} \ud \mu<\infty \quad\hbox{for some }p>2.\tag{H2-B'}
\label{ass11-thm-H22}
\end{equation}
Then the following Bochner identity holds in the weak sense in any ball $B(R)\Subset \Om\setminus S$: 
\begin{align}
 &L\Big(\frac12 \|D_Hu\|^2\Big)+\alpha|x|^{\alpha-2}\|D_Hu\|^2 \nonumber \\
 &\phantom{AAA}= \varrho \|D^2_H u\|^2+2\varrho \Bigg(\big((TXu^1)Yu^1-(TYu^1)Xu^1\big)+\frac{\big((TXu^2)Yu^2-(TYu^2)Xu^2\big)}{|u^1|^{2\beta}} \Bigg) \nonumber \\
&\phantom{AAA}+\frac{\beta \varrho}{|u^1|^{2\beta+2}}\Bigg(2u^1\Big\langle \nabla_H u^2,  \nabla_H\langle \nabla_H u^1,  \nabla_H u^2 \rangle \Big\rangle-3u^1\langle \nabla_H u^1, \nabla_H |\nabla_H u^2|^2 \rangle-2\langle \nabla_H u^1,  \nabla_H u^2 \rangle^2 \nonumber \\
&\phantom{AAAAAAAAA}+|\nabla_H u^2|^2 \Big(\beta\|D_Hu\|^2+(2+3\beta)|\nabla_H u^1|^2 \Big) \Bigg). \label{intro-id-Bochner}
\end{align} 
\end{theorem}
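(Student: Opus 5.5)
The plan is to prove the Bochner formula as a pointwise identity first, for smooth maps on balls $B(R)\Subset\Om\setminus S$, and then pass to $H^{2,2}_{loc}$ weak harmonic maps by approximation. Away from the singular line $S$ the vector fields $X$, $Y=|x|^\alpha\partial_y$ are smooth, the weight $|x|^{-\alpha}$ is smooth and bounded above and below on $B(R)$, and $\overline{u^1(\Om)}$ stays away from $S$ (implicitly, since $|u^1|^{-2\beta}$ appears). All coefficients are therefore smooth and the computation is classical.

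\textbf{Step 1: identify the operator.} I take $L v=|x|^{\alpha}{\rm div}_{G_2^\alpha}(\nabla_H v/|x|^\alpha)$, or the weighted variant defined in the paper, with $\varrho$ the corresponding weight factor. I then write $\frac12\|D_Hu\|^2=\frac12|\nabla_Hu^1|^2+\frac12|u^1|^{-2\beta}|\nabla_Hu^2|^2$.

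\textbf{Step 2: expand the first term.} Apply $L$ to $\frac12|\nabla_H u^1|^2$ and expand with $X,Y$. The expansion gives
\begin{itemize}
\item $|\nabla^2_H u^1|^2$;
\item $\langle\nabla_H u^1,\nabla_H(Lu^1)\rangle$;
\item the commutator terms involving $T=[X,Y]=\alpha|x|^{\alpha-2}x\,\partial_y$, namely $2((TXu^1)Yu^1-(TYu^1)Xu^1)$;
\item the lower order term coming from differentiating the weight, which produces $\alpha|x|^{\alpha-2}|\nabla_Hu^1|^2$ and is moved to the left-hand side.
\end{itemize}

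\textbf{Step 3: expand the second term.} Do the same for $\frac12|u^1|^{-2\beta}|\nabla_Hu^2|^2$ with the product rule. This produces the Hessian and commutator terms for $u^2$ divided by $|u^1|^{2\beta}$, plus cross terms in $\nabla_H|u^1|^{-2\beta}$ and $L(|u^1|^{-2\beta})$.

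\textbf{Step 4: substitute the Euler--Lagrange system.} Use the strong form of the system \eqref{intro-system-EL}. First, $Lu^1=-\beta|\nabla_Hu^2|^2u^1|u^1|^{-2\beta-2}$. Second, expanding the $u^2$ equation gives $Lu^2=2\beta|u^1|^{-2}u^1\langle\nabla_Hu^1,\nabla_Hu^2\rangle$. Substituting these into $\langle\nabla_H u^i,\nabla_H Lu^i\rangle$ produces the $\beta$-terms. These are
\begin{itemize}
\item $\langle\nabla_H u^2,\nabla_H\langle\nabla_Hu^1,\nabla_Hu^2\rangle\rangle$;
\item $\langle\nabla_Hu^1,\nabla_H|\nabla_Hu^2|^2\rangle$;
\item $\langle\nabla_Hu^1,\nabla_Hu^2\rangle^2$;
\item $|\nabla_Hu^2|^2|\nabla_Hu^1|^2$, from differentiating $u^1|u^1|^{-2\beta-2}$ and from $L|u^1|^{-2\beta}$.
\end{itemize}
I then collect coefficients to match $2,-3,-2,(2+3\beta)$ and the $\beta\|D_Hu\|^2$ term. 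The latter arises after using $Lu^1$ again inside $L(|u^1|^{-2\beta})$.

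\textbf{Step 5: weak formulation and approximation.} Multiply by a test function $\varphi\in C_c^\infty(B(R))$ and integrate by parts once, so only second derivatives of $u$ appear. The weak identity then makes sense for $u\in H^{2,2}_{loc}$. The genuinely weak part is deriving it without third derivatives. I will test the weak system \eqref{w-system-EL} with $\varphi$ replaced by $X(\varphi Xu^i)$ and $Y(\varphi Yu^i)$, suitably weighted. This is legitimate by density (Proposition~\ref{prop-dens}) and difference quotients, since $u\in H^{2,2}$ on $B(R)$. The nonlinear terms are handled with Hölder's inequality via \eqref{ass11-thm-H22}. Terms like $|\nabla_Hu^2|^2|u^1|^{-2\beta-2}\cdot X(\varphi Xu^1)$ need $|\nabla_Hu^2|^2|u^1|^{-2\beta-2}\in L^{p/2}$ paired with $\nabla_Hu^1\in L^{2p/(p-2)}$ and second derivatives in $L^2$, and the exponents $\frac{2}{p}+\frac{p-2}{2p}+\frac12=1$ match exactly.

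\textbf{Main obstacle.} I expect the hardest part to be the bookkeeping in Steps 3 and 4: getting the exact coefficients of the $\beta$-terms, including the $\beta^2$ contributions, and the correct placement of the commutator and weight terms. For the weak justification, the main care point is that every substituted test function is admissible and that all products are integrable under \eqref{ass11-thm-H22}.
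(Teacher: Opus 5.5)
Your Steps 1--4 are the paper's proof. You split $\frac12\|D_Hu\|^2$, apply the scalar Bochner formula for $L$ (the paper imports this as formula (26) of \cite{aww} rather than re-deriving it), use the product rule for the factor $|u^1|^{-2\beta}$, and substitute $Lu^1$, $Lu^2$ from the system. The bookkeeping does close:
\begin{itemize}
\item the coefficient $-3$ is $-1$ from $\langle\nabla_Hu^1,\nabla_HLu^1\rangle$ plus $-2$ from the product-rule cross term $\langle\nabla_H|u^1|^{-2\beta},\nabla_H|\nabla_Hu^2|^2\rangle$;
\item $(2+4\beta)|\nabla_Hu^1|^2|\nabla_Hu^2|^2+\beta|\nabla_Hu^2|^4|u^1|^{-2\beta}$ equals $|\nabla_Hu^2|^2\big(\beta\|D_Hu\|^2+(2+3\beta)|\nabla_Hu^1|^2\big)$.
\end{itemize}
Fix the normalization, though. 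Your $L=\varrho\,{\rm div}_{G_2^\alpha}(\nabla_H\cdot/\varrho)$ yields the stated identity divided by $\varrho$. With the paper's $L=\varrho^2{\rm div}_{G_2^\alpha}(\nabla_H\cdot/\varrho)$, both $Lu^1$ and $Lu^2$ carry an extra factor $\varrho$. The scalar formula then also has an extra term $-\frac{\varrho_x}{\varrho}(Xv)\,Lv$, which cancels against the derivative of that $\varrho$.

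What fails as written is your integrability claim in Step 5. The paper itself only sketches this part, via the exact pairing $\frac{p-2}{p}+\frac2p=1$ for $|\nabla_Hu^1|^2|\nabla_Hu^2|^2|u^1|^{-2\beta-2}$. There are two problems with your version:
\begin{itemize}
\item The arithmetic is off: $\frac2p+\frac{p-2}{2p}+\frac12=1+\frac1p$, not $1$.
\item The term you cite, $\frac{|\nabla_Hu^2|^2u^1}{|u^1|^{2\beta+2}}X(\varphi Xu^1)$, contains $\varphi\frac{|\nabla_Hu^2|^2}{|u^1|^{2\beta+1}}X^2u^1$, which has no factor $\nabla_Hu^1$ at all. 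Pairing it with $X^2u^1\in L^2$ requires $|\nabla_Hu^2|\in L^4$, i.e.\ $p\ge 4$ in \eqref{ass11-thm-H22}. The same problem arises for the piece $u^1\langle\nabla_Hu^2,(\nabla^2_Hu^1)\nabla_Hu^2\rangle$ on the right-hand side of the identity.
\end{itemize}
Hypothesis \eqref{ass11-thm-H22} matches exactly only two products: $|\nabla_Hu^1|\,\frac{|\nabla_Hu^2|}{|u^1|^{\beta+1}}\,|\nabla^2_Hu^2|$ (exponents $\frac{2p}{p-2},p,2$) and $|\nabla_Hu^1|^2\frac{|\nabla_Hu^2|^2}{|u^1|^{2\beta+2}}$.

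The repair is cheap on $B(R)\Subset\Om\setminus S$. There $\mu$ is comparable to Lebesgue measure and $X,Y$ have smooth coefficients, so $H^{2,2}(B(R),\mu)=W^{2,2}(B(R))$. The planar embedding $W^{1,2}\hookrightarrow L^q$ for every $q<\infty$ then gives $\nabla_Hu^i\in L^q_{loc}$ for all $q$. Together with $|u^1|\geq c>0$, which you already assume, this makes every term integrable. It also lets you pass $h\to0$ in the difference-quotient test functions $\Delta^k_{-h}(\varphi\Delta^k_hu^i)$. You need these rather than $X(\varphi Xu^i)$ itself, since the gradient of the latter involves third derivatives.
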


Let us remark, that this Bochner identity generalizes some of the previously studied Bochner identities, in particular:
\begin{itemize}
\item[(1)] If $\alpha=\beta=0$ and so $\varrho=1$ and both Grushin planes $G_2^\alpha$ and $G_2^\beta$ become the Euclidean planes, we retrieve the classical Bochner identity for the planar harmonic functions, i.e. 
$$
\Delta \frac{|\nabla v|^2}{2}=|\nabla^2v|^2.
$$
\item[(2)] If $\beta=0$, then the harmonic map $u:G_2^\alpha\to \R^2$, then we retrieve the Bochner identity studied in~\cite[Theorem 1.3]{aww}.
\end{itemize}

The geometric interpretation of the formula~\eqref{intro-id-Bochner} is as follows: its left-hand side consists of the elliptic-type differential operator acting on the function $\|D_Hu\|^2$, whereas on the right-hand side we have the Hessian term, the commutator terms for component functions and terms with projections of horizontal gradients of component functions. Note that the similar formulas with the analogous commutator and projection terms are known in the literature, e.g.:
\begin{itemize}
\item[(1)] in the setting of Carnot groups of step $2$ and $C^3$ subelliptic-harmonic functions, see~\cite[Corollary 3.6]{ga23} 
\[ 
 \Delta_H(|\nabla_Hf|^2)=2\|D^2_Hf\|^2+\frac12 \sum_{i,j=1}^m \left( (X_iX_j-XjX_i)f \right)^2+4\sum_{i,j=1}^m X_if (X_iX_j-X_jX_i)X_if,
\]
where $X_i$ stand for the horizontal vector fields in the given Carnot group.
\item[(2)] for the three-dimensional Sasakian models, see the Bochner formula on page 163, Section 2.2 in~\cite{bg}; see also the discussion of the generalized Bochner identity in~\cite[Section 2.3.2]{bg}, where the Ricci-term $\mathcal{R}(f)$ in \cite[Definition 2.15]{bg} corresponds to our commutator terms, and the second order differential form $\mathcal{S}(f)$  in~\cite[Formula  (2.14)]{bg} corresponds to our projection terms. In this notation our Bochner formula corresponds to the identity (2.17) in Theorem 2.18 in~\cite{bg}. 
\end{itemize}

The deeper relations between the aforementioned Bochner identities and the one in Theorem~\ref{thm-Bochner} are still to be investigated, as well as the Bochner identities for harmonic mappings in the higher dimensional Grushin spaces.

One of the consequences of the Bochner identity is presented in {\bf Corollary~\ref{cor-Bochner}}, where we infer that $\|D_H u\|$ is a subsolution of the following second order differential operator
\begin{equation*}
L\Big(\frac12 \|D_H u\|^2\Big)+\alpha|x|^{\alpha-2}\|D_Hu\|^2+\beta \frac{|x|^\alpha}{|u^1|^{2}}\|D_H u\|^4\geq 0.
\end{equation*} 
 In order to obtain this observation, one needs to assume that the map $u$ omits the singular set in $G_2^\beta$, as well as that $(TX u^i)Yu^i-(TYu^i)Xu^i \geq 0$ for $i=1,2$. The positivity assumptions on the commutators are counterparts of the corresponding assumption in Corollary 4.2 in~\cite{aww} for harmonic mappings with targets in a Euclidean space and, moreover, turn out to have a geometric interpretation in terms of the level sets of functions $u^i$, see Section 4 in~\cite{aww}.

Finally, in {\bf Section~\ref{sect-w-Harnack}} we observe that the component functions of a strong harmonic mapping $u=(u^1,u^2)$ enjoy the weak-Harnack estimates, see {\bf Theorems~\ref{weak-harnack-u2} and~\ref{weak harnack for u^1 thm}}. In particular, we show that the positive and negative parts of the second component function $u^2$ of $u$ satisfy the following estimate for every $p>1$ 
\begin{equation*}
    \sup_{\lambda B_r}|u^2_{\pm}| \le \frac{C}{1-\lambda}\Big(\vint_{B_r} \| u\|_{G_2^{\beta}}^{(1+\beta)p(2+\alpha)} \ud \mu \Big)^{\frac{1}{p(2+\alpha)}}.
\end{equation*}
Here $\alpha \in (0,1)$, $\beta \ge 0$, $\lambda\in (0,1)$ and we also impose condition~\eqref{orth-cond} for a function $K \in L^2_{\textrm{loc}}(\Om,\mu)$. The same geometric condition appears in the Caccioppoli inequality for strong harmonic mappings, see Lemma~\ref{lem-Cac-str}, and describes the growth of the Euclidean scalar product between $\nabla_H u^1$ and $\nabla_H u^2$. Moreover, similar estimate holds for the first component function $u^1$, see~\eqref{weak harnack for u^1 main inequality}, under similar conditions as above. The differences between the assertions for $u^1$ and $u^2$ come from lack of symmetry between the first and the second equations in the strong harmonic system~\eqref{weak-system-EL} and from the appearance of the weight $|u^1|^{2\beta}$ in the subriemannian norms~\eqref{def-DH} and~\eqref{defn-subr-unorm}.
\section{Preliminaries}\label{sect1}

In this section we first recall some basic definitions and properties of the Grushin spaces and planes. Then, in Section~\ref{sect12} we discuss the Sobolev spaces of functions and mappings used in our work. Section~\ref{sect13} is devoted to formal derivation of the Euler--Lagrange system of equations studied in this paper and the related Dirichlet energy, see formulas~\eqref{def-energy} - \eqref{system} and~\eqref{system-EL} - \eqref{str-system-EL}. However, the existence of the minimizers is discussed in the following Section~\ref{sect14}. Our main classes of Grushin harmonic mappings are brought on stage in Sections~\ref{sect15} - \ref{sect16}, where we define the weak- and the strong harmonic mappings, respectively and discuss their Caccioppoli estimates, see Lemmas~\ref{lem-Cac} and~\ref{lem-Cac-str}. As an application of such energy estimates we prove a variant of the Liouville theorem, see Proposition~\ref{thm-Liouv}.

\subsection{Grushin spaces. The measure and geometric setting}\label{sect11}

 The following section is based on the corresponding introduction to the Grushin spaces presented in the beginning of Section 2 in~\cite{aww}. Let us consider the Euclidean space $\Rn$ equipped with the coordinate system $(x_1,x_2,\ldots, x_n)$ and the following vector fields
 \begin{equation}\label{def-vectors}
  X_1=\frac{\partial}{\partial x_1}\quad   X_2=\varrho_2(x_1)\frac{\partial}{\partial x_2}\quad \ldots \quad X_n=\varrho_n(x_1,\ldots, x_{n-1})\frac{\partial}{\partial x_n},
 \end{equation}
where a priori we assume that functions $\varrho_i$ are nonnegative real-valued for $i=1,2,\ldots,n$ and $\varrho_1\equiv 1$.

If additionally $\varrho_i$ are differentiable, then the commutators (Lie brackets) $[X_i, X_j]$ can be computed explicitly.

Moreover, if $\varrho_i$ are polynomials then for any $x_0\in \Rn$ there exists the minimal number of the Lie brackets iterations needed to produce a non-zero vector field,  denoted by $r_{x_0}^j$, i.e.
\[
 [X_{i_1}, [X_{i_2}, [\ldots [X_{i_{r_{x_0}^j}}, X_j]\ldots](x_0)\not=0,
\] 
for each $j=1,2,\ldots$. Hence, the H\"ormander condition follows.


The family of vector fields $\{X_1,\ldots, X_n\}$ as in~\eqref{def-vectors} allows us to define the following \emph{Carnot--Cara\-th\'eo\-dory distance} $\dcc$ in $\Rn$: let $\gamma: [0,1]\to \R^n$ be a curve and set $\gamma(t):=(\gamma_1(t),\ldots, \gamma_n(t))$ for $t \in [0,1]$. Then, we define
\[
 d_{CC}(x,y)=\inf_{\gamma} \int_{0}^{1}\sqrt{\gamma_1'(t)^2+\frac{\gamma_2'(t)^2}{\varrho_2^2(\gamma_1(t))}+\ldots+\frac{\gamma_n'(t)^2}{\varrho_n^2(\gamma_1(t),\ldots, \gamma_{n-1}(t))}}\,\ud t,
\]
where the infimum is taken over all absolutely continuous curves $\gamma$ joining points $x$ and $y$, i.e. $\gamma(0)=x$ and $\gamma(1)=y$.

The metric space $(\Rn, \dcc)$ is called the Grushin space, and denoted by $G_n$, in reference to Grushin's work on the hypoelliptic operators, see~\cite{gr1,gr2}. From the viewpoint of the sub-Riemannian geometry the space $(G_n^\alpha, \dcc)$ is a Riemannian manifold outside the singular set $\{x\in \Rn: \Pi _{i=1}^{n} \varrho_i(x)=0\}$, while on the singular set we have two possibilities corresponding with the polynomial nature of the weights. Namely, if the weights are polynomials then the Grushin space is a subriemannian manifold, whereas in the case the weights are not polynomials, the vector fields $X_i$ may fail to satisfy the H\"ormander condition. 

We discuss now some examples of the Grushin spaces.

\begin{ex}\label{ex11}
Let us consider the following weights
\[
 \varrho_i(x_1,\ldots, x_{i-1})=|x_1|^{\alpha_1}\cdots |x_i|^{\alpha_{i-1}},\quad i=1,\ldots,n, \quad \alpha_i\geq 0.
\]
It follows that we can find explicit formulas for distances equivalent to the $\dcc$ distance, see Sections 2.2 and 3 in~\cite{wu}, \cite{fl}, also distances locally equivalent to $d_{CC}$, see~\cite{bel} and~\cite[Section 2]{ly}. 

The space $(G_n^\alpha, \dcc)$ is a Riemannian manifold outside the singular set $\{x\in \Rn: \Pi _{i=1}^{n-1} x_i=0\}$. From that perspective, the Grushin spaces can be regarded as one of the simplest almost-Riemannian manifolds. Recently, the Gushin spaces $G_n^\alpha$ have become a topic of interest in geometric mapping theory and fractal geometry. For example, the quasisymmetric and the bi-Lipschitz equivalence of $G_n^\alpha$ and $\R^n$, also the existence of bi-Lipschitz embeddings of $G_n^\alpha$ in $\R^{n+1}$ are studied in~\cite{wu}; see also~\cite{gjr, rv, wa} for the questions regarding the (quasi)conformal mappings in $G_2^\alpha$.
\end{ex}

The special case of Example~\ref{ex11} for $n=2$ will play the fundamental role in our work and therefore we discuss it separately.

\begin{ex}[Grushin space $G_2^\alpha$]\label{ex12}

Let us consider $\R^2$ with coordinates $(x,y)$ and the following vector fields on $\R^2$
\[ 
 X=\frac{\partial}{\partial x}\quad   Y=|x|^{\alpha}\frac{\partial}{\partial y}.
\]
Let us equip the plane $\R^2$ with the following Carnot--Carath\'eodory distance $d_{CC}$:
\[
 d_{CC}(p,q)=\inf_{\gamma} \int_{0}^{1}\sqrt{\gamma_1'(t)^2+\frac{\gamma_2'(t)^2}{|\gamma_1(t)|^{2\alpha}}}\,\ud t,
\]
where the infimum is taken over all absolutely continuous curves $\gamma$ joining points $p, q\in G_2^\alpha$, i.e. $\gamma(0)=p$ and $\gamma(1)=q$.
%

For any $0<\alpha<1$, there exists a bi-Lipschitz map from $G_2^\alpha$ onto $\R^2$ which maps $S$ onto a von Koch-snowflake curve, see~\cite[Section 2.1]{wu}. Thus, $G_2^\alpha$ and $\R^2$ are bi-Lipschitz equivalent. Furthermore, the Meyerson map gives the quasisymmetry from $G_2^\alpha$ onto $\R^2$, see Formula (5.1) in~\cite{wu} for $n=2$ . 

When discussing Grushin planes, it is common in the literature to assume that $\alpha\geq 1$, i.e. that the weight is Lipschitz regular, see e.g.~\cite{fv, ddfm}. In our work, as well as in our previous studies in~\cite{aww}, we focus our attention on the H\"older regular case $0<\alpha<1$ which is significantly more challenging and difficult.
\end{ex}
%
\medskip

\noindent {\bf Measures on $G_n$.} The Grushin spaces $G_n$ are often considered when equipped with the Lebesgue measure, see e.g.~\cite{bg, fms, fv}. However, it is more natural to study the subriemannian spaces with the intrinsic measure tailored to the underlying distance and reflecting the geometry of the space, and so in this work we take that approach (cf. the discussion in~\cite{aww}).

 The Hausdorff $2$-measure $H_2$ of balls in $G_2^\alpha$ is finite for $0<\alpha<1$ and equals the Riemannian measure $|x|^{-\alpha}\ud x\ud y$ on $G_2^\alpha\setminus S$. Furthermore, $H_2=C(\alpha) \mathcal{H}_{E}^{\frac{2}{1+\alpha}}$ on the singular set $S$ (i.e. on the $y$-axis), where $\mathcal{H}_{E}$ denotes the Euclidean Hausdorff measure. As a matter of fact, it turns out that the Hausdorff measure $H_n$ of balls in $G_n$ is finite for the weights in Example~\ref{ex11}. This result can be established by means of the ball-box technique, see \cite[Theorem 3.1]{wu} and \cite{fl, fl84} for details of that technique and Lemma 1.1 in~\cite{aww} for the result itself. We recall it in the special case of the Grushin planes $G_2^\alpha$. The fact that the following weighted measure $\ud \mu$ is $2$-Ahlfors regular and the weight in $\mu$ is $p$-Muckenhoupt for $p>1$ will be often used in our work.

In what follows balls in the Grushin metric, i.e. balls in the Carnot-Carath\'eodry distance $d_{CC}$, centered at point $x_0$ with radius $r$ will be denoted as follows
\[
B(x_0,r):=B_{G_2^\alpha}(x_0,r)=B_{d_{CC}}(x_0,r). 
\]
\begin{lem}[Lemma 1.1 in~\cite{aww} for $n=2$]\label{lem-Ahl-Muck}
 Let $(G_2^\alpha, d_{CC})$ be a Grushin plane equipped with the Euclidean coordinates $(x,y)$ whose Grushin structure is given by the following power weights 
\begin{equation}
\varrho_1\equiv 1,\quad \varrho_2(x)=|x|^{\alpha},\hbox{ where } \alpha \in [0,1). 
\end{equation}
Then the following measure $\mu$ is $2$-Ahlfors regular in $G_2^\alpha$:
\begin{equation*}
 \mu(B_{G_2^\alpha}(y,r)):=\int_{B_{G_2^\alpha}(y,r)} \frac{\ud x\ud y}{|x|^\alpha},
\end{equation*}
where $B_{G_2^\alpha}(y,r)$ denotes a ball in Grushin metric, centered at $y\in G_2^\alpha$ with radius $r>0$. More precisely, there exists a constant $C(\alpha)>0$ such that $\mu(B(y,r))\approx C(\alpha)r^2$ for any $y\in G_2^\alpha$ and any radius $r>0$.

Moreover, it turns out that the weight in the measure $\mu$ is the $p$-Muckenhoupt weight for any $p>1$.
\end{lem}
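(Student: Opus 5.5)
The plan is to reduce everything to explicit computations on Grushin balls via the ball-box principle, so that both the measure estimate and the Muckenhoupt condition become integrals of powers of $|x|$ over rectangles. The first step is to describe the Grushin ball $B(z_0,r)$, $z_0=(x_0,y_0)$, up to constants depending only on $\alpha$. We expect it to be comparable to the box
\[
Q(z_0,r)=\{(x,y): |x-x_0|<r,\ |y-y_0|< r(|x_0|+r)^{\alpha}\},
\]
in the sense that $Q(z_0,c_1r)\subset B(z_0,r)\subset Q(z_0,c_2r)$.

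For the upper inclusion we use the definition of $d_{CC}$ directly. Along an admissible curve of length less than $r$ we have $|\gamma_1'|\le$ speed, so $|\gamma_1(t)-x_0|<r$. Also $|\gamma_2'|\le |\gamma_1|^\alpha\cdot$speed $\le (|x_0|+r)^\alpha\cdot$speed, which gives the bound on $|y-y_0|$. For the lower inclusion we construct explicit curves. Given a target $(x,y)$ in $Q(z_0,c_1 r)$:
\begin{itemize}
\item first move horizontally to a point with $|x|\ge \max(|x_0|,r)/2$, which costs at most about $r$;
\item then move vertically, where the speed cost is $|\Delta y|/|x|^\alpha \lesssim r$;
\item finally move horizontally to $x$.
\end{itemize}
This requires a case split into $|x_0|\ge 2r$, where the weight is essentially constant $\approx |x_0|^\alpha$, and $|x_0|<2r$, where the weight is $\approx r^\alpha$ on most of the box. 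This is the standard ball-box argument, cf.\ \cite{wu, fl}.

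The second step is the measure estimate. We compute $\int_{Q(z_0,r)}|x|^{-\alpha}\,\ud x\,\ud y = 2r(|x_0|+r)^\alpha\int_{x_0-r}^{x_0+r}|x|^{-\alpha}\ud x$. If $|x_0|\ge 2r$, the inner integral is $\approx r|x_0|^{-\alpha}$ and $(|x_0|+r)^\alpha\approx |x_0|^\alpha$, so the total is $\approx r^2$. If $|x_0|<2r$, the inner integral is $\approx r^{1-\alpha}$, which is finite since $\alpha<1$, and $(|x_0|+r)^\alpha\approx r^\alpha$, so again the total is $\approx r^2$. Combined with the box comparison and the doubling of these box integrals under the change $r\mapsto cr$, this gives $\mu(B(z_0,r))\approx C(\alpha) r^2$.

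The third step is the Muckenhoupt property, which we expect to be the main delicate point because we must fix the underlying measure and balls. Interpreting it with respect to Lebesgue measure and Grushin balls (equivalently, boxes), we must verify
\[
\Big(\vint_Q w\Big)\Big(\vint_Q w^{-1/(p-1)}\Big)^{p-1}\le C
\]
for $w=|x|^{-\alpha}$. Both $w$ and $w^{-1/(p-1)}=|x|^{\alpha/(p-1)}$ depend only on $x$, so the averages over a box reduce to averages over the interval $(x_0-r,x_0+r)$. The condition therefore reduces to the one-dimensional $A_p$ condition for the power weight $|x|^{-\alpha}$ on $\R$. That condition holds exactly when $-1<-\alpha<p-1$, which is true for all $p>1$ since $0\le\alpha<1$. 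We verify it by the same two cases: far from $0$ both averages are comparable to the point values, and near $0$ both averages are explicit power integrals. The remaining care is checking that the box comparability constants do not spoil the uniformity in $z_0$ and $r$, which follows since the constants depend only on $\alpha$.
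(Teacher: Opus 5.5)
Your proposal is correct and follows essentially the route the paper relies on. The paper does not reprove this lemma but cites Lemma 1.1 of \cite{aww}, which uses the ball-box comparison of \cite{wu, fl} for the Ahlfors bound and a direct computation with the power weight $|x|^{-\alpha}$ (even giving the $A_1$ bound) for the Muckenhoupt part. One small remark: your reduction of the $A_p$ condition to intervals in the $x$-variable works verbatim for Euclidean squares too, which is the sense in which the weight is used when Theorem 2.1.4 of \cite{tu} is invoked in Proposition~\ref{prop-dens}.
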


\subsection{Sobolev spaces of mappings between Grushin planes}\label{sect12}

Let $G_2^\alpha=(\R^2, d_{CC}, \mu)$ be the Grushin plane, equipped with the weighted measure 
$$
\ud \mu= \frac{\ud x \ud y}{|x|^\alpha}.
$$
Recall that by Lemma 1.1 in~\cite{aww}, if $\alpha\in [0, 1)$, then the measure $\mu$ is $2$-Ahlfors regular and $p$-Muckenhoupt for all $p>1$.

Let $X,Y$ be a family of vector fields in $\Om$ as in Example~\ref{ex12}. Note that on the contrary to a number of results in the literature, our vector fields $X, Y$ need not to be Lipschitz continuous and are, typically, only H\"older continuous on $\R^2$ and $C^1(\R^2\setminus S)$, where $S$ stands for the singular set in $G_n$.
\smallskip
\\
In analogy with the notion of the horizontal gradient in the subriemannian setting, we set
\begin{equation*}
 \nabla_H u:=(X u, Y u)
 \end{equation*} 
 for any measurable function $u\in L^1_{loc}(\Om, \mu)$, where the derivatives $Xu, Yu\in L^1_{loc}$ are understood in the sense of distributions.

The Sobolev spaces in the setting of Grushin planes have been a subject of vivid investigations also in the context of the PDEs on Grushin planes, see for instance~\cite{fhk}, \cite{fs87} and~\cite{fss}. One has a choice between two definitions (``W'' vs.~``H''), the first one mimicking the definition of the Euclidean Sobolev functions, based on the integrability of a function and its horizontal gradient,
and the second one based on the density of the Euclidean Lipschitz-, metric Lipschitz- or smooth functions in the Sobolev norm.
In our previous work we have followed the second approach (with respect to the metric Lipschitz functions), see the discussion in Section 2.2 in~\cite{aww}. However, in the current work we employ the first approach to the Sobolev spaces and relate both approaches by proving the density of smooth functions in the Sobolev norm, see Proposition~\ref{prop-dens} below. Namely,
we say a function $v:\Om\to \R$ given on an open bounded set $\Om\subset G_2^\alpha$ belong to the \emph{Sobolev space $HW^{1,2}(\Om,\mu)$} if
\begin{equation*}
  v \in L^2(\Om,\mu) \hbox{ and } Xv, Yv \in L^2(\Om,\mu),
\end{equation*}
where $Xv, Yv$ are understood in a sense of distributions. We equip the space $HW^{1,2}$ with the following norm, in which  $HW^{1,2}$ is the Hilbert space:
\begin{equation}
\|v\|_{HW^{1,2}(\Om,\mu)}:=\|v\|_{L^2(\Om,\mu)} + \| \nabla_H v\|_{L^2(\Om,\mu)}.\label{def-HW-norm}
\end{equation}

The space of the Sobolev mappings is defined in an analogous way. Namely, if $u=(u^1,u^2):\Om\subset G_2^\alpha\to G_2^\beta$, then we say that $u\in HW^{1,2}(\Om, \mu; G_2^\beta)$ if $u^1,u^2\in HW^{1,2}(\Om,\mu)$. Moreover, the spaces $HW^{1,2}_{loc}$ of functions and mappings, respectively, are defined in the similar, canonical, ways. When discussing harmonic mappings between the Grushin planes we will impose some further conditions which will reflect the fact that the target space is a Grushin plane, see~\eqref{def-sol-space} and~\eqref{def-DH} in Section~\ref{sect14}.


The rest of this section is devoted to the proof of density of $C^{\infty}(\Om) \cap HW^{1,2}(\Om,\mu)$ in $HW^{1,2}(\Om,\mu)$. 
However, before we proceed with the proof, we make the following auxiliary observation. 
\begin{obs}\label{inclusion into Euclidean Sobolev space}
    Let $\alpha \in [0,1)$ and $\Om \subset G_2^{\alpha}$ be an open bounded set. Then,
    $$
     HW^{1,2}(\Om,\mu) \subset W^{1,1}(\Om, \ud x\ud y),
    $$
    where $W^{1,1}$ stands for the Euclidean Sobolev space, and it holds that $u_x = Xu$ and $u_y = |x|^{-\alpha}Yu$ in the distributional sense. 
      
      Moreover, there exists a constant $C = C(\alpha, \Om)$ such that for each $u \in W^{1,1}(\Om)$ it holds that 
      $$
      \|u\|_{W^{1,1}(\Om)} \le C \|u\|_{HW^{1,2}(\Om,\mu)},
      $$
      i.e., the embedding is continuous.
\end{obs}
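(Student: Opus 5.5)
The plan is to use Hölder's inequality with respect to the weighted measure $\mu$. The weight $|x|^{-\alpha}$ is locally integrable and $|x|^{\alpha}$ is bounded on bounded sets; together these give the $L^1$ bounds. Since $\Om$ is bounded, fix $M$ with $|x|\le M$ on $\Om$.

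First, $u\in L^1(\Om,\ud x\ud y)$. Write $\int_\Om |u|\,\ud x\ud y=\int_\Om |u|\,|x|^{\alpha}\,\ud\mu$ and apply Cauchy--Schwarz in $L^2(\mu)$:
\[
\int_\Om |u|\,\ud x\ud y\le \Big(\int_\Om |x|^{2\alpha}\ud\mu\Big)^{1/2}\|u\|_{L^2(\Om,\mu)}.
\]
Here $\int_\Om |x|^{2\alpha}\ud\mu=\int_\Om |x|^{\alpha}\,\ud x\ud y\le M^\alpha|\Om|<\infty$. The same computation with $Xu$ in place of $u$ gives $\|Xu\|_{L^1(\Om)}\le C\|Xu\|_{L^2(\Om,\mu)}$.

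For $Yu$ we need $|x|^{-\alpha}Yu\in L^1(\Om,\ud x\ud y)$. We have
\[
\int_\Om |x|^{-\alpha}|Yu|\,\ud x\ud y=\int_\Om |Yu|\,\ud\mu\le \mu(\Om)^{1/2}\|Yu\|_{L^2(\Om,\mu)},
\]
and $\mu(\Om)<\infty$ because $\alpha<1$ makes $|x|^{-\alpha}$ locally integrable (alternatively, this follows from the Ahlfors regularity in Lemma~\ref{lem-Ahl-Muck}, since $\Om$ is contained in a Grushin ball). Summing the three bounds gives the norm inequality with $C=C(\alpha,\Om)$.

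The step needing care is identifying the distributional derivatives. For $Xu$ the identity $u_x=Xu$ is immediate, since $X=\partial_x$ and the two definitions coincide. For $Y$, the distributional definition tested against $\varphi\in C_c^\infty(\Om)$ reads
\[
\int Yu\,\varphi\,\ud x\ud y=-\int u\,\partial_y(|x|^\alpha\varphi)\,\ud x\ud y=-\int u\,|x|^\alpha\varphi_y\,\ud x\ud y,
\]
which is well defined because $\partial_y$ does not act on the weight. If the paper instead defines $Yu$ via $\mu$-duality, the weight $|x|^{-\alpha}$ cancels in the same way, since it depends only on $x$.

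We want to show $\int u\,\psi_y=-\int |x|^{-\alpha}Yu\,\psi$ for all $\psi\in C_c^\infty(\Om)$. The natural move is to set $\varphi=|x|^{-\alpha}\psi$, but this is not smooth across $S$. I would handle this by taking $\varphi_\epsilon=\psi\,(x^2+\epsilon^2)^{-\alpha/2}$, which is smooth with compact support. Then $|x|^\alpha\varphi_\epsilon\to\psi$ pointwise, with the uniform bound $|x|^\alpha(x^2+\epsilon^2)^{-\alpha/2}\le1$. Both sides of the identity for $\varphi_\epsilon$ converge by dominated convergence: $u\in L^1$ controls the left side, and $|x|^{-\alpha}Yu\in L^1$ from the bound above controls the right side. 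This yields $u_y=|x|^{-\alpha}Yu\in L^1$, hence $u\in W^{1,1}(\Om)$.
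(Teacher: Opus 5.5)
Your proposal is correct and follows essentially the same route as the paper. Both arguments use Cauchy--Schwarz against $\mu$ to obtain the $L^1$ bounds for $u$, $Xu$ and $|x|^{-\alpha}Yu$; your constant $\big(\int_\Om |x|^{\alpha}\,\ud x\ud y\big)^{1/2}$ in place of the paper's $\sup_\Om|x|^\alpha\,\mu(\Om)^{1/2}$ is an immaterial difference. Both then identify $u_y=|x|^{-\alpha}Yu$ by regularizing the test function with $(x^2+\delta)^{-\alpha/2}$ and applying dominated convergence.
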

\begin{proof}
    Fix $u \in HW^{1,2}(\Om,\mu)$. The direct applications of the H\"older inequality yield
    \begin{equation*}
       \|u\|_{L^1(\Om)} \le (\sup_{\Om}|x|^{\alpha}) \mu(\Om)^{\frac{1}{2}}\|u\|_{L^2(\Om,\mu)} < \infty
      \,\, \hbox{ and }\,\, \| Xu \|_{L^1(\Om)} \le (\sup_{\Om}|x|^{\alpha}) \mu(\Om)^{\frac{1}{2}}\|Xu\|_{L^2(\Om,\mu)}.
    \end{equation*}
 Hence, the weak derivative $u_x$ exists and equals to $Xu=\frac{\partial}{\partial x}u$. In order to prove existence of the $u_y$ weak derivative we first observe that, by the H\"older inequality, it holds that:
\begin{equation*}
   \left \|\  |x|^{-\alpha}Yu \right \|_{L^1(\Om, \ud x\ud y)} \le \mu(\Om)^{\frac{1}{2}} \left \|Yu \right \|_{L^2(\Om,\mu)}.
\end{equation*}
Next, let us fix a test function $\varphi \in C_0^{\infty}(\Om)$ and by using the Lebesgue dominated convergence theorem directly compute the following integral:
\begin{align*}
    \int_{\Om} u \varphi_y \ud x \ud y & = \lim_{\delta \to 0^+} \int_{\Om} u \left(\frac{|x|^{\alpha}}{(x^2+\delta)^{\frac{\alpha}{2}}}\right) \varphi_y \ud x \ud y \\
   &= \lim_{\delta \to 0^+} \int_{\Om} u\, Y\!\left(\frac{\varphi}{(x^2+\delta)^{\frac{\alpha}{2}}}\right)  \ud x \ud y
   = -\lim_{\delta \to 0^+}\int_{\Om} (Yu) \frac{\varphi}{(x^2+\delta)^{\frac{\alpha}{2}}} \ud x \ud y = -\int_{\Om} (|x|^{-\alpha}Yu) \varphi \ud x \ud y.
\end{align*}
Here we also use the inequality $(x^2 + \delta)^{-\frac{\alpha}{2}} \le |x|^{-\alpha}$ for $\delta\geq 0$. Hence, the proof of the first assertion of the observation is complete. The continuity of the embedding follows immediately from the above norm estimates for $u_x$ and $u_y$, completing the whole proof.
\end{proof}

\begin{prop}\label{prop-dens}
    Let $\Om \subset G_2^{\alpha}$ be open connected and $\alpha \in[0,1)$. Then, it holds that $C^{\infty}(\Om) \cap HW^{1,2}(\Om,\mu)$ is dense in $HW^{1,2}(\Om,\mu)$.
\end{prop}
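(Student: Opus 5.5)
This is a Meyers--Serrin type density result, and I would prove it by the standard partition-of-unity plus mollification scheme. The Euclidean structure of the argument stays the same; what changes is that every estimate must be done with respect to $\mu$ and for the non-constant field $Y=|x|^\alpha\partial_y$.

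\textbf{Step 1: exhaustion and partition of unity.} Fix $u\in HW^{1,2}(\Om,\mu)$ and $\ep>0$. Choose an exhaustion $\Om_k\Subset\Om_{k+1}$ of $\Om$ and a smooth partition of unity $\{\psi_k\}$ subordinate to the cover $\{\Om_{k+1}\setminus\overline{\Om_{k-1}}\}$. Each $\psi_k u$ lies in $HW^{1,2}$, by the Leibniz rule for $X$ and $Y$ with smooth $\psi_k$. This is justified by Observation~\ref{inclusion into Euclidean Sobolev space}: $u\in W^{1,1}$ and $Yu=|x|^\alpha u_y$. Also, $|Y\psi_k|\le C$ on bounded sets, so $\psi_k u\in HW^{1,2}$ with compact support in $\Om$.

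\textbf{Step 2: mollification.} Let $\eta_\delta$ be a standard Euclidean mollifier and set $v=\sum_k \eta_{\delta_k}*(\psi_k u)$. It then suffices to show that $w_\delta:=\eta_\delta*w\to w$ in $HW^{1,2}(\Om,\mu)$ for compactly supported $w\in HW^{1,2}$, choosing $\delta_k$ so that each error is below $\ep 2^{-k}$. The terms $w$ and $Xw$ are easy. Since $\partial_x$ commutes with convolution, $X(w_\delta)=\eta_\delta*(Xw)$. Moreover $|x|^{-\alpha}$ is an $A_2$ weight by Lemma~\ref{lem-Ahl-Muck}, and it is a power of $|x|$, i.e.\ a one-dimensional $A_2$ weight. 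Hence convolution is dominated by the Hardy--Littlewood maximal function (strong maximal function), which is bounded on $L^2(\mu)$. Convergence then follows by approximating $f$ in $L^2(\mu)$ by continuous compactly supported functions and using the uniform bound.

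\textbf{Step 3: the $Y$-derivative, the main obstacle.} Here $Y$ does not commute with convolution, and by Observation~\ref{inclusion into Euclidean Sobolev space}
\[
Y(w_\delta)=|x|^\alpha\,\eta_\delta*(|x|^{-\alpha}Yw)=\eta_\delta*(Yw)+\Big(|x|^\alpha\,\eta_\delta*(|x|^{-\alpha}g)-\eta_\delta*g\Big),\qquad g:=Yw\in L^2(\mu).
\]
The first term converges to $g$ exactly as in Step 2. For the commutator $C_\delta g$ I would prove a uniform bound $\|C_\delta g\|_{L^2(\mu)}\le C\|g\|_{L^2(\mu)}$. I expect this to be the hardest point, because $|x|^{-\alpha}g$ need not be in $L^2(\mu)$. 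For $g\in C_c$ supported away from the axis $S$ one has $C_\delta g\to0$ uniformly. For general $g$, the commutator is dominated by the operator $T_\delta g(z):=|x|^\alpha\int\eta_\delta(z-z')|x'|^{-\alpha}|g(z')|\,dz'$. Pointwise,
\[
T_\delta g(z)\le |x|^\alpha\Big(\int\eta_\delta(z-z')|x'|^{-2\alpha}dz'\Big)^{1/2}\Big(\int\eta_\delta(z-z') g^2\,dz'\Big)^{1/2}.
\]
The first factor is $\lesssim \max(|x|,\delta)^{-\alpha}$, because $\alpha<1/2$ is not guaranteed. I would therefore instead use the $A_2$ (indeed $A_1$-type) property $\fint_I|t|^{-\alpha}\,dt\lesssim \inf_I$-comparable behaviour, which gives $|x|^\alpha\,\eta_\delta*(|x|^{-\alpha}|g|)\lesssim M_{A}(|x|^{-\alpha}|g|)\cdot|x|^\alpha$. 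Its $L^2(\mu)$ norm is controlled via the weighted maximal inequality for the weight $|x|^{\alpha}$, which is also $A_2$ since $\alpha<1$.

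\textbf{Step 4: conclusion.} A uniform bound plus convergence on the dense class $C_c(\Om\setminus S)$ yields $C_\delta g\to0$ in $L^2(\mu)$. Summing the errors over $k$ gives $\|v-u\|_{HW^{1,2}}<\ep$. Since the sum is locally finite, $v\in C^\infty(\Om)$. Connectedness of $\Om$ is not really used.
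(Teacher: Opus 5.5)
Your proof is correct and uses the same ingredients as the paper: the identity $Y(u_\epsilon)=|x|^\alpha\big(|x|^{-\alpha}Yu\big)_\epsilon$ from Observation~\ref{inclusion into Euclidean Sobolev space}, and the $A_2$ property of both $|x|^{-\alpha}$ and $|x|^{\alpha}$. The paper cites a weighted mollifier-convergence theorem from Turesson where you prove it via the maximal function.

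Your commutator split in Step~3 is unnecessary. Since $\|Y(w_\delta)-Yw\|_{L^2(\mu)}=\|\eta_\delta*f-f\|_{L^2(|x|^{\alpha}\ud x\ud y)}$ with $f=|x|^{-\alpha}Yw\in L^2(|x|^{\alpha}\ud x\ud y)$, your own Step~2 argument, applied with the $A_2$ weight $|x|^{\alpha}$, finishes directly; this is exactly what the paper does. Your partition of unity, on the other hand, handles the boundary of $\Om$, which the paper's direct mollification of $u$ on $\Om$ glosses over.
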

\begin{proof}
    Let $u \in HW^{1,2}(\Om,\mu)$ and $\varphi \in C_0^{\infty}(\mathbb{R}^2)$ be a nonnegative radially symmetric function  such that ${\rm supp}\,\varphi \subset B((0,0),1)$ and $\int_{\mathbb{R}^2} \varphi \ud x \ud y=1$. Given $\epsilon>0$ we consider the standard mollifier 
 $$
 \varphi_{\epsilon}(x,y):= \epsilon^{-2}\varphi(\epsilon^{-1} (x,y))\,\hbox{ and set }\, u_{\epsilon}:=u*\varphi_{\epsilon} \in C^{\infty}(\Om).
 $$
  Since the weight $|x|^\alpha$ is $p$-Muckenhoupt for $p>1$, see~\cite[Lemma 1.1]{aww}, and so in particular, $2$-Muckenhoupt, we may apply an approximation result in the weighted Sobolev spaces, see Theorem 2.1.4 in~\cite{tu}, to obtain the following convergence results in $L^2(\Om,\mu)$:
\begin{equation*}
  C^{\infty}(\Om) \cap L^2(\Om,\mu) \ni u_{\epsilon} \to u; \qquad C^{\infty}(\Om) \cap L^2(\Om,\mu) \ni X(u_{\epsilon}) = (X u)_{\epsilon} \to Xu.
\end{equation*}
 Next we consider the operator $Y(u_{\epsilon})$ and upon the direct computations together with Observation~\ref{inclusion into Euclidean Sobolev space} we get the following relation:
\begin{equation}\label{density of smooth functions eq 1}
\begin{aligned}
    (Yu_{\epsilon})(x,y) &= |x|^{\alpha} \partial_y (\varphi_{\epsilon} * u) (x,y) \\
    &= |x|^{\alpha}\int_{\mathbb{R}^2} \partial_y \varphi_{\epsilon}(x-s,y-t)u(s,t)\, \ud s \ud t \\
    &= -|x|^{\alpha}\int_{\mathbb{R}^2} \partial_t \varphi_{\epsilon}(x-s,y-t)u(s,t)\, \ud s \ud t = |x|^{\alpha}\int_{\mathbb{R}^2} \varphi_{\epsilon}(x-s,y-t) u_t(s,t)\, \ud s \ud t\\
    &= |x|^{\alpha}\int_{\mathbb{R}^2} \varphi_{\epsilon}(x-s,y-t) \big (|s|^{-\alpha}Yu(s,t)\big)\, \ud s \ud t \\
    &= |x|^{\alpha}\,\big(\varphi_{\epsilon}*(|\cdot|^{-\alpha} Yu)\big)(x,y) \\
    &= |x|^{\alpha}\,\big(|x|^{-\alpha} Yu\big)_{\epsilon}.
\end{aligned}
\end{equation}
Furthermore, note that the fact $Yu \in L^2(\Om,\mu)$ can be equivalently expressed as $|x|^{-\alpha} Yu \in L^2(\Om,|x|^{\alpha} \ud x \ud y)$ and hence, since $|x|^{\alpha}$ is an $2$-Muckenhoupt weight as well (see e.g. Remark 1.2.4 (4) in~\cite{tu}) and again, by the Theorem 2.1.4 in \cite{tu} we obtain the following convergence in $L^2(\Om,\mu)$:
\begin{equation}\label{density of smooth functions convergence 2}
    C^{\infty}(\Om) \cap L^2(\Om,|x|^{\alpha} \ud x \ud y) \ni (|x|^{-\alpha}  Yu)_{\epsilon} \to |x|^{-\alpha}  Yu.
\end{equation}
Therefore, by the equality \eqref{density of smooth functions eq 1} and convergence~\eqref{density of smooth functions convergence 2} we have
\begin{equation*}
\int_{\Om} |Yu_\epsilon|^2 \ud \mu = \int_{\Om} \big|\big(|x|^{-\alpha} Yu\big)_{\epsilon}\big|^2 |x|^{\alpha} \ud x \ud y<\infty.
\end{equation*}
Finally, we compute that
\begin{align*}
  \int_{\Om}|Yu_{\epsilon}-Yu|^2 \ud \mu &= \int_{\Om}\Big||x|^{\alpha} (|x|^{-\alpha} Yu)_{\epsilon}-Yu\Big|^2 |x|^{-\alpha} \ud x \ud y \\
  & = \int_{\Om} \Big|(|x|^{-\alpha} Yu)_{\epsilon}-|x|^{-\alpha} Yu\Big|^2 |x|^{\alpha} \ud x \ud y \to 0,\,\, \hbox{ as } \ep\to 0^{+}
\end{align*}
 and, hence, the assertion of Proposition~\ref{prop-dens} is proven.
\end{proof}

\begin{rem}\label{rem-C0-density}
Recall the Sobolev space of functions with zero trace $H_{0}^{1,2}(\Om, \mu)$, discussed in Section 2.2 in~\cite{aww}, defined as the closure of ${\rm Lip}_{|\cdot|}(\Om)\cap \mathcal{E}'$ in the $W^{1,2}(\Om, \R)$-norm, where $\mathcal{E}'$ stands for the compactly supported distributions, cf. Section 2 in~\cite{fs87}. Then, Observation 2.1(a) in~\cite{aww} and its proof imply that $C^{\infty}_0(\Om)\subset H^{1,2}_0(\Om)$ for any open set $\Om\subset G_n$. Since the Euclidean Lipschitz functions can be approximated by smooth functions, see Theorem 1 in Chapter 6.6.1 in~\cite{eg}, it turns out that Proposition~\ref{prop-dens} gives, as corollary, that the space $C^{\infty}_0(\Om)$ is dense in $H^{1,2}_0(\Om)$. This discussion also justifies the definition of the Sobolev space of functions $HW^{1,2}_{0}(\Om, \mu)$ as the closure of the $C_0^{\infty}$ functions in the Sobolev norm~\eqref{def-HW-norm}.
\end{rem}

\subsection{Harmonic mappings and the Euler--Lagrange system of equations}\label{sect13}

We begin this section with presenting formal derivation of the Euler--Lagrange system of equations for harmonic mappings between two given open sets in the Grushin spaces of dimension $n$ and $m$, respectively, see~\eqref{system}. Then, we specify the discussion to the main setting of our work, i.e., $n=m=2$ and present the two representations of the system, see~\eqref{system-EL} and~\eqref{str-system-EL}.

Let $\Omega \subset G_n$ be an open set. Moreover, let us assume that the metric and measure structures on given two Grushin spaces $G_n$ and $G_m$, equipped with the coordinate systems $(x_1,\ldots, x_n)$ and $(y_1,\ldots, y_n)$, respectively, are generated by the following vector fields, respectively, cf.~\eqref{def-vectors}:
\begin{equation*}
\begin{aligned}
   &X_1=\varrho_1 \frac{\partial}{\partial x_1}\quad X_2=\varrho_2(x_1) \frac{\partial}{\partial x_2}\quad \ldots\quad X_n=\varrho_n(x_1,x_2,\ldots, x_{n-1}) \frac{\partial}{\partial x_n},\\
   &Y_1=\nu_1 \frac{\partial}{\partial y_1}\quad Y_2=\nu_2(y_1) \frac{\partial}{\partial y_2}\quad \ldots\quad Y_m=\nu_m(y_1,y_2,\ldots, y_{m-1}) \frac{\partial}{\partial y_m}.
\end{aligned}
\end{equation*}
Here, as in Section~\ref{sect11} and throughout the work, we apply the same notation convention and assume that $\varrho_1 \equiv \nu_1 \equiv 1$. The non-Euclidean geometry in the source and the target domains of considered mappings is reflected in the form of the Riemannian norm of the Jacobi matrix of a map and the related Dirichlet energy functional. Moreover, we assume the weighted measure $\ud \mu$ in the source domains in $G_n$, defined as follows (cf. Section~\ref{sect11}):
\[
 \ud \mu=\frac{\ud x_1\cdots \ud x_n}{\varrho_1\,\varrho_2(x_1)\,\cdots\,\varrho_n(x_1,\ldots,x_{n-1})}.
\]
 More specifically, let $u=(u^1(x_1,\ldots,x_n),\ldots, u^m(x_1,\ldots, x_n)): \Omega \to G_m$ be a sufficiently regular map from an open set $\Om$ in the Grushin space $G_n$ into the Grushin space $G_m$. Then, the Riemannian norm of the Jacobi matrix of $u$ is given by the following formula:
\[
 \|D_H u\|_{G_m}^2 =\sum_{i=1}^{n}\varrho_i^2(x)\sum_{j=1}^m\frac{1}{\nu_j^2(u)}\left(\frac{\partial u^j}{\partial x_i}\right)^2,
\]
which in the particularly important for us case $n=m=2$ reads:
\begin{align*}
 \|D_H u\|_{G_2}^2&=\left(\frac{\partial u^1}{\partial x_1}\right)^2+\frac{1}{\nu_2^2(u^1)}\left(\frac{\partial u^2}{\partial x_1}\right)^2+ \varrho_2(x_1)^2\left(\frac{\partial u^1}{\partial x_2}\right)^2+\frac{\varrho_2(x_1)^2}{\nu_2^2(u^1)}\left(\frac{\partial u^2}{\partial x_2}\right)^2\\
&= |\nabla_H u^1|^2+\frac{1}{\nu_2^2(u^1)}|\nabla_H u^2|^2.
\end{align*}
Therefore, the related Dirichlet energy takes the following form:
\begin{equation}\label{def-energy}
    E(u):=\frac{1}{2}\int_{\Omega} \|D_H u\|_{G_m}^2 \ud \mu = \frac{1}{2}\int_{\Omega} \frac{\sum_{i=1}^{n}\varrho_i^2(x)\sum_{j=1}^m\frac{1}{\nu_j^2(u)}\left(\frac{\partial u^j}{\partial x_i}\right)^2}{\Pi_{i=1}^n \varrho_i(x)} \ \ud x_1\cdots \ud x_n.
\end{equation}

The definition of the above energy stems from the theory of harmonic mappings between Riemannian manifolds. Similar Dirichlet energy functionals have been extensively studied in the manifold setting, see e.g.~\cite{es, su1, uh} for classical references and~\cite{bs, sv} for some recent results. In the regular part of a Grushin space the aforementioned energy yields the Laplace--Beltrami system of equations, see~\cite{bl, aww}; a related standard approach relies on the subriemannian geometry and sublaplacians, see~\cite{fv, ly}. 

We are now ready to formally derive the Euler--Lagrange system of equations related to the energy functional~\eqref{def-energy}. In pursuit of this objective, we follow the standard Calculus of Variations technique of inner variations. Namely, given any smooth compactly supported mapping $(\psi_1,\ldots, \psi_m): \Om\to G_m$ we find that
\begin{equation*}
\begin{aligned}
    \frac{d}{dt}&\left[\frac{1}{\nu_j^2(u+t\psi)}\left(\frac{\partial u^j}{\partial x_i} + t\frac{\partial \psi_j}{\partial x_i}\right)^2 \right] \\
    &= \frac{2\left(\frac{\partial u^j}{\partial x_i} + t\frac{\partial \psi_j}{\partial x_i}\right)\frac{\partial \psi_j}{\partial x_i} \nu_j^2(u+t\psi) - 2\nu_j(u+t\psi) \left(\sum_{k=1}^m \psi_k \frac{\partial \nu_j}{\partial y_k}(u+t\psi)\right)\left(\frac{\partial u^j}{\partial x_i} + t\frac{\partial \psi_j}{\partial x_i}\right)^2}{\nu_j^4(u+t\psi)}
\end{aligned}
\end{equation*}
and at $t=0$ we obtain
\begin{equation*}
\begin{aligned}
    \frac{d}{dt}&\left[\frac{1}{\nu_j^2(u+t\psi)}\left(\frac{\partial u^j}{\partial x_i} + t\frac{\partial \psi_j}{\partial x_i}\right)^2 \right]|_{t=0}
    =\frac{2\frac{\partial u^j}{\partial x_i}\frac{\partial \psi_j}{\partial x_i} \nu_j^2(u) - 2\nu_j(u) \left(\sum_{k=1}^m \psi_k \frac{\partial \nu_j}{\partial y_k}(u)\right)\left(\frac{\partial u^j}{\partial x_i}\right)^2}{\nu_j^4(u)}.
\end{aligned}
\end{equation*}
We apply this computation in the definition of the Euler--Lagrange system of equations to arrive at the following expression:
\begin{equation*}
\begin{aligned}
    \frac{d}{dt}E(u+t\psi)|_{t=0} 
    &= \sum_{j=1}^m\int_{\Omega} \frac{\sum_{i=1}^{n}\varrho_i^2(x)\left[\frac{\partial u^j}{\partial x_i}\frac{\partial \psi_j}{\partial x_i} \nu_j^2(u) - \nu_j(u) \left(\sum_{k=1}^m \psi_k \frac{\partial \nu_j}{\partial y_k}(u)\right)\left(\frac{\partial u^j}{\partial x_i}\right)^2\right]}{\nu_j^4(u) \Pi_{i=1}^n \varrho_i(x) } \ud x\\
    &= \sum_{j=1}^m\int_{\Omega} \frac{\langle \nabla_H u^j, \nabla_H \psi_j\rangle_{\mathbb{R}^n} \nu_j^2(u) - \nu_j(u) \left(\sum_{k=1}^m \psi_k \frac{\partial \nu_j}{\partial y_k}(u)\right)\lvert  \nabla_H u^j \rvert^2}{\nu_j^4(u) \Pi_{i=1}^n \varrho_i(x) } \ud x\\
    &=-\sum_{j=1}^m \int_{\Omega} \textrm{div}_{G_n}\left[\frac{\nabla_H u^j}{\nu_j^2(u) \Pi_{i=1}^n \varrho_i(x)}\right] \psi_j \ dx - \sum_{k=1}^m \int_{\Omega} \sum_{j=1}^m \frac{\lvert \nabla_H u^j \rvert^2 \frac{\partial \nu_j}{\partial y_k}(u)}{\nu_j^3(u) \Pi_{i=1}^n \varrho_i(x)}  \psi_k \ud x.
\end{aligned}
\end{equation*}
Therefore, the Euler--Lagrange system of equations reads 
\begin{equation}\label{system}
    \textrm{div}_{G_n}\left[\frac{\nabla_H u^j}{\nu_j^2(u) \Pi_{i=1}^n \varrho_i(x)}\right] + \sum_{k=1}^{m} \frac{\lvert \nabla_H u_k \rvert^2 \frac{\partial \nu_k}{\partial y_j}(u)}{\nu_k^3(u)\Pi_{i=1}^n \varrho_i(x)} = 0, \quad \textrm{for all } j=1,2,\ldots,m.
\end{equation}

Let us consider the case $n=m=2$ and the following vector fields in the Grushin plane $G_2^\alpha$ and, respectively, second Grushin plane $G_2^\beta$:
\begin{align}
 &X_1=\frac{\partial}{\partial x}\quad  Y_1=|x|^{\alpha}\frac{\partial}{\partial y},\quad \alpha\geq 0, \label{def-XYvectors}\\
 &X_2=\frac{\partial}{\partial x}\quad  Y_2=|x|^{\beta}\frac{\partial}{\partial y},\quad \beta\geq 0. \nonumber
\end{align}
Then the system of equations~\eqref{system} reads as follows, whenever the regularity of functions $u^1$ and $u^2$ allows it:
\begin{equation}\label{system-EL}
\begin{cases}
{\rm div}_{G_2^{\alpha}}\left(\frac{\nabla_H u^1}{|x|^\alpha}\right)+\beta \frac{|\nabla_H u^2|^2 u^1}{|x|^\alpha |u^1|^{2\beta+2}}=0 \\
{\rm div}_{G_2^{\alpha}}\left(\frac{\nabla_H u^2}{|x|^\alpha|u^1|^{2\beta}}\right)=0.
\end{cases}
\end{equation}
Similarly, by assuming the appropriate regularity of the expressions in~\eqref{system-EL} we may write out the following strong form of the above system ($\Delta_{G_2^{\alpha}}=X_1^2+Y_1^2$):
\begin{equation}\label{str-system-EL}
\begin{cases}
|u^1|^{2\beta+2}\Big(\Delta_{G_2^{\alpha}} u^1 -\alpha \frac{x}{|x|^2}u^1_x\Big) +\beta u^1|\nabla_H u^2|^2=0 \\
|u^1|^2\Big(\Delta_{G_2^{\alpha}} u^2  - \alpha \frac{x}{|x|^2}u^2_x \Big) - 2\beta u^1 \langle \nabla_H u^1, \nabla_H u^2 \rangle=0.
\end{cases}
\end{equation}

Below we refer to solutions of the system of equations~\eqref{system-EL}, under the appropriate regularity assumptions, as to \emph{the weak harmonic mappings}, and in the case of the system of equations~\eqref{str-system-EL} as \emph{the strong harmonic mappings}, respectively, see Definitions~\ref{weak-solution} and~\ref{str-solution}. 

\subsection{Existence of minimizers of the Dirichlet energy}\label{sect14}
The purpose of this section is to discuss the existence of minimizers in the setting of mappings between the Grushin planes, which we show by modifying the standard direct methods. However, the uniqueness of minimizers turns out to be more delicate matter which we only comment below and postpone till further studies.

Recall the energy functional~\eqref{def-energy} specified for mappings $u = (u^1,u^2):\Om \subset G_2^{\alpha} \to G_2^{\beta}$ with $\alpha \in [0,1)$, $\beta \ge 0$ and for an open connected set $\Om$:
\begin{equation}\label{def-energy-2}
    E(u) = \frac{1}{2}\int_{\Om}\Big (|\nabla_H u^1|^2 + \frac{|\nabla_H u^2|^2}{|u^1|^{2\beta}}\Big) \ud \mu 
\end{equation}
When $\beta =0$, then we retrieve the energy functional for mappings from open sets in $G_2^\alpha$ to $\R^2$, previously considered in~\cite{aww}. 

In order to analyze the energy $\eqref{def-energy-2}$ let us introduce the following auxiliary energy functional
defined for $\alpha \in [0,1)$, $\beta \ge 0$ and the parameter $M>0$ whose role is to quantify the cut-off level for a weight $|u^1|^{-2\beta}$ and, thus, to control the growth of the integrand when $u^1$ grows unbounded:
\begin{equation}\label{M dirichlet energy for maps between grushin planes}
    E^M(u) := \int_{\Om}\left( |\nabla_H u^1|^2 + \max\{M,  |u^1|^{-2\beta}\}|\nabla_H u^2|^2 \right) \ud \mu.
\end{equation}

We clearly see that if $u^1 \in L^{\infty}(\Om)$ and $u^1$ is not identically zero in $\Om$, then letting $M = \|u^1\|_{L^\infty(\Om)}^{-2\beta}$ yields $E^M(u) = E(u)$. 

\begin{remark}
(1) For our studies of harmonic mappings as the critical points of the Dirichlet energy~\eqref{def-energy-2} the natural function setting is the one of the Sobolev mappings $HW^{1,2}(\Om, G_2^\beta)$. However, for such mappings the energy $E^M(u)$ may not always be finite, in general. Nevertheless, it is finite if we require that the image of a map $u$ 
does not intersect the singular line in $G_2^\beta$, i.e., if we impose an additional assumption that $|u^1| \ge \delta > 0$ for some constant $\delta>0$. 
\smallskip

\noindent (2) There exists yet another wide class of Grushin harmonic mappings for which it holds that $E^M<\infty$ for the appropriate choice of $M>0$. Namely, let $u$ be a harmonic map obtained as the composition of the canonical quasisymmetries $\phi_\alpha, \phi_\beta$ between the Grushin planes $G_2^\alpha$, respectively, $G_2^\beta$ and the Euclidean planes and a holomorphic function $h:\varphi_{\alpha}(\Om) \to \mathbb{C}$, see~\eqref{map-mey} and the discussion in Section~\ref{sect2} for more detailed studies of such Grushin harmonic mappings:
\begin{equation*}
    u = \varphi_{\beta}^{-1} \circ h \circ \varphi_{\alpha}.
\end{equation*}
Then for $\beta \in [0,1)$ and for each $\Om' \subset \subset \Om$ we have that $E^M(u)<\infty$ on $\Om'$ and the proof is analogous to the proof of Proposition~\ref{prop-holom}. Indeed, the $L^2_{\textrm{loc}}(\Om,\mu)$ integrability of $|\nabla_H u^1|$ is part of the assertion of the proposition, whereas the $L^2_{\textrm{loc}}(\Om,\mu)$ integrability of $|u^1|^{-\beta} |\nabla_H u^2|$ follows from the Grushin--Cauchy--Riemann system of equations~\eqref{CR equations in Grushin setting}.
\smallskip

\noindent (3) One may find a variety of examples of maps $u=(u^1,u^2) \in HW^{1,2}(\Om, \mu; G_2^{\beta})$ such that $E^M(u) = \infty$. Indeed, it suffices that $|u^1|^{-2\beta} \notin L^1(\Om,\mu)$ and there exists $\delta>0$ such that $|\nabla_H u^2| \ge \delta$ a.e. in $\Om$. For example, let
$$
 u^1(x,y) := y^n,\quad u^2(x,y) := x\quad \hbox{ for }n \in \N\,\hbox{ such that }\,2\beta n \ge 1.
$$
 One directly verifies that $u \in HW^{1,2}(\Om,G_2^{\beta})$ and $E^M(u) = \infty$. Furthermore, similar examples arise by letting $u(x,y)= (W(x,y),Q(x,y))$, with polynomial components $W,Q$, satisfying $E^M(u) = \infty$.
\end{remark}

We now proceed to the main result of this section. 

\begin{theorem}\label{thm-exist-min}
Let $\alpha \in [0,1)$, $\beta \ge 0$, $M>0$ and $\Om \subset G_2^{\alpha}$ be a domain in $G_2^\alpha$ with the (Euclidean) Lipschitz boundary $\partial \Om$. Let further  $u_0 \in HW^{1,2}(\Om,\mu; G_2^\beta)$ such that $E^M(u_0) < \infty$, where $E^M$ denotes the energy functional in~\eqref{M dirichlet energy for maps between grushin planes}. Then the following minimization problem admits a solution:
\begin{equation*}
    \inf{\{E^M(u) \ | \ u-u_0 \in HW^{1,2}(\Om,\mu; G_2^\beta)\}}.
\end{equation*}
\end{theorem}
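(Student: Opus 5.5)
Clearly the admissible class is meant to be $u-u_0\in HW^{1,2}_0(\Om,\mu;G_2^\beta)$ (the zero-trace space of Remark~\ref{rem-C0-density}); otherwise the constraint is vacuous. I will use the direct method of the calculus of variations in this class. Set $m:=\inf E^M\le E^M(u_0)<\infty$ and take a minimizing sequence $u_k=(u_k^1,u_k^2)$ with $E^M(u_k)\to m$.

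\textbf{Step 1: coercivity.} Since $\max\{M,|u^1|^{-2\beta}\}\ge M$, we have
\[
E^M(u)\ge \int_\Om |\nabla_H u^1|^2\,\ud\mu+M\int_\Om|\nabla_H u^2|^2\,\ud\mu .
\]
Hence $\nabla_H u_k$ is bounded in $L^2(\Om,\mu)$. To bound $u_k-u_0$ in $L^2(\Om,\mu)$ I need a Poincar\'e inequality for $HW^{1,2}_0(\Om,\mu)$, namely $\|v\|_{L^2(\Om,\mu)}\le C\|\nabla_H v\|_{L^2(\Om,\mu)}$. I would prove it for $v\in C_0^\infty(\Om)$ using only the $X$-derivative. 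Integrate along horizontal lines $v(x,y)=\int_{-\infty}^x v_x(s,y)\,\ud s$ and apply Cauchy--Schwarz with the weight:
\[
|v(x,y)|^2\le \Big(\int |v_x|^2|s|^{-\alpha}\,\ud s\Big)\Big(\int_{I}|s|^{\alpha}\,\ud s\Big),
\]
where $I$ is the bounded $x$-projection of $\Om$. Then integrate against $|x|^{-\alpha}\ud x\ud y$, which is finite on bounded sets since $\alpha<1$. The density result (Proposition~\ref{prop-dens} and Remark~\ref{rem-C0-density}) extends the inequality to $HW^{1,2}_0$. Consequently $u_k$ is bounded in $HW^{1,2}(\Om,\mu)$, which is a Hilbert space.

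\textbf{Step 2: compactness.} Pass to a subsequence with $u_k\rightharpoonup u$ weakly in $HW^{1,2}$. The affine closed set $u_0+HW^{1,2}_0$ is weakly closed, so $u$ is admissible. For the nonlinear weight I also need pointwise a.e.\ convergence of $u_k^1$. By Observation~\ref{inclusion into Euclidean Sobolev space}, $u_k$ is bounded in the Euclidean $W^{1,1}(\Om)$. Since $\partial\Om$ is Lipschitz, Rellich--Kondrachov gives $u_k\to u$ in $L^1(\Om)$, and a further subsequence converges a.e. Alternatively, working on subdomains $\Om'\Subset\Om$ suffices, followed by a diagonal argument.

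\textbf{Step 3: lower semicontinuity (the main obstacle).} The first term $\int|\nabla_H u^1|^2\,\ud\mu$ is weakly lsc as the square of a norm. For the second term write $w_k:=\max\{M,|u_k^1|^{-2\beta}\}^{1/2}$. This function of $u^1$ is continuous into $[\sqrt M,\infty]$, so $w_k\to w:=\max\{M,|u^1|^{-2\beta}\}^{1/2}$ a.e., with the value $+\infty$ allowed on $\{u^1=0\}$. I would truncate: for $N>0$ set $w_k^N:=\min\{w_k,N\}$, which is bounded and converges a.e.\ to $w^N$. Then $w_k^N\nabla_H u_k^2\rightharpoonup w^N\nabla_H u^2$ weakly in $L^2(\Om,\mu)$. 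Indeed, for $\phi\in L^2$ we have $w_k^N\phi\to w^N\phi$ strongly by dominated convergence, and $\nabla_H u_k^2\rightharpoonup\nabla_H u^2$. Hence
\[
\int (w^N)^2|\nabla_H u^2|^2\,\ud\mu\le\liminf_k\int (w_k^N)^2|\nabla_H u_k^2|^2\,\ud\mu\le\liminf_k\int w_k^2|\nabla_H u_k^2|^2\,\ud\mu .
\]
Letting $N\to\infty$ by monotone convergence gives lower semicontinuity of the second term. This shows $E^M(u)\le\liminf_k E^M(u_k)=m$, so $u$ is a minimizer.

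The delicate points are the Poincar\'e inequality in the weighted, H\"older-coefficient setting and obtaining a.e.\ convergence of $u_k^1$. The reduction to the Euclidean $W^{1,1}$ via Observation~\ref{inclusion into Euclidean Sobolev space} is what handles the latter.
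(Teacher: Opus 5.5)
Your proof is correct, but it takes a more direct route than the paper's. You also read the admissible class correctly as $u_0+HW^{1,2}_0$. The paper's own proof works in that class: it shows $\bar u-u_0\in HW^{1,2}_0$ via Mazur's lemma.

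The paper does not apply the direct method to $E^M$ itself. It first introduces regularized functionals $E^M_\epsilon$ with weight $\max\{(|u^1|^2+\epsilon)^{-\beta},M\}$, which is bounded by $\max\{\epsilon^{-\beta},M\}$. It asserts existence of minimizers $u_{\epsilon_k}$ of these by the standard direct method (Claim 1, only sketched) and then lets $\epsilon_k\to 0$.

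The limit passage relies on a truncation lemma (Claim 2): if $v_n\to v$ a.e. and $f_n\rightharpoonup f$ weakly in $L^2(\Om,\mu)$, then $\liminf_n\|v_nf_n\|\ge\|vf\|$. This is precisely your Step 3. The paper combines it with the comparison $E^M_{\epsilon_k}(u_{\epsilon_k})\le E^M_{\epsilon_k}(u)\le E^M(u)$ for every admissible $u$, which shows that the weak limit $\bar u$ minimizes $E^M$.

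Since the lower semicontinuity with the singular, possibly infinite weight has to be proved in either case, your approach is shorter: you apply it directly to a minimizing sequence for $E^M$ and avoid the auxiliary existence claim entirely. You also supply the weighted Poincaré inequality, obtained by integrating along horizontal lines with constant $\int_I|x|^{-\alpha}\,\ud x\int_I|s|^{\alpha}\,\ud s$. The paper uses this inequality tacitly when it asserts $\sup_k\|u_{\epsilon_k}\|_{HW^{1,2}}<\infty$.

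One small step you should state: the a.e. limit obtained from Rellich--Kondrachov coincides with the weak $HW^{1,2}$ limit $u$. This holds because weak convergence in $L^2(\Om,\mu)$ implies weak convergence in $L^1(\Om)$, by Observation~\ref{inclusion into Euclidean Sobolev space}.

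The paper's detour does buy something, though it is not needed for existence. It produces the minimizer as a limit of minimizers of non-degenerate problems, with convergent energies $E^M_{\epsilon_k}(u_{\epsilon_k})\to E^M(\bar u)$, in the spirit of $\Gamma$-convergence. That can be useful for later approximation arguments.
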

\begin{proof}
The proof is inspired by the idea of the $\Gamma$--convergence, see for instance~\cite{br} for a detailed description. 

Let map $u_0 \in HW^{1,2}(\Om,\mu; G_2^\beta)$ be such that $E^M(u_0) < \infty$. We associate with the energy functional $E^M$ a sequence of auxiliary energies, show the existence for each of such energies and then study the convergence of the corresponding minimizers.
\smallskip

\noindent {\bf Claim 1:}
\emph{ Let $M>0$ and $\ep>0$ and consider the following approximate energy functional
\begin{equation}\label{approximate dirichlet energy between grushin planes}
  E_{\epsilon}^M(u) := \int_{\Om} \left(|\nabla_H u^1|^2 + \max{\{(|u^1|^2+\epsilon)^{-\beta},M\}}|\nabla_H u^2|^2\right) \ud \mu.
\end{equation}
Then the following minimization problem admits a solution in $HW^{1,2}(\Om,\mu; G_2^\beta)$:
\begin{equation*}
    \inf{\{E_{\epsilon}^M(u) \ | \ u-u_0 \in HW^{1,2}(\Om,\mu; G_2^\beta)\}}.
\end{equation*}
}
 The proof of Claim 1 is a standard adaptation of Calculus of Variations direct methods, see e.g. Chapter 3 in~\cite{da} or Section 2 in \cite{aww} for the Grushin space setting. It amounts to showing weak lower semicontinuity of $E_{\ep}^M$ and the existence of the minimizing sequence, see Lemma 2.4 and Theorem 2.6 in~\cite{aww} for similar reasoning for the Euclidean target space.

Fix a sequence of approximate energies $(E^M_{\epsilon_k})$ with $\ep_k \to 0$ as $k\to \infty$ and, by  Claim 1, consider the corresponding sequence of Sobolev minimizers $(u_{\epsilon_k})$. Then, the inequality $((u^1_0)^2 + \epsilon)^{-\beta} \le |u^1_0|^{-2\beta}$ holding for all $\epsilon>0$, allows us to infer that
\begin{equation}\label{existence of minimisers M energy pf 1}
    E^M_{\epsilon_k}(u_{\epsilon_k}) \le E^M_{\epsilon}(u_0) \le E^M(u_0) <\infty,\quad \hbox{ for each }k=0,1\ldots.
\end{equation}
Hence, $\liminf_{\epsilon_k \to 0^+}E^M_{\epsilon_k}(u_{\epsilon_k}) < \infty$. By possibly extracting a subsequence, we assume that
\[
\lim_{\epsilon_k \to 0^+}E^M_{\epsilon_k}(u_{\epsilon_k}) = \liminf_{\epsilon_k \to 0^+}E^M_{\epsilon_k}(u_{\epsilon}).
\]
As in the standard proof of existence of the minimizing sequence we observe that 
$$
\sup_{\epsilon_k} \| u_{\epsilon_k}\|_{HW^{1,2}(\Om,\mu; G_2^\beta)} < \infty.
$$
Therefore, by the reflexivity of $HW^{1,2}(\Om,\mu; G_2^\beta)$ together with Observation \ref{inclusion into Euclidean Sobolev space} and the Rellich--Kondrachov embedding theorem, we extract a further subsequence of minimizers, denoted for sake of simplicity again by  $(u_{\epsilon_k})$, such that $u_{\epsilon_k} \rightharpoonup \bar{u}$ weakly in $HW^{1,2}(\Om,\mu; G_2^\beta)$ and $u_{\epsilon_k} \to \bar{u}$ a.e. in $\Om$. Moreover, the standard reasoning involving the Mazur Lemma allows us to infer that $\bar{u} - u_0 \in HW^{1,2}_0(\Om,\mu)$. Next, the weak lower semicontinuity of the norm gives us the following estimate:
\begin{equation}\label{existence of minimisers M energy pf 2}
    \int_{\Om} |\nabla_H \bar{u}^1|^2 \ud \mu \le \liminf_{k \to \infty} \int_{\Om} |\nabla_H u^1_{\epsilon_k}|^2 \ud \mu. 
\end{equation}
Furthermore, it holds that 
\begin{equation}\label{existence of minimisers M energy pf 3}
    \int_{\Om} \max{\{|\bar{u}^1|^{-2\beta},M\}}|\nabla_H \bar{u}^2|^2 \ud \mu \le \liminf_{k \to \infty} \int_{\Om} \max{\{((u^1_{\epsilon_k})^2 + \epsilon)^{-\beta},M\}}|\nabla_H u^2_{\epsilon_k}|^2 \ud \mu.
\end{equation}
This observation follows directly from the next claim:
\smallskip

\noindent {\bf Claim 2:} \emph{  Let $\Om \subset G_2^{\alpha}$ be an open, bounded set. Let $v_n \to v$ a.e. in $\Om$ and $u_n \rightharpoonup u$ weakly in $L^2(\Om,\mu)$. Then
    \begin{equation*}
        \liminf_{n \to \infty} \| v_n u_n\|_{L^{2}(\Om,\mu)} \ge \| vu \|_{L^2(\Om,\mu)}.
    \end{equation*}
}
In order to prove Claim 2 we fix $M>0$ and associate with the sequence $(v_n)$ the following sequence of cut-off functions:
    \begin{equation*}
    v_n^M := \begin{cases}
    v_n \qquad \qquad \ \ \quad |v_n| \le M,\\
    M\sgn{(v_n)} \qquad |v_n| > M,
    \end{cases}
    \end{equation*}
and we define $v^M$ analogously. For a given $\varphi \in L^2(\Om,\mu)$ the direct computations gives us the convergence of following sequence:
\begin{equation*}
\begin{aligned}
    \Big| \int_{\Om} v_n^M u_n \varphi \ud \mu - \int_{\Om} v^M u \varphi \ud \mu \Big| &\le \Big| \int_{\Om} v_n^M u_n \varphi \ud \mu - \int_{\Om} v^M u_n \varphi \ud \mu \Big| + \Big| \int_{\Om} v^M u_n \varphi \ud \mu - \int_{\Om} v^M u \varphi \ud \mu \Big|\\
    & = \Big| \int_{\Om} (v_n^M - v^M)u_n \varphi \ud \mu \Big| + \Big| \int_{\Om} (u_n - u) v^M \varphi \ud \mu \Big| \to 0.
\end{aligned}
\end{equation*}
In the last step we also use the Lebesgue dominated convergence theorem and the weak convergence $u_n \rightharpoonup u$ weakly in $L^2(\Om,\mu)$. Hence, $v_n^Mu_n \rightharpoonup v^Mu$ weakly in $L^2(\Om,\mu)$. Moreover, by the weak semicontinuity of the norm and by the definition of $v_n^M$ we get that:
\begin{equation*}
    \liminf_{n \to \infty} \| v_n u_n \|_{L^2(\Om,\mu)} \ge \liminf_{n \to \infty} \| v_n^Mu_n \|_{L^2(\Om,\mu)} \ge \|v^M u\|_{L^2(\Om,\mu)}.
\end{equation*}
Finally, by appealing to the Lebesgue monotone convergence theorem we have that $\|v^M u\|_{L^2(\Om,\mu)} \nearrow \|v u\|_{L^2(\Om,\mu)}$ as $M \nearrow \infty$, we conclude the proof of Claim 2.
\smallskip

\noindent By combining \eqref{existence of minimisers M energy pf 1}, \eqref{existence of minimisers M energy pf 2} and \eqref{existence of minimisers M energy pf 3} we show the finiteness of the energy $E^M(\bar{u})$: 
\begin{equation}\label{existence of minimisers M energy pf 4}
\begin{aligned}
    E^M(\bar{u}) &\le \liminf_{k \to \infty}\int_{\Om} |\nabla_H u^1_{\epsilon_k}|^2 \ud \mu + \liminf_{k\to \infty}\int_{\Om} \max{\{((u^1_{\epsilon_k})^2 + \epsilon)^{-\beta},M\}}|\nabla_H u^2_{\epsilon_k}|^2 \ud \mu \\
    &\le \liminf_{k \to \infty} E_{\epsilon_k}^M(u_{\epsilon_k}) < \infty.
\end{aligned}
\end{equation}
Conversely, analogous reasoning as in \eqref{existence of minimisers M energy pf 1}, implies that:
\begin{equation*}
   E^M_{\epsilon_k}(u_{\epsilon_k}) \le E^M_{\epsilon_k}(\bar{u}) \le E^M(\bar{u}).
\end{equation*}
Whence by the latter estimates and by~\eqref{existence of minimisers M energy pf 4} it holds:
\begin{equation}\label{existence of minimisers M energy pf 5}
    \liminf_{k \to \infty}E^M_{\epsilon_k}(u_{\epsilon_k}) \le \limsup_{k \to \infty}E^M_{\epsilon_k}(u_{\epsilon_k}) \le E^M(\bar{u}) \le \liminf_{k \to \infty}E^M_{\epsilon_k}(u_{\epsilon_k}) < \infty
\end{equation}
and, therefore, $\lim_{k \to \infty}E_{\epsilon_k}^M(u_{\epsilon_k}) = E^M(\bar{u})$.

On the other hand, fix any $u \in HW^{1,2}(\Om,\mu)$ such that $u - u_0 \in HW^{1,2}(\Om,\mu)$. Then again, by the same reasoning we obtain that $E^M_{\epsilon_k}(u_{\epsilon_k}) \le E^M(u)$ for every $k \in \mathbb{N}$. This combined with the inequality \eqref{existence of minimisers M energy pf 5} yields $E^M(\bar{u}) \le E^M(u)$, as claimed and completes the proof of Theorem~\ref{thm-exist-min}.
\end{proof}

\subsection{Weak harmonic mappings and the Caccioppoli estimates}\label{sect15}

 Let $\Om\subset G_2^{\alpha}$ be an open bounded subset in the Grushin plane $G_2^{\alpha}$ and $\phi=(\phi^1, \phi^2)\in C^{\infty}_{0}(\Om,\R^2)$ be a vector-valued test function. In order to derive the weak formulation of the system~\eqref{system-EL} we multiply its both sides by the test functions $\phi^1$ and $\phi^2$, respectively, and perform integration by parts assuming for a moment that it is possible. As a consequence we arrive at the following system of equations which hold for any $\phi^1, \phi^2 \in C^{\infty}_{0}(\Om,\R)$:
 \begin{equation}\label{w-system-EL}
\begin{cases}
-\int_{\Om} \left\langle \frac{\nabla_H u^1}{|x|^\alpha}, \nabla_H \phi^1 \right \rangle\,\ud x \ud y+\beta \int_{\Om} \frac{|\nabla_H u^2|^2 u^1}{|x|^\alpha |u^1|^{2\beta+2}}\phi^1\,\ud x \ud y=0 \\
-\int_{\Om} \left \langle \frac{\nabla_H u^2}{|x|^\alpha|u^1|^{2\beta}}, \nabla_H \phi^2 \right \rangle\,\ud x \ud y=0.
\end{cases}
\end{equation}

Next, we discuss and justify the regularity assumptions for a map $u=(u^1,u^2)$ in order to be a solution to the system~\eqref{w-system-EL}. 

Recall that the weighted measure $\ud \mu$ is defined as follows $\ud \mu=\frac{\ud x \ud y}{|x|^\alpha}$ for $\alpha\geq 0$. However, if $0\leq \alpha \leq 1$, then the weight $|x|^{-\alpha}$ is locally integrable in $\R^2$ and, moreover, the measure $\mu$ is the $A_p$ Muckenhoupt measure. Nevertheless, if $\alpha\geq 1$, then $\mu$ is also Muckenhoupt on sets non-intersecting the singular set, i.e. outside the $y$-axis. Let us observe that if we are not interested in $\mu$-integrable constant functions, then lack of the $L^1_{loc}-$ property of the weight is not an obstacle for our studies. Indeed, all the terms in the above formulations of the harmonic system of equations \eqref{system-EL}-\eqref{w-system-EL} become zero for a constant mapping. This discussion allows us to formulate the following definition of the weak solutions.

Suppose that a mapping $u=(u^1,u^2):\Om\subset G_2^{\alpha}\to G_2^\beta$ satisfies
\begin{equation}\label{def-sol-space}
  u^1, u^2\in L^\infty_{loc}(\Om),\quad D_H u \in L^2_{loc}(\Om, \ud \mu),
\end{equation}
where $\alpha \geq 0$. 
The latter condition equivalently reads as follows:
\begin{equation}\label{def-DH}
 \|D_H u\|^2_{G^2_\beta}:=|\nabla_H u^1|^2+\frac{|\nabla_H u^2|^2}{|u^1|^{2\beta}}\in L^1_{loc}(\Om,\ud \mu).
\end{equation}
Let us discuss when solutions to the system~\eqref{w-system-EL} are well-defined. If we impose the integrability assumptions~\eqref{def-sol-space} and that 
\[
\int_{\Om} \frac{|\nabla_H u^2|^2}{|u^1|^{4\beta}} \ud \mu<\infty, \tag{A1}
\]
then both of the following divergence operators in~\eqref{w-system-EL} are defined in the weak sense:
\begin{align*}
 &{\rm div}_{G_2^{\alpha}}\left(\frac{\nabla_H u^1}{|x|^\alpha}\right):=-\int_{\Om} \left\langle \frac{\nabla_H u^1}{|x|^\alpha}, \nabla_H \phi^1 \right \rangle \ud x \ud y \leq \|\nabla_H u^1\|_{L^2(\Om,\mu)}\|\nabla_H \phi\|_{L^2(\Om,\mu)}\\
 & {\rm div}_{G_2^{\alpha}}\left(\frac{\nabla_H u^2}{|x|^\alpha|u^1|^{2\beta}}\right):=-\int_{\Om} \left \langle \frac{\nabla_H u^2}{|x|^\alpha|u^1|^{2\beta}}, \nabla_H \phi^2 \right \rangle \ud x \ud y \leq \left \|\frac{|\nabla_H u^2|}{|u^1|^{2\beta}} \right \|_{L^2(\Om,\mu)}\|\nabla_H \phi\|_{L^2(\Om,\mu)}.
\end{align*}
Moreover, in order to ensure the finiteness of the remaining integral in the first equation of the system~\eqref{w-system-EL} we assume that 
\[
\int_{\Om} \frac{|\nabla_H u^2|^2}{|u^1|^{2\beta+1}} \ud \mu<\infty. \tag{A2}
\]

The relation between the two introduced integrability conditions depends on $\beta$. Namely, since $u^1\in L^{\infty}_{loc}$ then for $0\leq \beta \leq \frac12$ it holds that (A2) implies (A1), while if $\beta>\frac12$ then (A1) implies (A2). Therefore, we introduce the following assumption for a map $u$ as in~\eqref{def-sol-space}:
\begin{equation}\label{ass-lem-HW0}
\int_{\Om} \frac{|\nabla_H u^2|^2}{|u^1|^{2\beta+1}} \ud \mu<\infty \quad \hbox{for }\,0\leq \beta\leq \frac12;\qquad 
\int_{\Om} \frac{|\nabla_H u^2|^2}{|u^1|^{4\beta}} \ud \mu<\infty \quad \hbox{for }\, \beta> \frac12.
\end{equation} 

The above discussion leads us to the following definition of Grushin-harmonic mappings.
 
\begin{defn}[weak harmonic mappings]\label{weak-solution} 
 Let $\alpha, \beta>0$ and $\Omega \subset G_2^{\alpha}$ be open and bounded. We say that a mapping $u=(u^1,u^2):\Omega \to G_2^{\beta}$, such that $u \in HW^{1,2}(\Omega,\mu; G_2^\beta)$ and $u^1, u^2 \in L^{\infty}_{loc}(\Omega)$ is a \textit{weak harmonic mapping}, if it satisfies the system of equations~\eqref{w-system-EL} in the weak sense and the integrability condition~\eqref{ass-lem-HW0} holds.
\end{defn}
 
Since it is often convenient to deal with the larger class of test functions than $C_0^{\infty}$, for example $HW_{0}^{1,2}$, we will now investigate conditions allowing us to test the system~\eqref{w-system-EL} with the Sobolev test functions.
\begin{lem}\label{lem-HW0}
 Let a map $u:\Om\subset G_2^\alpha\to G_2^\beta$ be as in~\eqref{def-sol-space}, i.e.
\[
 u^1, u^2\in L^\infty_{loc}(\Om),\quad D_H u \in L^2_{loc}(\Om, \ud \mu)
\] 
and satisfy the integrability condition~\eqref{ass-lem-HW0}. 

If $u$ is a solution of the system~\eqref{w-system-EL} for test functions in $C_{0}^{\infty}(\Om, \R^2)$, then so is when testing~\eqref{w-system-EL} with compactly supported mappings $\phi=(\phi^1,\phi^2)$ such that 
\begin{equation}\label{ass-lem-HW}
 \phi^1, \phi^2\in L^\infty_{loc}(\Om) \hbox{  and }|\nabla_H \phi^1|, |\nabla_H \phi^2| \in L^2_{loc}(\Om, \ud \mu)
\end{equation}
provided that either $\alpha\geq 0$ and $u$ is a solution on an open bounded subset $\Om'\Subset\Om \setminus S$, or $0\leq \alpha <1$.
\end{lem}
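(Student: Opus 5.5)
The plan is a density argument: approximate a test map $\phi$ satisfying~\eqref{ass-lem-HW} by smooth compactly supported maps, and pass to the limit in each of the three integrals in~\eqref{w-system-EL}. Note first that each integral, written with respect to $\ud\mu$, becomes an $L^2(\mu)$ pairing. Namely,
\[
\int_\Om \Big\langle \frac{\nabla_H u^1}{|x|^\alpha},\nabla_H\phi^1\Big\rangle \ud x\ud y=\int_\Om \langle \nabla_H u^1,\nabla_H\phi^1\rangle \ud\mu,
\]
and similarly the second equation is $\int_\Om \langle |u^1|^{-2\beta}\nabla_H u^2,\nabla_H\phi^2\rangle\ud\mu$. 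The zero-order term is $\beta\int_\Om |\nabla_H u^2|^2 u^1|u^1|^{-2\beta-2}\phi^1\ud\mu$.

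\textbf{Step 1.} Fix $\phi$ with compact support $K\Subset\Om$ and satisfying~\eqref{ass-lem-HW}. Then $\phi^i\in HW^{1,2}(\Om,\mu)$, since $\phi^i$ is bounded on $K$ and $\mu(K)<\infty$ when $0\le\alpha<1$. In the other case $K\subset\Om'\Subset\Om\setminus S$, so the weight $|x|^{-\alpha}$ is bounded above and below on $K$ and the situation is essentially Euclidean.

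\textbf{Step 2.} Mollify: $\phi^i_\ep:=\phi^i*\varphi_\ep$. For small $\ep$ these lie in $C_0^\infty(\Om)$ with supports in a fixed compact $K'$, and they satisfy $\|\phi^i_\ep\|_{L^\infty}\le\|\phi^i\|_{L^\infty}$. By the proof of Proposition~\ref{prop-dens}, which uses the $A_2$ property from Lemma~\ref{lem-Ahl-Muck} and Theorem 2.1.4 in~\cite{tu}, we get $\nabla_H\phi^i_\ep\to\nabla_H\phi^i$ in $L^2(\mu)$. This uses the identity $Y\phi_\ep=|x|^\alpha(|x|^{-\alpha}Y\phi)_\ep$, together with Observation~\ref{inclusion into Euclidean Sobolev space}. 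Passing to a subsequence, $\phi^i_\ep\to\phi^i$ a.e. In the case $K\subset\Om\setminus S$ the same conclusion follows from standard Euclidean mollification, since $|x|^{\pm\alpha}$ is smooth and bounded there.

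\textbf{Step 3 (passing to the limit).} The first gradient term converges by Cauchy--Schwarz, since $\nabla_H u^1\in L^2_{loc}(\mu)$. For the second, we need $|u^1|^{-2\beta}|\nabla_H u^2|\in L^2(K',\mu)$.
\begin{itemize}
\item For $\beta>\frac12$ this is exactly~\eqref{ass-lem-HW0}.
\item For $\beta\le\frac12$, write $|u^1|^{-4\beta}=|u^1|^{-2\beta-1}|u^1|^{1-2\beta}$. Since $1-2\beta\ge0$ and $u^1\in L^\infty_{loc}$, this is bounded by $C|u^1|^{-2\beta-1}$, so~\eqref{ass-lem-HW0} again suffices.
\end{itemize}
For the zero-order term, the integrand is bounded by $\|\phi^1\|_\infty|\nabla_H u^2|^2|u^1|^{-2\beta-1}$, uniformly in $\ep$. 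This majorant is integrable: directly by~\eqref{ass-lem-HW0} when $\beta\le\frac12$, and when $\beta>\frac12$ because $|u^1|^{-2\beta-1}\le C|u^1|^{-4\beta}$ locally (as $4\beta-2\beta-1>0$ and $u^1$ is bounded). Dominated convergence along the a.e. convergent subsequence then finishes the argument.

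\textbf{Main obstacle.} The delicate point is Step 2 near $S$. Mollification does not commute with $Y$ because of the weight, and $L^\infty$ bounds alone do not give convergence of the horizontal gradient. This is handled by the weighted identity from Proposition~\ref{prop-dens}, which requires $\alpha<1$ to have the Muckenhoupt property. That is why, for $\alpha\ge1$, the statement restricts to $\Om'\Subset\Om\setminus S$.
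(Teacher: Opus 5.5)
Your proof is correct and uses the same density strategy as the paper. You approximate $\phi$ by maps in $C_0^\infty$ and pass to the limit in each integral of \eqref{w-system-EL}. You control the coefficients with $\nabla_H u^1\in L^2_{loc}(\Om,\mu)$ and \eqref{ass-lem-HW0}. Your check that \eqref{ass-lem-HW0} gives both $|u^1|^{-2\beta}|\nabla_H u^2|\in L^2_{loc}(\Om,\mu)$ and $|u^1|^{-2\beta-1}|\nabla_H u^2|^2\in L^1_{loc}(\Om,\mu)$ in each range of $\beta$ is the same comparison the paper makes before Definition~\ref{weak-solution}.

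The real difference is in the zero-order term.
\begin{itemize}
\item The paper postulates a sequence $\phi_i\in C_0^\infty(\Om,\R^2)$ with $\|\phi-\phi_i\|_{L^\infty}\to0$ and $\nabla_H\phi_i\to\nabla_H\phi$ in $L^2(\Om,\mu)$. It cites uniform approximation of bounded functions for the first property and Remark~\ref{rem-C0-density} for the second. It then bounds the zero-order difference by $\|\phi^1-\phi^1_i\|_{L^\infty}$.
\item An $L^\infty$-limit of continuous functions has a continuous representative. The class \eqref{ass-lem-HW}, however, contains bounded functions without one, e.g.\ $\eta\,\sin\big(\log\log(1/|z-z_0|)\big)$ with $z_0\notin S$ and $\eta$ a cutoff near $z_0$. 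So that step is only justified for continuous $\phi$.
\item The paper also does not check that a single sequence achieves both convergences at once.
\end{itemize}

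Your single mollified sequence does both.
\begin{itemize}
\item The identity $Y\phi_\ep=|x|^\alpha(|x|^{-\alpha}Y\phi)_\ep$ together with the $A_2$ property gives $L^2(\mu)$ convergence of $\nabla_H\phi_\ep$.
\item The bound $\|\phi_\ep\|_{L^\infty}\le\|\phi\|_{L^\infty}$, a.e.\ convergence, and the majorant $\beta\|\phi^1\|_{L^\infty}|\nabla_H u^2|^2|u^1|^{-2\beta-1}\mathbf{1}_{K'}$ let dominated convergence handle the zero-order term.
\end{itemize}
This covers the full class \eqref{ass-lem-HW}. It is the same device the paper itself uses later in Remark~\ref{rem-sing-str}(3). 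The only cost is losing the quantitative H\"older-type bound of the paper's argument, which the qualitative statement does not need.
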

\begin{proof}
 Let $\phi$ be a compactly supported function as in the assertion~\eqref{ass-lem-HW} of the lemma. Suppose that the sequence $(\phi_i)=(\phi_i^1,\phi_i^2)$ of functions in $C^\infty_{0}(\Om,\R^2)$ is such that 
 \[
  \|\phi-\phi_i\|_{L^\infty(\Om)}\to 0,\quad \|\nabla_H \phi^j- \nabla_H \phi^j_i\|_{L^2(\Om,\mu)}\to 0\quad \hbox{ for }j=1,2\hbox{ as }i \to \infty.
 \]
The first convergence is the direct application of the approximation of bounded compactly supported functions by compactly supported smooth functions. The latter approximation is available due to Remark~\ref{rem-C0-density}, when $\alpha\in [0,1)$. For general $\alpha\geq 0$, the similar approximation can be inferred on an open bounded set $\Om'\Subset \Om \setminus S$, since outside set $S$ the Sobolev spaces $HW^{1,2}$ and the Euclidean $W^{1,2}$ are equivalent.

Upon substituting functions $\phi^1$ and $\phi^1_i$ into the first equation of the system~\eqref{w-system-EL} and subtracting the corresponding equations we obtain the following estimate
\begin{align*}
&\int_{\Om} \frac{|\nabla_H u^1|}{|x|^{\alpha}} |\nabla_H (\phi^1-\phi^1_i)|\,\ud x \ud y
+\beta \int_{\Om} \frac{|\nabla_H u^2|^2}{|x|^\alpha |u^1|^{2\beta+1}}|\phi^1-\phi^1_i| \,\ud x \ud y \\
& \leq  \|\nabla_H u^1\|^2_{L^2(\Om,\mu)}\|\nabla_H (\phi^1-\phi^1_i)\|^2_{L^2(\Om,\mu)}+\beta  \left \|\frac{|\nabla_H u^2|^2}{|u^1|^{2\beta+1}} \right \|^2_{L^2(\Om,\mu)}\|\phi^1-\phi^1_i\|^2_{L^\infty(\Om)}\to 0,\quad \hbox{ as }i \to \infty,
\end{align*}
by the choice of the sequence $(\phi^1_i)$ and conditions~\eqref{def-sol-space} and~\eqref{ass-lem-HW0}. Similarly, we show the convergence for the second equation in~\eqref{w-system-EL} and, thus, complete the proof of Lemma~\ref{lem-HW0}.
\end{proof}

Therefore, under the assumptions of Lemma~\ref{lem-HW0}, it is now justified to consider the functions $\phi^i:=u^i\eta^2$ for $\eta\in C_0^{\infty}(\Om,\R)$ and $i=1,2$ and it holds that both $\phi^i\in HW_{0}^{1,2}(\Om, \R)$.

We are in a position to derive the key energy estimates for the weak Grushin harmonic mappings.
\begin{lem}[Caccioppoli inequality 1]\label{lem-Cac}
Let $\Om\subset G_2^\alpha$ be an open set in the $\alpha$-Grushin plane such that the exponent $\alpha$ satisfies that
either
\begin{itemize}
\item $\alpha\in[0,1)$, or
\item $\alpha\geq 0$ and it holds that $\overline{\Om}\cap S \not=\emptyset$. 
\end{itemize}
Moreover, let $u=(u^1, u^2) \in HW^{1,2}(\Om, \mu; G_2^\beta)$ be a weak harmonic mapping, i.e., $u^1, u^2\in L^\infty_{loc}(\Om)$ satisfy the system~\eqref{w-system-EL} and condition~\eqref{ass-lem-HW0}. Then the following Caccioppoli inequality holds for any $\eta \in C_0^{\infty}(\Om,\R)$ such that $0\leq \eta \leq 1$
 \begin{equation}\label{first-Cac}
   \int_{\Om} \|D_H u(x)\|_{G_2^\beta}^2 \eta^2(x) \ud \mu(x) \leq c(\beta) \int_{\Om}\|u(x)\|^2_{G_2^\beta} |\nabla_H \eta(x)|^2 \ud \mu(x),
 \end{equation}
 where $c(\beta)=4(1+\max\{1, 2\beta\})$ and $\|u\|_{G_2^\beta}$ stands for the Riemannian length of the vector $u=(u^1,u^2)$ in the target space $G_2^\beta$, i.e.
 \begin{equation}\label{defn-subr-unorm}
  \|u(x)\|_{G_2^\beta}^2=|u^1(x)|^2+\frac{|u^2(x)|^2}{|u^1(x)|^{2\beta}},\quad x\in \Om\subset G_2^\alpha,\quad u^1(x)\not=0.
 \end{equation}
 \end{lem}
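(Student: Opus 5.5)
We test the system~\eqref{w-system-EL} with $\phi^1=u^1\eta^2$ and $\phi^2=u^2\eta^2$. This is admissible by Lemma~\ref{lem-HW0}, since $u^i\in L^\infty_{loc}$ and $\nabla_H\phi^i=\eta^2\nabla_H u^i+2u^i\eta\nabla_H\eta\in L^2_{loc}(\Om,\mu)$.

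For the first equation, after rewriting $\frac{\ud x\ud y}{|x|^\alpha}=\ud\mu$, the test gives
\[
\int_\Om |\nabla_H u^1|^2\eta^2\ud\mu = -2\int_\Om \eta u^1\langle\nabla_H u^1,\nabla_H\eta\rangle\ud\mu+\beta\int_\Om\frac{|\nabla_H u^2|^2}{|u^1|^{2\beta}}\eta^2\ud\mu ,
\]
where we used $u^1\cdot u^1/|u^1|^{2\beta+2}=|u^1|^{-2\beta}$.

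For the second equation,
\[
\int_\Om \frac{|\nabla_H u^2|^2}{|u^1|^{2\beta}}\eta^2\ud\mu=-2\int_\Om \eta\frac{u^2}{|u^1|^{2\beta}}\langle\nabla_H u^2,\nabla_H\eta\rangle\ud\mu .
\]
There is no term involving $\nabla_H u^1$ here, because the weight $|u^1|^{-2\beta}$ is inside the divergence and is not differentiated when we test with $u^2\eta^2$. The point of choosing the test function this way is exactly to avoid differentiating the weight.

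Set $A:=\int|\nabla_H u^1|^2\eta^2\ud\mu$ and $B:=\int|u^1|^{-2\beta}|\nabla_H u^2|^2\eta^2\ud\mu$. Cauchy--Schwarz on the second identity, writing $\frac{u^2}{|u^1|^{2\beta}}\nabla_H u^2=\frac{u^2}{|u^1|^\beta}\cdot\frac{\nabla_H u^2}{|u^1|^\beta}$, gives
\[
B\le 2B^{1/2}\Big(\int \tfrac{|u^2|^2}{|u^1|^{2\beta}}|\nabla_H\eta|^2\ud\mu\Big)^{1/2},
\]
hence $B\le 4\int \frac{|u^2|^2}{|u^1|^{2\beta}}|\nabla_H\eta|^2\ud\mu$. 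Likewise the first identity gives
\[
A\le 2A^{1/2}\Big(\int|u^1|^2|\nabla_H\eta|^2\ud\mu\Big)^{1/2}+\beta B .
\]
Using $2ab\le \frac12 a^2+2b^2$ we absorb the first term and get $A\le 4\int|u^1|^2|\nabla_H\eta|^2\ud\mu+2\beta B$. Adding the two bounds,
\[
A+B\le 4\int|u^1|^2|\nabla_H\eta|^2\ud\mu+(1+2\beta)B ,
\]
and therefore
\[
A+B\le 4\int|u^1|^2|\nabla_H\eta|^2\ud\mu+4(1+2\beta)\int\tfrac{|u^2|^2}{|u^1|^{2\beta}}|\nabla_H\eta|^2\ud\mu .
\]
Since $4\le c(\beta)$ and $4(1+2\beta)\le 4(1+\max\{1,2\beta\})=c(\beta)$, this is~\eqref{first-Cac} with the stated constant.

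The main obstacle is justification rather than algebra. Every integral above must be finite so that the absorption steps are legitimate: $B<\infty$ follows from $D_H u\in L^2_{loc}$, and the right-hand side involves $|u^2|^2|u^1|^{-2\beta}$, which we may assume finite (otherwise there is nothing to prove). The Lemma~\ref{lem-HW0} approximation must also cover $\phi^2=u^2\eta^2$; one needs $\nabla_H\phi^2\in L^2_{loc}$, and condition~\eqref{ass-lem-HW0} controls the $\beta$-term. In the case $\alpha\ge0$ we apply Lemma~\ref{lem-HW0} in its version away from $S$. If needed, one tests first with truncations of $u^2$ and passes to the limit by monotone convergence.
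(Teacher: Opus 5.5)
Your proposal is correct and follows the paper's proof essentially verbatim. You test the two equations of \eqref{w-system-EL} with $u^1\eta^2$ and $u^2\eta^2$, obtain $A\le 4\int|u^1|^2|\nabla_H\eta|^2\ud\mu+2\beta B$ and $B\le 4\int\frac{|u^2|^2}{|u^1|^{2\beta}}|\nabla_H\eta|^2\ud\mu$, and add them; your coefficient $4(1+2\beta)\le c(\beta)$ and your finiteness justification (via \eqref{ass-lem-HW0}, $u^1\in L^\infty_{loc}$ and Lemma~\ref{lem-HW0}) match the paper's argument.
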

 Note that when defining  $\|u(x)\|_{G_2^\beta}$ we slightly abuse notation used previously for introducing $\|D_H u(x)\|_{G_2^\beta}$, see~\eqref{def-DH}.  
\begin{proof}
Upon applying the test functions $\phi^i=u^i\eta^2$ for $\eta\in C_0^{\infty}(\Om,\R)$ such that $0\leq \eta \leq 1$ in $\Om$ and $i=1,2$, respectively,  in the first and the second equation of the weak formulation~\eqref{w-system-EL} we obtain the following system of inequalities:
 \begin{equation}\label{aux1-lem-Cac}
\begin{cases}
 \int_{\Om} |\nabla_H u^1|^2 \eta^2 \ud \mu\leq 4\int_{\Om}|u^1|^2 |\nabla_H \eta|^2 \ud \mu +2\beta \int_{\Om} \frac{|\nabla_H u^2|^2}{ |u^1|^{2\beta}}\eta^2 \ud \mu \\
 \int_{\Om} \frac{|\nabla_H u^2|^2}{|u^1|^{2\beta}} \eta^2 \ud \mu \leq 4\int_{\Om}\frac{|u^2|^2}{|u^1|^{2\beta}} |\nabla_H \eta|^2 \ud \mu.
\end{cases}
\end{equation}
Observe that integrability condition~\eqref{ass-lem-HW0} together with the $L^2(\ud \mu)$-integrability of $u^1$ imply that the both inequalities above are well-defined. Indeed, by the H\"older inequality it holds that
\[
  \int_{\Om} \frac{|\nabla_H u^2|^2}{|u^1|^{2\beta}} \eta^2 \ud \mu \leq \int_{{\rm supp }\,\eta} \frac{|\nabla_H u^2|^2}{|u^1|^{2\beta+1}} |u^1| \ud \mu \leq \|u^1\|_{L^\infty({\rm supp }\,\eta)}\,\int_{\Om} \frac{|\nabla_H u^2|^2}{|u^1|^{2\beta+1}}\ud \mu<\infty.
\]
Next, we apply the second estimate in~\eqref{aux1-lem-Cac} in the first one and get the following inequality:
\begin{equation}\label{aux2-lem-Cac} 
 \int_{\Om} |\nabla_H u^1|^2 \eta^2 \ud \mu \leq 4\max\{1, 2\beta\} \int_{\Om} \Big(|u^1|^2+\frac{|u^2|^2}{|u^1|^{2\beta}}\Big) |\nabla_H \eta|^2 \ud \mu.
\end{equation}
Finally, we add up inequality~\eqref{aux2-lem-Cac} and the second inequality in~\eqref{aux1-lem-Cac}  to arrive at~\eqref{first-Cac}:
\begin{align*}
  \int_{\Om} |D_H u|^2 \eta^2 \ud \mu & = \int_{\Om} \Big( |\nabla_H u^1|^2 + \frac{|\nabla_H u^2|^2}{|u^1|^{2\beta}}\Big) \eta^2 \ud \mu \\
  &\leq 4(1+\max\{1, 2\beta\}) \int_{\Om}\Big(|u^1|^2+\frac{|u^2|^2}{|u^1|^{2\beta}}\Big) |\nabla_H \eta|^2 \ud \mu.
\end{align*}
If $\dist(u^1(\Om), S)>0$, then $\ud x\otimes \ud x+\frac{1}{|x|^\beta}\ud y\otimes \ud y$ is the Riemannian tensor in the Grushin plane $G_2^\beta$ and so the Riemannian length $\|u(\cdot)\|_{G_2^\beta}$ is well defined at all points of $\Om$.
\end{proof}

\subsection{Strong harmonic mappings}\label{sect16}

The goal of this section is to derive the weak formulation for the non-divergence form of the system~\eqref{str-system-EL}, the so-called strong harmonic mappings. As in the case of the weak harmonic mappings, let us assume that  $\Om\subset G_2^{\alpha}$ is an open bounded subset in the Grushin plane $G_2^{\alpha}$ and $\phi=(\phi^1, \phi^2)\in C^{\infty}_{0}(\Om,\R^2)$ is a vector-valued test function. We multiply the both sides of the equations of the system~\eqref{str-system-EL} by the test functions $\phi^1$ and $\phi^2$, respectively, and perform the integration by parts. However now the non-divergence form requires slightly more computations than the system~\eqref{w-system-EL} and, therefore, we present more details. 
%
%

The left-hand side of the first equation of~\eqref{str-system-EL} takes the following form 
\begin{align*}
&  \int_{\Omega}|u^1|^{2\beta+2} \Big(\Delta_{G_2^{\alpha}} u^1 -\alpha \frac{x}{|x|^2}u^1_x\Big)\phi^1 \ud \mu \nonumber \\
& = \int_{\Omega}|u^1|^{2\beta+2}\Big(u^1_{xx}+|x|^{2\alpha}u^1_{yy} - \frac{\alpha}{x}u^1_x\Big)|x|^{-\alpha}\phi^1 \ud x \ud y \nonumber \\
&=\int_{\Omega} |u^1|^{2\beta+2}u^1_{xx}|x|^{-\alpha} \phi^1 \ud x \ud y + \int_{\Omega} |u^1|^{2\beta+2}u^1_{yy}|x|^{\alpha}\phi^1 \ud x \ud y-\int_{\Omega} |u^1|^{2\beta+2}u^1_x \frac{\alpha}{x|x|^{\alpha}}\phi^1 \ud x \ud y \nonumber \\
 &= \int_{\Omega}|u^1|^{2\beta+2} u^1_x\left(\frac{\alpha}{x|x|^{\alpha}}\right) \phi^1 \ud x \ud y - \int_{\Omega}(2\beta+2)|u^1|^{2\beta}u^1 (u^1_x)^2|x|^{-\alpha}\phi^1 \ud x \ud y -\int_{\Omega} |u^1|^{2\beta+2}u^1_x|x|^{-\alpha} \phi^1_x \ud x \ud y \nonumber \\
&\phantom{AA}-\int_{\Omega} (2\beta+2) |u^1|^{2\beta}u^1(u^1_y)^2|x|^{\alpha}\phi^1 \ud x \ud y - \int_{\Omega}|u^1|^{2\beta+2}u^1_y|x|^{\alpha}\phi^1_y \ud x \ud y-\int_{\Omega} |u^1|^{2\beta+2}u^1_x \left( \frac{\alpha}{x|x|^{\alpha}}\right) \phi^1 \ud x \ud y \nonumber \\
    &= - \int_{\Omega}(2\beta+2)|u^1|^{2\beta}u^1|\nabla_H u^1|^2 \phi^1 \ud \mu- \int_{\Omega}|u^1|^{2\beta+2} \langle \nabla_H u,\nabla_H \phi^1 \rangle \ud \mu.
\end{align*}
Hence, the first equation in~\eqref{str-system-EL} reads:
\begin{equation}\label{str-system-EL-1}
 \int_{\Omega} |u^1|^{2\beta+2} \langle \nabla_H u^1,\nabla_H \phi^1 \rangle \ud \mu = \int_{\Omega} \beta u^1 |\nabla_H u^2|^2 \phi^1 \ud \mu  - (2\beta+2) \int_{\Omega}|u^1|^{2\beta}u^1 |\nabla_H u^1|^2 \phi^1 \ud \mu.
\end{equation}
We now turn our attention to the second equation in~\eqref{str-system-EL} and, similarly as above, upon direct computations we get the following formula:
\begin{equation*}
\begin{aligned}
&\int_{\Omega}(u^1)^2 \Big(\Delta_{G_2^{\alpha}} u^2 -\alpha \frac{x}{|x|^2}u^2_x\Big)|x|^{-\alpha}\phi^2 \ud x \ud y \\
&=- 2\int_{\Omega}u^1_x u^2_x u^1 |x|^{-\alpha}\phi^2 \ud x \ud y -\int_{\Omega}(u^1)^2 u^2_x|x|^{-\alpha}\phi^2_x \ud x \ud y-2\int_{\Omega} u^1_y u^2_y u^1|x|^{\alpha}\phi^2 \ud x \ud y - \int_{\Omega}(u^1)^2u^2_y|x|^{\alpha}\phi^2_y \ud x \ud y\\
 &= - \int_{\Omega}(u^1)^2 \langle \nabla_H u^2,\nabla_H \phi^2 \rangle \ud \mu - 2\int_{\Omega} u^1 \langle \nabla_H u^1,\nabla_H u^2 \rangle\phi^2 \ud \mu.
\end{aligned}
\end{equation*}
Therefore, the second equation of the system~\eqref{str-system-EL} reads
\begin{align}
\int_{\Omega}(u^1)^2 \langle \nabla_H u^2,\nabla_H \phi^2 \rangle \ud \mu &= -\int_{\Omega} 2\beta u^1 \langle \nabla_H u^1,\nabla_H u^2\rangle \phi^2 \ud \mu - 2\int_{\Omega} u^1 \langle \nabla_H u^1,\nabla_H u^2 \rangle\phi^2  \ud \mu \nonumber \\
&= -(2\beta+2) \int_{\Omega} u^1\langle \nabla_H u^1,\nabla_H u^2 \rangle \phi^2 \ud \mu. \label{str-system-EL-2}
\end{align}

Based on the equations~\eqref{str-system-EL-1} and~\eqref{str-system-EL-2} the weak solutions should be at least in the 
Sobolev space $HW^{1,2}(\Omega,\mu)$ but also, since the power $u^1$ appears under the integrals it is handy to assume that $u^1\in L^{\infty}(\Omega)$ as well. Note that such an assumption is in accordance with the vast literature on the harmonic mappings in the Euclidean setting, see e.g.~\cite{es,uh}, though not necessary for the studies, see Chapter 8 in \cite{jo}. Historically, the $L^{\infty}$ harmonic maps appear in the context of continuous harmonic maps, see Section 8.4 in \cite{jo}, although in general harmonic maps between Riemannian manifolds need not to be continuous, see e.g.~\cite{su1,su2}. On the other hand, the discussion in Section~\ref{sect-w-Harnack} below supports our boundedness assumption, as in Theorem~\ref{weak-harnack-u2} we show that the second component function $u^2$ of the map $u$ satisfies the weak Harnack estimate and so, in particular, is bounded. 
%
%
We summarize the above discussion in the following definition.

\begin{defn}[strong harmonic mappings]\label{str-solution} Let $\alpha, \beta>0$ and $\Omega \subset G_2^{\alpha}$ be open and bounded. We say that a mapping $u=(u^1,u^2):\Omega \to G_2^{\beta}$, such that $u \in HW^{1,2}(\Omega,\mu; G_2^{\beta})$ and $u^1 \in L^{\infty}(\Omega)$ is a \textit{strong harmonic mapping}, if the following system holds 
for all test mappings $\phi = (\phi^1,\phi^2) \in C^{\infty}_0(\Om,\R^2)$:
\begin{equation}\label{weak-system-EL}
\begin{cases}
& \int_{\Omega} |u^1|^{2\beta+2} \langle \nabla_H u^1,\nabla_H \phi^1 \rangle \ud \mu+ (2\beta+2) \int_{\Omega}|u^1|^{2\beta}u^1 |\nabla_H u^1|^2 \phi^1 \ud \mu = \int_{\Omega} \beta u^1 |\nabla_H u^2|^2 \phi^1 \ud \mu\\
&\int_{\Omega} (u^1)^2 \langle \nabla_H u^2,\nabla_H \phi^2 \rangle \ud \mu +(2\beta+2) \int_{\Omega} u^1 \langle \nabla_H u^1,\nabla_H u^2 \rangle \phi^2 \ud \mu=0.
\end{cases}
\end{equation}
\end{defn}

\begin{remark}\label{rem-sing-str}
(1)\,\,We observe that at points where $u^1(\Om)\cap S\not=\emptyset$, i.e. when $u^1\not=0$ we can divide the both sides of the equations in~\eqref{str-system-EL} by $u^1$ and upon repeating the discussion in this section obtain the following variant of the system~\eqref{weak-system-EL}, to which in what follows we will often appeal:
\begin{equation}\label{weak-system-EL2}
\begin{cases}
& \int_{\Omega} |u^1|^{2\beta}u^1 \langle \nabla_H u^1,\nabla_H \phi^1 \rangle \ud \mu+ (2\beta+1) \int_{\Omega}|u^1|^{2\beta} |\nabla_H u^1|^2 \phi^1 \ud \mu = \int_{\Omega} \beta |\nabla_H u^2|^2 \phi^1 \ud \mu\\
&\int_{\Omega} u^1 \langle \nabla_H u^2,\nabla_H \phi^2 \rangle \ud \mu +(2\beta+1) \int_{\Omega} \langle \nabla_H u^1,\nabla_H u^2 \rangle \phi^2 \ud \mu=0.
\end{cases}
\end{equation}

(2)\,\,Observe that by the derivation of system~\eqref{weak-system-EL}, strong harmonic mappings are weak harmonic, if  we additionally assume that $u^2\in L_{loc}^{\infty}$ and the assumption~\eqref{ass-lem-HW0} holds. The latter condition trivially holds for mappings $u$ omitting the singular set in their target domains.

(3)\,\,Let $\alpha \in [0,1)$ and $u=(u^1,u^2)$ be a solution to~\eqref{weak-system-EL}. It turns out that for both equations in~\eqref{weak-system-EL} the function $\phi v$ is an admissible test function for $\phi \in C_0^{\infty}(\Om)$. Indeed, as in the proof of Proposition~\ref{prop-dens} we may show that the following convolutions are uniformly bounded:
\begin{equation}\label{remark: unform bound of convolutions}
\int_{\mathbb{R}^2} \varphi_{\epsilon}(x-y)(\psi v)(y) \ud x \ud y \le M \int_{\mathbb{R}^2} \varphi_{\epsilon}(x-y) \ud x \ud y = M,
\end{equation}
 where $M:= \sup{(\psi v)}$. Let now $(\psi_k)$ be a sequence of functions in $C^{\infty}_0(\Om)$ such that $\psi_k \to \phi v$ in $HW^{1,2}(\Om,\mu)$. Then, the estimate~\eqref{remark: unform bound of convolutions} together with the Lebesgue dominated convergence theorem imply that
\begin{equation}\label{remark about test functions: first type of convergence}
 \psi_k|\nabla_H u^2|^2|x|^{-\alpha} \le M |\nabla_H u^2|^2|x|^{-\alpha} \in L^1(\Om) \hbox{ and }\psi_k|\nabla_H u^2|^2|x|^{-\alpha} \to \phi v|\nabla_H u^2|^2|x|^{-\alpha} \textrm{ a.e. in } \Om,
\end{equation}
and, therefore,
\[
 \int_{\Omega} \beta \psi_k |\nabla_H u^2|^2 \ud \mu \to \int_{\Omega} \beta \phi v |\nabla_H u^2|^2 \ud \mu.
\]
The H\"older inequality gives us the following convergence:
    \begin{align}
  &  \Big| \int_{\Omega}u^1|u^1|^{2\beta} \langle \nabla_H u^1,\nabla_H \psi_k \rangle \ud \mu- \int_{\Omega}u^1|u^1|^{2\beta} \langle \nabla_H u^1,\nabla_H (\phi v) \rangle \ud \mu \Big| \label{remark-test-111} \\
   &\phantom{AA}\lesssim_{\|u^1\|_{L^{\infty}(\Om)}^{1+2\beta}} \int_{\Om}|\nabla_H u^1||\nabla_H \psi_k - \nabla_H (\phi v)| \ud \mu \le \| u^1 \|_{HW^{1,2}(\Om,\mu)} \| \psi_k -\phi v\|_{HW^{1,2}(\Om,\mu)} \to 0,\quad k\to \infty.
    \end{align}
The remaining integrals appearing in the system~\eqref{weak-system-EL} are of the types similar either to~\eqref{remark about test functions: first type of convergence} or~\eqref{remark-test-111} and, thus, can be handled in the similar manner. 
\end{remark}

Similarly to the weak harmonic mappings also the strong one possess the Caccioppoli estimate which we now present and discuss, cf. Lemma~\ref{lem-Cac} for weak harmonic mappings.

\begin{lem}[Caccioppoli inequality 2]\label{lem-Cac-str} Let $\alpha \in [0,1)$ and $\Om \subset G_2^{\alpha}$ be an open set. Let further $u=(u^1,u^2):\Omega \to G_2^{\beta}$ be a strong harmonic mapping satisfying Definition~\ref{str-solution}, in particular $u \in HW^{1,2}(\Omega,\mu; G_2^{\beta})$ and $u^1 \in L^{\infty}(\Omega)$. 
%
Suppose that $u$ satisfies the following orthogonality condition at $\mu$-almost every point in $\Om$ for a function $K \in L^2_{\textrm{loc}}(\Om,\mu)$
\begin{equation}\label{orth-cond}
    |\langle \nabla_H u^1, \nabla_H u^2 \rangle| \le K(x,y)|u^1|^{\beta+2}.
\end{equation}
Then the following Caccioppoli inequality holds:
\begin{equation}\label{Cacp ineq ineq}
    \int_{\Om} \| D_H u \|_{G_2^{\beta}}^2 \eta^2 \ud \mu \le c(\beta) \int_{\Om} \| u \|_{G_2^{\beta}}^2 |\nabla_H \eta|^2 \ud \mu,\quad \hbox{for }\eta \in C_0^{\infty}(\Om),
\end{equation}
where $c(\beta)=4(1+\beta)^2$. Furthermore, the similar inequality holds for the $u^2$ component of map $u$: 
\begin{equation}\label{Cacp ineq for u^2}
    \int_{\Om} \frac{|\nabla_H u^2|^2}{|u^1|^{2\beta}} \eta^2 \ud \mu \le 4 \int_{\Om} \frac{|u^2|^2}{|u^1|^{2\beta}} |\nabla_H \eta|^2 \ud \mu.
\end{equation}
\end{lem}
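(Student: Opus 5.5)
The plan is to undo the multiplication by powers of $|u^1|$ that turned the divergence system~\eqref{system-EL} into the non-divergence system~\eqref{str-system-EL}. Formally,
\[
|u^1|^2\Big(\Delta_{G_2^{\alpha}} u^2-\alpha\tfrac{x}{|x|^2}u^2_x\Big)-2\beta u^1\langle\nabla_H u^1,\nabla_H u^2\rangle=|x|^{\alpha}|u^1|^{2\beta+2}\,{\rm div}_{G_2^\alpha}\Big(\frac{\nabla_H u^2}{|x|^\alpha|u^1|^{2\beta}}\Big).
\]
The first equations are related in the same way. I will therefore test~\eqref{weak-system-EL} with functions that carry a compensating factor $|u^1|^{-2\beta-2}$. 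This recovers the weak equations tested with $u^i\eta^2$, and then the Young-inequality argument of Lemma~\ref{lem-Cac} applies.

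\textbf{Second equation.} Take $\phi^2=u^2\eta^2|u^1|^{-2\beta-2}$ in the second equation of~\eqref{weak-system-EL}. Expanding $\nabla_H\phi^2$ produces three terms:
\begin{itemize}
\item[(a)] $\int|u^1|^{-2\beta}|\nabla_H u^2|^2\eta^2\,\ud\mu$;
\item[(b)] $2\int|u^1|^{-2\beta}u^2\eta\langle\nabla_H u^2,\nabla_H\eta\rangle\,\ud\mu$;
\item[(c)] $-(2\beta+2)\int|u^1|^{-2\beta-2}u^1u^2\eta^2\langle\nabla_H u^1,\nabla_H u^2\rangle\,\ud\mu$.
\end{itemize}
Term (c) cancels exactly with the zero-order term $(2\beta+2)\int u^1\langle\nabla_H u^1,\nabla_H u^2\rangle\phi^2\,\ud\mu$. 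Hence (a) $=-$(b). Cauchy--Schwarz followed by Young's inequality, $2ab\le\tfrac12a^2+2b^2$, then gives~\eqref{Cacp ineq for u^2} with constant $4$.

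\textbf{First equation.} Take $\phi^1=u^1|u^1|^{-2\beta-2}\eta^2$. Since $\nabla_H(u^1|u^1|^{-2\beta-2})=-(2\beta+1)|u^1|^{-2\beta-2}\nabla_H u^1$, the left-hand side becomes
\[
-(2\beta+1)\int|\nabla_H u^1|^2\eta^2\,\ud\mu+2\int u^1\eta\langle\nabla_H u^1,\nabla_H\eta\rangle\,\ud\mu+(2\beta+2)\int|\nabla_H u^1|^2\eta^2\,\ud\mu .
\]
The right-hand side is $\beta\int|\nabla_H u^2|^2|u^1|^{-2\beta}\eta^2\,\ud\mu$. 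Young's inequality then yields
\[
\int|\nabla_H u^1|^2\eta^2\,\ud\mu\le 4\int|u^1|^2|\nabla_H\eta|^2\,\ud\mu+2\beta\int\frac{|\nabla_H u^2|^2}{|u^1|^{2\beta}}\eta^2\,\ud\mu .
\]
Inserting~\eqref{Cacp ineq for u^2} and adding it once more gives~\eqref{Cacp ineq ineq} with coefficient $4$ on the $|u^1|^2$ term and $4+8\beta$ on the $|u^2|^2|u^1|^{-2\beta}$ term. Both are at most $4(1+\beta)^2$.

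\textbf{Main obstacle: admissibility of the test functions.} The factor $|u^1|^{-2\beta-2}$ is neither bounded nor Sobolev near $\{u^1=0\}$, so the above is only formal. I would regularize: replace $|u^1|^{-2\beta-2}$ by $(|u^1|^2+\epsilon)^{-\beta-1}$, which is bounded and Sobolev since $u^1\in L^\infty$. Such test functions $\phi v$ are admissible by Remark~\ref{rem-sing-str}(3). With this regularization the cancellation of term (c) is no longer exact. There remains an error of the form
\[
\epsilon\int\frac{u^1u^2\eta^2\langle\nabla_H u^1,\nabla_H u^2\rangle}{(|u^1|^2+\epsilon)^{\beta+2}}\,\ud\mu ,
\]
and an analogous error appears for the first equation.

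This is precisely where the orthogonality condition~\eqref{orth-cond} enters. It bounds the integrand of the error by $K|u^1|^{\beta+3}|u^2|\eta^2/(|u^1|^2+\epsilon)^{\beta+2}$. This is dominated by $K|u^1|^{1-\beta}|u^2|\eta^2$, or after Cauchy--Schwarz by a quantity controlled through $K\in L^2_{loc}$ and $u\in HW^{1,2}$. It also carries the vanishing factor $\epsilon/(|u^1|^2+\epsilon)$. Dominated convergence should then send the error to $0$, while monotone convergence handles the main terms as $\epsilon\to0$.

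I expect the exact bookkeeping of this domination, in particular on the set where $u^1$ is small compared to $u^2$, to be the delicate part. If it fails I would first try additionally truncating $u^2$.
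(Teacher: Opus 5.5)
Your argument is the paper's argument. The paper tests with $\phi^1_\delta=\frac{1}{1+\beta}\frac{|u^1|}{(|u^1|+\delta)^{2\beta+1}}\eta^2$ and $\phi^2_\delta=\frac{u^1u^2\eta^2}{(|u^1|+\delta)^{2\beta+2}}$, which are the right choices for the divided system \eqref{weak-system-EL2}. Testing \eqref{weak-system-EL2} with $\phi$ amounts to testing \eqref{weak-system-EL} with $\phi/u^1$. So at $\delta=0$ the paper's functions are $\frac{1}{1+\beta}$ times your $\phi^1$ and exactly your $\phi^2$. Your formal computation and the constants $4$ and $4+8\beta\le 4(1+\beta)^2$ are correct.

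The paper then simply invokes monotone convergence. You correctly see that, after regularization, the cross terms in the second equation leave an unsigned remainder
\[
E_\epsilon=(2\beta+2)\,\epsilon\int_{\Om}\frac{u^1u^2\eta^2\langle\nabla_H u^1,\nabla_H u^2\rangle}{(|u^1|^2+\epsilon)^{\beta+2}}\,\ud\mu ,
\]
and that handling it is what \eqref{orth-cond} is for. This is more careful than the source. In the first equation the remainder is $(2\beta+2)\epsilon\int|u^1|^{2\beta+2}|\nabla_H u^1|^2\eta^2(|u^1|^2+\epsilon)^{-\beta-2}\,\ud\mu\geq 0$. It sits on the left-hand side, so it can simply be dropped.

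Two steps do not close as written.

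\emph{Domination.} One has $\sup_{\epsilon>0}\epsilon s^{\beta+3}(s^2+\epsilon)^{-\beta-2}=c_\beta s^{1-\beta}$. So the best $\epsilon$-independent majorant that \eqref{orth-cond} yields is $c_\beta K|u^1|^{1-\beta}|u^2|\eta^2$. This lies in $L^1(\Om,\mu)$, via $K\in L^2_{loc}$, $u^2\in L^2$ and $u^1\in L^\infty$, only when $\beta\le1$. For $\beta>1$ it is not controlled by the stated hypotheses:
\begin{itemize}
\item your Cauchy--Schwarz variant needs $|u^2||u^1|^{-\beta}\in L^2({\rm supp}\,\eta,\mu)$, while finiteness of the right-hand side of the $u^2$-inequality only gives this on ${\rm supp}\,\nabla_H\eta$;
\item truncating $u^2$ does not help, because the singularity comes from $u^1$.
\end{itemize}
So as written your proof covers $\beta\le1$. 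For $\beta>1$ it works under the extra assumption $\|u\|_{G_2^\beta}\in L^2_{loc}(\Om,\mu)$, in which case
\[
|E_\epsilon|\le C\,\Big\|\frac{u^2}{|u^1|^{\beta}}\,\eta\Big\|_{L^2(\mu)}\,\Big\|K|u^1|\eta\,\frac{\epsilon}{|u^1|^2+\epsilon}\Big\|_{L^2(\mu)}\longrightarrow 0 .
\]
The paper's sketch does not address this point either.

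\emph{Admissibility.} Remark~\ref{rem-sing-str}(3) presupposes that $v$ is bounded. But $u^2$ is not assumed bounded, and $\nabla_H\big(u^2(|u^1|^2+\epsilon)^{-\beta-1}\big)$ contains $u^2\nabla_H u^1$, which is only in $L^1$. Test instead with $T_M(u^2)\,\eta^2(|u^1|^2+\epsilon)^{-\beta-1}$, where $T_M(s)=\max\{-M,\min\{s,M\}\}$, and let $M\to\infty$ at fixed $\epsilon$. By \eqref{orth-cond} and $u^1\in L^\infty$, every term is dominated by $C_\epsilon\big(|\nabla_H u^2|^2+|u^2||\nabla_H u^2|+K|u^2|\big)\in L^1$. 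This is where your truncation idea belongs.

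Finally, your monotone limits only see the set $\{u^1\neq0\}$. That is all the statement can mean, since $u^1\equiv0$ solves \eqref{weak-system-EL} for every $u^2$.
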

As for the interplay between conditions~\eqref{orth-cond} and~\eqref{ass-lem-HW0} imposed in the Caccioppoli estimate for the weak harmonic mappings in Lemma~\ref{lem-Cac} notice that, by the Cauchy--Schwarz inequality, it holds that  
\[
\int_{\Om} \left(\frac{|\langle \nabla_H u^1, \nabla_H u^2 \rangle|}{|u^1|^{\beta+2}}\right)^2 \ud \mu\leq \int_{\Om} \frac{|\nabla_H u^1|^2}{|u^1|^3}\,\frac{|\nabla_H u^2|^2}{|u^1|^{2\beta+1}} \ud \mu,
\]
and so the condition~\eqref{ass-lem-HW0} by itself is not enough to ensure the $L^2_{loc}$ integrability of the left-hand side.
%
\begin{ex}
The harmonic mappings obtained as conjugates of holomorphic mappings satisfy condition~\eqref{orth-cond} with $K\equiv 0$, see Proposition~\ref{prop-holom}. Similarly, strong harmonic mappings whose target domain does not intersect the singular set, and so $|u^1|\geq c>0$, satisfy~\eqref{orth-cond}. 

Moreover, consider a map $u=(u^1, u^2)$ given by the following formula:
\begin{equation*}
    u^1(x,y) = \sin{(y-{x}_{\alpha})}, \qquad u^2(x,y) = \frac{1}{2}(y-{x}_{\alpha}) - \frac{1}{4}\sin{(2y-2{x}_{\alpha})}, \hbox{ where } x_{\alpha}:=\frac{x|x|^{\alpha}}{\alpha+1}.
\end{equation*}
One directly verifies that $u$ satisfies the system~\eqref{str-system-EL} in the domain $\{(x,y)\in G^2_\alpha:\, |{x}_{\alpha}|+ |x|^{k \alpha} < |y| < \frac{\pi}{4}\}$ for any $k\in [0,\frac23]$.
 
It turns out that the map $u$ satisfies~\eqref{orth-cond} for $\beta=1$ and $K \in L^{\infty}(\Om)$:
\begin{align*}
    |\langle \nabla_H u^1, \nabla_H u^2 \rangle| &\le 2|x|^{2\alpha} = \frac{2|x|^{2\alpha}}{|\sin{(y-{x}_{\alpha})}|^3}|u^1(x,y)|^3 \le \frac{2|x|^{2\alpha}}{(|y| - |{x}_{\alpha}|)^3}|u^1(x,y)|^3\\
    & < 2\frac{|x|^{2\alpha}}{|x|^{3k\alpha}}|u^1(x,y)|^3 \lesssim |u^1(x,y)|^3,
\end{align*}
where we used the elementary inequality $|\sin{t}| \ge \frac{2}{\pi}|t|$ for $t \in [-\frac{2}{\pi},\frac{2}{\pi}]$.
\end{ex}

\begin{proof}[Proof of Lemma~\ref{lem-Cac-str}] The proof is similar to the one for Lemma~\ref{lem-Cac} and therefore we comment only the key steps of the reasoning. 
Let $\eta \in C_0^{\infty}(\Om)$, $\delta>0$. Then the following mappings $\phi_{\delta} = (\phi^1_{\delta},\phi^2_{\delta})$ are test mappings for the system~\eqref{weak-system-EL}:
\begin{equation*}
 \phi^1_{\delta} := \frac{1}{1+\beta} \frac{|u^1|}{(|u^1|+\delta)^{2\beta+1}} \eta^2, \qquad
 \phi^2_{\delta} := \frac{u^1}{(|u^1|+\delta)^{2\beta+2}} u^2\eta^2.
\end{equation*}
This observation together with application of the Lebesgue monotone convergence theorem when $\delta\to 0^{+}$ allows us to conclude the Caccioppoli estimates~\eqref{Cacp ineq ineq} and~\eqref{Cacp ineq for u^2}.
\end{proof}
The following consequence of the Caccioppoli estimates will be useful in the proof of the weak Harnack estimates, see Theorem~\ref{weak-harnack-u2}.

\begin{cor}\label{Caccioppoli inequality for positive part remark}
    Let $\alpha \in [0,1)$ and $\Om \subset G_2^{\alpha}$ be an open set. Let further $u=(u^1,u^2):\Omega \to G_2^{\beta}$ be a strong harmonic mapping satisfying Definition~\ref{str-solution} and condition~\eqref{orth-cond} with
 $K \in L^2_{\textrm{loc}}(\Om,\mu)$. Then for each $\eta \in C_0^{\infty}(\Om)$ it holds that:
\begin{equation}\label{Cacp ineq for postive part/negative of u^2}
    \int_{\Om} \frac{|\nabla_H u^2_{\pm}|^2}{|u^1|^{2\beta}} \eta^2 \ud \mu \le 4 \int_{\Om} \frac{|u^2_{\pm}|^2}{|u^1|^{2\beta}} |\nabla_H \eta|^2 \ud \mu,
\end{equation}
where $u^2_{+}:=\max\{u^2, 0\}$ and $u^2_{-}:=\max\{-u^2, 0\}$ denote the positive and the negative part of coordinate function $u^2$, respectively.
\end{cor}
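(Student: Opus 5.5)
We follow the proof of \eqref{Cacp ineq for u^2} in Lemma~\ref{lem-Cac-str}, with $u^2$ replaced by $u^2_{+}$ (and by $u^2_-$ in the symmetric case). The plan is to test the second equation of \eqref{weak-system-EL} with
\[
\phi^2_\delta:=\frac{u^1}{(|u^1|+\delta)^{2\beta+2}}\,u^2_{+}\,\eta^2,\qquad \delta>0,
\]
in place of $\frac{u^1}{(|u^1|+\delta)^{2\beta+2}}u^2\eta^2$. Admissibility comes from Remark~\ref{rem-sing-str}(3) together with the standard fact that truncation preserves $HW^{1,2}$. Indeed, by Observation~\ref{inclusion into Euclidean Sobolev space} we have $u^2\in W^{1,1}$, so $\nabla_H u^2_+=\chi_{\{u^2>0\}}\nabla_H u^2$ a.e. The factor $u^1/(|u^1|+\delta)^{2\beta+2}$ is bounded with bounded gradient coefficient, since $u^1\in L^\infty$. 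For the product to be an admissible test function we need $u^2_+$ bounded on $\operatorname{supp}\eta$; as in the lemma, we assume $u^2\in L^\infty_{loc}$, or else truncate $u^2_+$ at level $k$ and let $k\to\infty$ by monotone convergence.

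Next we expand. We have $\nabla_H\phi^2_\delta=\frac{u^1\eta^2}{(|u^1|+\delta)^{2\beta+2}}\nabla_H u^2_+ + u^2_+\eta^2\nabla_H\big(\frac{u^1}{(|u^1|+\delta)^{2\beta+2}}\big)+\frac{2u^1u^2_+\eta}{(|u^1|+\delta)^{2\beta+2}}\nabla_H\eta$. The crucial observation is that on $\{u^2>0\}$ we have $\nabla_H u^2=\nabla_H u^2_+$, while $u^2_+=0$ elsewhere. Hence every term in the tested identity,
\[
\int (u^1)^2\langle\nabla_H u^2,\nabla_H\phi^2_\delta\rangle\,\ud\mu+(2\beta+2)\int u^1\langle\nabla_H u^1,\nabla_H u^2\rangle\phi^2_\delta\,\ud\mu=0,
\]
can be rewritten with $u^2$ replaced by $u^2_+$. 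We thus obtain exactly the identity used in the lemma, with $u^2$ replaced by $u^2_+$.

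The cross terms containing $\langle\nabla_H u^1,\nabla_H u^2_+\rangle$ are then handled as in Lemma~\ref{lem-Cac-str}. They either cancel for the chosen exponent $2\beta+2$, or are controlled by the orthogonality condition \eqref{orth-cond}: since $|\langle\nabla_H u^1,\nabla_H u^2_+\rangle|\le|\langle\nabla_H u^1,\nabla_H u^2\rangle|$, the bound with $K\in L^2_{loc}$ transfers verbatim. Then we use Young's inequality on the $\nabla_H\eta$ term to get
\[
\int\frac{(u^1)^4|\nabla_H u^2_+|^2}{(|u^1|+\delta)^{2\beta+2}}\,\frac{\eta^2}{(u^1)^2}\,\ud\mu\lesssim 4\int \frac{(u^1)^2|u^2_+|^2}{(|u^1|+\delta)^{2\beta+2}}|\nabla_H\eta|^2\,\ud\mu+\text{(error terms vanishing as }\delta\to0).
\]
Letting $\delta\to0^+$ by monotone convergence gives \eqref{Cacp ineq for postive part/negative of u^2}. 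The case of $u^2_-$ follows by applying the same argument to $-u^2$: the map $(u^1,-u^2)$ is again strong harmonic, since both equations are odd in $u^2$ and \eqref{orth-cond} is invariant under this sign change.

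The main obstacle is the rigorous justification of the test function $\phi^2_\delta$: the product of the truncation $u^2_+$ with the nonsmooth factor in $u^1$, near the set $\{u^1=0\}$ and near $S$. We would handle this via Proposition~\ref{prop-dens} and the boundedness argument of Remark~\ref{rem-sing-str}(3), checking that the chain rule for $u^2_+$ holds in $HW^{1,2}$. This is inherited from $W^{1,1}$ through Observation~\ref{inclusion into Euclidean Sobolev space}, because $Yv=|x|^\alpha v_y$.
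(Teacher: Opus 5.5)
\textbf{There is a genuine gap: your test function is paired with the wrong form of the equation.} Your test function coincides with the paper's, $\phi^2_\delta=(|u^1|+\delta)^{-2\beta-2}u^1u^2_{\pm}\eta^2$. However, it suits the divided system \eqref{weak-system-EL2} of Remark~\ref{rem-sing-str}(1), not the undivided equation you write out, which has weight $(u^1)^2$ and coefficient $2\beta+2$. With your combination the powers of $u^1$ are off by one, and the key step fails.

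Write $\phi^2_\delta=g(u^1)u^2_+\eta^2$ with $g(t)=t(|t|+\delta)^{-2\beta-2}$, so $g'(t)=(|t|+\delta)^{-2\beta-3}(\delta-(2\beta+1)|t|)$. The principal term of your tested identity is then
\[
\frac{(u^1)^3}{(|u^1|+\delta)^{2\beta+2}}\,|\nabla_H u^2_+|^2\eta^2 .
\]
This changes sign with $u^1$ and tends to $\sgn(u^1)|u^1|^{1-2\beta}|\nabla_H u^2_+|^2\eta^2$. The total coefficient of $u^2_+\eta^2\langle\nabla_H u^1,\nabla_H u^2\rangle$ is
\[
(u^1)^2g'(u^1)+(2\beta+2)u^1g(u^1)=\frac{(u^1)^2\big(|u^1|+(2\beta+3)\delta\big)}{(|u^1|+\delta)^{2\beta+3}}\longrightarrow |u^1|^{-2\beta}.
\]
So the cross terms do not cancel, and bounding them by \eqref{orth-cond} leaves a non-vanishing extra term on the right-hand side. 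In particular, the left-hand side you display after Young's inequality, with weight $(u^1)^2(|u^1|+\delta)^{-2\beta-2}$, is not what your identity produces. Your hedge ``either cancel or are controlled by \eqref{orth-cond}'' leaves exactly the decisive point open.

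\textbf{The repair is simple.} Test \eqref{weak-system-EL2} with your $\phi^2_\delta$. Equivalently, test the second equation of \eqref{weak-system-EL} with $(|u^1|+\delta)^{-2\beta-2}u^2_+\eta^2$, i.e.\ without the factor $u^1$.
\begin{itemize}
\item The principal weight becomes $w_\delta=(u^1)^2(|u^1|+\delta)^{-2\beta-2}\ge0$, which increases to $|u^1|^{-2\beta}$ as $\delta\downarrow0$.
\item The cross coefficient is exactly $(2\beta+2)\delta\,u^1(|u^1|+\delta)^{-2\beta-3}$. So the leading parts cancel identically, and only an $O(\delta)$ remainder survives. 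It is bounded by $(2\beta+2)K\,u^2_+\eta^2\,\delta|u^1|^{\beta+3}(|u^1|+\delta)^{-2\beta-3}$, which tends to $0$ pointwise.
\item This remainder is the only place where \eqref{orth-cond} with $K\in L^2_{loc}$ is needed: it is removed by dominated convergence.
\item Use $u^2_+\nabla_H u^2=u^2_+\nabla_H u^2_+$ and $2ab\le\frac12a^2+2b^2$ on the $\nabla_H\eta$ term. This gives $\int w_\delta|\nabla_H u^2_+|^2\eta^2\,\ud\mu\le4\int w_\delta(u^2_+)^2|\nabla_H\eta|^2\,\ud\mu+o(1)$.
\item Monotone convergence as $\delta\to0$ then yields \eqref{Cacp ineq for postive part/negative of u^2}.
\end{itemize}
With this correction your argument is the paper's. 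A minor point: the first equation is even, not odd, in $u^2$. Only the linearity of the second equation in $u^2$ matters for the $u^2_-$ case, so your conclusion there still holds.
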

\begin{proof}
    The proof is analogous to the one of Lemma~\ref{lem-Cac-str} and relies on the following slight modification of the test mapping $\phi_{\delta}$:
 \begin{equation*}
 \phi^1_{\delta} := (1+\beta)^{-1}(|u^1|+\delta)^{-2\beta} \eta^2, \qquad
 \phi^2_{\delta} := (|u^1|+\delta)^{-2\beta-2}u^1u^2_{\pm}\eta^2.
\end{equation*}
\end{proof}

Both the Caccioppoli estimates: for the weak and strong harmonic mappings allow us to infer a variant of the Liouville theorem.
\begin{prop}[Liouville theorem]\label{thm-Liouv} Let $\alpha \in [0,1)$ and let $u=(u^1,u^2):G_2^{\alpha} \to G_2^{\beta}$ be either:
\begin{itemize}
\item
 an entire weakly harmonic map in $HW^{1,2}(G_2^{\alpha}, \ud \mu; G_2^{\beta}) \cap L^\infty_{loc}(\Om)$ such that $u$ additionally satisfies condition~\eqref{ass-lem-HW0}, or
\item  an entire strong harmonic map in $HW^{1,2}(G_2^{\alpha}, \ud \mu;G_2^{\beta})$ such that $u^1\in L^\infty_{loc}(\Om)$, satisfying~\eqref{orth-cond} with $K \in L^2_{\textrm{loc}}(\Om,\mu)$. 
\end{itemize}
If $\| u(\cdot) \|_{G_2^{\alpha}} \in L^2(G_2^{\alpha},\mu)$, then the image $u(G_2^{\alpha})$ is a point.
\end{prop}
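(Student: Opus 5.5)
The plan is the standard Liouville argument: apply the Caccioppoli estimates with cut-offs on larger and larger balls and let the radius go to infinity. Fix a point $x_0\in G_2^\alpha$ and $R>0$. Let $\eta_R$ be a cut-off function with $\eta_R\equiv 1$ on $B(x_0,R)$, $\operatorname{supp}\eta_R\subset B(x_0,2R)$, $0\le\eta_R\le 1$ and $|\nabla_H\eta_R|\le C/R$.

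\emph{Constructing the cut-off.} Take $\eta_R=\psi(d_{CC}(x_0,\cdot)/R)$ with $\psi$ smooth. Since $d_{CC}$ is Lipschitz along horizontal curves, we get $|\nabla_H d_{CC}(x_0,\cdot)|\le 1$ a.e. If strict smoothness is needed, mollify; Remark~\ref{rem-C0-density} and Lemma~\ref{lem-HW0} (respectively Remark~\ref{rem-sing-str}(3) in the strong case) allow such Lipschitz test functions to be used in the Caccioppoli estimates.

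\emph{Applying the Caccioppoli estimates.} In the weak case we use Lemma~\ref{lem-Cac}; in the strong case we use Lemma~\ref{lem-Cac-str}, whose hypothesis \eqref{orth-cond} is assumed. Both give
\[
\int_{B(x_0,R)}\|D_Hu\|^2_{G_2^\beta}\,\ud\mu\le c(\beta)\int_{G_2^\alpha}\|u\|^2_{G_2^\beta}|\nabla_H\eta_R|^2\,\ud\mu\le \frac{C\,c(\beta)}{R^2}\int_{G_2^\alpha}\|u\|^2_{G_2^\beta}\,\ud\mu .
\]
The last integral is finite by assumption; we read $\|u\|_{G_2^\alpha}$ in the statement as the target norm \eqref{defn-subr-unorm}. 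Letting $R\to\infty$ and using monotone convergence gives $\int_{G_2^\alpha}\|D_Hu\|^2\,\ud\mu=0$. Here we do not even need Ahlfors regularity (Lemma~\ref{lem-Ahl-Muck}), because $\|u\|\in L^2$ globally and the factor $R^{-2}$ alone kills the right-hand side.

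\emph{Concluding constancy.} We now have $\nabla_Hu^1=0$ $\mu$-a.e. Also $|\nabla_Hu^2|^2|u^1|^{-2\beta}=0$, and since $u^1\in L^\infty_{loc}$ this forces $\nabla_Hu^2=0$ a.e. By Observation~\ref{inclusion into Euclidean Sobolev space}, on every bounded domain $u^i\in W^{1,1}$ with Euclidean gradient $(Xu^i,|x|^{-\alpha}Yu^i)=0$. Hence each $u^i$ is constant on balls, and since $\R^2$ is connected, constant globally. Therefore $u(G_2^\alpha)$ is a point.

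\emph{Main obstacle.} The main technical point is justifying that the cut-off test functions $u^i\eta_R^2$ (and the $\delta$-regularized ones in the strong case) are admissible on all of $G_2^\alpha$ with $\eta_R$ only Lipschitz. The Caccioppoli lemmas are stated for open sets $\Om$ and $\eta\in C_0^\infty(\Om)$, so I would apply them with $\Om=B(x_0,3R)$ and a smoothed $\eta_R$. For this I need a uniform bound on $|\nabla_H\eta_R|$. It comes from the Euclidean-smooth choice $\eta_R(x,y)=\psi(\rho(x,y)/R)$, where $\rho$ is the explicit homogeneous gauge $\rho=(|x|^{2(1+\alpha)}+ (1+\alpha)^2 y^2)^{1/(2(1+\alpha))}$, which is equivalent to $d_{CC}$ and satisfies $|\nabla_H\rho|\lesssim 1$ by direct computation. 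I would try this explicit gauge first. A secondary subtlety is the points where $u^1=0$: there $\|u\|_{G_2^\beta}$ is only defined in a limiting sense, but its finite $L^2$ norm already makes the term $|u^2|^2|u^1|^{-2\beta}$ integrable, which is all the argument uses.
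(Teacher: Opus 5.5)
Your proposal is correct and is essentially the paper's argument: a cut-off $\eta$ with $\eta\equiv 1$ on $B(0,r)$, support in $B(0,2r)$ and $|\nabla_H\eta|\le C/r$ is inserted into Lemma~\ref{lem-Cac} (weak case) or Lemma~\ref{lem-Cac-str} (strong case), and then the global $L^2$ bound on $\|u\|_{G_2^\beta}$ together with the factor $r^{-2}$ forces $\int\|D_Hu\|_{G_2^\beta}^2\,\ud\mu=0$. Your extra details only make explicit what the paper leaves implicit: the explicit gauge cut-off (which is $C^2$ rather than $C^\infty$ away from the origin, but still admissible via Lemma~\ref{lem-HW0}), working on a bounded $\Om$ so that $u^1\in L^\infty$ in the strong case, and the passage from $D_Hu=0$ to constancy via Observation~\ref{inclusion into Euclidean Sobolev space}.
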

\begin{proof}
  Let $\eta \in C_0^{\infty}(G_2^{\alpha})$ be such that $\textrm{supp}(\eta) \subset B(0,2r)$ for some $r>0$, satisfying $0 \le \eta \le 1$, $\eta \equiv 1$ in $B(0,r)$ and such that $|\nabla_H \eta| \le \frac{C}{r}$, where $B(0,r)$ and $B(0,2r)$ denote the Grushin balls centered at $0$ with radius $r$ and $2r$, respectively. That such function exists in 
 $G^2_{\alpha}$ follows, for instance, from Observation 2.1 in \cite{aww} or Lemma 5.4 in~\cite{fl83}. By direct application of Lemma~\ref{lem-Cac}, respectively Lemma~\ref{lem-Cac-str} with the test function $\eta$ we obtain the following estimate:
    \begin{equation*}
        \int_{B(0,r)}\| D_H u(x) \|_{G_2^{\beta}}^2 \ud \mu(x) \lesssim_{\beta} \frac{C^2}{r^2} \int_{B(0,2r)} \| u(x) \|_{G_2^{\beta}}^2 \ud \mu(x) \lesssim_{\beta} \frac{1}{r^2} \big\|\,\|u(\cdot)\|_{G_2^{\beta}}\,\big\|^2_{L^2(G^2_{\alpha},\mu)}\to 0, \hbox{ as } r\to \infty.
    \end{equation*}
This completes the proof of Proposition~\ref{thm-Liouv}.
\end{proof}

\section{Examples of harmonic mappings}\label{sect2}

The purpose of this section is to provide a number of examples of Grushin harmonic mappings satisfying system~\eqref{system-EL}. Since the non-divergence form~\eqref{str-system-EL} is more convenient for the direct verification if a given map is Grushin harmonic, in this section we use~\eqref{str-system-EL}.  Our presentation is divided into the following categories of examples:
\begin{itemize}
\item[(1)] Harmonic mappings arising from the Euclidean plane obtained by pull-back of the Euclidean harmonic mappings via the canonical quasisymmetric map between a $G_2^\alpha$ and $\C$.
\item[(2)] Harmonic mappings coming from the Grushin conformal mappings.
\item[(3)] Harmonic mappings obtained by the separation of variables method.
\end{itemize}

Recall the so--called \textit{canonical quasisymmetry} of a Grushin plane $G_2^{\gamma}$  and the complex plane, $\varphi_{\gamma}:G_2^{\gamma} \to \R^2$, defined with the formula
\begin{equation}\label{map-mey}
  \varphi_{\gamma}(x,y):= \left(\frac{x|x|^{\gamma}}{\gamma+1}, y\right)= (x_{\gamma}, y)
\end{equation}
where we use a parameter $\gamma>0$ in order to distinguish it from the $\alpha$ and $\beta$ cases.

Let $h=(h^1,h^2):\Om\subset \R^2 \to \mathbb{R}^2$ be a Euclidean harmonic map, i.e. $\Delta h^1=0$ and $\Delta h^2=0$ in $\Om$. Set the composition map
\[
u:=\varphi_{\beta}^{-1} \circ h \circ \varphi_{\alpha}: \phi_\alpha(\Om)\subset G_2^{\alpha} \to G_2^{\beta}.
\]
By applying the definition of map $\phi_\beta$ in~\eqref{map-mey} we find that
\begin{equation}\label{comp-map}
 u=\varphi_{\beta}^{-1} \circ h \circ \varphi_{\alpha}=\Big((\beta+1)^{\frac{1}{\beta+1}}h^1\,|h^1|^{-\frac{\beta}{\beta+1}},h^2\Big)\circ \varphi_\alpha.
\end{equation}
This way of generating the Grushin harmonic mappings is sensitive to affine translations and so, not all planar (Euclidean) harmonic mappings give rise to the Grushin harmonic ones. Indeed, suppose that $h=(h^1,h^2)$ defines the Grushin harmonic map $u=(u^1, u^2)$ as in~\eqref{comp-map}. Then $(h^1, h^2+x)$ remains planar harmonic, however the map 
\[
 v=\varphi_{\beta}^{-1} \circ (h^1, h^2+x) \circ \varphi_{\alpha}=\left(u^1,u^2+x_\alpha\right)
\]
satisfies that $|\nabla_H v^2|^2=(u^2_x + |x|^{\alpha})^2 + |x|^{2\alpha}(v_y^2)^2\not = |\nabla_H u^2|^2$ and so, fails in general the first equation in~\eqref{str-system-EL}.

Let us move toward some positive examples and assume that $\Omega \subset G_2^{\alpha}$ and $\Omega' \subset G_2^{\beta}$ are open sets. By direct computations we find the first order derivatives of $u$ in a set $\{u^1 \neq 0\}$:
\begin{equation}\label{first order derivatives of u}
\begin{aligned}
 &u^1_x = \frac{|x|^\alpha}{|u^1|^{\beta}} h^1_{w_1}(w_1,w_2)|_{w=\varphi_{\alpha}(x,y)}, \quad u^1_y = \frac{1}{|u^1|^{\beta}} h^1_{w_2}(w_1, w_2)|_{w=\varphi_{\alpha}(x,y)}\\
 &u^2_x = |x|^{\alpha}h^2_{w_1}(w_1,w_2)|_{w=\varphi_{\alpha}(x,y)}, \quad \ \ \ u^2_y = h^2_{w_2}(w_1,w_2)|_{w=\varphi_{\alpha}(x,y)}.
\end{aligned}
\end{equation}
\begin{ex}\label{example holomorphic 1}
    Let $h=(e^{x}\cos{y},e^x\sin{y})$. Then $u=(u^1,u^2):=\varphi_{\beta}^{-1} \circ h \circ \varphi_{\alpha}$ satisfies pointwisely system \eqref{str-system-EL} in the set $\{(x,y): x \neq 0, \  \cos{y} \neq 0\}$. Indeed, we apply~\eqref{first order derivatives of u} to obtain that
\begin{equation*}
\begin{aligned}
    &u^1_x = \frac{|x|^\alpha}{|u^1|^{\beta}} e^{x_{\alpha}}\cos{y}, \quad u^1_y = -\frac{1}{|u^1|^{\beta}} e^{x_{\alpha}}\sin{y},\\
    &u^2_x = |x|^{\alpha}e^{x_{\alpha}}\sin{y}, \quad \ \ \ u^2_y = e^{x_{\alpha}}\cos{y}.
\end{aligned}    
\end{equation*}
Furthermore, the analogous computations result in the second--order partial derivatives of $u$ as follows: 
\begin{equation*}
\begin{aligned}
  &u^1_{xx}=|u^1|^{-\beta}|x|^{2\alpha}e^{x_{\alpha}}\cos{y}+\alpha \frac{|x|^{\alpha}}{x}|u^1|^{-\beta}e^{x_{\alpha}}\cos{y}-\beta \frac{|x|^{2\alpha}}{|u^1|^{2\beta}u^1}(e^{x_{\alpha}}\cos{y})^2,\\
    &u^1_{yy}= -\beta \frac{1}{|u^1|^{2\beta}u^1}(e^{x_{\alpha}}\sin{y})^2 - |u^1|^{-\beta}e^{x_{\alpha}}\cos{y},\\
    &u^2_{xx}= \frac{\alpha|x|^{\alpha}}{x} e^{x_{\alpha}}\sin{y}+|x|^{2\alpha}e^{x_{\alpha}}\sin{y},\\
    & u^2_{yy}=-e^{x_{\alpha}}\sin{y}.
\end{aligned}
\end{equation*}
In a consequence, we find that 
\begin{equation*}
\begin{aligned}
    &|u^1|^{2\beta+2}\Big(\Delta_{G_2^{\alpha}} u^1 -\alpha \frac{x}{|x|^2}u^1_x\Big) +\beta u^1|\nabla_H u^2|^2=0,\\
    &\Delta_{G_2^{\alpha}} u^2  - \alpha \frac{x}{|x|^2}u^2_x = \langle \nabla_H u^1, \nabla_H u^2 \rangle=0,
\end{aligned}
\end{equation*}
and thus, map $u$ solves the system~\eqref{str-system-EL}.
%
\end{ex}

\begin{ex}\label{example holomorphic 2}
     Let $h=(x^2-y^2,2xy)$. Then $u=(u^1,u^2):=\varphi_{\beta}^{-1} \circ h \circ \varphi_{\alpha}$ satisfies system \eqref{str-system-EL} in a set $\{x \neq 0 , \ x \neq y\}$. The computations go along the same lines as those in Example~\ref{example holomorphic 1} and thus, we omit them.
\end{ex}

Note also that we may directly check that the perturbed maps $(u^1,u^2+x_\alpha)$ in Examples~\ref{example holomorphic 1} and~\ref{example holomorphic 2} are not Grushin harmonic. 

Observe that in both examples above, harmonic maps $h$ are actually complex holomorphic functions after intrinsic change of domains and targets to $\mathbb{C}$. That is no mere coincidence, in fact the following observation holds.

\begin{prop}\label{prop-holom}
    Let $\Om \subset G_2^{\alpha}$, $\Om' \subset G_2^{\beta}$ be open sets and let $\alpha, \beta \in [0,1)$. Furthermore, let $h=(h^1,h^2):\varphi_{\alpha}(\Om) \to \varphi_{\beta}(\Om')$ be a holomorphic function with $h^1 \not\equiv 0$ and set 
$$
 u:=(u^1,u^2):=\varphi_{\beta}^{-1} \circ h \circ \varphi_{\alpha}: \Om \to \Om'.
$$
Then $u\in HW^{1,2}_{loc}(\Om, \mu; G_2^\beta)$, $u^1\in L^\infty_{loc}(\Om)$ and $u$ is (locally) a strong harmonic mapping, i.e. $u$ is a weak solution to~\eqref{str-system-EL} on every open subset $\Om'\Subset \Om$. In particular, the assertion holds also in the case $\Om$ intersects the singular set $S$. 
\end{prop}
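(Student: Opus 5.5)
The proof transfers the whole problem to the Euclidean $w$-plane through the canonical quasisymmetry $\varphi_\alpha$. There the strong system~\eqref{weak-system-EL} should become a Euclidean weak system with bounded, locally integrable coefficients, and the Cauchy--Riemann equations should make its verification nearly trivial. Throughout I write $w=(w_1,w_2)=\varphi_\alpha(x,y)=(x_\alpha,y)$.

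\emph{Step 1: formulas.} By~\eqref{comp-map}, $u^1$ is characterised by $\frac{1}{\beta+1}|u^1|^{\beta}u^1=h^1\circ\varphi_\alpha$ and $u^2=h^2\circ\varphi_\alpha$. In particular $|u^1|\approx|h^1\circ\varphi_\alpha|^{\frac1{\beta+1}}$, so $u^1\in L^\infty_{loc}(\Om)$ and $u^1$ is continuous. Using~\eqref{first order derivatives of u} and the Cauchy--Riemann equations $h^1_{w_1}=h^2_{w_2}$, $h^1_{w_2}=-h^2_{w_1}$, I expect on $\{x\neq0,\ u^1\neq0\}$ that
\begin{equation*}
\nabla_H u^1=\frac{|x|^\alpha}{|u^1|^\beta}\,(\nabla_w h^1)\circ\varphi_\alpha,\qquad \nabla_H u^2=|x|^\alpha(\nabla_w h^2)\circ\varphi_\alpha,
\end{equation*}
with $|\nabla_w h^1|=|\nabla_w h^2|=|h'|$ and $\langle\nabla_H u^1,\nabla_H u^2\rangle=0$. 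The orthogonality is the identity behind the remark that $K\equiv0$ in~\eqref{orth-cond}. The change of variables rests on two facts: $\ud w=|x|^\alpha\ud x\ud y$, so $|x|^{2\alpha}\ud\mu=\ud w$; and $\nabla_H\phi=|x|^\alpha(\nabla_w\tilde\phi)\circ\varphi_\alpha$ with $\tilde\phi:=\phi\circ\varphi_\alpha^{-1}$.

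\emph{Step 2: Sobolev regularity.} Under this change of variables, $\int|\nabla_H u^2|^2\ud\mu=\int|h'|^2\ud w<\infty$ locally, and
\begin{equation*}
\int|\nabla_H u^1|^2\ud\mu=\int|h'|^2\,|h^1|^{-\frac{2\beta}{\beta+1}}\ud w .
\end{equation*}
The exponent $s=\frac{2\beta}{\beta+1}$ lies in $[0,1)$ because $\beta<1$, and $h^1\not\equiv0$. I would prove local integrability of $|h'|^2|h^1|^{-s}$ by cases.
\begin{itemize}
\item Near a point with $h'\neq0$, the function $h^1$ is a regular coordinate, and $|t|^{-s}$ is integrable.
\item Near an isolated critical point $z_0$ of order $m$, I write $h-h(z_0)\sim z^m$. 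If $h^1(z_0)=0$, then $h^1\sim r^m\cos(m\theta+c)$ and $|h'|^2\sim r^{2m-2}$, and $\int r^{2m-1-ms}|\cos(m\theta+c)|^{-s}\ud r\,\ud\theta<\infty$ since $s<1$.
\end{itemize}
Compactness then gives $u\in HW^{1,2}_{loc}(\Om,\mu;G_2^\beta)$, with the distributional derivatives across $S$ and across $\{u^1=0\}$ identified via Observation~\ref{inclusion into Euclidean Sobolev space} and the continuity of $u$.

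\emph{Step 3: the equations.} Take $\phi\in C_0^\infty(\Om')$ with $\Om'\Subset\Om$. In $w$-variables the second equation of~\eqref{weak-system-EL} reads
\begin{equation*}
\int (u^1)^2\langle\nabla_w h^2,\nabla_w\tilde\phi\rangle\,\ud w+(2\beta+2)\int u^1\cdot 0\cdot\tilde\phi\,\ud w=0 .
\end{equation*}
The first equation becomes
\begin{equation*}
\int|u^1|^{\beta+2}\langle\nabla_w h^1,\nabla_w\tilde\phi\rangle\ud w+(2\beta+2)\int u^1|h'|^2\tilde\phi\,\ud w=\beta\int u^1|h'|^2\tilde\phi\,\ud w,
\end{equation*}
where, slightly loosely, I absorb the sign of $u^1$ in the first term. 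All coefficients here are continuous and bounded, and $\tilde\phi\in W^{1,1}$ with compact support, because $|\nabla_w\tilde\phi|\lesssim|x|^{-\alpha}$ is integrable in $w$. So $S$ disappears entirely.

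The remaining singular set is $Z=\{h^1=0\}$, a locally finite union of analytic arcs. On the complement of an $\epsilon$-neighbourhood $Z_\epsilon$, the identities hold classically: this is the pointwise computation of Examples~\ref{example holomorphic 1}--\ref{example holomorphic 2}, done in general with $\Delta_w h^i=0$ and Cauchy--Riemann. I then integrate by parts on $\R^2\setminus Z_\epsilon$, after first approximating $\tilde\phi$ by smooth functions in $W^{1,1}$. The boundary fluxes are $(u^1)^2\partial_\nu h^2\,\tilde\phi$ and $|u^1|^{\beta+2}\partial_\nu h^1\,\tilde\phi$; both are bounded by $|h^1|^{\frac{2}{\beta+1}}|h'|$ on $\partial Z_\epsilon$, which tends to $0$ while the lengths $|\partial Z_\epsilon|$ stay bounded. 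The bulk terms converge by dominated convergence.

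\emph{Main obstacle.} I expect the hardest step to be justifying the integrals and the limit near $Z$, particularly at critical points of $h$ lying on $Z$. There the lengths of $\partial Z_\epsilon$ and the local integrability estimate of Step 2 must be checked together, again using the local model $z^m$. A second point needing care is the density argument allowing the merely $W^{1,1}$ test function $\tilde\phi$, which I would handle as in Remark~\ref{rem-sing-str}(3).
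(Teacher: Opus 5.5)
Your argument is correct in substance, and it takes a genuinely different route from the paper's. The paper works in the $(x,y)$-plane throughout:
\begin{itemize}
\item it controls $|\nabla_H u^1|$ near $\{u^1=0\}$ by the Cheng--Yau bound $|\nabla h^1|/|h^1|\lesssim d(\cdot,Z)^{-1}$ together with a coarea argument;
\item it obtains Sobolev membership from an ACL/Banach--Zarecki argument;
\item it verifies the divided system \eqref{weak-system-EL2} by integrating by parts, using the Grushin--Cauchy--Riemann system \eqref{CR equations in Grushin setting} and $u^1|u^1|^{\beta}=(\beta+1)h^1\circ\varphi_\alpha$.
\end{itemize}
You instead pull everything back by $\varphi_\alpha$. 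There $|x|^{2\alpha}\ud\mu=\ud w$ and $\nabla_H\phi=|x|^{\alpha}(\nabla_w\tilde\phi)\circ\varphi_\alpha$, so both the weight and $S$ disappear. The energy bound becomes, up to a constant, $\int|h'|^2|h^1|^{-2\beta/(\beta+1)}\ud w<\infty$. You then verify the undivided system \eqref{weak-system-EL} of Definition~\ref{str-solution} directly. Your local computation is right, but make it exact with a normal form rather than ``$\sim$'': write $h-h(z_0)=g^m$ with $g$ conformal, so that $h^1=\rho^m\cos m\theta$ exactly in the coordinate $g=\rho e^{i\theta}$. This route is cleaner than the paper's coarea step, which after the substitution $w=\varphi_\alpha(x,y)$ is just $\int d(w,Z)^{-\gamma}\ud w$. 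It also shows exactly where $\beta<1$ enters.

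Two points need attention.

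First, your identification of the pointwise derivatives of $u^1$ with its distributional ones is not justified by what you cite. Observation~\ref{inclusion into Euclidean Sobolev space} only relates the two kinds of derivatives once one of them is known to exist. Continuity plus an $L^2$ pointwise gradient off $\{u^1=0\}\cup S$ does not exclude a singular part of $Du^1$; a Cantor staircase is continuous with zero derivative a.e. You need some absolute continuity, which the paper's Banach--Zarecki argument supplies. Within your setting there is a quick fix:
\begin{itemize}
\item Write $u^1=F(h^1\circ\varphi_\alpha)$ with $F(t)=\sgn(t)((\beta+1)|t|)^{1/(\beta+1)}$ and $h^1\circ\varphi_\alpha\in C^1$.
\item Approximate $F$ uniformly by the $C^1$ functions $F_\epsilon(t)=\sgn(t)\big[((\beta+1)(|t|+\epsilon))^{1/(\beta+1)}-((\beta+1)\epsilon)^{1/(\beta+1)}\big]$, which satisfy $0\le F_\epsilon'\le F'$.
\item Pass to the limit in $\nabla(F_\epsilon\circ h^1\circ\varphi_\alpha)$ by dominated convergence, using your integrability bound.
\item Then the $(x^2+\delta)^{\alpha/2}$ approximation from the proof of the Observation gives $Yu^1=|x|^{\alpha}u^1_y$.
\end{itemize}
A similar line-wise argument is needed to justify $\tilde\phi\in W^{1,1}$.

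Second, the excision of $Z_\epsilon$ in Step 3 is unnecessary, and so is the ``main obstacle'' you anticipate there. Your claim that $|\partial Z_\epsilon|$ stays bounded would itself need proof. Note that $|u^1|^{\beta+2}=((\beta+1)|h^1|)^{\frac{\beta+2}{\beta+1}}$ and $(u^1)^2=((\beta+1)|h^1|)^{\frac{2}{\beta+1}}$. Both exponents exceed $1$, the second precisely because $\beta<1$, so both are $C^1$ functions of $w$. Their gradients are $\nabla_w|u^1|^{\beta+2}=(\beta+2)u^1\nabla_w h^1$ and $\nabla_w(u^1)^2=2u^1|u^1|^{-\beta}\nabla_w h^1$. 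Hence $|u^1|^{\beta+2}\nabla_w h^1$ and $(u^1)^2\nabla_w h^2$ are $C^1$ vector fields. Their divergences are $(\beta+2)u^1|h'|^2$ and $2u^1|u^1|^{-\beta}\langle\nabla_w h^1,\nabla_w h^2\rangle=0$. A single integration by parts against $\tilde\phi$ then gives both equations, since $-(\beta+2)+(2\beta+2)=\beta$. No boundary terms arise, and no analysis near critical points on $Z$ is needed.
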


In other words, the above proposition shows that every holomorphic function in the planar domain is (strong) Grushin harmonic up to compositions with the appropriate canonical quasisymmetric maps (i.e. the Meyerson maps). Nevertheless, it turns out that not every map satisfying system~\eqref{str-system-EL} is the conjugate of a holomorphic function, as illustrated by the following example.

\begin{ex}\label{example of not holomorphic}
    Let $C \in \mathbb{R}$ and $v:G^2_{\alpha} \to \R$ be a non-constant Grushin harmonic function as considered in~\cite{aww}. i.e. $v$ satisfies the following equation in the weak sense:
\begin{equation*}
0= {\rm div}_{G_2^\alpha}\left(\frac{\nabla_H v}{|x|^{\alpha}}\right)=\left(\frac{v_x}{|x|^\alpha}\right)_{\!x}+\left(v_y\right)_y.
\end{equation*}
 Consider the map $u=(v,C):G_2^{\alpha} \to \mathbb{R}\times \{C\} \subset G_2^{\beta}$. It is easy to see that $u$ satisfies system of equations~\eqref{str-system-EL}, while at the same time a map $h = \varphi_{\beta} \circ u \circ \varphi_{\alpha}^{-1}$ fails the Cauchy--Riemann system of equations. 
\end{ex}

\begin{proof}[Proof of Proposition~\ref{prop-holom}]
As in the assertion of the proposition, let $h=(h^1,h^2):\varphi_{\alpha}(\Om) \to \varphi_{\beta}(\Om')$ be a holomorphic function and $u=(u^1,u^2):=\varphi_{\beta}^{-1} \circ h \circ \varphi_{\alpha}: \Om \to \Om'$. 

Since $u$ is continuous, it holds that $u^1\in L^{\infty}_{\textrm{loc}}(\Om)$. In order to prove that $u\in HW^{1,2}_{loc}(\Om, \mu; G_2^\beta)$, we first note that by~\eqref{first order derivatives of u}, we have that $X_1 u^2, Y_1 u^2 \in L^2_{\textrm{loc}}(\Om,\mu)$. It remains to verify that $X_1 u^1, Y_1 u^1\in L^2_{\textrm{loc}}(\Om,\mu)$, which reduces to checking that $X_1 u^1, Y_1 u^1 \in L^2(B_R,\mu)$ for any Euclidean or Grushin ball $B_R:=B((x_0,y_0),R)\Subset \Om$ centered at a point $(x_0,y_0) \in \Om$. 

Let $(x_0,y_0) \in \Om$ and consider two cases: $u^1(x_0,y_0) \neq 0$ and  $u^1(x_0,y_0)=0$.
\smallskip

\noindent {\bf Case 1: $u^1(x_0,y_0) \neq 0$.} Then the proposition follows by continuity of $u^1$ and by~\eqref{first order derivatives of u}. Indeed,  let $\varphi_{\alpha}(\Omega), \varphi_{\beta}(\Omega') \subset \mathbb{C}$ be open sets and let $h:\varphi_{\alpha}(\Omega) \to \varphi_{\beta}(\Omega')$ be a holomorphic function. By applying formulas \eqref{first order derivatives of u} in the set of points  $\{(x,y)\in \Om: x \neq 0 \hbox{ and } u^1(x,y) \neq 0\}$ we find the second order partial derivatives of $u$:
\begin{equation}\label{second order derivatives of u}
\begin{aligned}
    &u^1_{xx}=-\frac{\beta|x|^{2\alpha}}{|u^1|^{2\beta}u^1}(h^1_x({x}_{\alpha},y))^2 + \frac{\alpha|x|^{\alpha}}{x|u^1|^{\beta}}h^1_x ({x}_{\alpha},y)+\frac{|x|^{2\alpha}}{|u^1|^{\beta}}h^1_{xx}({x}_{\alpha},y),\\
    &u^1_{yy}= -\frac{\beta}{|u^1|^{2\beta}u^1}(h^1_y({x}_{\alpha},y))^2+\frac{1}{|u^1|^{\beta}}h^1_{yy}({x}_{\alpha},y),\\
    &u^2_{xx}= \frac{\alpha |x|^{\alpha}}{x}h^2_x ({x}_{\alpha},y) + |x|^{2\alpha}h^2_{xx}({x}_{\alpha},y),
    \qquad  u^2_{yy}=h^2_{yy}({x}_{\alpha},y).
\end{aligned}
\end{equation}
We employ formulas \eqref{first order derivatives of u}-\eqref{second order derivatives of u}, the Cauchy--Riemann equations, as well as, the fact that the components of holomorphic functions are harmonic, to find that 
\begin{equation*}
    \Delta_{G_2^{\alpha}} u^2  - \alpha \frac{x}{|x|^2}u^2_x = \langle \nabla_H u^1, \nabla_H u^2 \rangle=0,
\end{equation*}
and
\begin{equation*}
    |u^1|^{2\beta+2}\Big(\Delta_{G_2^{\alpha}} u^1 -\alpha \frac{x}{|x|^2}u^1_x\Big) +\beta u^1|\nabla_H u^2|^2=0.
\end{equation*}
Therefore, the mapping $u$ satisfies the system of equations~\eqref{str-system-EL} and the assertion is proven.
\smallskip

\noindent {\bf Case 2: $u^1(x_0,y_0)=0$.} By the Cheng--Yau estimate (1-2) in \cite{zh}, see also~\cite{cy}, we have that  
\begin{equation}\label{prop, holomorphic are weakly GH ineq 1}
    \frac{|\nabla h^1({x}_{\alpha},y)|}{|h^1({x}_{\alpha},y)|} \lesssim \frac{1}{d{(({x}_{\alpha},y),Z)}}, 
\end{equation}
for all $(x,y) \in B_R \setminus Z$, where $Z := \{(x,y)\,:\, h^1(x,y) =0\}$. Denote by $d({x}_{\alpha},y):=d{(({x}_{\alpha},y),Z)}$. Therefore, by \eqref{first order derivatives of u} and \eqref{prop, holomorphic are weakly GH ineq 1} we get the following estimate:
\begin{align}
    \int_{B_R} |\nabla_H u^1|^2 \ud \mu &=\int_{B_R} \frac{|x|^\alpha}{|u^1|^{2\beta}} (h^1_x({x}_{\alpha},y))^2 + \frac{|x|^{\alpha}}{|u^1|^{2\beta}} (h^1_y({x}_{\alpha},y))^2 \ud x \ud y \label{ex23-grad-est}\\
    &\approx_{\beta} \int_{B_R} \frac{|x|^\alpha}{|h^1|^{\frac{2\beta}{\beta+1}}} (h^1_x ({x}_{\alpha},y))^2 + \frac{|x|^{\alpha}}{|h^1|^{\frac{2\beta}{\beta+1}}} (h^1_y({x}_{\alpha},y))^2 \ud x \ud y \lesssim \int_{B_R} d({x}_{\alpha},y)^{\frac{-2\beta}{\beta+1}}|x|^{\alpha} \ud x \ud y. \nonumber
\end{align}
Set $\gamma:=\frac{2\beta}{\beta+1} \in [0,1)$ and denote by $d((a,b)):=\sqrt{a^2+b^2}$ the Euclidean norm of a point $(a,b)\in \R^2$. Since, it holds that $(d_x)^2+(d_y)^2=1$, we have 
\[
 |x|^{2\alpha}\leq |x|^{2\alpha} d_x^2+\max\{ \sup_{B_R}|x|^{2\alpha},1\}d_y^2\lesssim_{R^{2\alpha}} |x|^{2\alpha}d_x^2+d_y^2.
\]
Therefore,
\begin{equation*}
    \int_{B_R} d({x}_{\alpha},y)^{-\gamma}|x|^{\alpha} \ud x \ud y \lesssim_R \int_{B_R}d({x}_{\alpha},y)^{-\gamma}\sqrt{|x|^{2\alpha}d_x^2+d_y^2} \,\ud x \ud y. 
\end{equation*}
We are in a positions to apply the coarea formula and obtain that the above integral is finite. Let $M:=\sup_{B_R}d({x}_{\alpha},y)$ and note that $|\nabla d({x}_{\alpha},y)| =1$ a.e. in $B_R$, and moreover it holds that $|\nabla (d \circ \varphi_{\alpha})| =\sqrt{|x|^{2\alpha}d_x^2 + d_y^2}$ by the chain rule. Hence we have that
\begin{align*}
 &\int_{B_R}d({x}_{\alpha},y)^{-\gamma}\sqrt{|x|^{2\alpha}d_x^2+d_y^2}\, \ud x \ud y \\
 &= \int_0^M \int_{(d \circ \varphi_{\alpha})^{-1}(t)} (d\circ \varphi_{\alpha})(s)^{-\gamma} \ud \mathcal{H}^1(s) \ud t 
    =\int_0^M \int_{(d \circ \varphi_{\alpha})^{-1}(t)} t^{-\gamma} \ud \mathcal{H}^1(s) \ud t \lesssim \int_0^M t^{-\gamma} \ud t <\infty,
\end{align*}
and, therefore, the proof of the $L^2(\mu)$-integrability in~\eqref{ex23-grad-est} is completed.

In order to show that $u \in HW^{1,2}_{\textrm{loc}}(\Om,\mu; G_2^\beta)$ it suffices to show that $u \in W^{1,1}_{\textrm{loc}}(\Om, \ud x\ud y)$. Indeed, once the existence of the weak derivatives $u_x$ and $u_y$ is established, we may infer the existence of distributional derivatives $X_1u$ and $Y_1 u$ by interpreting $|x|^{\alpha} = \lim_{\delta \to 0^+}(\sqrt{x^2+\delta})^{\alpha}$ and by using the Lebesgue dominated convergence theorem. In the course of the proof of the latter observation, one also finds that $Y_1 u = |x|^{\alpha}u_y$. Since such a reasoning follows the standard lines, we omit the details.

Based on the discussion so far, it suffices to show that $u^1, u^2$ are ACL on a.e. line intersecting $\Om$. Since $u^2\in C^1_{loc}(\Om)$, the ACL property for $u^2$ follows. In order to show that property for $u^1$ we appeal to the proof of the Banach--Zarecki theorem, see e.g. pages 274--275 in~\cite{ye}. Namely, we directly verify that  $u^1$ is continuous and, since zeros of $h^1$ are isolated or $h^1$ is constantly zero, we know that $u^1$ is a.e. differentiable. Moreover, the measure of the image of the set of points where $u^1$ is non--differentiable is zero, as $u^1(\{u^1=0\})=\{0\}$. Furthermore, it holds that $u^1_x(\cdot,y_0) \in L^1$ on any set $[a,b]\times \{y_0\} \subset \Om$, by the Fubini theorem and previous considerations about integrability of the horizontal gradient. In consequence of this approach, we handle the ACL condition on horizontal lines. 
The argument for absolute continuity on the vertical lines $\{x_0\} \times [c,d]$ goes along the same lines and therefore, we omit it. Hence, the Sobolev regularity $u \in HW^{1,2}_{\textrm{loc}}(\Om,\mu)$ and $u^1\in L^{\infty}_{\textrm{loc}}(\Om)$ follows.
What remains to be shown, is that $u$ satisfies system of equations~\eqref{str-system-EL}. 

First we point out that, since the mapping $h$ satisfies the Cauchy--Riemann equations, we may deduce from~\eqref{first order derivatives of u} that $u$ satisfies the following system of equations at almost every point in $\Om$:
\begin{equation}\label{CR equations in Grushin setting}
\begin{cases}
|u^1|^{\beta} u^1_x = |x|^{\alpha}u^2_y, \\
|u^1|^{\beta}|x|^{\alpha}u^1_y = -u^2_x. 
\end{cases}
\end{equation}
This together with the Cauchy--Riemann equations for $h$ and the direct observation that $u^1|u^1|^{\beta} = (\beta+1)h^1({x}_{\alpha},y)$ implies that the first equation in~\eqref{str-system-EL} holds weakly. Indeed, let $\phi=(\phi^1,\phi^2) \in C^{\infty}_0(\Om,\mathbb{R}^2)$ be any. Then, we directly verify the following identity:
\begin{equation}\label{Prop Obs rev, top left comp}
\begin{aligned}
&  \int_{\Omega}u^1|u^1|^{2\beta}\langle \nabla_H u^1,\nabla_H \phi^1 \rangle \ud \mu \\
& = \int_{\Om} u^1|u^1|^{2\beta} u^1_x \phi^1_x |x|^{-\alpha} \ud x \ud y + \int_{\Om} u^1|u^1|^{2\beta} u^1_y \phi^1_y |x|^{\alpha} \ud x \ud y\\
  &=\int_{\Om} u^1|u^1|^{\beta} u^2_y \phi^1_x \,\ud x \ud y - \int_{\Om} u^1|u^1|^{\beta} u^2_x \phi^1_y \,\ud x \ud y\\
    &= (\beta+1)\int_{\Om} h^1({x}_{\alpha},y) h^2_y({x}_{\alpha},y) \phi^1_x \,\ud x \ud y - (\beta+1)\int_{\Om} h^1({x}_{\alpha},y) |x|^{\alpha}h^2_x({x}_{\alpha},y) \phi^1_y \,\ud x \ud y\\
    &=-(\beta+1)\int_{\Om} |x|^{\alpha}h^1_x({x}_{\alpha},y)h^2_y({x}_{\alpha},y) \phi^1 \,\ud x \ud y + (\beta+1)\int_{\Om} |x|^{\alpha} h^1_y({x}_{\alpha},y) h^2_x({x}_{\alpha},y) \phi^1 \,\ud x \ud y\\
    &-(\beta+1)\int_{\Om} |x|^{\alpha}h^1({x}_{\alpha},y)h^2_{yx}({x}_{\alpha},y) \phi^1 \,\ud x \ud y + (\beta+1)\int_{\Om} |x|^{\alpha} h^1({x}_{\alpha},y) h^2_{xy}({x}_{\alpha},y) \phi^1 \,\ud x \ud y\\
    &=-(\beta+1)\int_{\Om} |x|^{\alpha}h^1_x({x}_{\alpha},y)h^2_y({x}_{\alpha},y) \phi^1 \,\ud x \ud y + (\beta+1)\int_{\Om} |x|^{\alpha} h^1_y({x}_{\alpha},y) h^2_x({x}_{\alpha},y) \phi^1 \,\ud x \ud y\\
    &=-(\beta+1)\int_{\Om} |x|^{\alpha}(h^2_y ({x}_{\alpha},y))^2 \phi^1 \,\ud x \ud y - (\beta+1)\int_{\Om} |x|^{\alpha} (h^2_x({x}_{\alpha},y))^2 \phi^1 \,\ud x \ud y\\
    &=-(\beta+1)\int_{\Om}|x|^{\alpha}\phi^1 |\nabla h^2({x}_{\alpha},y)|^2 \,\ud x \ud y = -(\beta+1)\int_{\Om}\phi^1|\nabla_H u^2|^2 \ud \mu.
\end{aligned}   
\end{equation}
On the other hand it holds that 
\begin{equation}\label{Prop Obs rev, top right comp}
\begin{aligned}
& \int_{\Omega} \beta \phi^1 |\nabla_H u^2|^2 \ud \mu - \int_{\Omega}(2\beta+1)|u^1|^{2\beta}\phi^1|\nabla_H u^1|^2 \ud \mu \\
    &= \int_{\Omega} \beta \phi^1 |\nabla_H u^2|^2 |x|^{-\alpha} \ud x \ud y - (2\beta+1)\int_{\Om}\phi^1|\nabla_H u^2|^2 |x|^{-\alpha} \ud x \ud y\\
    &=-(\beta+1)\int_{\Om}\phi^1|\nabla_H u^2|^2 |x|^{-\alpha} \ud x \ud y
\end{aligned}
\end{equation}
and hence, by comparing \eqref{Prop Obs rev, top left comp} and \eqref{Prop Obs rev, top right comp} we arrive at the weak formulation of the first equation~\eqref{str-system-EL}.

Similar approach allows us to show that also the second equation in~\eqref{str-system-EL} holds weakly for a mapping $u$:
\begin{equation}\label{Prop Obs rev, bottom left comp}
\begin{aligned}
  \int_{\Omega}u^1 \langle \nabla_H u^2,\nabla_H \phi^2 \rangle \ud \mu 
  &  = \int_{\Omega}u^1 u^2_x \phi^2_x |x|^{-\alpha} \ud x \ud y + \int_{\Om} u^1 u^2_y \phi^2_y |x|^{\alpha} \ud x \ud y\\
    &=\int_{\Omega}u^1 h^2_x({x}_{\alpha},y) \phi^2_x \ud x \ud y + \int_{\Om} u^1 h^2_y({x}_{\alpha},y) \phi^2_y |x|^{\alpha} \ud x \ud y\\
    &=-\int_{\Omega}u^1_x h^2_x({x}_{\alpha},y) \phi^2 \ud x \ud y
    -\int_{\Omega}u^1 h^2_{xx}({x}_{\alpha},y)  \phi^2 |x|^{\alpha} \ud x \ud y\\
    &-\int_{\Om} u^1_y h^2_y({x}_{\alpha},y) \phi^2 |x|^{\alpha} \ud x \ud y - \int_{\Om} u^1 h^2_{yy}({x}_{\alpha},y) \phi^2 |x|^{\alpha} \ud x \ud y\\
    &=-\int_{\Omega}u^1_x u^2_x \phi^2 |x|^{-\alpha} \ud x \ud y 
    -\int_{\Om} u^1_y u^2_y \phi^2 |x|^{\alpha} \ud x \ud y \\
    &= -\int_{\Omega}u^1_x u^2_x \phi^2 \ud \mu
    -\int_{\Om} u^1_y u^2_y \phi^2 |x|^{2\alpha} \ud \mu \\
    &= -\int_{\Om}\phi^2 \langle \nabla_H u^1, \nabla_H u^2 \rangle \ud \mu.
\end{aligned}
\end{equation}
Recall the computations in Case 1 to see that $\langle \nabla_H u^1, \nabla_H u^2 \rangle = 0$ a.e. in $\Om$ and hence, $u$ satisfies weakly also the second equation in~\eqref{str-system-EL}. This completes the proof of the Proposition~\ref{prop-holom}.
\end{proof}
In the next corollaries we address relations between the conformal and the Grushin harmonic mappings. We distinguish two notions of conformality: first proposed in Definition 2.1 in~\cite{wa}, see also~\cite{ac}, and the \textit{metrical conformality}, see Definition 2.2 in~\cite{gjr} for $H=1$ and Theorem 4.1 therein. The approach in~\cite{ac,wa} utilizes Riemannian geometry to define conformality through limits of the appropriate orthonormal matrix on the singular line in the Grushin plane. On the other hand, the approach in~\cite{gjr} considers the so-called metric- and geometric- conformal maps defined through the modulus of curve families and the distortion of a map.

\begin{cor}\label{riem conformal maps are harmonic}
    Let $\alpha \in [0,1)$ and $\Om,\Om' \subset G_2^{\alpha}$ be open sets. Let $u \in HW^{1,\frac{2(\alpha+1)}{\alpha}}(\Om,\Om')$ be an orientation preserving conformal map in the sense of Definition 2.1 in \cite{wa}. Then $u$ is (locally) a strong harmonic mapping, i.e. $u$ is a weak solution to~\eqref{str-system-EL} on every open subset $\Om'\Subset \Om$. 
\end{cor}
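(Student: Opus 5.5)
The plan is to reduce Corollary~\ref{riem conformal maps are harmonic} to Proposition~\ref{prop-holom} with $\beta=\alpha$. Conformal maps in the sense of Definition 2.1 in~\cite{wa} are expected to be exactly the conjugates of holomorphic maps under the canonical quasisymmetry $\varphi_\alpha$ from~\eqref{map-mey}. Set $h:=\varphi_\alpha\circ u\circ\varphi_\alpha^{-1}:\varphi_\alpha(\Om)\to\varphi_\alpha(\Om')$. If we show that $h$ is holomorphic with $h^1\not\equiv 0$, then $u=\varphi_\alpha^{-1}\circ h\circ\varphi_\alpha$, and Proposition~\ref{prop-holom} gives that $u$ is locally a strong harmonic mapping. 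The degenerate case $h^1\equiv0$ means $u^1\equiv 0$. This is excluded for an orientation preserving conformal map, since it would be a nondegenerate local homeomorphism away from a small set.

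First, work on the regular part. On $\Om\setminus(S\cup u^{-1}(S))$ the Grushin plane is Riemannian with metric $\ud x^2+|x|^{-2\alpha}\ud y^2$. Conformality in the sense of~\cite{wa} means that the matrix
\[
A=\begin{pmatrix} Xu^1 & Yu^1\\ |u^1|^{-\alpha}Xu^2 & |u^1|^{-\alpha}Yu^2\end{pmatrix}
\]
is a positive multiple of an orthogonal matrix with positive determinant. Writing this out gives the Grushin--Cauchy--Riemann system~\eqref{CR equations in Grushin setting} with $\beta=\alpha$:
\[
|u^1|^{\alpha}u^1_x=|x|^\alpha u^2_y,\qquad |u^1|^\alpha|x|^\alpha u^1_y=-u^2_x .
\]
Next, use the chain rule together with~\eqref{first order derivatives of u}, read backwards. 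The derivatives of $h^1=(u^1|u^1|^\alpha/(\alpha+1))\circ\varphi_\alpha^{-1}$ and $h^2=u^2\circ\varphi_\alpha^{-1}$ in the variables $(w_1,w_2)=(x_\alpha,y)$ turn this system into the classical Cauchy--Riemann equations $h^1_{w_1}=h^2_{w_2}$, $h^1_{w_2}=-h^2_{w_1}$. So $h$ is holomorphic on $\varphi_\alpha(\Om\setminus(S\cup u^{-1}(S)))$.

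The main obstacle is extending holomorphy of $h$ across $\varphi_\alpha(S)\cup\varphi_\alpha(u^{-1}(S))$. The plan is to show that $h\in W^{1,1}_{loc}$, or better $W^{1,2}_{loc}$, in the Euclidean sense on all of $\varphi_\alpha(\Om)$, and that the Cauchy--Riemann equations hold almost everywhere. Weyl's lemma then gives that $h$ is holomorphic, because a $W^{1,1}_{loc}$ solution of $\bar\partial h=0$ is holomorphic, and the exceptional set is a union of lines and Lebesgue null. This is where the hypothesis $u\in HW^{1,\frac{2(\alpha+1)}{\alpha}}$ is used. It will be combined with H\"older's inequality and Observation~\ref{inclusion into Euclidean Sobolev space} to control the Jacobian factors $|x|^{\pm\alpha}$ coming from $\varphi_\alpha^{-1}$. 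It will also control the factor $|u^1|^{\alpha}$ in $\partial_{w}h^1\sim |u^1|^\alpha\nabla u^1$; since $u^1$ is locally bounded, this factor is harmless. The specific exponent $p=\frac{2(\alpha+1)}{\alpha}$ should be exactly what makes $|x|^{-\alpha}|\nabla_H u|$ integrable after the change of variables. If this integrability fails, the fallback is to extend across the single line $\varphi_\alpha(S)=\{w_1=0\}$ using continuity of $h$ and Morera's theorem, which removes lines for continuous functions holomorphic off them. The set $u^{-1}(S)$ maps to the zero set of $h^1$ and is handled in the same way.
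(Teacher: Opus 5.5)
Your reduction is the same as the paper's. The paper, however, does not prove the representation. It quotes Theorem 3.5 in \cite{wa}, which gives $u=\varphi_\alpha^{-1}\circ h\circ\varphi_\alpha$ with $h$ a biholomorphism and $\{h^1=0\}\cap\varphi_\alpha(\Om)=\{w_1=0\}\cap\varphi_\alpha(\Om)$. It then applies Proposition~\ref{prop-holom} with $\beta=\alpha$. You try to derive the representation yourself, and the step you flag as the main obstacle is not closed.

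The exceptional set is $\varphi_\alpha(S)\cup Z$ with $Z:=\varphi_\alpha(u^{-1}(S))=\{h^1=0\}$. Only $\varphi_\alpha(S)$ is known to be a line. Before holomorphy is established, $Z$ is merely the preimage of a line under a homeomorphism. It could be a non-rectifiable arc, even one of positive area. That $Z$ is the line $\{w_1=0\}$, i.e.\ $u^{-1}(S)=S$, is part of the \emph{conclusion} of Theorem 3.5 in \cite{wa}. So your claim that the exceptional set is ``a union of lines and Lebesgue null'' is unjustified, and both of your mechanisms fail on $Z$:
\begin{itemize}
\item Weyl's lemma needs $\bar\partial h=0$ a.e.\ on $Z$. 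Sobolev calculus only gives $\nabla h^1=0$ a.e.\ on $Z$ and says nothing about $\nabla h^2$.
\item Morera cannot be applied to $h$ ``in the same way'' across an arbitrary arc.
\end{itemize}

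You can close the gap using injectivity, since $u$, and hence $h$, is a homeomorphism. Set $V:=\varphi_\alpha(\Om)\setminus\{w_1=0\}$.
\begin{enumerate}
\item The map $h$ is holomorphic and injective on $V\setminus Z$, whose image is $h(V)\setminus\{w_1=0\}$.
\item Hence $g:=h^{-1}$ is continuous on the open set $h(V)$ and holomorphic off the straight line $\{w_1=0\}$. Morera makes $g$ holomorphic on $h(V)$.
\item Then $h=g^{-1}$ is holomorphic on $V$.
\item A second Morera argument across $\{w_1=0\}$ in the source finishes the proof.
\end{enumerate}
Alternatively, you would have to extract from Definition 2.1 in \cite{wa} that \eqref{CR equations in Grushin setting} also holds a.e.\ on $u^{-1}(S)$; this forces $\nabla u^2=0$ there.

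Your guess about the role of the exponent $\frac{2(\alpha+1)}{\alpha}$ is also off. Since $\ud w_1\ud w_2=|x|^\alpha\ud x\ud y$, one gets $\int|\nabla h^2|^2\ud w=\int|\nabla_H u^2|^2\ud\mu$ and $\int|\nabla h^1|^2\ud w=\int|u^1|^{2\alpha}|\nabla_H u^1|^2\ud\mu$. So $u\in HW^{1,2}_{loc}$ with $u^1$ locally bounded already gives $h\in W^{1,2}_{\rm loc}$. The higher integrability enters through \cite{wa}, not through this step.
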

\begin{cor}\label{metric conformal maps are harmonic}
    Let $\alpha \in [0,1)$ and $u:G_2^{\alpha} \to G_2^{\alpha}$ be an orientation preserving metrically conformal map
    in the sense of Definition 2.2 in~\cite{gjr} for $H=1$.  Then $u$ is (locally) a strong harmonic mapping, i.e. $u$ is a weak solution to~\eqref{str-system-EL} on every open subset $\Om'\Subset \Om$. 
\end{cor}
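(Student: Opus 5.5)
The plan is to reduce Corollary~\ref{metric conformal maps are harmonic} to Proposition~\ref{prop-holom} with $\beta=\alpha$. The aim is to show that an orientation preserving metrically conformal self-map $u$ of $G_2^\alpha$ has the form $u=\varphi_\alpha^{-1}\circ h\circ\varphi_\alpha$ with $h$ holomorphic on $\varphi_\alpha(G_2^\alpha)=\C$ and $h^1\not\equiv 0$. Proposition~\ref{prop-holom} then applies directly on every $\Om'\Subset\Om$, including neighbourhoods of the singular line $S$.

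\textbf{Step 1: metric conformality transfers through $\varphi_\alpha$.} I will invoke the characterization in \cite[Theorem 4.1]{gjr} (case $H=1$): metric and geometric conformality coincide, and both are equivalent to the map preserving the modulus of curve families in $G_2^\alpha$ equipped with the Ahlfors $2$-regular measure $\mu$ (Lemma~\ref{lem-Ahl-Muck}). Away from $S$, the canonical quasisymmetry $\varphi_\alpha$ is a smooth Riemannian isometry. Indeed, $\ud(x_\alpha)=|x|^\alpha\ud x$, so the Euclidean metric $\ud x_\alpha^2+\ud y^2$ pulls back to $|x|^{2\alpha}(\ud x^2+|x|^{-2\alpha}\ud y^2)$, which is conformal to the Grushin metric with factor $|x|^{2\alpha}$. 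Consequently $\varphi_\alpha$ is conformal off $S$, and since $S$ carries no $2$-modulus, $\varphi_\alpha$ preserves conformal modulus globally. It follows that $h:=\varphi_\alpha\circ u\circ\varphi_\alpha^{-1}$ is a $1$-quasiconformal homeomorphism of $\C$.

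\textbf{Step 2: identify $h$.} Since $h$ is orientation preserving and $1$-quasiconformal, the Weyl lemma shows it is conformal and hence holomorphic. In fact it is affine, $h(z)=az+b$ with $a\neq0$. Because $h$ is a homeomorphism, $h^1\not\equiv0$.

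\textbf{Step 3: apply Proposition~\ref{prop-holom}.} With $\beta=\alpha$ and $\Om=\Om'=G_2^\alpha$, Proposition~\ref{prop-holom} gives $u\in HW^{1,2}_{loc}$, $u^1\in L^\infty_{loc}$, and that $u$ weakly solves \eqref{str-system-EL} on every relatively compact subset.

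\textbf{Main obstacle.} The delicate step is Step 1: checking that the definition in \cite{gjr} is invariant under conjugation by $\varphi_\alpha$ near $S$, where $\varphi_\alpha$ is only H\"older and the pulled-back metric degenerates. My first attempt is a modulus argument using removability of the line $S$ for $1$-quasiconformality, since a line is removable for quasiconformal homeomorphisms. If the definitions in \cite{gjr} already state that metrically conformal maps are exactly the conjugates $\varphi_\alpha^{-1}\circ h\circ\varphi_\alpha$ of M\"obius or affine maps, which I expect is the content of their Theorem~4.1, I will quote that result and skip the modulus argument.
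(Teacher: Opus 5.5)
Your proof has the same skeleton as the paper's: write $u=\varphi_\alpha^{-1}\circ h\circ\varphi_\alpha$ with $h$ holomorphic and $h^1\not\equiv0$, then apply Proposition~\ref{prop-holom} with $\beta=\alpha$. The difference is in how the representation is obtained.

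\textbf{Comparison with the paper.} The paper obtains the representation purely by citation. It combines \cite[Theorem 4.1]{gjr} with Example 4.1 and Theorem 3.5 of \cite{wa}, which yields a biholomorphism $h$ with $\{h^1=0\}=\{x=0\}$. The same citation also handles Corollary~\ref{riem conformal maps are harmonic}, where $h$ maps between general domains and need not be affine. You instead derive the representation yourself. You conjugate by $\varphi_\alpha$, use that $\varphi_\alpha$ is conformal off $S$ and preserves modulus, and invoke the rigidity of $1$-quasiconformal self-maps of $\C$. This is more self-contained and gives the sharper conclusion that $h$ is affine. It also shows that the axis-preservation property supplied by \cite{wa} is not needed, since Proposition~\ref{prop-holom} only asks for $h^1\not\equiv0$.

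\textbf{Points that need tightening.}

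First, off $S$ the map $\varphi_\alpha$ is conformal with factor $|x|^{\alpha}$, not an isometry; your own computation shows this.

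Second, ``$S$ carries no $2$-modulus'' is false as written, because the curves crossing $S$ form a family of positive modulus. What you actually need is the following. Since $S$ and $i\R$ are null sets, densities transfer via $\tilde\rho=(\rho\circ\varphi_\alpha)|x|^{\alpha}$, with $\int\tilde\rho^2\,\ud\mu=\int\rho^2$. In addition, Euclidean-rectifiable curves that are not Grushin-rectifiable form a modulus-null family by Fuglede's lemma. This holds because the length factor $|x|^{-\alpha}\circ\varphi_\alpha^{-1}\approx|w_1|^{-\alpha/(1+\alpha)}$ lies in $L^2_{loc}$ for $\alpha<1$.

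Third, the general passage from the metric to the geometric definition in Loewner spaces does not preserve constants. So ``metric $H=1$ implies modulus preservation'' must genuinely be the content of \cite[Theorem 4.1]{gjr}, and you have not checked this. If it is not, you can avoid modulus altogether:
\begin{itemize}
\item $h$ is quasiconformal. Metrically quasiconformal self-maps of $G_2^\alpha$ are quasisymmetric by the Heinonen--Koskela theory, since $G_2^\alpha$ is Ahlfors $2$-regular and bi-Lipschitz to $\R^2$, and $\varphi_\alpha$ is quasisymmetric.
\item Consider a.e.\ $w\notin i\R\cup h^{-1}(i\R)$; this excluded set is null because $h^{-1}$ is quasiconformal. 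There, conformality of $\varphi_\alpha$ shows that the metric distortion of $u$ at $\varphi_\alpha^{-1}(w)$ equals $\|Dh(w)\|/\min_{|e|=1}|Dh(w)e|$.
\item Hence distortion at most $1$ forces $\bar\partial h=0$ a.e., and $h$ is conformal.
\end{itemize}
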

\begin{proof}[Proofs of Corollaries~\ref{riem conformal maps are harmonic} and~\ref{metric conformal maps are harmonic}]
By Theorem 3.5 in \cite{wa} we deduce that if $u$ is either as in the assumptions of Corollary~\ref{riem conformal maps are harmonic} or as in the assumptions of Corollary~\ref{metric conformal maps are harmonic}, then $u$ is in the form 
$$
u:= \varphi_{\alpha}^{-1} \circ h \circ \varphi_{\alpha}
$$ 
for some biholomorphism $h:\varphi_{\alpha}(\Om) \to \varphi_{\alpha}(\Om')$ satisfying $\{h^1 = 0\} \cap \varphi_{\alpha}(\Om) = \{x=0\}\cap \varphi_{\alpha}(\Om)$. Hence, the claim of Corollary \ref{riem conformal maps are harmonic} follows directly from Proposition~\ref{prop-holom}. Moreover, the assertion of Corollary~\ref{metric conformal maps are harmonic} follows upon combining~\cite[Theorem 4.1]{gjr} together with Example 4.1 and Theorem 3.5 in~\cite{wa}.
\end{proof}

Next we observe that there exist strong harmonic mappings, satisfying~\eqref{str-system-EL} outside the singular set, whose Sobolev extensions with respect to the Lebesgue (non-weighted measure) fail to be a weak solution to~\eqref{str-system-EL}. This is the consequence of the fact that for $\alpha \ge 1$, the weighted measure $\ud \mu=|x|^{-\alpha}\ud x\ud y$ is not locally finite when intersecting the singular set, see Sections 1 and 2.1 in~\cite{aww}. 
\begin{cor}\label{prop: biholomorphisms fixing the y axis are intrisicaly gh but not wgh}
   Let $\alpha \ge \beta \ge 0$ and $\Om, \Om' \subset G_2^{\alpha}$ be open sets. Let further $h=(h^1,h^2):\varphi_{\alpha}(\Om) \to \varphi_{\beta}(\Om)$ be a biholomorphism (i.e., holomorphic homeomorphism) satisfying $\{h^1 = 0\} \cap \varphi_{\alpha}(\Om) = \{x=0\}\cap \varphi_{\alpha}(\Om)$.  Then $u=(u^1,u^2):= \varphi_{\beta}^{-1} \circ h \circ \varphi_{\alpha}$ satisfies \eqref{str-system-EL} in $\Omega\setminus S$ and $u$ is locally bounded with $|\nabla_H u^i|\in L^{\infty}_{loc}$ for $i=1,2$.
\end{cor}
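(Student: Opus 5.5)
The plan is to reduce everything to the explicit formulas \eqref{first order derivatives of u} and to Case~1 of the proof of Proposition~\ref{prop-holom}, using the zero-set hypothesis on $h^1$ to control $u^1$ near $S$. The one non-formal point is the comparability of $|h^1|$ with $|w_1|$, described in the third paragraph below.

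\emph{The system off $S$.} Since $\{h^1=0\}\cap\varphi_\alpha(\Om)=\{x=0\}\cap\varphi_\alpha(\Om)$ and $\varphi_\alpha$ preserves the $y$-axis, we get $u^1\neq 0$ on $\Om\setminus S$. Hence every point of $\Om\setminus S$ lies in the set $\{x\neq 0,\ u^1\neq 0\}$. There $u$ is smooth, and the Case~1 computation of Proposition~\ref{prop-holom} applies verbatim. That computation uses \eqref{first order derivatives of u}, \eqref{second order derivatives of u}, the Cauchy--Riemann equations and the harmonicity of $h^1,h^2$, and it shows that \eqref{str-system-EL} holds pointwise, hence also weakly, in $\Om\setminus S$. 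Local boundedness is immediate, because $u=\varphi_\beta^{-1}\circ h\circ\varphi_\alpha$ is a composition of continuous maps.

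\emph{Gradient of $u^2$.} By \eqref{first order derivatives of u} we have $Xu^2=|x|^\alpha h^2_{w_1}(\varphi_\alpha)$ and $Yu^2=|x|^\alpha h^2_{w_2}(\varphi_\alpha)$. Since $h$ is smooth, both are locally bounded, even across $S$.

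\emph{Gradient of $u^1$.} Here \eqref{first order derivatives of u} gives
\[
Xu^1=\frac{|x|^\alpha}{|u^1|^\beta}\,h^1_{w_1}(\varphi_\alpha),\qquad Yu^1=\frac{|x|^{\alpha}}{|u^1|^\beta}\,h^1_{w_2}(\varphi_\alpha).
\]
So it suffices to show that $|x|^\alpha|u^1|^{-\beta}$ is locally bounded near $S$. I would first prove that locally $|h^1(w)|\approx|w_1|$.
\begin{itemize}
\item Since $h^1$ is real-analytic and vanishes on the line $\{w_1=0\}$, we can write $h^1=w_1g$ with $g$ real-analytic.
\item On that line, $g=h^1_{w_1}$. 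Moreover $h^1_{w_2}=0$ there, because $h^1$ vanishes identically along the line.
\item If also $h^1_{w_1}=0$ at some point of the line, then $\nabla h^1=0$ there. By the Cauchy--Riemann equations this gives $h'=0$, which is impossible for an injective holomorphic map.
\item Hence $g\neq0$ on the line, and by continuity $|g|\approx 1$ on compact subsets.
\end{itemize}
With $w_1=x_\alpha=x|x|^\alpha/(\alpha+1)$ and $|u^1|^{\beta+1}=(\beta+1)|h^1(\varphi_\alpha)|$ from \eqref{comp-map}, this yields
\[
|u^1|^\beta\approx |x|^{\frac{(1+\alpha)\beta}{1+\beta}}.
\]
Therefore $|x|^\alpha|u^1|^{-\beta}\approx|x|^{\alpha-\frac{(1+\alpha)\beta}{1+\beta}}$. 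The exponent is nonnegative exactly when $\alpha(1+\beta)\ge\beta(1+\alpha)$, i.e. when $\alpha\ge\beta$. So under the standing assumption $\alpha\ge\beta$ this factor is locally bounded, and $|\nabla_H u^1|\in L^\infty_{loc}$ follows.
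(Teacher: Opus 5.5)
Your argument is correct in substance, but it reaches the key bound by a different, self-contained route. The paper gets the strong harmonicity by quoting Proposition~\ref{prop-holom}. It gets boundedness of $\nabla_H u^1$ by appealing to Lemma~3.3, Theorem~3.5 and formula~(23) of~\cite{wa}, which show that $u^1_x$ and $u^1_y$ extend continuously across $S$ when $\alpha\ge\beta$.

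You instead factor $h^1=w_1 g$ and use injectivity ($h'\neq 0$) together with the Cauchy--Riemann equations to get $g\neq 0$ on the axis. This yields the explicit comparability $|x|^\alpha|u^1|^{-\beta}\approx |x|^{(\alpha-\beta)/(1+\beta)}$. What this buys you:
\begin{itemize}
\item a proof independent of~\cite{wa};
\item a transparent view of exactly where $\alpha\ge\beta$ enters. Since $h^1_{w_1}\neq 0$ on the axis, $Xu^1$ genuinely blows up at $S$ when $\alpha<\beta$.
\item no reliance on Proposition~\ref{prop-holom}, because you verify the system only pointwise on $\Om\setminus S$. That proposition is stated for $\alpha,\beta\in[0,1)$, while the corollary allows any $\alpha\ge\beta\ge 0$, so your route fits the statement better.
\end{itemize}
The paper's route gives slightly more, namely continuity of the derivatives across $S$. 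Boundedness is all the statement asks for.

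One step you skip is the one the paper explicitly lists as remaining: the ACL property. In this paper $\nabla_H u^1=(Xu^1,Yu^1)$ is the distributional horizontal gradient. Pointwise bounds on $\Om\setminus S$ therefore give $|\nabla_H u^1|\in L^\infty_{loc}$ only after you rule out a singular part of the derivative concentrated on $S$. Your own estimates close this quickly:
\begin{itemize}
\item $u^1$ is continuous on $\Om$ and smooth off $S$.
\item $u^1_x$ is locally bounded by your bound.
\item Writing $h^1_{w_2}=w_1 g_{w_2}$ gives $|u^1_y|\lesssim |x|^{1+\alpha}\,|x|^{-(1+\alpha)\beta/(1+\beta)}=|x|^{(1+\alpha)/(1+\beta)}$ near the axis.
\end{itemize}
A continuous function that is $C^1$ off a line with locally bounded gradient is locally Lipschitz: on each segment, apply the mean value theorem on either side of the crossing point and use continuity there. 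Hence the distributional derivatives of $u^1$ are the classical ones, and $Yu^1=|x|^\alpha u^1_y$ a.e. No such issue arises for $u^2=h^2\circ\varphi_\alpha$, which is $C^1$ on all of $\Om$.
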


\begin{proof}
By Proposition~\ref{prop-holom} we know that $u$ is (locally) a strong harmonic mapping. Moreover, by continuity of $u$, we conclude that $u \in L^{\infty}_{\textrm{loc}}(\Om)$, while by formulas \eqref{first order derivatives of u} we obtain the $L^{\infty}_{\textrm{loc}}(\Om)$ regularity of the distributional derivatives of $u^2$, i.e., the $X_1 u^2$, $Y_1 u^2$ derivatives.

In order to show that $u^1_x,  |x|^{\alpha} u^1_y \in L^{\infty}_{\textrm{loc}}(\Om)$ we follow the reasoning in the proofs of Lemma 3.3 and Theorem 3.5 in~\cite{wa}. Namely, since $h:\varphi_{\alpha}(\Om) \to \varphi_{\beta}(\Om)$ is a biholomorphism with $\{h^1 = 0\} \cap \varphi_{\alpha}(\Om) = \{x=0\}\cap \varphi_{\alpha}(\Om)$ one continuously extends the following partial derivatives of $u^1$ to the whole $\Om$:
\begin{equation*}
    u^1_x = \frac{|x|^\alpha}{|u^1|^{\beta}} h^1_x ({x}_{\alpha},y), \quad u^1_y = \frac{1}{|u^1|^{\beta}} h^1_y({x}_{\alpha},y).
\end{equation*}
In consequence, $u^1_x$ and $u^1_y$ are continuous functions also on the singular set and to prove that we have used the assumption $\alpha \ge \beta \ge 0$, cf., (23) in~\cite{wa}. Hence, what remains to be shown, is the ACL property of $u$ whose proof is similar to the one of Case 2 in Proposition~\ref{prop-holom} and, therefore, we will omit the details.
%
%
\end{proof}

 In the remaining two examples we provide harmonic mappings constructed by using the separation of variables method. 

\begin{ex}\label{Example by frobenius method}
    Let $u=(u^1,u^2)$ be a map defined as follows
 \[
 u^1(x,y):=F_1({x}_{\alpha})G_1(y), \quad u^2(x,y):=F_2({x}_{\alpha})G_2(y)\quad \hbox{ where }{x}_{\alpha}:=\frac{x|x|^{\alpha}}{\alpha+1}
\] 
and functions $F_1, F_2, G_1, G_2 \in C^2(\Om)$ for an open set $\Om\subset G_2^\alpha$. Upon substituting $u$ into the second equation in~\eqref{str-system-EL} we obtain the following ODE:
  \begin{equation*}
        F_1G_1F_2''G_2 + F_1G_1F_2G_2'' = 2\beta F_1'G_1F_2'G_2+F_1G_1'F_2G_2',
 \end{equation*}
   which separates in two ODEs 
\begin{equation*}
       F_1F_2'' = 2\beta F_1'F_2', \qquad G_1G_2'' = 2\beta G_1'G_2'.
\end{equation*}
 By direct computations, we obtain
    \begin{equation}\label{Example frobenius computations 2}
        |F_2'| = |A||F_1|^{2\beta}, \quad |G_2'| = |B||G_1|^{2\beta}, \qquad A,B \in \mathbb{R}.
    \end{equation}
While the solutions with separated variables can be studied for $\beta > 0$, in order to simplify the discussion let us assume that $\beta =1$.  Then the first equation in \eqref{str-system-EL} reads:
\begin{equation*}
    (F_1G_1)^3[F_1''G_1 + F_1G_1''] + (F_2'G_2)^2 + (F_2G_2')^2 = 0
\end{equation*}
and by~\eqref{Example frobenius computations 2} this equation simplifies as follows: 
\begin{equation}\label{Example frobenius computations 4}
    (F_1)^3(G_1)^4F_1'' + (F_1)^4(G_1)^3 G_1'' + A^2 (F_1)^4 (G_2)^2 + B^2(F_2)^2 (G_1)^4 = 0.
\end{equation}
In order to perform the further analysis let us consider the following cases.

\textbf{Case 1.1.} If $B=0$, then $G_2$ is constant and we may as well assume that $G_2(y) \equiv 1$ for the sake of simplicity of presentation. Thus, the equation~\eqref{Example frobenius computations 4} reads
\begin{equation*}
    (F_1)^3(G_1)^4F_1'' + (F_1)^4(G_1)^3 G_1'' + A^2 (F_1)^4 = 0
\end{equation*}
and separates into:
\begin{equation*}
    F_1'' = 0, \qquad G_1^3 G_1''+ A^2 = 0.
\end{equation*}
We directly solve these equations and the find that
\begin{equation*}
    F_1({x}_{\alpha}) = D {x}_{\alpha} + E, \quad G_1(y) = \pm \frac{1}{\sqrt{H}} \sqrt{H^2(y+I)^2 - A^2}, \qquad H>0, \ A, D, E, I \in \mathbb{R}.
\end{equation*}
Finally, we substitute such $F_1$ into~\eqref{Example frobenius computations 2} and obtain that $F_2'(t) = \pm A(D t + E)^2$, which gives us the following family of solutions
\begin{equation*}
  F_2(t) = \frac{1}{3}AD^2 t^3 + ADEt^2 + AE^2t+I.
\end{equation*}
In a consequence the harmonic map $u$ takes the following form: 
\begin{equation*}
    u^1(x,y) = \pm \frac{1}{\sqrt{H}}(D {x}_{\alpha} + E)\sqrt{H^2(y+I)^2 - A^2}, \qquad u^2(x,y) = \frac{1}{3}AD^2 {x}_{\alpha}^3 + ADE {x}_{\alpha}^2 + AE^2{x}_{\alpha}+I,
\end{equation*}
with $A,D,E,I \in \mathbb{R}$, $H>0$ and $|y+I| > |A/H|$.

\textbf{Case 1.2.} Let us assume that $G_1(y) = G_2(y) \equiv 1$, i.e., the solution depends only on the $x$ variable. Then \eqref{Example frobenius computations 4} simplifies further and reads:
\begin{equation*}
    F_1'' + A^2 F_1 = 0\, \hbox{ and so }\, F_1(t) = D\cos{At} + E \sin{At}.
\end{equation*}
Hence, solving \eqref{Example frobenius computations 2} yields
\begin{equation*}
    F_2(t) = H \pm \frac{1}{4}\Big(2A(D^2+E^2)t - 2DE\cos{(2At)} + (D^2 - E^2)\sin{(2At)}\Big).
\end{equation*}
As a result we arrive at the following family of solutions depending on $x$ variable only: 
\begin{equation*}
\begin{aligned}
    &u^1(x,y) = D\cos{(A{x}_{\alpha})} + E \sin{(A {x}_{\alpha})},\\
    &u^2(x,y) = H \pm \frac{1}{4}\left(2A(D^2+E^2){x}_{\alpha} - 2DE\cos{(2A{x}_{\alpha})} + (D^2 - E^2)\sin{(2A{x}_{\alpha})}\right)
\end{aligned}
\end{equation*}
with $A,D,E,H \in \mathbb{R}$.

\textbf{Case 2.1.} Let now $A=0$, and so without loss of generality we may assume that $F_2(y) \equiv 1$. Then, by~\eqref{Example frobenius computations 2}, after an analogous discussion as in Case 1.1, equation \eqref{Example frobenius computations 4} implies that:
\begin{equation*}
    G_1'' = 0, \qquad F_1^3 F_1''+ B^2 = 0.
\end{equation*}
By direct computations analogous to the ones in Case 1.1, we get the following family of solutions:
\begin{equation*}
    u^1(x,y) = \pm \frac{1}{\sqrt{H}}(D y + E)\sqrt{H^2({x}_{\alpha}+I)^2 - B^2}, \qquad u^2(x,y) = \frac{1}{3}BD^2 y^3 + BDE y^2 + BE^2y+I,
\end{equation*}
where $B,D,E,I \in \mathbb{R}$, $H>0$ and $|\tilde{x}_{\alpha}+I| > |B/H|$.

\textbf{Case 2.2.} Finally, let us assume that $F_1(x) = F_2(x) \equiv 1$, i.e., the solution depends only on the $y$ variable. The procedure of obtaining solution is analogous to the one in Case 1.2 and yields the following family of solutions:
\begin{equation*}
\begin{aligned}
    &u^1(x,y) = D\cos{(By)} + E \sin{(By)},\\
    &u^2(x,y) = H \pm \frac{1}{4}\left(2B(D^2+E^2)y - 2DE\cos{(2By)} + (D^2 - E^2)\sin{(2By)}\right),
\end{aligned}
\end{equation*}
where $B,D,E,H \in \mathbb{R}$.
\end{ex}

\begin{ex}\label{better behaved, non homolomorphic solution}
Let $\beta = 1$ and consider a map $u = (u^1,u^2)$ in the following form:
\begin{equation*}
    u^1(x,y) = F(y+ (-1)^j{x}_{\alpha}), \qquad u^2(x,y) = G(y + (-1)^j{x}_{\alpha}),\qquad j=0,1
\end{equation*}
for a $C^2$-regular functions $F,G:\mathbb{R} \to \mathbb{R}$ and points $(x,y)\in \Om$ for some open set $\Om\subset G_2^\alpha$.
We substitute the above $u^1$ and $u^2$ in the second equation of system \eqref{str-system-EL} to obtain the following relation:
\begin{equation*}
    FG'' = 2 F'G'.
\end{equation*}
As in Example~\ref{Example by frobenius method}, the above equation is a separable ODE and implies that $G' = \pm A F^{2}$ for some $A \in \mathbb{R}$. By direct computations, plugging in the latter result into the first equation in \eqref{str-system-EL}, we get $F'' + A^2 F =0$ and hence:
\begin{equation*}
    F(t) = B\cos{At} + C \sin{At}, \qquad A,B,C \in \mathbb{R}.
\end{equation*}
Next, by using the previously obtained relation $G' = \pm A F^{2}$ we get:
\begin{equation*}
    G(t) = D \pm \frac{1}{4}\left(2A(B^2+C^2)t - 2BC\cos{(2At)} + (B^2 - C^2)\sin{(2At)}\right).
\end{equation*}
Hence, we obtain the following family of solution $u$ of system \eqref{str-system-EL}:
\begin{align*}
    u^1(x,y)& = B\cos{(Ay + (-1)^j A{x}_{\alpha})} + C \sin{(Ay + (-1)^j A{x}_{\alpha})} \\
    u^2(x,y)&= D \pm \frac{1}{4}\Big(2A(B^2+C^2)(y + (-1)^j{x}_{\alpha}) - 2BC\cos{(2Ay + (-1)^j 2A{x}_{\alpha})} \\
    &+ (B^2 - C^2)\sin{(2Ay + (-1)^j 2A{x}_{\alpha})}\Big)
\end{align*}
where $A,B,C,D \in \mathbb{R}$ and $j=0,1$.
\end{ex}

\section{The $H^{2,2}_{loc}$ second order Sobolev regularity of harmonic maps}

The goal of this section is to prove the second order $H^{2,2}_{loc}$ Sobolev regularity of harmonic mappings between the Grushin planes, see Theorem~\ref{thm-H22}. The result is then applied in the next section to study the Bochner identity for harmonic mappings and its consequences.
 
We first recall the definition of the $H^{2,2}$-Sobolev spaces of functions in the Grushin setting $G_2^\alpha$. One should pay attention to the fact that the weighted measure setting leads to the problems of defining the difference quotients and requires additional effort, see the discussion of the properties (DQ1)-(DQ3). Nevertheless, we can appeal to Lemma~\ref{lem-Sob-char}, already proven in~\cite[Lemma 3.1]{aww} which is the Grushin counterpart of the well known description of the second order weak derivatives.  Then, the proof of Theorem~\ref{thm-H22} employs standard techniques based on the choice of appropriate test functions and the analysis of their difference quotients. However, since now the target space is the Grushin plane, the proof is far more complicated than its counterpart for the harmonic mappings into $\R^2$, cf. Theorem 1.2 in~\cite{aww}. Indeed, the following are the key differences between the proof in~\cite{aww} and here:
\begin{itemize}
\item since the system of equations~\eqref{w-system-EL} is coupled and its first equation differs from the second one, our analysis of the difference quotients has to take into account the interplay between estimates coming from both equations;
\item the natural norm~\eqref{def-DH} of the differential $D_Hu$ of a harmonic map $u=(u^1,u^2)$ contains the weight $|u^1|^{-2\beta}$ for the gradient of the second component function $u^2$; this in turn makes our analysis more delicate and demanding and motivates our assumption that the image of the harmonic map does not intersect the singular set in the target space $G_2^\beta$, i.e.,  $u^1(x,y)\not=0$ for all points $(x,y)$ in the closure of the domain of $u$;
\end{itemize}
For these reasons the second order regularity of harmonic maps becomes a delicate issue on domains intersecting the singular set (the $y$-axis) in the source space $G_2^\alpha$. Therefore,  additional conditions on the gradient integrability need to be imposed to ensure the $H^{2,2}$-regularity, see conditions~\eqref{ass0-thm-H22}-\eqref{ass3-thm-H22}. 
\smallskip


Let $\Om\subset G_2^\alpha$ be an open set and $v:\Om \to \R$ be a real-valued function with weak derivatives $Xv, Yv$  existing in $\Om$ as distributions and suppose that $XYv$ and $YXv$ exist as distributions. We define the following Sobolev space
\begin{align*}\label{def-H22}
& H^{2,2}(\Om,\mu):=\{v\in L^2(\Om, \mu)\,:\ XXv, XYv, YXv, YYv \in L^2(\Om,\mu)\}, \\
& XXv=(v_x)_x, \quad XYv=(\varrho v_y)_x=\varrho_xv_y+\varrho (v_y)_x,  \quad YXv=\varrho (v_x)_y, \quad YYv=\varrho^2(v_y)_y.\nonumber
\end{align*} 
The space $H^{2,2}(\Om,\mu)$ is a Hilbert space. By analogy we define the space $H^{2,2}_{loc}(\Om,\mu)$ as the space of functions in $H^{2,2}_{loc}(K,\mu)$ for any compact set $K\Subset \Om$. 
The space of mappings $u:\Om\to G_2^{\beta}$ in $H^{2,2}(\Om,G_2^{\beta}, \mu)$ consists of mappings whose component functions $u^1, u^2$ belong to $H^{2,2}(\Om,G_2^{\beta}, \mu)$.

Before we present conditions allowing to infer that a Grushin-harmonic function on $\Om\subset G_2$ belongs to the $H^{2,2}$-space, we recall Lemma 3.1 in~\cite{aww} and address the new phenomena occuring for the target space $G_2^\beta$ . 
Let us remark, that the second order regularity in the Grushin setting has been studied e.g. in~\cite{dm} and also Section 6 in~\cite{ddfm} for the setting of the Grushin plane with the weight $\varrho(x)=x$.

Let us briefly recall the key elements of the difference quotients method, see~\cite[Section 3]{aww} for more details. Let $\Om\subset G_n$ and $v:\Om\to \R$ be a function. Let $h>0$ and $e^i$ for $i=1,\ldots, n$ denote vectors in the standard vector basis in $\R^n$.  We define 
\[
\Delta_h^{i} v(p):=\Delta_h^{e_i} v(p):=\frac{v(p+he^i)-v(p)}{h},\quad p\in \Om.
\]
The following properties hold, cf. Chapter 8.1 in~\cite{giu} for the Euclidean setting:
\begin{itemize}
\item[(DQ1)] $\Delta_h^{i} (uv)(p)=u(p+he^i) \Delta_h^{i} v(x)+v(x)\Delta_h^{i}u(x)$.
\item[(DQ2)] Set $\Om_{|h|}:=\{p\in\Om\,:\,\dist(p,\partial \Om)>|h|\}$ for an open bounded set $\Om\subset G_2$. If $v\in H^{1,2}\cap L^p(\Om,\mu)$ for some $p> \frac{2}{1-\alpha}\geq 2$, then $\Delta_h^{i} v\in H^{1,2}(\Om_{|h|},\mu)$ for any $i=1,2$ and 
\[
(\Delta_h^{i} v)_{x_i}=\Delta_h^{i}(v_{x_i}).
\]
\item[(DQ3)] For any test function $\phi\in C_0^{\infty}(\Om_{|h|})$ we have that
\[
 \int_{\Om} v \Delta_h^{i} \phi\, \ud x\ud y= -\int_{\Om} \phi \Delta_{-h}^{i} v\, \ud x\ud y.
\]
\end{itemize}

Recall Lemma 3.1 in~\cite{aww}. For the sake of completion of the presentation, let us note that as in the Euclidean setting the converse of the assertions below also holds, however we are not stating them since they will not be used in what follows (see e.g. Lemma 7.23 in~\cite[Chapter 7.11]{gt}).

\begin{lem}\label{lem-Sob-char}
  Let $\Om\subset G_2^\alpha$ be a domain and $\Om'\Subset \Om$. Furthermore, let $v\in L^2_{loc}(\Om,\mu)$ with weak derivatives $v_x, |x|^\alpha v_y\in L^2_{loc}(\Om, \mu)$.
\smallskip

\noindent If there exist constants $\sigma<\dist(\Om', \partial \Om)$ and $C>0$ such that:
  \begin{align*}
  &\sup_{0<|h|<\sigma} \int_{\Om'}|\Delta_h^{1} v_x|^2 \frac{\ud x \ud y}{|x|^{\alpha}}\leq C,
   \hbox{  then the weak derivative }X^2v\hbox{ exists and }\|X^2 v\|_{L^2(\Om',\mu)}\leq C, \\
  &\sup_{0<|h|<\sigma} \int_{\Om'}|\Delta_h^{1} (\varrho v_y)|^2 \frac{\ud x \ud y}{|x|^{\alpha}}\leq C,
   \hbox{  then the weak derivative }XYv\hbox{ exists and }\|XY v\|_{L^2(\Om',\mu)}\leq C, \\
  &\sup_{0<|h|<\sigma} \int_{\Om'}|\Delta_h^{2} v_x|^2\,|x|^{\alpha}\, \ud x \ud y\leq C,
   \hbox{  then the weak derivative }YXv\hbox{ exists and }\|YX v\|_{L^2(\Om',\mu)}\leq C, \\
  &\sup_{0<|h|<\sigma} \int_{\Om'}|\Delta_h^{2} v_y|^2\, |x|^{\alpha}\, \ud x \ud y\leq C,
   \hbox{ then the weak derivative }Y^2v\hbox{  exists and }\|Y^2 v\|_{L^2(\Om',\mu)}\leq C.
  \end{align*}
%
\end{lem}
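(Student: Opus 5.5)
The plan is the classical Hilbert-space compactness argument for difference quotients (cf. Lemma 7.23 in~\cite{gt}), carried out in the appropriate weighted $L^2$ space. The only new point is pairing the weak limits with suitably re-weighted test functions, so that the resulting identities hold in the ordinary (Lebesgue) distributional sense. Throughout, recall from Observation~\ref{inclusion into Euclidean Sobolev space} that $v_x$ and $v_y=|x|^{-\alpha}Yv$ belong to $L^1_{loc}(\Om,\ud x\ud y)$. Fix $\phi\in C_0^\infty(\Om')$ and take $|h|<\sigma$, so that (DQ3) applies and $\Delta^i_{-h}\phi\to\phi_{x_i}$ uniformly with supports in a fixed compact subset of $\Om$. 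Consequently, for any $w\in L^1_{loc}(\Om,\ud x\ud y)$,
\[
\int_{\Om'}\phi\,\Delta^i_h w\,\ud x\ud y=-\int_{\Om} w\,\Delta^i_{-h}\phi\,\ud x\ud y\longrightarrow -\int_\Om w\,\phi_{x_i}\,\ud x\ud y .
\]

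For the first two assertions I will work in $L^2(\Om',\mu)$. The families $\Delta^1_h v_x$ and $\Delta^1_h(\varrho v_y)=\Delta^1_h(Yv)$ are bounded there by hypothesis. By reflexivity, a sequence $h_k\to0$ gives a weak limit $g\in L^2(\Om',\mu)$ with $\|g\|^2_{L^2(\Om',\mu)}\le C$, by weak lower semicontinuity of the norm. I then test the weak convergence against $\psi=\phi|x|^{\alpha}$. This is admissible because $\int|\psi|^2\ud\mu=\int\phi^2|x|^{\alpha}\ud x\ud y<\infty$. The test yields $\int\phi\,\Delta^1_{h_k}w\,\ud x\ud y\to\int g\phi\,\ud x\ud y$ for $w=v_x$, respectively $w=Yv$. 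Comparing with the display above gives $(v_x)_x=g$, respectively $(Yv)_x=g$, as distributions. These are precisely $X^2v=g$ and $XYv=(\varrho v_y)_x=g$, together with the stated bounds. (The bound is $C$ on the squared norm; that is the convention in the statement.)

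For the last two assertions the hypotheses give boundedness of $\Delta^2_h v_x$ and $\Delta^2_h v_y$ in $L^2(\Om',|x|^{\alpha}\ud x\ud y)$. I extract weak limits $g$ there, with $\int g^2|x|^\alpha\le C$. Now the correct test function is $\psi=\phi|x|^{-\alpha}$. It lies in $L^2(|x|^\alpha\ud x\ud y)$ exactly because $\int\phi^2|x|^{-\alpha}\ud x\ud y<\infty$, and this is where $\alpha<1$ (local integrability of $|x|^{-\alpha}$, Lemma~\ref{lem-Ahl-Muck}) is used. Pairing gives $\int\phi\,\Delta^2_{h_k}w\,\ud x\ud y\to\int g\phi\,\ud x\ud y$, hence $(v_x)_y=g$, respectively $(v_y)_y=g$. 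Therefore $YXv=\varrho(v_x)_y=|x|^\alpha g$, and
\[
\|YXv\|^2_{L^2(\Om',\mu)}=\int|x|^{2\alpha}g^2|x|^{-\alpha}\ud x\ud y=\int g^2|x|^\alpha\ud x\ud y\le C.
\]
Similarly $Y^2v=|x|^{2\alpha}(v_y)_y=|x|^{2\alpha}g$, so
\[
\|Y^2v\|^2_{L^2(\Om',\mu)}=\int|x|^{3\alpha}g^2\ud x\ud y\le \sup_{\Om'}|x|^{2\alpha}\, C .
\]
This bound carries a harmless factor depending on $\Om'$ (bounded), which I would absorb into $C$.

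I expect the main obstacle to be the bookkeeping of the weights rather than any deep step. In each case the weak limit lives in a space weighted by $|x|^{\mp\alpha}$, while the distributional identity must be read against Lebesgue measure. One must therefore choose $\psi=\phi|x|^{\pm\alpha}$ and verify its membership in the dual space near the singular line $S$; the case $\psi=\phi|x|^{-\alpha}$ is the delicate one and fails for $\alpha\ge1$. A second point to check is that the limit is independent of the subsequence, which follows since it is identified as a distributional derivative. Finally, the $L^1_{loc}$ membership of $v_x$ and $v_y$ near $S$ is needed to pass to the limit in (DQ3), and it is exactly what Observation~\ref{inclusion into Euclidean Sobolev space} provides.
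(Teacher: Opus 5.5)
Your proof is correct. It is the standard weak-compactness argument of Lemma~7.23 in~\cite{gt}, run in $L^2(\Om',\mu)$ and in $L^2(\Om',|x|^{\alpha}\ud x\ud y)$ and paired against the re-weighted test functions $\phi|x|^{\pm\alpha}$; the paper gives no proof of its own here and only recalls Lemma~3.1 of~\cite{aww}. Your remark on the last assertion is also right: the hypothesis bounds $\|\Delta_h^2(Yv)\|^2_{L^2(\Om',\mu)}$, so it controls $\partial_y(Yv)$ rather than $Y^2v=\varrho\,\partial_y(Yv)$, and the bound for $Y^2v$ genuinely carries the factor $\sup_{\Om'}|x|^{2\alpha}$ instead of the constant $C$ as literally stated.
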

Note that  the change of the weight in the last two assertions is due to the fact that $\varrho=|x|^{\alpha}$ depends only on the $x$ variable, hence $|\Delta_h^{e_2} (\varrho u_x)|=\varrho |\Delta_h^{e_2} u_x|$ and $\Delta_h^{e_2} (\varrho u_y)|=\varrho |\Delta_h^{e_2} u_y|$.

\begin{rem}\label{rem-Sob-char2}
The aforementioned definition~\eqref{def-DH} of the differential $D_Hu$ of a harmonic map $u=(u^1,u^2)$ results in stronger estimates for the difference quotients of the component function $u^2$ than the one in Lemma~\ref{lem-Sob-char} above, see for instance the left-hand side of~\eqref{main-est-Dh}. However, that condition implies the corresponding one in the lemma above due to our assumption that $u^1\in L^\infty_{loc}$, see~\eqref{def-sol-space}. Indeed, suppose that for a domain $\Om'\Subset \Om$ as in Lemma~\ref{lem-Sob-char}, it holds
\[
\sup_{0<|h|<\sigma} \int_{\Om'}\frac{|\Delta_h^{1} u^2_x|^2}{|u^1(x+h,y)|^{2\beta}} \frac{\ud x \ud y}{|x|^{\alpha}}\leq C.
\]
Then
\begin{align*}
\sup_{0<|h|<\sigma} \int_{\Om'} |\Delta_h^{1} u^2_x|^2 \frac{\ud x \ud y}{|x|^{\alpha}} &\leq \sup_{0<|h|<\sigma}\int_{\Om'}\frac{|\Delta_h^{1} u^2_x|^2}{|u^1(x+h,y)|^{2\beta}} |u^1(x+h,y)|^{2\beta} \frac{\ud x \ud y}{|x|^{\alpha}} \\
 &\leq \|u^1\|_{L^\infty(\Om')}\sup_{0<|h|<\sigma} \int_{\Om'}\frac{|\Delta_h^{1} u^2_x|^2}{|u^1(x+h,y)|^{2\beta}} \frac{\ud x \ud y}{|x|^{\alpha}}\leq C\|u^1\|_{L^\infty(\Om')},
 \end{align*}
 and so the corresponding assumption for $v=u^2$ in Lemma~\ref{lem-Sob-char} holds. 
%
 \end{rem}

Recall that $S$ denotes the singular set of a Grushin plane $G_2^\alpha$, i.e. the $y$-axis. For the readers convenience we also recall the statement of Theorem~\ref{thm-H22}.
\smallskip
\\
\noindent
{\em {\bf Theorem~\ref{thm-H22}.}\,
Let $\Om\subset G^{\alpha}_2$ be an open bounded set and $u=(u^1, u^2):\Om \to G^2_{\beta}$ be a weakly Grushin-harmonic map in $H^{1,2}_{loc}(\Om,G^{\beta}_2,\mu)$, i.e., $u$ satisfies the system of equations~\eqref{w-system-EL}, and  such that $\overline{u^1(\Om)}\cap S=\emptyset$.  Moreover, assume that $\alpha, \beta \in [0,1)$ and there exist
 \begin{equation}
 p> \frac{2}{1-\alpha}\,\,\hbox{ and }\,\,\ \gamma \geq \max\left\{ \frac{2+p+p\alpha}{2\alpha},\, p(2-\alpha)\right\}\,(>2)\qquad \tag{H2-A} \label{ass0-thm-H22}
\end{equation}
 such that for any ball $B(R)\cap S\not=\emptyset$ it holds that:
\leqnomode
\begin{align}
&\int_{B(R)} \frac{|\nabla_H u^2|^p}{|u^1|^{(\beta+1)p}} \ud \mu<\infty, \qquad
\int_{B(R)} \frac{|u^1_x|^{\frac{2p}{p-2}}}{|x|^{\frac{2\alpha}{p-2}}} \ud \mu<\infty \tag{H2-B} \label{ass1-thm-H22} \\
&\int_{B(R)} \frac{|D_H u|^p}{|x|^{\alpha(\gamma-1)}} \ud \mu<\infty \tag{H2-C}\label{ass2-thm-H22} \\
& \int_{B(R)} \frac{|u^1_x|^{\frac{p}{p-2}}}{|x|^{\frac{p}{p-2}}} \ud \mu<\infty. \tag{H2-D}\label{ass3-thm-H22}
\end{align}
If assumptions \eqref{ass0-thm-H22}-\eqref{ass3-thm-H22} hold, then the second order derivatives $X^2 u^1, XY u^1$ and $X^2 u^2, XY u^2$ are in $L^{2}_{loc}(\Om, \mu)$. 

\noindent If assumption~\eqref{ass1-thm-H22} holds for some $p>2$, then the second order derivatives $YXu^1, Y^2u^1$ and $YXu^2, Y^2u^2$ are in $L^{2}_{loc}(\Om, \mu)$. 

\noindent
Furthermore, if an open set $\Om'\subset \Om$ satisfies $\overline{\Om'}\cap S=\emptyset$, then it holds that $u^1, u^2 \in H^{2,2}(\Om', \R, \mu)$ only under the assumption~\eqref{ass1-thm-H22} holding for some $p>2$.}

\begin{remark}\label{rem44} Let us discuss relations between the assumptions of Theorem~\ref{thm-H22} in the general case and in some special important cases.

(1) Direct analysis based on the straightforward computations reveals that there are no implications between assumptions~\eqref{ass1-thm-H22}-\eqref{ass3-thm-H22} for general $p>2$ and $\alpha, \beta \in (0,1)$. However, assumption~\eqref{ass0-thm-H22} and the first condition in~\eqref{ass1-thm-H22} imply condition (A1) in~\eqref{ass-lem-HW0} in Definition~\ref{weak-solution} of a weak harmonic mapping.
\smallskip

(2) If $\alpha=\beta=0$, then $\varrho=1$, $\ud \mu=\ud x\ud y$ and our harmonic mappings become the (uncoupled) harmonic mappings in the plane. Then, one of the key estimate~\eqref{main-est-H22} of the proof below, see Step III, takes the following form: 
\begin{align*}
\int_{B(R/2)} \bigg((\Delta_{h}^1u^1_x)^2 +  (\Delta_{h}^1 u^1_y)^2 + (\Delta_{h}^1u^2_x)^2 + (\Delta_{h}^1 u^2_y)^2 \bigg) \ud x\,\ud y &\lesssim_c  \frac{1}{R^2} \int_{B(R)}  \bigg ( (\Delta_h^1 u^1)^2 + (\Delta_h^1 u^2)^2\bigg) \ud x\,\ud y\\
&\lesssim \frac{1}{R^2} \|u\|^2_{H^{1,2}(B(R))}. 
\end{align*}
Therefore, we retrieve the familiar estimates for harmonic functions, following from combining a characterization of the Sobolev space via the difference quotients, see e.g. Lemma 7.23 in~\cite{gt} applied to $\Delta_h^j u_x^i$ and $\Delta_h^j u_y^i$, and Theorem 4.6 in~\cite{lin} for $p=2$. Thus no assumptions \eqref{ass0-thm-H22}-\eqref{ass3-thm-H22} are needed in that case.
\smallskip

(3) If $\beta=0$, then the estimate~\eqref{main-est-H22} reads:
\begin{align*}
&\int_{B(R/2)} \bigg((\Delta_{h}^1u^1_x)^2 +  (\Delta_{h}^1 \varrho u^1_y)^2 + (\Delta_{h}^1u^2_x)^2 + (\Delta_{h}^1 \varrho u^2_y)^2 \bigg) \frac{\ud x\,\ud y}{\varrho} \\
&\lesssim \frac{1}{R^2} \|u\|^2_{H^{1,2}(B(R))}+  \int_{B(R)} |D_H u(x+h,y)|^2 \frac{(\Delta_h^1 \varrho)^2}{\varrho(x+h)^2}  \frac{\ud x\,\ud y}{\varrho}, 
\end{align*}
and the appearance of the second term on the right-hand side leads only to assumptions~\eqref{ass0-thm-H22} and~\eqref{ass2-thm-H22}, as in Theorem 1.2 in~\cite{aww}, i.e., for $\beta=0$.
\end{remark}
 
\begin{ex}\label{ex-thm-H22}
A large class of examples illustrating Theorem~\ref{thm-H22} comes from the Grushin harmonic mappings originating from the holomorphic mappings in the plane, as in Proposition~\ref{prop-holom} and Examples~\ref{example holomorphic 1} and~\ref{example holomorphic 2}. Indeed, if $\overline{\Om} \cap S=\emptyset$, then Theorem~\ref{thm-H22} gives us that $u^1, u^2 \in H^{2,2}(\Om, \mu)$ provided that the assumption~\eqref{ass1-thm-H22} holds. Moreover, if $\overline{\Om}$ intersects the singular set, then the same assumption allows us to conclude that $YX u^i, Y^2u^i$ exist and are $L^2_{loc}$-integrable for $i=1,2$.

Recall Remark~\ref{rem-sing-str}(2) and equations~\eqref{first order derivatives of u}.  Then, by the direct computations,  conditions in the assumption~\eqref{ass1-thm-H22} read:
\begin{align*}
&\int_{B(R)} \frac{|\nabla_H u^2|^p}{|u^1|^{(\beta+1)p}} \ud \mu= \int_{B(R)} \frac{|x|^{\alpha(p-1)}}{|u^1|^{(\beta+1)p}} |\nabla h(\phi_{\alpha(x,y)})|^p\,\ud x \ud y <\infty \\
&\int_{B(R)} \frac{|u^1_x|^{\frac{2p}{p-2}}}{|x|^{\frac{2\alpha}{p-2}}} \ud \mu=\int_{B(R)} \frac{|x|^{\frac{\alpha p}{p-2}}}{|u^1|^{\frac{2\beta}{p-2}}} |h^1_{w_1}(w)|_{w=\phi_{\alpha}(x,y)}|^{\frac{2p}{p-2}}
 \,\ud x \ud y <\infty.
\end{align*}
Both integrals are finite, since $p>2$, $u^1$ omits the singular set by the assumptions of Therorem~\ref{thm-H22} and, moreover, it holds that $\alpha(p-1)+1>0$ and $\frac{\alpha p}{p-2}+1>0$ and so the ball-box theorem~\cite[Theorem 3.1]{wu} can be applied. 
\end{ex}

\begin{proof}[Proof of Theorem~\ref{thm-H22}]
 Let $u=(u^1, u^2)$ be a Grushin-harmonic map as in the assumptions of the theorem. First we study the existence of derivatives $X^2$ and $XY$. This reduces to integral estimates for $\Delta_h^1 u^i_x$ and $\Delta_h^1 (\varrho u^i_y)$ for $i=1,2$. However, since equations in the system~\eqref{w-system-EL} are not symmetric as it is in the Euclidean setting, Steps I and II below also differ. Then, in Step III we merge both estimates and only starting form there the assumptions \eqref{ass1-thm-H22}-\eqref{ass3-thm-H22} come into play, see the estimate~\eqref{main-est-H22} and the discussion following it. Moreover, our analysis distinguishes the cases whether the domain intersects the singular set or not, see the presentation for Cases 1 and 2 in Step 3. Finally, we discuss the derivatives $YX$ and $Y^2$ which are simpler than the $X^2$ and $XY$, since now the appearance of the weight $\varrho=\varrho(x)$ in difference quotients $\Delta^2_h$ does not affect the computations.
 \smallskip
 \\
 {\bf The derivatives $X^2$ and $XY$.}
\smallskip
\\
 {\bf Step I: the integral estimate for the difference quotients $\Delta_h^1 u^1_x$ and $\Delta_h^1 (\varrho u^1_y)$}.
 \smallskip
 \\
Let $\phi \in C_0^{\infty}(B(R))$ be a test function on a ball $B(R)\subset 2B(R)\Subset \Om$ and such that $\phi$ satisfies: $0\leq \phi \leq 1$ on $B(R)$, $\phi\equiv 1$ on $B(R/2)$ and $|\nabla_H \phi|\leq \frac{c}{R}$.  Since $\phi$ is compactly supported and the support can be taken to be the closure of a ball, we have that $\phi^2 \Delta_h^1 u^1\in HW^{1,2}_0(B(R),\mu)$. This discussion together with the property (DQ2) of $\Delta_h^i u^1$ yields that 
\smallskip

\centerline{
$\Delta_{-h}^1(\phi^2 \Delta_h^1 u^1)$ is a test function for the system of equations~\eqref{w-system-EL}.
}
\smallskip

\noindent In what follows we set $0<|h|<r<R$.

We test the first equation of the system~\eqref{w-system-EL} with the above function and employ the properties (DQ1)-(DQ3) of finite difference quotients, to arrive at the following identity:
\begin{align*}  
&\beta \int_{\Om} \frac{|\nabla_H u^2|^2 u^1}{\varrho |u^1|^{2\beta+2}}\,\Delta_{-h}^1(\phi^2 \Delta_h^1 u^1)\,\ud x\,\ud y \\
&\phantom{AA}=-\int_{\Om} \langle \nabla_H u^1, \nabla_H [\Delta_{-h}^1(\phi^2 \Delta_h^1 u^1)] \rangle \frac{\ud x\,\ud y}{\varrho} \\
&\phantom{AA}=-\int_{\Om} \Big( \frac{u^1_x}{\varrho}\, \Delta_{-h}^1(\phi^2 \Delta_h^1 u^1)_x+ \varrho u^1_y\,\Delta_{-h}^1(\phi^2 \Delta_h^1 u^1)_y \Big) \ud x\,\ud y \\
&\phantom{AA}= \int_{\Om} \Big( \Delta_{h}^1\left(\frac{u^1_x}{\varrho}\right) (\phi^2 \Delta_h^1 u^1)_x+ \Delta_{h}^1(\varrho u^1_y) (\phi^2 \Delta_h^1 u^1)_y \Big) \ud x\,\ud y \\
&\phantom{AA}=\int_{\Om} \Big( u^1_x(x+h,y)\Delta_{h}^1(\varrho^{-1})+\varrho^{-1} \Delta_{h}^1u^1_x \Big)\,\Big( \phi^2 \Delta_h^1 u^1_x+ 2\phi \phi_x (\Delta_h^1 u^1) \Big) \\
&\phantom{AAAA} +\Delta_{h}^1(\varrho u^1_y) \Big[ \phi^2 \big(\varrho^{-1} \Delta_{h}^1(\varrho u^1_y)- \varrho^{-1}u^1_y(x+h,y)\Delta_{h}^1\varrho\big) + 2\phi \phi_y (\Delta_h^1 u^1) \Big] \ud x\,\ud y,
\end{align*} 
where in the last equality we additionally use that $ \Delta_{h}^1u^1_y=\varrho^{-1} \Delta_{h}^1(\varrho u^1_y)- \varrho^{-1}u^1_y(x+h,y)\Delta_{h}^1\varrho$. Next, we rearrange terms in the above integral and move terms with $(\Delta_{h}^1u^1_x)^2$ and $(\Delta_{h}^1\varrho u^2_y)^2$ to the left-hand side. In a consequence, we obtain the following formula:
\begin{align*}  
&\int_{\Om} \Big((\Delta_{h}^1u^1_x)^2 +  (\Delta_{h}^1 \varrho u^1_y)^2 \Big) \phi^2 \frac{\ud x\,\ud y}{\varrho} \tag{$u^1$-est}\\
&\phantom{A}=-\beta \int_{\Om} \frac{|\nabla_H u^2|^2 u^1}{\varrho |u^1|^{2\beta+2}}\,\Delta_{-h}^1(\phi^2 \Delta_h^1 u^1)\, \ud x\,\ud y \tag{N}\\
&\phantom{A}-\int_{\Om}\! \left(u^1_x(x+h,y)(\Delta_{h}^1\varrho^{-1}) \varrho^{\frac12}\phi \right)\!\cdot\!\left((\Delta_h^1 u^1_x)\varrho^{-\frac12} \phi\right)-\left(u^1_y(x+h,y)(\Delta_{h}^1\varrho) \varrho^{-\frac12}\phi \right)\!\cdot\!\left((\Delta_h^1 \varrho u^1_y)\varrho^{-\frac12} \phi\right) \tag{I}\\
&\phantom{A}-\int_{\Om} \left(2 (\Delta_h^1 u^1) \varrho^{-\frac12}\phi_x \right)\cdot\left((\Delta_{h}^1u^1_x) \varrho^{-\frac12} \phi\right) +\left(2 (\Delta_h^1 u^1) \varrho^{\frac12}\phi_y \right)\cdot\left((\Delta_{h}^1 \varrho u^1_y) \varrho^{-\frac12} \phi\right) \tag{II}\\
&\phantom{A}-\int_{\Om} 2 \left(u^1_x(x+h,y)(\Delta_{h}^1\varrho^{-1})\varrho^{\frac12}\phi\right)\cdot\left((\Delta_h^1 u^1)\varrho^{-\frac12} \phi_x \right). \tag{III}
\end{align*} 
In order to estimate integrals (I)-(III) we use the algebraic inequality $2ab\leq \delta^{-2} a^2+\delta^2b^2$ for various $\delta>0$ and get that:
\begin{align}
{\rm (I)+(II)+(III)}&\leq \frac14 \int_{\Om} (\Delta_h^1 u^1_x)^2 \frac{\phi^2}{\varrho}+\int_{\Om} [u^1_x(x+h,y)]^2 (\Delta_{h}^1\varrho^{-1})^2 \varrho \phi^2 \label{est-I-III}\\
&\phantom{A}+\frac14 \int_{\Om} (\Delta_h^1 \varrho u^1_y)^2 \frac{\phi^2}{\varrho}+\int_{\Om} [u^1_y(x+h,y)]^2 (\Delta_{h}^1\varrho)^2 \frac{\phi^2}{\varrho}\tag{I} \nonumber \\
&+\frac14 \int_{\Om} (\Delta_h^1 u^1_x)^2 \frac{\phi^2}{\varrho}+4\int_{\Om} (\Delta_h^1 u^1)^2 \frac{\phi_x^2}{\varrho}+\frac14 \int_{\Om} (\Delta_h^1 \varrho u^1_y)^2  \frac{\phi^2}{\varrho} +4\int_{\Om} (\Delta_{h}^1 u^1)^2 \varrho \phi_y^2 \tag{II} \nonumber \\
&+\int_{\Om} [u^1_x(x+h,y)]^2 (\Delta_{h}^1\varrho^{-1})^2 \varrho \phi^2+\int_{\Om} (\Delta_h^1 u^1)^2 \frac{\phi_x^2}{\varrho}.\tag{III} \nonumber
\end{align} 
It turns out that the most challenging term is (N), arising from the coupling of~\eqref{w-system-EL}.  Note that by the property (DQ3) we get that 
\[
 \beta \int_{\Om} \frac{|\nabla_H u^2|^2 u^1}{\varrho |u^1|^{2\beta+2}}\,\Delta_{-h}^1(\phi^2 \Delta_h^1 u^1)\, \ud x\,\ud y=-\beta \int_{\Om} \Delta_{h}^1 \left(\frac{|\nabla_H u^2|^2 u^1}{\varrho |u^1|^{2\beta+2}}\right)\, \phi^2 \Delta_h^1 u^1\, \ud x\,\ud y.
\]
Next, by the product rule for the difference quotients, see the property (DQ1), we directly expand the integrand on the right-hand side above as follows
\begin{equation}\label{eq1-N}
 \Delta_{h}^1 \left(\frac{|\nabla_H u^2|^2 u^1}{\varrho |u^1|^{2\beta+2}}\right)=\frac{u^1(x+h,y)}{|u^1(x+h,y)|^{2\beta+2}} \Delta_{h}^1\left(\frac{|\nabla_H u^2|^2}{\varrho(x)}\right)+\frac{|\nabla_H u^2|^2}{\varrho(x)} \Delta_{h}^1\left(\frac{u^1}{|u^1|^{2\beta+2}}\right).
\end{equation}
Similarly, we expand the $\Delta_h^1$-expressions on the right-hand side of~\eqref{eq1-N}:
\begin{align}
 \Delta_{h}^1\left(\frac{|\nabla_H u^2|^2}{\varrho}\right)&= \Delta_{h}^1\left(\frac{(u^2_x)^2}{\varrho}+\frac{(\varrho u^2_y)^2}{\varrho}\right) \nonumber\\
 & =\Big[u^2_x(x+h,y) \Delta_{h}^1u^2_x+u^2_x \Delta_{h}^1u^2_x\Big]\varrho^{-1}+[u^2_x(x+h,y)]^2 \Delta_{h}^1 \varrho^{-1} \nonumber \\
 &\phantom{AA}+\varrho^{-1} \Delta_{h}^1\Big( (\varrho u^2_y)^2 \Big)+\big(\varrho(x+h) u^2_y(x+h,y)\big)^2 \Delta_{h}^1 \varrho^{-1} \nonumber \\
 &=\Big[u^2_x(x+h,y) \Delta_{h}^1u^2_x+u^2_x \Delta_{h}^1u^2_x\Big]\varrho^{-1}+[u^2_x(x+h,y)]^2 \Delta_{h}^1 \varrho^{-1} \nonumber \\
&\phantom{AA}+ \varrho^{-1} \Big[ (\varrho(x+h) u^2_y(x+h,y) \Delta_{h}^1 (\varrho u^2_y)
+\varrho u^2_y \Delta_{h}^1 (\varrho u^2_y)\Big] \nonumber \\
&\phantom{AA}+\big(\varrho(x+h) u^2_y(x+h,y)\big)^2 \Delta_{h}^1 \varrho^{-1}. \label{eq2-N} \\
 \Delta_{h}^1 \left(\frac{u^1}{|u^1|^{2\beta+2}}\right)&= \frac{\Delta_{h}^1 u^1}{|u^1(x+h,y)|^{2\beta+2}}+u^1 \Delta_h^1\left(\frac{1}{|u^1|^{2\beta+2}}\right) \nonumber\\
 &=\frac{\Delta_{h}^1 u^1}{|u^1(x+h,y)|^{2\beta+2}}-u^1 \frac{\Delta_h^1 |u^1|^{2\beta+2}}{|u^1(x+h,y)|^{2\beta+2}|u^1|^{2\beta+2}}.\label{eq3-N}
\end{align}
We substitute~\eqref{eq2-N} and~\eqref{eq3-N} to ~\eqref{eq1-N} and, upon rearranging the appropriate terms, we obtain the following expression:
\begin{align}\label{eq11-N}
&\Delta_{h}^1 \left(\frac{|\nabla_H u^2|^2 u^1}{\varrho |u^1|^{2\beta+2}}\right) \nonumber \\
&\phantom{AA}=\frac{u^1(x+h,y)}{|u^1(x+h,y)|^{2\beta+2}} 
\bigg[ u^2_x(x+h,y)(\Delta_{h}^1 u^2_x)\varrho^{-1}+ u^2_x \Delta_{h}^1 u^2_x \varrho^{-1} \nonumber \\
&\phantom{AAAAAAAAAAAAAAAA}+\varrho(x+h)u^2_y(x+h,y) (\Delta_{h}^1 \varrho u^2_y)\varrho^{-1}+\varrho^{-1}(\varrho  u^2_y) \Delta_h^1 (\varrho u^2_y) \nonumber \\
&\phantom{AAAAAAAAAAAAAAAA} + |\nabla_H u^2(x+h,y)|^2\Delta_{h}^1 \varrho^{-1}\bigg] \nonumber \\
&\phantom{AAAA}+\frac{|\nabla_H u^2|^2}{\varrho} \left(\frac{\Delta_{h}^1 u^1}{|u^1(x+h,y)|^{2\beta+2}}-u^1 \frac{\Delta_h^1 |u^1|^{2\beta+2}}{|u^1(x+h,y)|^{2\beta+2}|u^1|^{2\beta+2}}\right).
\end{align}
In order to complete {\bf Step I} we combine the estimate for the term (N) in~\eqref{eq11-N} with the estimates for terms (I)-(III) and apply them in the estimate ($u^1$-est) upon including the appropriate terms in the left-hand side and also include the corresponding terms of (III) in (I) and (II), respectively. Recall that $\phi$ is supported in the ball $B(R)$. Consequently, we arrive at the following inequality
\begin{align}
&\frac12 \int_{B(R)} \Big((\Delta_{h}^1u^1_x)^2 + (\Delta_{h}^1 \varrho u^1_y)^2 \Big) \phi^2 \frac{\ud x\,\ud y}{\varrho}\nonumber \\
&\phantom{A}\lesssim_c \int_{B(R)} (\Delta_h^1 u^1)^2 \frac{|\nabla_H \phi|^2}{\varrho}+\int_{B(R)} [u^1_x(x+h,y)]^2 (\Delta_{h}^1 \varrho^{-1})^2 \phi^2 \varrho+\int_{B(R)} [u^1_y(x+h,y)]^2 (\Delta_{h}^1\varrho)^2 \frac{\phi^2}{\varrho} \nonumber \\
&\phantom{A}+\beta \int_{B(R)} \Bigg[\frac{u^1(x+h,y)}{|u^1(x+h,y)|^{2\beta+2}} 
\bigg(u^2_x(x+h,y)(\Delta_{h}^1 u^2_x)\varrho^{-1}+ \varrho(x+h) u^2_y(x+h,y)\Delta_h^1 (\varrho u^2_y) \varrho^{-1}\nonumber \\
&\phantom{AAAAAAAAAAAAA}+ u^2_x \Delta_{h}^1 u^2_x \varrho^{-1}  +\varrho^{-1}(\varrho  u^2_y) \Delta_h^1 (\varrho u^2_y) + |\nabla_H u^2(x+h,y)|^2\Delta_{h}^1 \varrho^{-1}\bigg) \nonumber \\
&\phantom{AA}+\frac{|\nabla_H u^2|^2}{\varrho} \left(\frac{\Delta_{h}^1 u^1}{|u^1(x+h,y)|^{2\beta+2}}-u^1 \frac{\Delta_h^1 |u^1|^{2\beta+2}}{|u^1(x+h,y)|^{2\beta+2}|u^1|^{2\beta+2}}\right)\Bigg]\phi^2 \Delta_h^1 u^1. \label{main-est1-H22}
\end{align} 
The resulting inequality gives a bound for the integral norms of the difference quotients for $u^1_x$ and $\varrho u^1_y$, however the right-hand side estimate in~\eqref{main-est1-H22} depends on the corresponding difference quotients for $u^2_x$ and $\varrho u^2_y$. In order to eliminate them, in the next step we conduct an analogous analysis as in Step I involving the second equation in~\eqref{w-system-EL} and then, in Step III, combine the resulting estimates. 
 \smallskip
 \\
 {\bf Step II: the integral estimate for the difference quotients $\Delta_h^1 u^2_x$ and $\Delta_h^1 (\varrho u^2_y)$}.
 \smallskip
 \\
Let $\phi \in C_0^{\infty}(B(R))$ be a test function satisfying the same assumptions as in the beginning of the discussion in Step I and, similarly, as before we observe that 

\centerline{
$\Delta_{-h}^1(\phi^2 \Delta_h^1 u^2)$ is a test function for the system of equations~\eqref{w-system-EL}.
}
\smallskip

\noindent In what follows we set $0<|h|<r<R$ and test the second equation of the system~\eqref{w-system-EL} with the above function and employ the properties (DQ2)-(DQ3) of finite difference quotients:
\begin{align}  
&0 =-\int_{\Om} \left \langle \frac{1}{|u^1|^{2\beta}}\nabla_H u^2, \nabla_H [\Delta_{-h}^1(\phi^2 \Delta_h^1 u^2)] \right \rangle \ud x\,\ud y \nonumber \\
&=\int_{\Om} \Delta_h^1 \left(\frac{u^2_x}{\varrho |u^1|^{2\beta}}\right) (\phi^2 \Delta_h^1 u^2)_x + \Delta_h^1 \left(\frac{\varrho u^2_y}{|u^1|^{2\beta}}\right) (\phi^2 \Delta_h^1 u^2)_y \frac{\ud x\,\ud y}{\varrho}. \label{eq2-weak}
\end{align}
By applying the product rule for the finite quotients, see (DQ1), we have that
\begin{align*}  
\Delta_h^1 \left(\frac{u^2_x}{\varrho}\frac{1}{|u^1|^{2\beta}}\right)&= \frac{u^2_x}{\varrho} \Delta_h^1\left(\frac{1}{|u^1|^{2\beta}}\right)+\frac{1}{|u^1(x+h,y)|^{2\beta}} \Delta_h^1\left(\frac{u^2_x}{\varrho} \right)\\
\Delta_h^1 \left(\frac{\varrho u^2_y}{|u^1|^{2\beta}}\right)&=\varrho u^2_y \Delta_h^1\left(\frac{1}{|u^1|^{2\beta}}\right)+\frac{1}{|u^1(x+h,y)|^{2\beta}}  \Delta_h^1(\varrho u^2_y).
\end{align*}
Therefore, equation~\eqref{eq2-weak} takes the following form:
\begin{align}  
&0=\int_{\Om} \left[\frac{u^2_x}{\varrho} \Delta_h^1\left(\frac{1}{|u^1|^{2\beta}}\right)+\frac{u^2_x(x+h,y) \Delta_h^1 \varrho^{-1}}{|u^1(x+h,y)|^{2\beta}}+\frac{\Delta_h^1 u^2_x}{\varrho |u^1(x+h,y)|^{2\beta}}\right] \left(\phi^2 \Delta_h^1 u^2_x+2\phi \phi_x (\Delta_h^1 u^2)\right) \nonumber \\
&\phantom{AA}+ \left[\varrho u^2_y \Delta_h^1\left(\frac{1}{|u^1|^{2\beta}}\right)+\frac{\Delta_h^1(\varrho u^2_y)}{|u^1(x+h,y)|^{2\beta}}\right] \times \nonumber \\
&\phantom{AAAA} \times \Big(\phi^2 (\varrho^{-1} \Delta_{h}^1(\varrho u^2_y)- \varrho^{-1}u^2_y(x+he^1,y)\Delta_{h}^1\varrho)+2\phi \phi_y (\Delta_h^1 u^2)\Big)  \ud x\,\ud y. \label{eq2-aux1}
\end{align}
Here, we also use (DQ1) to find that $\Delta_{h}^1u^2_y=\varrho^{-1} \Delta_{h}^1(\varrho u^2_y)- \varrho^{-1}u^2_y(x+h,y)\Delta_{h}^1\varrho$. Next, we rearrange expressions in the equation~\eqref{eq2-aux1} in a similar way as in the corresponding discussion for the integral ($u^1$-est), cf. Step I above. Furthermore, we use the similar notation (I)-(III) to denote the integrals corresponding to appropriate terms in ($u^1$-est).
\begin{align*}  
&\int_{\Om} \Bigg(\frac{(\Delta_{h}^1u^2_x)^2}{|u^1(x+h,y)|^{2\beta}} +  \frac{(\Delta_{h}^1 \varrho u^2_y)^2}{|u^1(x+h,y)|^{2\beta}} \Bigg) \phi^2 \frac{\ud x\,\ud y}{\varrho} \tag{$u^2$-est}\\
&=-\int_{\Om} \left(\frac{u^2_x}{\varrho} \Delta_h^1\left(\frac{1}{|u^1|^{2\beta}}\right) |u^1(x+h,y)|^{\beta} \varrho^{\frac12} \phi\right)\left(\frac{\Delta_h^1 u^2_x}{|u^1(x+h,y)|^\beta} \frac{\phi}{\varrho^{\frac12}}\right) \tag{I}\\
&\phantom{AA}-\int_{\Om} \left(\frac{u^2_x(x+h,y)  \Delta_h^1 \varrho^{-1}}{|u^1(x+h,y)|^{\beta}} \varrho^{\frac12} \phi\right)\left(\frac{\Delta_h^1 u^2_x}{|u^1(x+h,y)|^\beta} \frac{\phi}{\varrho^{\frac12}}\right) \tag{I} \\
&\phantom{AA}+\int_{\Om} \left(\frac{u^2_y(x+h,y)  \Delta_h^1\varrho}{ |u^1(x+h,y)|^{\beta}} \frac{\phi}{\varrho^{\frac12}} \right)\left(\frac{\Delta_h^1 (\varrho u^2_y)}{|u^1(x+h,y)|^\beta} \frac{\phi}{\varrho^{\frac12}}\right) \tag{I}\\
&\phantom{AA}-\int_{\Om} \left(\varrho u^2_y\Delta_h^1\left(\frac{1}{|u^1|^{2\beta}}\right) |u^1(x+h,y)|^{\beta}
\frac{\phi}{\varrho^{\frac12}}\right)\left(\frac{\Delta_h^1 (\varrho u^2_y)}{|u^1(x+h,y)|^\beta} \frac{\phi}{\varrho^{\frac12}}\right) \tag{I}\\
&\phantom{AA}+\int_{\Om} \left(u^2_y \Delta_h^1\left(\frac{1}{|u^1|^{2\beta}}\right) |u^1(x+h,y)|^{\beta} \varrho^{\frac12} \phi\right)\left(\frac{u^2_y(x+h,y) (\Delta_h^1\varrho)}{|u^1(x+h,y)|^\beta} \frac{\phi}{\varrho^{\frac12}}\right) \\
&\phantom{AA}-\int_{\Om} \left(2 \frac{\Delta_h^1 u^2}{|u^1(x+h,y)|^{\beta}}\frac{\phi_x}{\varrho^{\frac12}} \right) \left(\frac{ u^2_x(x+h,y) \Delta_h^1 \varrho^{-1}}{|u^1(x+h,y)|^{\beta}} \varrho^{\frac12}\phi \right) \tag{III} \\
&\phantom{AA}-\int_{\Om} \left(2 \frac{\Delta_h^1 u^2}{|u^1(x+h,y)|^{\beta}}\frac{\phi_x}{\varrho^{\frac12}} \right) \left(\frac{u^2_x}{\varrho} \Delta_h^1 \left(\frac{1}{|u^1|^{2\beta}}\right)|u^1(x+h,y)|^{\beta} \varrho^{\frac12}\phi \right) \tag{III}\\
&\phantom{AA}-\int_{\Om} \left(\varrho u^2_y \Delta_h^1\left(\frac{1}{|u^1|^{2\beta}}\right)
 |u^1(x+h,y)|^{\beta} \frac{\phi}{\varrho^{\frac12}}\right) \left(2\frac{\Delta_h^1 u^2}{|u^1(x+h,y)|^\beta} \phi_y \varrho^{\frac12}\right) \tag{III}\\
&\phantom{AA}-\int_{\Om} \left(2 \frac{\Delta_h^1 u^2}{|u^1(x+h,y)|^{\beta}}\frac{\phi_x}{\varrho^{\frac12}} \right) \left(\frac{\Delta_h^1 u^2_x}{|u^1(x+h,y)|^{\beta}}\frac{\phi}{\varrho^{\frac12}}\right)  \tag{II} \\
&\phantom{AA}-\int_{\Om} \left(\frac{\Delta_h^1 (\varrho u^2_y)}{|u^1(x+h,y)|^{2\beta}} \frac{\phi}{\varrho^{\frac12}}\right) \left(2\frac{\Delta_h^1 u^2}{|u^1(x+h,y)|^\beta} \phi_y \varrho^{\frac12}\right). \tag{II}\label{eq2-aux2}
\end{align*} 

As in the corresponding junction of Step I, see the discussion following~\eqref{est-I-III}, we estimate integrals (I)-(III) by the algebraic inequality $2ab\leq \delta^{-2} a^2+\delta^2b^2$ for various $\delta>0$ and include integrals with terms $\Delta_{h}^1u^1_x$ and $\Delta_{h}^1 \varrho u^1_y$ into the left hand side of ($u^2$-est) to get the following inequality:
\begin{align}  
&\frac14 \int_{B(R)} \Bigg(\frac{(\Delta_{h}^1u^2_x)^2}{|u^1(x+h,y)|^{2\beta}} +  \frac{(\Delta_{h}^1 \varrho u^2_y)^2}{|u^1(x+h,y)|^{2\beta}} \Bigg) \phi^2 \frac{\ud x\,\ud y}{\varrho} \nonumber \\
&\lesssim_c \int_{B(R)} \frac{(\Delta_h^1 u^2)^2}{|u^1(x+h,y)|^{2\beta}} \frac{|\nabla_H \phi|^2}{\varrho} \nonumber \\
&\phantom{AA}+\int_{B(R)} \frac{[u^2_x(x+h,y)]^2  (\Delta_h^1 \varrho^{-1})^2}{|u^1(x+h,y)|^{2\beta}} \varrho \phi^2 
+\int_{B(R)} \frac{[u^2_y(x+h,y)]^2  (\Delta_h^1 \varrho)^2 \varrho^{-1}}{|u^1(x+h,y)|^{2\beta}} \phi^2 \nonumber \\
&\phantom{AA}+\int_{B(R)} |\nabla_H u^2|^2\left(\Delta_h^1 \frac{1}{|u^1|^{2\beta}}\right)^2|u^1(x+h,y)|^{2\beta} \frac{\phi^2}{\varrho}. \label{eq2-aux3}
\end{align}

Since $(\Delta_h^1 \varrho)^2\varrho^{-1}=(\Delta_h^1 \varrho^{-1})^2 \varrho^2(x+h) \varrho $, the above integrals with $u^2_x$ and $u^2_y$ can be estimated further and so~\eqref{eq2-aux3} reads:
\begin{align}  
&\frac14 \int_{B(R)} \Bigg(\frac{(\Delta_{h}^1u^2_x)^2}{|u^1(x+h,y)|^{2\beta}} +  \frac{(\Delta_{h}^1 \varrho u^2_y)^2}{|u^1(x+h,y)|^{2\beta}} \Bigg) \phi^2 \frac{\ud x\,\ud y}{\varrho} \nonumber \\
&\lesssim_c \int_{B(R)} \frac{(\Delta_h^1 u^2)^2}{|u^1(x+h,y)|^{2\beta}} \frac{|\nabla_H \phi|^2}{\varrho} + \int_{B(R)} |\nabla_H u^2(x+h,y)|^2 \frac{(\Delta_h^1 \varrho^{-1})^2}{|u^1(x+h,y)|^{2\beta}} \varrho \phi^2\nonumber \\
&\phantom{AA}+
\int_{B(R)} |\nabla_H u^2|^2 \left( \Delta_h^1 \frac{1}{|u^1|^{2\beta}}\right)^2 |u^1(x+h,y)|^{2\beta} \frac{\phi^2}{\varrho}. \label{main-est2-H22}
\end{align}
\smallskip
\\
{\bf Step III: the integral estimate for the difference quotient  $\Delta_h^1 |D_H u|^2$}.
 \smallskip
 \\
We add up the estimates~\eqref{main-est1-H22} and~\eqref{main-est2-H22}, keeping in mind that the first one contains the terms depending on $\Delta_h^1 u^2_x$ and $\Delta_h^1 (\varrho u^2_y)$ which now can be included in the left-hand side of the resulting estimate:
\begin{align}
&\frac14 \int_{B(R)} \bigg((\Delta_{h}^1u^1_x)^2 +  (\Delta_{h}^1 \varrho u^1_y)^2 + \frac{(\Delta_{h}^1u^2_x)^2}{|u^1(x+h,y)|^{2\beta}} +  \frac{(\Delta_{h}^1 \varrho u^2_y)^2}{|u^1(x+h,y)|^{2\beta}} \bigg) \phi^2 \frac{\ud x\,\ud y}{\varrho} \nonumber \tag{$u$-est} \\
&\phantom{AAA}\lesssim_c \int_{B(R)} \bigg ( (\Delta_h^1 u^1)^2 +\frac{(\Delta_h^1 u^2)^2}{|u^1(x+h,y)|^{2\beta}}\bigg) \frac{|\nabla_H \phi|^2}{\varrho} \nonumber \\
&\phantom{AAAA}+ \int_{B(R)} \bigg( |\nabla_H u^1(x+h,y)|^2 + \frac{|\nabla_H u^2(x+h,y)|^2}{|u^1(x+h,y)|^{2\beta}}\bigg) (\Delta_h^1 \varrho^{-1})^2 \varrho \phi^2 \nonumber \\
&\phantom{AAAA}+\beta \int_{B(R)} \left|\frac{\Delta_{h}^1 u^2_x }{|u^1(x+h,y)|^{\beta}} \frac{\phi}{\varrho^{\frac12}}\right| \left|\frac{u^2_x(x+h,y) \Delta_h^1 u^1}{|u^1(x+h,y)|^{\beta+1}} \frac{\phi}{\varrho^{\frac12}}\right| \nonumber \\
&\phantom{AAAA}+\beta \int_{B(R)} \left|\frac{\Delta_{h}^1 u^2_x }{|u^1(x+h,y)|^{\beta}} \frac{\phi}{\varrho^{\frac12}}\right| \left|\frac{u^2_x\Delta_h^1 u^1}{|u^1(x+h,y)|^{\beta+1}} \frac{\phi}{\varrho^{\frac12}}\right| \nonumber \\
&\phantom{AAAA}+\beta \int_{B(R)} \left|\frac{\Delta_{h}^1 (\varrho u^2_y) }{|u^1(x+h,y)|^{\beta}} \frac{\phi}{\varrho^{\frac12}}\right| \left|\frac{\varrho(x+h) u^2_y(x+h,y) \Delta_h^1 u^1}{|u^1(x+h,y)|^{\beta+1}} \frac{\phi}{\varrho^{\frac12}}\right| \nonumber \\
&\phantom{AAAA}+\beta \int_{B(R)} \left|\frac{\Delta_{h}^1 (\varrho u^2_y) }{|u^1(x+h,y)|^{\beta}} \frac{\phi}{\varrho^{\frac12}}\right|\left|\frac{(\varrho  u^2_y) \Delta_h^1 u^1}{|u^1(x+h,y)|^{\beta+1}} \frac{\phi}{\varrho^{\frac12}}\right| \nonumber \\
&\phantom{AAAA}+\beta \int_{B(R)}\frac{1}{|u^1(x+h,y)|^{2\beta+1}} |\nabla_H u^2(x+h,y)|^2|\Delta_{h}^1 \varrho^{-1}||\Delta_h^1 u^1|\phi^2 \nonumber \\
&\phantom{AAAA}+\beta \int_{B(R)} \frac{|\nabla_H u^2|^2}{\varrho} \left|\Delta_{h}^1 \left(\frac{u^1}{|u^1|^{2\beta+2}}\right)\right| |\Delta_h^1 u^1|\phi^2 \nonumber \\
&\phantom{AAAA}+\int_{B(R)} |\nabla_H u^2|^2 \left(\Delta_h^1\frac{1}{|u^1|^{2\beta}}\right)^2 |u^1(x+h,y)|^{2\beta}\frac{\phi^2}{\varrho}. \label{main-est-H22}
\end{align}
As in Steps I and II, we apply the algebraic inequality $2ab\leq \delta^{-2} a^2+\delta^2b^2$ for various $\delta>0$ and include integrals with terms $\Delta_{h}^1u^1_x$ and $\Delta_{h}^1 \varrho u^1_y$ into the left hand side of ($u$-est) to get the following inequality:
\begin{align}
&\frac14 \int_{B(R)} \bigg((\Delta_{h}^1u^1_x)^2 +  (\Delta_{h}^1 \varrho u^1_y)^2 + \frac{(\Delta_{h}^1u^2_x)^2}{|u^1(x+h,y)|^{2\beta}} +  \frac{(\Delta_{h}^1 \varrho u^2_y)^2}{|u^1(x+h,y)|^{2\beta}} \bigg) \phi^2 \frac{\ud x\,\ud y}{\varrho} \nonumber \tag{$u$-est} \\
&\phantom{AAA}\lesssim_c \int_{B(R)} \bigg ( (\Delta_h^1 u^1)^2 +\frac{(\Delta_h^1 u^2)^2}{|u^1(x+h,y)|^{2\beta}}\bigg) \frac{|\nabla_H \phi|^2}{\varrho} \nonumber \\
&\phantom{AAAA}+ \int_{B(R)} \bigg( |\nabla_H u^1(x+h,y)|^2 + \frac{|\nabla_H u^2(x+h,y)|^2}{|u^1(x+h,y)|^{2\beta}}\bigg) (\Delta_h^1 \varrho^{-1})^2 \varrho \phi^2 \nonumber \\
&\phantom{AAAA}+\beta^2 \int_{B(R)} \frac{|\nabla_H u^2(x+h,y)|^2}{|u^1(x+h,y)|^{2\beta+2}}  (\Delta_h^1 u^1)^2\frac{\phi^2}{\varrho} \nonumber \\
&\phantom{AAAA}+(\beta^2+\beta) \int_{B(R)} \frac{|\nabla_H u^2|^2}{|u^1(x+h,y)|^{2\beta+2}}  (\Delta_h^1 u^1)^2\frac{\phi^2}{\varrho} \nonumber \\
&\phantom{AAAA}+\beta \int_{B(R)}\frac{|\nabla_H u^2(x+h,y)|^2}{|u^1(x+h,y)|^{2\beta+1}} \left| \Delta_{h}^1 \varrho^{-1}\right|\,|\Delta_h^1 u^1|\phi^2 \nonumber \\
&\phantom{AAAA}+\int_{B(R)} \frac{|\nabla_H u^2|^2 }{|u^1|^{4\beta}|u^1(x+h,y)|^{2\beta}}(\Delta_h^1|u^1|^{2\beta})^2\frac{\phi^2}{\varrho}. \label{main-est-H22}
\end{align}

Note that in the last integral we also employ the following identity: $\Delta_h^1\frac{1}{|u^1|^{2\beta}}=-\frac{\Delta_h^1|u^1|^{2\beta}}{|u^1|^{2\beta}|u^1(x+h,y)|^{2\beta}}$.  Moreover, we use that
\begin{align}
&\Delta_{h}^1 \left(\frac{u^1}{|u^1|^{2\beta+2}}\right)= \frac{\Delta_{h}^1 u^1}{|u^1(x+h,y)|^{2\beta+2}}-\frac{u^1}{|u^1|^{2\beta+2}} \frac{\Delta_{h}^1 |u^1|^{2\beta+2}}{|u^1(x+h,y)|^{2\beta+2}} \nonumber \\
&\left|\Delta_{h}^1 \left(\frac{u^1}{|u^1|^{2\beta+2}}\right)\right| \leq \frac{|\Delta_{h}^1 u^1|}{|u^1(x+h,y)|^{2\beta+2}}+(2\beta+2) \frac{|\Delta_{h}^1 u^1|}{|u^1(x+h,y)|^{2\beta+2}}. \label{main-est-aux3}
\end{align}
Here, we appeal to the fact that, since $u^1\in HW^{1,2}(\Om)\cap u^1 \in L^\infty_{loc}$ and $u^1\not=0$ in $\Om$, then also $|u^1|$ and $|u^1|^{2\beta+2}$ belong to $ HW^{1,2}(\Om)$ and by applying the characterization of the Sobolev functions in Lemma~\ref{lem-Sob-char} we also justify the $L^2$-integrability in~\eqref{main-est-aux3}. Thus, we have that
\begin{equation}
|\nabla_H u^2|^2 \left| \Delta_{h}^1 \left(\frac{u^1}{|u^1|^{2\beta+2}}\right)\right|\, |\Delta_h^1 u^1|\lesssim_{\beta} \frac{|\nabla_H u^2|^2}{|u^1(x+h,y)|^{2\beta+2}}(\Delta_h^1 u^1)^2.   \label{main-est-aux4}
\end{equation}
 As for the last integral in~\eqref{main-est-H22}, we follow the same approach as above to obtain the following estimate:
\begin{equation*}
(\Delta_h^1|u^1|^{2\beta})^{2}\lesssim_{\beta} \left(\frac{\Delta_h^1 u^1}{|u^1|^{1-2\beta}}\right)^{2}.
\end{equation*}

Recall that $\phi$ satisfies: $\phi\equiv 1$ on $B(R/2)$ and $|\nabla_H \phi|\leq \frac{c}{R}$. By Lemma~\ref{lem-Sob-char}, the above integral with the term $(\Delta_h^1 u^1)^2$ is bounded by the norm $\|D_H u\|^2_{L^2(B(R), \mu)}$, up to a constant depending also on $1/R^2$.  The same holds as well for the integral with the term $\frac{(\Delta_h^1 u^2)^2}{|u^1(x+h,y)|^{2\beta}}$, upon applying Remark~\ref{rem-Sob-char2} and due to assumption that $\overline{u^1(\Om)}\cap S=\emptyset$. 

Thus, we may further simplify the estimate~\eqref{main-est-H22} and obtain that
\begin{align}
&\frac14 \int_{B(R/2)} \bigg((\Delta_{h}^1u^1_x)^2 +  (\Delta_{h}^1 \varrho u^1_y)^2 + \frac{(\Delta_{h}^1u^2_x)^2}{|u^1(x+h,y)|^{2\beta}} +  \frac{(\Delta_{h}^1 \varrho u^2_y)^2}{|u^1(x+h,y)|^{2\beta}} \bigg)  \frac{\ud x\,\ud y}{\varrho} \nonumber \tag{$u$-est} \\
&\phantom{AAA}\lesssim_c  \frac{1}{R^2} \|u\|^2_{H^{1,2}(B(R), \mu)}
+ \int_{B(R)} |D_H u(x+h,y)|^2 \frac{(\Delta_h^1 \varrho)^2}{\varrho(x+h)^2}  \frac{\ud x\,\ud y}{\varrho} \label{main-est-Dh} \\
&\phantom{AAAA}+\beta^2 \int_{B(R)} \frac{|\nabla_H u^2(x+h,y)|^2}{|u^1(x+h,y)|^{2\beta+2}}\, {\color{blue}(\Delta_h^1 u^1)^2} \frac{\ud x\,\ud y}{\varrho} \nonumber \tag{I}\\
&\phantom{AAAA}+\beta \int_{B(R)} \frac{|\nabla_H u^2|^2}{|u^1|^{2\beta+2}}{\color{blue} \frac{|u^1|^{2\beta+2}}{|u^1(x+h,y)|^{2\beta+2}}  (\Delta_h^1 u^1)^2} \frac{\ud x\,\ud y}{\varrho} \nonumber \tag{II}\\
&\phantom{AAAA}+\beta \int_{B(R)}\frac{|\nabla_H u^2(x+h,y)|^2}{|u^1(x+h,y)|^{2\beta+2}} {\color{blue}|\Delta_h^1 u^1|\,\frac{\left| \Delta_{h}^1 \varrho\right|}{\varrho(x+h)} |u^1(x+h,y)|} \frac{\ud x\,\ud y}{\varrho} \nonumber \tag{III}\\
&\phantom{AAAA}+\int_{B(R)} \frac{|\nabla_H u^2|^2 }{|u^1|^{2\beta+2}}{\color{blue} \frac{|u^1|^{2\beta}}{|u^1(x+h,y)|^{2\beta}}(\Delta_h^1 u^1)^2}\frac{\ud x\,\ud y}{\varrho}. \label{main-est-H22-2} \tag{IV}
\end{align}

Here we use colours to distinguish expressions in the above integrands to which we apply the H\"older inequality and assumption~\eqref{ass1-thm-H22} as explained below.  By following our notation convention we denote the aforementioned integrals by (I)-(IV) and analyze them one by one.
\smallskip
\\
By applying the H\"older inequality to integrals (I) and (III) with exponents $\frac{p}{2}$ and $\frac{p}{p-2}$ followed by application of~\eqref{ass1-thm-H22} we obtain that
\begin{equation}
\int_{B(R)} \frac{|\nabla_H u^2(x+h,y)|^p}{|u^1(x+h,y)|^{(\beta+1)p}} \frac{\ud x \ud y}{\varrho(x+h)}
\leq  \int_{B(R+r)} \frac{|\nabla_H u^2|^p}{|u^1|^{(\beta+1)p}} \ud \mu<\infty. \label{main-est-aux4}
\end{equation}
Similarly we handle parts of the integrands in (II) and (IV) with the $|\nabla_H u^2|^2$-terms. 
The remaining expressions in (I)-(IV) result in the following integrals, respectively:
\begin{align}
&\int_{B(R)} \left(\frac{\varrho(x+h)}{\varrho(x)}\right)^{\frac{2}{p-2}}(\Delta_h^1 u^1)^{\frac{2p}{p-2}} \ud \mu\leq (R+r)^{\frac{2\alpha}{p}}\int_{B(R)} \frac{(\Delta_h^1 u^1)^{\frac{2p}{p-2}}}{\varrho(x)^{\frac{2}{p-2}}} \ud \mu \nonumber \\
& \int_{B(R)} \left(\frac{ |u^1|}{|u^1(x+h,y)|}\right)^{(\beta+1)\frac{2p}{p-2}} (\Delta_h^1 u^1)^{\frac{2p}{p-2}} \ud \mu \tag{$\star$} \label{main-est-aux2} \\
&\|u^1\|_{L^{\infty}(B(R+r))}^{\frac{p}{p-2}}\int_{B(R)} \left(|\Delta_h^1 u^1| \frac{\left| \Delta_{h}^1 \varrho\right|}{\varrho(x+h)\varrho(x)}\varrho(x+h)^{\frac{2}{p}}\right)^{\frac{p}{p-2}} \ud x \ud y,\label{main-est-aux66} \\
&\int_{B(R)} \left(\frac{|u^1|}{|u^1(x+h,y)|}\right)^{2\beta\frac{p}{p-2}} (\Delta_h^1 u^1)^{\frac{2p}{p-2}} \ud \mu  \nonumber
 \end{align}
 Note that the second and the fourth integral in~\eqref{main-est-aux2} give rise to the same integral, as by the assumptions of the theorem $p>2$ and the image $u^1(B(R))$ is separated from the singular set and hence bounded from below on $B(2R)$. As for the third integral above, it arises, due to the boundedness of $|u^1(x+h,y)|$ and will be analyzed
at the end of this step, see the discussion following after~\eqref{est-XY-thm12}.
%
  Moreover, since by the assumption~\eqref{ass1-thm-H22} the partial derivative $u^1_x$ is integrable in the power $\frac{2p}{p-2}>2$ with respect to the measure $\mu$, we have trivially that it is also integrable with respect to the Lebesgue measure:
 $\|(u^1_x)^\frac{2p}{p-2}\|_{L^1(B(R), \ud x\ud y)} \leq R^\alpha\|(u^1_x)^\frac{2p}{p-2}\|_{L^1(B(R), \ud \mu)}$. 
Thus, the Morrey embedding theorem applies and $u^1_x$ is continuous and so is, in particular, $u^1$ with respect to variable $x$. As a consequence, the integrability of the second and the fourth integral in~\eqref{main-est-aux2} reduces to the integrability of the first integral in~\eqref{main-est-aux2}, which holds by assumption~\eqref{ass1-thm-H22}.
The analysis of the integrability of integrals (I)-(IV) in the estimate ($u$-est) will be completed once we handle the remaining two integrals, i.e., ~\eqref{main-est-Dh} and~\eqref{main-est-aux66}. 

Let us consider two cases (recall the assumption that $0<|h|<r$):
\[
 (1)\, \overline{B(R+r)}\cap S=\emptyset \quad \hbox{ and }\quad (2)\, \overline{B(R+r)}\cap S\not=\emptyset.
\]
{\bf Case 1:} In the first case the difference quotient $\frac{\Delta_{h}^1 \varrho} {\varrho(x+h)}$ exists for all points in $B(R)$ and, moreover, converges pointwisely to the derivative $\varrho_x$ of $\varrho(x)=|x|^\alpha$, as $h\to 0$. Namely, it holds that
\[
\frac{|\Delta_{h}^1 \varrho|}{\varrho(x+h)} \to_{h\to 0} \frac{|\varrho_x(x)|}{\varrho(x)}=\frac{\alpha}{|x|}<\frac{\alpha}{\dist(B(R+r), S)}.
\]
This observation allows us to provide the following estimates:
\begin{align*}
\int_{B(R)} |D_H u(x+h,y)|^2 \frac{(\Delta_h^1 \varrho)^2}{\varrho(x+h)^2}  \frac{\ud x\,\ud y}{\varrho}
&\lesssim_{\alpha, \dist(B(R+r),S)} \int_{B(R)}\,|D_H u(x+h,y)|^2 \frac{\ud x\,\ud y}{\varrho}\\
&\lesssim_{\alpha, \dist(B(R+r),S)} \|D_H u\|^2_{L^2(B(R+r), \mu)}<\infty.
\end{align*}
We follow the same reasoning to complete the analysis of the integral (III). Namely, we have by~\eqref{main-est-aux3} that 
\begin{align*}
\int_{B(R)} \frac{|\nabla_H u^2(x+h,y)|^p}{|u^1(x+h,y)|^{(\beta+1)p}} \left(\frac{\left| \Delta_{h}^1 \varrho\right|}{\varrho(x+h)}\right)^{\frac{p}{2}}\, \frac{\ud x\,\ud y}{\varrho}
&\lesssim_{\alpha, p, \dist(B(R+r),S)} \int_{B(R)}\frac{|\nabla_H u^2(x+h,y)|^p}{|u^1(x+h,y)|^{(\beta+1)p}}\,\frac{\ud x\,\ud y}{\varrho} <\infty.
\end{align*}
Thus, the integral on the left-hand side of ($u$-est) is finite and, so by applying Lemma~\ref{lem-Sob-char} and Remark~\ref{rem-Sob-char2} the assertion of Theorem~\ref{thm-H22} is proven for $X^2v$ and $XYv$ in the case (1). 
\smallskip
\\

\noindent {\bf Case 2:} In the second case, that is when the ball $B(R)$ intersects the singular set $S$, we need to ensure the integrability of integrals in~\eqref{main-est-Dh}, by imposing additional condition on mapping $u$. Namely, for $\gamma$ as in the assumptions~\eqref{ass0-thm-H22} and~\eqref{ass2-thm-H22}, it holds 
\begin{align}
&\int_{B(R)} |D_H u(x+h,y)|^2 \frac{(\Delta_h^1 \varrho)^2}{\varrho(x+h)^2}  \frac{\ud x\,\ud y}{\varrho} \nonumber \\
&\phantom{AAAA}=\int_{B(R)} \frac{|D_H u(x+h,y)|^2}{\varrho^{\frac{2}{p}\gamma}(x+h)} \frac{(\Delta_{h}^1\varrho)^2}{\varrho}\,\frac{\ud x\ud y}{\varrho^{2-\frac{2}{p}\gamma}(x+h)}\nonumber \\
&\phantom{AAAA} \leq \left(\int_{B(R)} \frac{|D_H u(x+h,y)|^p}{\varrho(x+h)^\gamma}\ud x\ud y \right)^{\frac{2}{p}}\left(\int_{B(R)}\left(\frac{(\Delta_{h}^1\varrho)^2}{\varrho}\right)^{\frac{p}{p-2}} \varrho(x+h)^{\frac{2(\gamma-p)}{p-2}}\ud x\ud y \right)^{\frac{p-2}{p}}. \label{main-est-aux22}
\end{align}
Upon the linear change of variables in the first integral, we obtain the integral which is finite by the assumption~\eqref{ass2-thm-H22}. Therefore, we are left with verifying that under that assumption on $p$ and $\gamma$ also the second integral is finite. 

The following discussion repeats the corresponding presentation in the proof of Theorem 1.2 in~\cite{aww}, but we nevertheless present it here for the readers convenience. Note that for $x\not=0$ it holds that for $h\to 0$, the integrand converges pointwisely to function 
$$
\left(\frac{\varrho_x^2}{\varrho}\right)^{\frac{p}{p-2}}\varrho^{\frac{2(\gamma-p)}{p-2}}\approx_{\alpha} |x|^{(\alpha-2)\frac{p}{p-2}+\alpha{\frac{2(\gamma-p)}{p-2}}},
$$
which is integrable on $B(R)$ provided that $\gamma>\frac{2+p+p\alpha}{2\alpha}$. Next, we note that by the Calculus 
$$
|x+h|^{1-\alpha} |\varrho(x+h)-\varrho(x)|\leq c(\alpha, R)|h|.
$$
Therefore, we find the following estimate 
\begin{align*}
 \sup_{0<|h|<R} \left(\frac{(\Delta_{h}^1\varrho)^2}{\varrho}\right)^{\frac{p}{p-2}} \varrho(x+h)^{\frac{2(\gamma-p)}{p-2}} &=  \sup_{0<|h|<R} \left(\frac{(|x+h|^{1-\alpha}\Delta_{h}^1\varrho)^2}{|x+h|^{2-2\alpha} \varrho}\right)^{\frac{p}{p-2}} \varrho(x+h)^{\frac{2(\gamma-p)}{p-2}} \nonumber \\
& \lesssim_{\alpha, R}  \sup_{0<|h|<R} |x|^{-\frac{\alpha p}{p-2}} \frac{\varrho(x+h)^{\frac{2(\gamma-p)}{p-2}}}{\varrho(x+h)^{\frac{2p(1-\alpha)}{p-2}}} \nonumber \\
 &  \lesssim_{\alpha, R} |x|^{-\frac{\alpha p}{p-2}} |x+h|^{\alpha\frac{2(\gamma-p)-2p(1-\alpha)}{p-2}}.
\end{align*}
If $p>2$ and $\gamma>p(2-\alpha)$, then the power $\delta:=\frac{\alpha}{p-2}[2(\gamma-p)-2p(1-\alpha)]>0$ and so for a suitable positive constant we have that $|x+h|^\delta \lesssim_{c(\delta)}|x|^\delta+R^\delta$. Thus,
\begin{equation}\label{est-XY-thm12}
|x|^{-\frac{\alpha p}{p-2}} |x+h|^{\alpha\frac{2(\gamma-p)-2p(1-\alpha)}{p-2}}\lesssim_{c(\delta)} |x|^{-\frac{\alpha p}{p-2}+\alpha\frac{2(\gamma-p)-2p(1-\alpha)}{p-2}}+|x|^{-\frac{\alpha p}{p-2}}R^{\alpha\frac{2(\gamma-p)-2p(1-\alpha)}{p-2}}.
\end{equation}
One directly verifies that $\gamma\geq \frac{2+p+p\alpha}{2\alpha}>\frac{p(5\alpha-2\alpha^2-1)+2}{2\alpha}$ and also $\frac{2+p+p\alpha}{2}\geq p(2-\alpha)$ for $0<\alpha<\frac12$, while the opposite inequality holds if $\frac12<\alpha<1$ and $p<\frac{2}{(1-\alpha)(2\alpha-1)}$. Hence, the integrability of the expression in~\eqref{est-XY-thm12} holds under the condition~\eqref{ass2-thm-H22}. Thus, we reduce the discussion to the previously discussed integral $\int_{B(R)} (\Delta_h^1 u^1)^{\frac{2p}{p-2}} \ud \mu$ which is finite by the assumption~\eqref{ass1-thm-H22}. As a consequence, we can appeal to the Lebesgue convergence theorem and the integrability of~\eqref{main-est-aux22} is proven.

In order to handle the integral~\eqref{main-est-aux66} we appeal to the assumption~\eqref{ass3-thm-H22} and observe that, since now $B(R)\cap S\not=\emptyset$, we get at points $(x,y)\in B(R)$ with $x\not=0$ that
\[
 |\Delta_h^1 u^1| \frac{|\Delta_{h}^1 \varrho|}{\varrho(x+h)^{1-\frac{2}{p}}\varrho(x)} \to_{h\to 0}|u^1_x| \frac{|\varrho_x(x)|}{\varrho(x)^{2-\frac{2}{p}}}=\alpha |u^1_x| |x|^{\frac{2\alpha}{p}-\alpha-1}
\] 
and hence
\[
 \int_{B(R)} \left(|u^1_x| |x|^{\frac{2\alpha}{p}-\alpha-1}\right)^{\frac{p}{p-2}}\ud x\ud y=\int_{B(R)} \left(\frac{|u^1_x|}{|x|}\right)^{\frac{p}{p-2}}\ud \mu<\infty.
\]
Thus, application of the Lebesgue convergence theorem gives the integrability of~\eqref{main-est-aux66}. 
 We complete the proof of the assertion of Theorem~\ref{thm-H22} for $X^2v$ and $XYv$ in the case (2) by appealing to Lemma~\ref{lem-Sob-char} and Remark~\ref{rem-Sob-char2}.
\smallskip
\\
 {\bf The derivatives $YX$ and $Y^2$}.
 \smallskip
 \\
 This step of the proof is simpler than the previous ones, as now the studied difference quotients are $\Delta_h^2u^i_x$ and $\Delta_h^2u^i_y$ for $i=1,2$ and thus the weight $\varrho$ does not appear explicitly inside $\Delta_h^2$. Moreover, a number of expressions studied in Steps I-III are now zero, as $\varrho$ depends only on the $x$ variable, e.g. $\Delta_h^2 \varrho^{-1} \equiv 0$ and the same for $\Delta_h^2 \varrho\equiv 0$.
 
 Similarly to the previous steps, we consider the following test function $\Delta_{-h}^2(\phi^2 \Delta_h^2 u^1)$ which belongs to $HW^{1,2}_0(B(R))$, provided that $\phi \in  HW^{1,2}_0(B(R))$, where $2B(R)\Subset \Om$ and $\phi$ is such that $0\leq \phi \leq 1$ on $B(R)$, $\phi\equiv 1$ on $B(R/2)$ and $|\nabla_H \phi|\leq \frac{c}{R}$.
 
 As previously, we test the first equation~\eqref{system-EL} with the above function and appeal to properties (DQ1)-(DQ3) of the finite difference quotients to obtain the following identity:
\begin{align*}  
&\beta \int_{\Om} \frac{|\nabla_H u^2|^2 u^1}{\varrho |u^1|^{2\beta+2}}\,\Delta_{-h}^2(\phi^2 \Delta_h^2 u^1)\,\ud x\,\ud y \\
&\phantom{AA}=-\int_{\Om} \langle \nabla_H u^1, \nabla_H [\Delta_{-h}^2(\phi^2 \Delta_h^2 u^1)] \rangle \frac{\ud x\,\ud y}{\varrho} \\
&\phantom{AA}=-\int_{\Om} \Big( \frac{u^1_x}{\varrho}\, \Delta_{-h}^2(\phi^2 \Delta_h^2 u^1)_x+ \varrho u^1_y\,\Delta_{-h}^2(\phi^2 \Delta_h^2 u^1)_y \Big) \ud x\,\ud y \\
&\phantom{AA}= \int_{\Om} \Big( \Delta_{h}^2\left(\frac{u^1_x}{\varrho}\right) (\phi^2 \Delta_h^2 u^1)_x+ \Delta_{h}^1(\varrho u^1_y) (\phi^2 \Delta_h^2 u^1)_y \Big) \ud x\,\ud y \\
&\phantom{AA}=\int_{\Om} \varrho^{-1} \Delta_{h}^2u^1_x \Big( \phi^2 \Delta_h^2 u^1_x+ 2\phi \phi_x (\Delta_h^2 u^1) \Big) +\varrho \Delta_{h}^2 u^1_y \Big( \phi^2 \Delta_{h}^2 u^1_y + 2\phi \phi_y (\Delta_h^2 u^1) \Big) \ud x\,\ud y.
\end{align*} 
Then, as in Step I above, upon omitting similar details we arrive at the inequality
\begin{align*}  
&\int_{\Om} \Big((\Delta_{h}^2u^1_x)^2 +  \varrho^2 (\Delta_{h}^2 u^1_y)^2 \Big) \phi^2 \frac{\ud x\,\ud y}{\varrho}\\
&\leq \beta \int_{\Om} \Delta_{h}^2 \left(\frac{|\nabla_H u^2|^2 u^1}{\varrho |u^1|^{2\beta+2}}\right)\, \phi^2 \Delta_h^1 u^2\, \ud x\,\ud y \\
&\phantom{A}+\frac12 \int_{\Om} (\Delta_h^2 u^1_x)^2 \frac{\phi^2}{\varrho}+2\int_{\Om} (\Delta_h^2 u^1)^2 \frac{\phi_x^2}{\varrho}+\frac12 \int_{\Om} \varrho (\Delta_h^2 u^1_y)^2 \phi^2 +2\int_{\Om} (\Delta_{h}^2 u^1)^2 \varrho \phi_y^2.
\end{align*} 
As in Step I, the most challenging is now to estimate the first integral on the right-hand side above. Upon repeating simplified computations at~\eqref{eq1-N}-\eqref{eq11-N} we obtain that 
\begin{align}\label{step4-N}
& \frac12 \int_{B(R)} \Big((\Delta_{h}^2u^1_x)^2 +  \varrho^2 (\Delta_{h}^2 u^1_y)^2 \Big) \phi^2 \frac{\ud x\,\ud y}{\varrho} \\
&\leq 2\int_{B(R)} (\Delta_h^2 u^1)^2 |\nabla_H \phi|^2 \frac{\ud x\,\ud y}{\varrho}\nonumber \\
&\phantom{A}+\beta \int_{B(R)}\Big[ \frac{u^1(x,y+h)}{|u^1(x,y+h)|^{2\beta+2}} \bigg[ u^2_x(x,y+h)(\Delta_{h}^2 u^2_x)\varrho^{-1}+ u^2_x(\Delta_{h}^2 u^2_x)\varrho^{-1} \nonumber \\
&\phantom{AAAAAAAAAAAAAAAAAAAA}+\varrho u^2_y(x,y+h) (\Delta_{h}^2 \varrho u^2_y)+\varrho u^2_y (\Delta_h^2 u^2_y) \bigg] \nonumber \\
&\phantom{AAAA}+\frac{|\nabla_H u^2|^2}{\varrho} \left(\frac{\Delta_{h}^2 u^1}{|u^1(x,y+h)|^{2\beta+2}}-u^1 \frac{\Delta_h^2 |u^1|^{2\beta+2}}{|u^1(x,y+h)|^{2\beta+2}|u^1|^{2\beta+2}}\right)\Big]\phi^2\Delta_h^2u^1\,\ud x\,\ud y. \nonumber
\end{align} 
Similarly as in Step II, we next find the estimates for $\Delta_h^2 u^2_x$ and $\Delta_h^2 u^2_y$, which boils down to testing the second equation of the system~\eqref{w-system-EL} with the test function $\Delta_{-h}^2(\phi^2 \Delta_h^2 u^2)$. In consequence of repeating the discussions at~\eqref{eq2-weak}-\eqref{main-est2-H22} and at~\eqref{main-est-Dh}-(V), upon the necessary modifications, we arrive at the following inequality 
\begin{align}  
&\frac14 \int_{B(R)} \Bigg(\frac{(\Delta_{h}^2u^2_x)^2}{|u^1(x,y+h)|^{2\beta}} +  \frac{\varrho^2(\Delta_{h}^2 u^2_y)^2}{|u^1(x,y+h)|^{2\beta}} \Bigg) \phi^2 \frac{\ud x\,\ud y}{\varrho} \nonumber \\
&\lesssim_c \int_{B(R)} \frac{(\Delta_h^2 u^2)^2}{|u^1(x,y+h)|^{2\beta}} \frac{|\nabla_H \phi|^2}{\varrho} 
+c(R)\,\int_{B(R)} |\nabla_H u^2|^2 \left( \Delta_h^2 \frac{1}{|u^1|^{\beta}}\right)^2 |u^1(x,y+h)|^{2\beta}\frac{\phi^2}{\varrho} \nonumber \\
&\lesssim_c  \frac{1}{R^2} \|u\|^2_{H^{1,2}(B(R), \mu)}+c(R) \int_{B(R)} \frac{|\nabla_H u^2|^2 }{|u^1|^{2\beta+2}}|u^1|^{2\beta} (\Delta_h^1 u^1)^2 \frac{\ud x\,\ud y}{\varrho}. \label{main-est2-H22-3}
\end{align}
Therefore, by the discussion similar to the one at the estimates~\eqref{main-est-aux3}-\eqref{main-est-aux66}, the assumption~\eqref{ass1-thm-H22} suffices to infer the finiteness of the integrals~\eqref{main-est2-H22-3}. 
\end{proof}

\section{Bochner formula for harmonic mappings}

The main goal of this section is to develop the Bochner identity for harmonic mappings between open sets in Grushin planes.
Such an identity is well-known in the setting of smooth functions on $n$-dimensional Riemannian manifolds and is nowadays one of the indispensable and fundamental tools in geometric analysis. Recall its classical formulation
\begin{equation*}
	\Delta \frac{|\nabla v|^2}{2}=\langle \nabla \Delta v, \nabla v\rangle+|\nabla^2v|^2+{\rm Ric}(\nabla v, \nabla v),
\end{equation*}
where ${\rm Ric}$ stands for the Ricci curvature tensor, see Sections 1 and 7 in~\cite{li}. A particularly important case arises, when function $v$ satisfies the Beltrami--Laplace equation on a smooth manifold: $\Delta v=0$ or, more generally, a non-homogeneous equation $\Delta v=c$ for some $c\in \R$ and the Ricci tensor is nonnegative everywhere. Then we observe that
\[
\Delta \frac{|\nabla v|^2}{2}\geq |\nabla^2v|^2\geq 0
\]
and, thus, $|\nabla v|^2$ is subharmonic. However, this no longer need to be the case in the subriemannian setting of Carnot groups, see e.g.~\cite{ga1, ga23}. Indeed, it turns out that the subharmonicity property depends on whether one considers the left- or right-invariant vector fields, see~\cite[Proposition 3.4]{ga23}. Nevertheless, in the setting of harmonic mappings from a Grushin plane to $\Rn$, a Bochner identity can be proven, see~\cite[Section 4]{aww}. Recall that such mappings satisfy the (uncoupled) system of PDEs, see the discussion in Section 2.3 in~\cite{aww}. 

Below we make next step and handle the more difficult case of harmonic mappings between open sets in $G_2^\alpha$ and $G_2^\beta$ for $\alpha \in [0,1)$ and $\beta \geq 0$. The following are the key differences between the results in~\cite{aww} and the current work:  
\begin{itemize}
\item harmonic maps  previously studied in~\cite{aww} satisfy the (uncoupled) system of PDEs and so their analysis, in fact, reduces to studies of each component function of a map. Now the system is coupled and so both component functions influence the computations;
\item the geometry of both, the source- and the target Grushin planes comes into play which is reflected in the complexity of the Bochner identity and appearance of the additional terms.
\end{itemize}

In the discussion below we need the following auxiliary differential operator. Let function $v:G_2^\alpha \to \R$ be in the Sobolev space $HW^{1,2}_{loc}$. Then in the weak sense we define the following second order operator $L$:
\begin{equation}\label{def-L-bochner}
L(v)= \varrho^2 \divGt \left(\frac{\nabla_H v}{\varrho}\right)=\varrho v_{xx}-\varrho_{x}v_{x}+\varrho^3 v_{yy},\quad \varrho(x)=|x|^\alpha.
\end{equation}
The latter strong formula for $L$ holds under the assumption that $v\in C^2_{loc}$ and $x\not=0$. However, if $x=0$ we may assume that $L(v)(x)=0$ without loss of generality, since
\[
 |x|^2L(v)=|x|^{\alpha+2}v_{xx}-\alpha |x|^{\alpha}xv_x+|x|^{3\alpha+2}v_{yy}.
\]
Otherwise at $x\not=0$, the regularity $H^{1,2}_{loc}$ together with the assumption~\eqref{ass1-thm-H22} suffice for the strong form of $L$ to hold, provided that $v$ is a component function of the weakly harmonic mapping~\eqref{w-system-EL}. Indeed, for points outside the $y$-axis, the last assertion of Theorem~\ref{thm-H22} implies the $H^{2,2}_{loc}$-regularity of Grushin-harmonic mappings provided that~\eqref{ass1-thm-H22} hold.  Furthermore, note that in the special case of $\alpha=0$, and so when the Grushin plane $G_2^\alpha$ agrees with $\R^2$, we retrieve the Laplace operator, i.e. $L=\Delta$. 

Recall that the (Grushin) Hessian matrix of a function $v$ and its norm are defined, respectively, as follows (at points where the second order derivatives of $v$ exist):
\begin{align}
& \nabla^2_H v:=\left(
 \begin{array}{cc}
XXv & XYv\\
YXv & YYv
\end{array}
\right)
\,=\,\left(
 \begin{array}{cc}
v_{xx} & \varrho_{x}v_y+\varrho v_{yx}\\
\varrho v_{xy} & \varrho^2 v_{yy}
\end{array}
\right) \nonumber \\
& \|\nabla_H^2 v\|_{H}^2:={\rm tr} \left[\nabla_H^2 v \cdot (\nabla_H^2 v)^T)\right]=v_{xx}^2+\varrho_{x}^2v_{y}^2+2\varrho\varrho_{x}v_y v_{yx}+\varrho^2v_{yx}^2+\varrho^2v_{xy}^2+\varrho^4v_{yy}^2\geq 0. \label{def-hessian-fun}
\end{align}
Hence, the norm of $D^2_H u$ of the Hessian matrix of $u$ can be given by the following formula:
\[
\|D^2_H u\|^2:=\|\nabla_H^2 u^1\|_{H}^2+\frac{\|\nabla_H^2 u^2\|_{H}^2}{|u^1|^{2\beta}}.
\]
Moreover, in order to state the Bochner formula for Grushin-harmonic mappings we need to recall the following definition of the commutator, see Definition 4.1 in~\cite{aww}. Such a commutator has been defined in ~\cite{fv} for the Grushin plane equipped with the weight $\varrho(x)=x$.
\begin{defn}\label{defns-Tcom}
Suppose the real-valued function $v\in HW^{1,2}_{loc}$ has the derivatives $XYv$ and $YXv$ as distributions. We define the following operator, given by the Lie bracket of vector fields $X$ and $Y$
\begin{equation}\label{def-T}
 Tv:=[X,Y]v=X(Yv)-Y(Xv)=(\varrho v_y)_x-\varrho(v_x)_y=\varrho_{x} v_y.
\end{equation}
Furthermore, we define the following commutator of $T$ and vector fields $X$ and $Y$:
\begin{equation}\label{def-T-commutator}
(TXv)Yv-(TYv)Xv=\varrho\varrho_{x}(v_yv_{yx}-v_{x}v_{yy}).
\end{equation}
\end{defn}
 Note that it holds that $Tv=\varrho_{x} v_y=\alpha |x|^{\alpha}x^{-1} v_y$ and so $Tv$ is well-defined and $L^1_{loc}$-integrable, for example, for Grushin-harmonic mappings whose source domain does not intersect the singular set $S$ in $G_2^\alpha$; also if $v$ is $HW^{1,2}$ component function of a mapping satisfying assumptions~\eqref{ass0-thm-H22} and~\eqref{ass2-thm-H22} of Theorem~\ref{thm-H22}. Similarly analysis gives us conditions for the commutator in~\eqref{def-T-commutator} to be well defined. Namely, it holds that
\[
 (TXv)Yv-(TYv)Xv=\varrho\varrho_{x}(v_yv_{yx}-v_{x}v_{yy})=(\varrho_x v_y)(\varrho v_y)_x- (\varrho_x v_y)^2-
 (\varrho_x v_x)(\varrho v_y)_y.
\]
Therefore, the above expression is well-defined provided that $Tv$, $XYv$ and $Y^2v$ exist, which holds, for example, if $v$ is a component function of a weak harmonic mapping satisfying assumptions of Theorem~\ref{thm-H22}. Moreover, if $\alpha>\frac13$ then $\varrho_xv_x\in L^1_{loc}$ for $v\in HW^{1,2}_{loc}(\Om, \mu)$; also for $\alpha\geq \frac23$ we have that $\varrho_xv_x\in L^2_{loc}$.

Recall the statement of the Bochner formula, see Introduction.
\smallskip
\\
{\em {\bf Theorem~\ref{thm-Bochner}.}  Let $\alpha \in [0,1)$ and $\Om\subset G_2^{\alpha}$ be an open set. Suppose that $u:\Om\to G_2^\beta$ such that $u\in H^{2,2}_{loc}(\Om, \mu; G_2^\beta)$ is a weak harmonic map, i.e., $u$ satisfies Definition~\ref{weak-solution} in $\Om$ as well as on any ball $B(R)\Subset \Om\setminus S$ we have that
\begin{equation*}
\int_{B(R)} \frac{|\nabla_H u^2|^p}{|u^1|^{(\beta+1)p}} \ud \mu<\infty, \qquad
\int_{B(R)} |\nabla_H u^1|^{\frac{2p}{p-2}} \ud \mu<\infty \quad\hbox{for some } p>2.\tag{H2-B'} 
\end{equation*}
Then the following Bochner identity holds in the weak sense in any ball $B(R)\Subset\Om\setminus S$:  
 \begin{align}
 &L\left(\frac12 \|D_Hu\|^2\right)+\alpha|x|^{\alpha-2}\|D_Hu\|^2 \nonumber \\
 &\phantom{AAA}= \varrho \|D^2_H u\|^2+2\varrho \Bigg(\big((TXu^1)Yu^1-(TYu^1)Xu^1\big)+\frac{\big((TXu^2)Yu^2-(TYu^2)Xu^2\big)}{|u^1|^{2\beta}} \Bigg) \nonumber \\
&\phantom{AAA}+\frac{\beta \varrho}{|u^1|^{2\beta+2}}\Bigg(2u^1\Big\langle \nabla_H u^2,  \nabla_H\langle \nabla_H u^1,  \nabla_H u^2 \rangle \Big\rangle-3u^1\langle \nabla_H u^1, \nabla_H |\nabla_H u^2|^2 \rangle-2\langle \nabla_H u^1,  \nabla_H u^2 \rangle^2 \nonumber \\
&\phantom{AAAAAAAAA}+|\nabla_H u^2|^2 \Big(\beta\|D_Hu\|^2+(2+3\beta)|\nabla_H u^1|^2 \Big) \Bigg). \label{id-Bochner}
\end{align} 
}
Let us comment the regularity assumptions in the above theorem. The assumption~\eqref{ass11-thm-H22} is a slight strengthening of the assumption~\eqref{ass1-thm-H22}, as now we require also the integrability of $u^1_y$. We need such an assumption in order to make sure that the identity~\eqref{id-Bochner} is well defined in the weak sense, see  discussions following the formulas~\eqref{aux1-Bochner} and~\eqref{aux3-Bochner} in the proof below. However, since in Theorem~\ref{thm-H22} the assumption~\eqref{ass1-thm-H22} is imposed in order to infer the $H^{2,2}_{loc}$-regularity of weak harmonic maps our assumption~\eqref{ass11-thm-H22} is only moderately restrictive. 

As a consequence of Theorem~\ref{thm-Bochner} we obtain the following subharmonicity result.

\begin{cor}\label{cor-Bochner}
 Let $u\in H^{2,2}_{loc}(\Om, \mu; G_2^\beta)$ be a weak harmonic map as in Definition~\ref{weak-solution}, defined on an open set $\Om\setminus S\subset G_2^\alpha$ and for the weight $\varrho=|x|^{\alpha}$ with $0\leq\alpha<1$ with the target space $G_2^\beta$ for $0<\beta\leq \frac16$ as well as on any ball $B(R)\Subset \Om\setminus S$ the assumption~\eqref{ass11-thm-H22} holds. 
 
If
\begin{equation}\label{cor-commutators}
T(Xu^1)Yu^1-(TYu^1)Xu^1\geq 0\,\,\hbox{ and }\,\, T(Xu^2)Yu^2-(TYu^2)Xu^2\geq 0
\end{equation}
then 
\begin{equation}\label{cor-subh}
L\left(\frac12 \|D_H u\|^2\right)+\alpha|x|^{\alpha-2}\|D_Hu\|^2+\beta \frac{|x|^\alpha}{|u^1|^{2}}\|D_H u\|^4\geq 0.
\end{equation} 
and so $\|D_H u\|$ is a subsolution to~\eqref{cor-subh}. 
\end{cor}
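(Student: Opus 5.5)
The corollary follows from the Bochner identity of Theorem~\ref{thm-Bochner}. Under the stated hypotheses that theorem applies on every ball $B(R)\Subset\Om\setminus S$, so \eqref{id-Bochner} holds weakly. On its right-hand side, $\varrho\|D^2_Hu\|^2\geq 0$, and by \eqref{cor-commutators} the commutator terms are nonnegative as well, since $\varrho\geq 0$ and $|u^1|^{-2\beta}>0$. It therefore suffices to show that the $\beta$-bracket is bounded below by $-|u^1|^{2\beta}\|D_Hu\|^4\cdot(\text{harmless factor})$ after absorbing pieces into $\varrho\|D^2_Hu\|^2$. Once this is done, the claimed inequality \eqref{cor-subh} follows: multiplying by $\beta\varrho|u^1|^{-2\beta-2}$ turns the error into $\beta|x|^{\alpha}|u^1|^{-2}\|D_Hu\|^4$.

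The bracket is split into two kinds of terms.

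The terms of order zero in second derivatives are $-2\langle\nabla_Hu^1,\nabla_Hu^2\rangle^2$ and $|\nabla_Hu^2|^2(\beta\|D_Hu\|^2+(2+3\beta)|\nabla_Hu^1|^2)$. The second is nonnegative. For the first, Cauchy--Schwarz gives $2\langle\nabla_Hu^1,\nabla_Hu^2\rangle^2\le 2|\nabla_Hu^1|^2|\nabla_Hu^2|^2$, and this is already dominated by $(2+3\beta)|\nabla_Hu^1|^2|\nabla_Hu^2|^2$. So these terms are jointly nonnegative.

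The terms containing second derivatives are $2u^1\langle\nabla_Hu^2,\nabla_H\langle\nabla_Hu^1,\nabla_Hu^2\rangle\rangle$ and $-3u^1\langle\nabla_Hu^1,\nabla_H|\nabla_Hu^2|^2\rangle$. I expand them via the product rule into sums of (first derivative)$\times$(first derivative)$\times$(horizontal Hessian entry) of $u^1$ or $u^2$. Up to the commutator correction $T$, which is again controlled by the Hessian norm \eqref{def-hessian-fun} through the identity $XYv=YXv+\varrho_xv_y$, each such term is bounded by $C|u^1|\,|\nabla_Hu^1|\,|\nabla_Hu^2|\,\|\nabla^2_Hu^2\|_H$ or by $C|u^1|\,|\nabla_Hu^2|^2\|\nabla^2_Hu^1\|_H$, with a universal constant $C$ (roughly $C\le 2+2+6$ after counting). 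Each product is estimated by Young's inequality $2ab\le \epsilon a^2+\epsilon^{-1}b^2$, choosing $a$ so that $\frac{\beta\varrho}{|u^1|^{2\beta+2}}\epsilon a^2$ is absorbed by $\varrho\|\nabla^2_Hu^1\|^2_H$ and $\varrho\|\nabla^2_Hu^2\|_H^2|u^1|^{-2\beta}$ respectively. Concretely, I take $a=|u^1|^{1-\beta}\|\nabla_H^2u^2\|_H$ (and similarly for $u^1$), with $\epsilon\sim 1/(C\beta)$. The leftover $b^2$ terms are of the form $C^2\beta\,|u^1|^{2\beta}|\nabla_Hu^2|^2(|\nabla_Hu^1|^2+|\nabla_Hu^2|^2|u^1|^{-2\beta})$. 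This is bounded by $C'\beta\,|u^1|^{2\beta}\|D_Hu\|^4$, up to terms $|\nabla_Hu^2|^2|\nabla_Hu^1|^2$ that can be absorbed into the nonnegative zero-order part.

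The main obstacle is the bookkeeping of the constants. I need the absorption into $\varrho\|D^2_Hu\|^2$ to use at most the full coefficient $1$ while the remainder has coefficient at most $1$ in front of $\|D_Hu\|^4$. This is exactly where a restriction like $\beta\le\frac16$ should enter: I expect the condition to look like $C\beta\le 1$ with $C=6$, coming from the factor $3$ in front of $\langle\nabla_Hu^1,\nabla_H|\nabla_Hu^2|^2\rangle$ (which expands to $2\cdot 3$ Hessian contractions). I would first try $\epsilon=1/(6\beta)$ and check that both absorption and remainder close.

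Finally, all manipulations are pointwise a.e. on $B(R)\Subset\Om\setminus S$, where $u\in H^{2,2}_{loc}$ and \eqref{ass11-thm-H22} makes every product integrable. Hence the inequality holds in the weak sense against nonnegative test functions, and $\|D_Hu\|$ is a subsolution.
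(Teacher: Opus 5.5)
Your architecture is the paper's: absorb the second-order part of the $\beta$-bracket into $\varrho\|D^2_Hu\|^2$ by Young's inequality, then pay the leftovers with the zero-order part and the allowed term $\beta\varrho|u^1|^{-2}\|D_Hu\|^4$. But the corollary consists of nothing except this bookkeeping, and the concrete choices you make do not close it.

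Multiply through by $|u^1|^{2\beta+2}/(\beta\varrho)$ ("bracket units"). The Hessian budget becomes $\beta^{-1}\big(|u^1|^{2+2\beta}\|\nabla^2_Hu^1\|_H^2+|u^1|^{2}\|\nabla^2_Hu^2\|_H^2\big)$, and the allowance becomes $|u^1|^{2\beta}\|D_Hu\|^4$. The identities $\nabla_H\langle\nabla_Hu^1,\nabla_Hu^2\rangle=\nabla^2_Hu^1\nabla_Hu^2+\nabla^2_Hu^2\nabla_Hu^1$ and $\nabla_H|\nabla_Hu^2|^2=2\nabla^2_Hu^2\nabla_Hu^2$ hold exactly. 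No $T$-correction is needed, since $\|\cdot\|_H$ is the Frobenius norm of the full non-symmetric matrix. So the second-order terms are at least $-2|u^1||\nabla_Hu^2|^2\|\nabla^2_Hu^1\|_H-8|u^1||\nabla_Hu^1||\nabla_Hu^2|\|\nabla^2_Hu^2\|_H$.

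Consequently the right weight is $a=|u^1|\,\|\nabla^2_Hu^2\|_H$, not $|u^1|^{1-\beta}\|\nabla^2_Hu^2\|_H$. Your choice fails in two places:
\begin{itemize}
\item The absorbed quantity becomes $\beta\epsilon\varrho|u^1|^{-4\beta}\|\nabla^2_Hu^2\|_H^2$, which is not dominated by $\varrho|u^1|^{-2\beta}\|\nabla^2_Hu^2\|_H^2$ uniformly as $|u^1|\to0$.
\item Your leftover $|u^1|^{2\beta}|\nabla_Hu^2|^2(|\nabla_Hu^1|^2+|u^1|^{-2\beta}|\nabla_Hu^2|^2)$ is not bounded by any fixed multiple of $|u^1|^{2\beta}\|D_Hu\|^4$ when $|u^1|$ is large (take $\nabla_Hu^1=0$).
\end{itemize}
The coefficient $\beta$ in \eqref{cor-subh} is fixed, so constants depending on $\|u^1\|_{L^\infty}$ are not allowed. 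Your trial value $\epsilon=1/(6\beta)$ also fails. It uses only a sixth of the Hessian budget and leaves $96\beta|\nabla_Hu^1|^2|\nabla_Hu^2|^2$, which the available terms cannot pay for unless $\beta\le\frac1{23}$.

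What does close is $a_1=|u^1|^{1+\beta}\|\nabla^2_Hu^1\|_H$ and $a_2=|u^1|\,\|\nabla^2_Hu^2\|_H$, with $\epsilon=1/\beta$ for both. Each Hessian appears in a different cross term, so each may use its full weight. The remainders are then:
\begin{itemize}
\item $\beta|u^1|^{-2\beta}|\nabla_Hu^2|^4$, which cancels exactly against the same term inside $\beta|\nabla_Hu^2|^2\|D_Hu\|^2$;
\item $16\beta|\nabla_Hu^1|^2|\nabla_Hu^2|^2$.
\end{itemize}
The paper pays the second remainder with the zero-order term $(2+4\beta)|\nabla_Hu^1|^2|\nabla_Hu^2|^2$. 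This is exactly where $16\beta\le2+4\beta$, i.e.\ $\beta\le\frac16$, comes from. The $8=2+6$ is the total coefficient of the $\nabla^2_Hu^2$ terms, not the factor $3$ alone. What is left is only $-2\langle\nabla_Hu^1,\nabla_Hu^2\rangle^2\ge-|u^1|^{2\beta}\|D_Hu\|^4$ (Remark~\ref{rem52}).

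In your arrangement you have already spent $2|\nabla_Hu^1|^2|\nabla_Hu^2|^2$ on $-2\langle\nabla_Hu^1,\nabla_Hu^2\rangle^2$. Only $4\beta|\nabla_Hu^1|^2|\nabla_Hu^2|^2$ remains, so you must draw on the allowance via $|u^1|^{2\beta}\|D_Hu\|^4\ge4|\nabla_Hu^1|^2|\nabla_Hu^2|^2$. The condition becomes $16\beta\le4+4\beta$, i.e.\ $\beta\le\frac13$. So your route is viable and even covers a larger range than stated, but only after exactly the computation the proposal leaves undone and mis-specifies.
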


\begin{rem}\label{rem52}
 The analysis of the last term in~\eqref{id-Bochner} and the estimate in the beginning of the proof of Corollary~\ref{cor-Bochner}, reveals that in the hypotheses~\eqref{cor-subh} of the corollary we may alternatively consider a different operator with the term $2\frac{\beta \varrho}{|u^1|^{2\beta+2}}\langle \nabla_H u^1,  \nabla_H u^2 \rangle^2$ on the left-hand side, instead of the term $\frac{\beta \varrho}{|u^1|^{2}}\|D_H u\|^4$ and it immediately holds that $2\langle \nabla_H u^1,  \nabla_H u^2 \rangle^2\leq |u^1|^{2\beta}\|D_H u\|^4$. However, the current choice allows us to study the differential operator given in a more handy form than the alternative one. Namely, as an operator depending only on $\|D_H u\|$, its derivatives and weights in $G_2^\alpha$ and $G_2^\beta$.
\end{rem}

%
%
First we present the proof of the Bochner identity, then the proof of Corollary.

\begin{proof}[Proof of Theorem~\ref{thm-Bochner}]
For the sake of simplicity of the discussion, let us first write the weak formulation of the system~\eqref{w-system-EL} in the following equivalent but more explicit form (still understood in the weak sense):
\begin{equation}\label{w2-system-EL}
\begin{cases}
\left(\frac{u_x^1}{|x|^\alpha}\right)_x +|x|^{\alpha} u^1_{yy}+\beta \frac{|\nabla_H u^2|^2 u^1}{|x|^\alpha |u^1|^{2\beta+2}}=0 \\
\left( \frac{u^2_x}{|x|^\alpha|u^1|^{2\beta}}\right)_x+|x|^\alpha \left(\frac{u^2_y}{|u^1|^{2\beta}}\right)_y=0.
\end{cases}
\end{equation}

Let us first present the plan of the proof. Our general goal is to find a second order differential operator and the corresponding identity it satisfies, which generalizes both the classical Bochner identity and the one for harmonic mappings with $\R^m$ as the target space, cf.~\cite[Section 4]{aww}. In {\bf Step 1} we will show that both equations of the system~\eqref{w2-system-EL} can be represented in terms of the above operator $L$. Then, in {\bf Step 2} by appealing to the Bochner identity (26) in~\cite{aww} we will find a formula for $L(\frac12 |\nabla_H u^1|^2)$. Similar computations performed in {\bf Step 3} will give us the formula for $L(\frac12 \frac{|\nabla_H u^2|^2}{|u^1|^{2\beta}})$ and so, in turn, in {\bf Step 4} we will obtain the formula for $L(\frac12 \|D_Hu\|^2)$.
\smallskip

\noindent {\bf Step 1.} By direct computations applied to the first equation of the system~\eqref{w2-system-EL} we get that 
\begin{align*}
&\left(\frac{u_x^1}{\varrho}\right)_x +\varrho u^1_{yy}+\beta \frac{|\nabla_H u^2|^2 u^1}{\varrho |u^1|^{2\beta+2}}=0 \\
& \varrho u_{xx}^1 -\varrho_x u^1_{x}+\varrho^3 u^1_{yy}+\beta \varrho \frac{|\nabla_H u^2|^2 u^1}{|u^1|^{2\beta+2}}=0 \\
& \varrho^2 \divGt \left(\frac{\nabla_H u^1}{\varrho}\right)+\beta \varrho \frac{|\nabla_H u^2|^2 u^1}{|u^1|^{2\beta+2}}=0 \\
&L(u^1) + \beta \varrho \frac{|\nabla_H u^2|^2 u^1}{|u^1|^{2\beta+2}}= 0.
\end{align*}

Similarly, we transform the second equation of the system~\eqref{w2-system-EL}:
\begin{align*}
&\left( \frac{u^2_x}{|x|^\alpha|u^1|^{2\beta}}\right)_x+|x|^\alpha \left(\frac{u^2_y}{|u^1|^{2\beta}}\right)_y=0 \\
&\frac{u^2_{xx} (\varrho |u^1|^{2\beta}) - u_x^2 (\varrho |u^1|^{2\beta})_x}{\varrho^2 |u^1|^{4\beta}}+\varrho^3 \frac{u^2_{yy} |u^1|^{2\beta} -u_y^2 (|u^1|^{2\beta})_y}{\varrho^2|u^1|^{4\beta}}=0 \\
& (\varrho |u^1|^{2\beta}) u^2_{xx} - (\varrho |u^1|^{2\beta})_x u_x^2+\varrho^3 |u^1|^{2\beta} u^2_{yy}  -\varrho^3 (|u^1|^{2\beta})_y u_y^2 =0\\
& |u^1|^{2\beta}\left(\varrho u^2_{xx} - \varrho_x u_x^2+\varrho^3 u^2_{yy}\right) - \varrho (|u^1|^{2\beta})_x u_x^2-\varrho^3 (|u^1|^{2\beta})_y u_y^2 =0\\
& |u^1|^{2\beta} L(u^2) - \varrho \langle \nabla_H |u^1|^{2\beta},  \nabla_H u^2 \rangle =0.
\end{align*}
In consequence, the system of equations~\eqref{w2-system-EL} equivalently reads:
\begin{equation}\label{L-w2-system-EL}
\begin{cases}
L(u^1) + \beta \varrho \frac{|\nabla_H u^2|^2 u^1}{|u^1|^{2\beta+2}}= 0\\
|u^1|^{2\beta} L(u^2) - \varrho \langle \nabla_H |u^1|^{2\beta},  \nabla_H u^2 \rangle =0.
\end{cases}
\end{equation}
\noindent {\bf Step 2.} Recall the discussion on pg. 34 in~\cite{aww} following the geometric interpretation of the commutators $T$ in~\cite[Definition 4.1]{aww} and the formula (26) in~\cite{aww} for the operator $L$. There, by combining (26) with the definition of the commutator $T$ we obtain the following expression for $L$:  
\begin{align}
L(\frac12 |\nabla_H u^1|^2)&=\varrho \|\nabla_H^2 u^1\|_{H}^2+\langle \nabla_H u^1, \nabla_H L(u^1) \rangle+\varrho \left(\frac{\varrho_{x}}{\varrho}\right)_{x}|\nabla_H u^1|^2-\frac{\varrho_{x}}{\varrho}u^1_x L(u^1) \nonumber \\
&+2\varrho \big((TXu^1)Yu^1-(TYu^1)Xu^1\big). \label{L-Bochner-1}
\end{align}
Therefore, we need to compute $\nabla_H L(u^1)=-\beta \nabla_H\left( \varrho \frac{|\nabla_H u^2|^2 u^1}{|u^1|^{2\beta+2}}\right)$. By direct computations, we find the components of the horizontal gradient $\nabla_H L(u^1)$:
\begin{align}
\left( \varrho \frac{|\nabla_H u^2|^2 u^1}{|u^1|^{2\beta+2}}\right)_x &=
\varrho_x \frac{|\nabla_H u^2|^2 u^1}{|u^1|^{2\beta+2}}+\varrho\frac{(u^1|\nabla_H u^2|^2)_x |u^1|^{2\beta+2}-u^1|\nabla_H u^2|^2 (|u^1|^{2\beta+2})_x}{|u^1|^{4\beta+4}} \nonumber \\
&=\varrho_x \frac{|\nabla_H u^2|^2 u^1}{|u^1|^{2\beta+2}}+\frac{\varrho}{|u^1|^{2\beta+2}}\left[u^1_x |\nabla_H u^2|^2+u^1 (|\nabla_H u^2|^2)_x- (2\beta+2) u^1_x|\nabla_H u^2|^2\right] \nonumber \\
&=\frac{1}{|u^1|^{2\beta+2}}\left[\varrho_x u^1 |\nabla_H u^2|^2- (1+2\beta)\varrho u^1_x|\nabla_H u^2|^2 +\varrho u^1 (|\nabla_H u^2|^2)_x\right]. \label{ident111-Bochner}
\end{align}
Similarly, we obtain the second component function of $\nabla_H L(u^1)$:
\begin{equation}
 \left( \varrho \frac{|\nabla_H u^2|^2 u^1}{|u^1|^{2\beta+2}}\right)_y = \frac{1}{|u^1|^{2\beta+2}}\left[\varrho u^1 (|\nabla_H u^2|^2)_y- (1+2\beta)\varrho u^1_y|\nabla_H u^2|^2\right].\label{ident112-Bochner}
\end{equation}
Notice that due to the integrability assumption~\eqref{ass-lem-HW0} in the definition of the weak harmonic mappings, both identities~\eqref{ident111-Bochner} and~\eqref{ident112-Bochner} are well-defined in the weak sense and hence so is $\nabla_H L(u^1)$.
Hence, 
\begin{align}
 \langle \nabla_H u^1, \nabla_H L(u^1) \rangle &= |u^1|^{-2\beta-2}\Big[\beta(1+2\beta)\varrho (u^1_x)^2 |\nabla_H u^2|^2-\beta \varrho_x u^1 u^1_x|\nabla_H u^2|^2 - \beta \varrho u^1 u^1_x (|\nabla_H u^2|^2)_x \nonumber \\
& \phantom{AAAAAAAa}-\beta \varrho^3u^1u^1_y (|\nabla_H u^2|^2)_y +\beta(1+2\beta)\varrho^3 (u^1_y)^2|\nabla_H u^2|^2 \Big]. \label{aux1-Bochner}
\end{align}
For the identity~\eqref{aux1-Bochner} to be well-defined in the weak sense we also need $|\int_{B_r}  \langle \nabla_H u^1, \nabla_H L(u^1) \rangle \phi\, \ud \mu|<\infty$ for any $\phi \in C_0^\infty(B_r)$ and $B_r\Subset \Om$. This, upon estimating the right-hand side of~\eqref{aux1-Bochner} and taking into consideration the regularity of weak harmonic mappings in Definition~\ref{weak-solution}, reduces to estimating the following integral
\[
 \int_{B_r} \frac{|\nabla_H u^1|^2}{|u^1|}\frac{|\nabla_H u^2|^2}{|u^1|^{2\beta+1}}\ud \mu.
\]
By applying the H\"older estimate and the assumption~\eqref{ass11-thm-H22} we show the finiteness of the above expression (and any $p>2$ in~\eqref{ass11-thm-H22} suffices). We substitute computations in~\eqref{aux1-Bochner} into~\eqref{L-Bochner-1}, simplify the expression and obtain the following identity:
\begin{align*}
& L\Big(\frac12 |\nabla_H u^1|^2\Big)+\alpha |x|^{\alpha-2}|\nabla_H u^1|^2 \\
&\phantom{AAA}=\varrho \|\nabla_H^2 u^1\|_{H}^2+2\varrho \big((TXu^1)Yu^1-(TYu^1)Xu^1\big) \nonumber \\
&\phantom{AAAA}+|u^1|^{-2\beta-2}\Big[\beta(1+2\beta)\varrho (u^1_x)^2 |\nabla_H u^2|^2-\beta \varrho_x u^1 u^1_x|\nabla_H u^2|^2 - \beta \varrho u^1 u^1_x (|\nabla_H u^2|^2)_x \nonumber \\
&\phantom{AAAAAAAa}-\beta \varrho^3u^1u^1_y (|\nabla_H u^2|^2)_y +\beta(1+2\beta)\varrho^3 (u^1_y)^2|\nabla_H u^2|^2 \Big] +\frac{\varrho_{x}}{\varrho}u^1_x \left( \beta \varrho \frac{|\nabla_H u^2|^2 u^1}{|u^1|^{2\beta+2}}\right) \\
&\phantom{AAA}=\varrho \|\nabla_H^2 u^1\|_{H}^2+2\varrho \big((TXu^1)Yu^1-(TYu^1)Xu^1\big) \\
&\phantom{AAAA}+|u^1|^{-2\beta-2}\Big[\beta(1+2\beta)\varrho (u^1_x)^2 |\nabla_H u^2|^2 +\beta(1+2\beta)\varrho^3 (u^1_y)^2|\nabla_H u^2|^2  \\
&\phantom{AAAAAAAAAAa} - \beta \varrho u^1 u^1_x (|\nabla_H u^2|^2)_x -\beta \varrho^3u^1u^1_y (|\nabla_H u^2|^2)_y\Big].
\end{align*}
Upon grouping together the appropriate terms on the right-hand side and rearranging the resulting expression, we obtain the following identity
\begin{align}
& L\Big(\frac12 |\nabla_H u^1|^2\Big)+\beta \varrho \frac{u^1}{|u^1|^{2\beta+2}}\langle \nabla_H u^1, \nabla_H |\nabla_H u^2|^2\rangle +\alpha |x|^{\alpha-2}|\nabla_H u^1|^2 \nonumber \\ 
 &\phantom{AAA}=\varrho \|\nabla_H^2 u^1\|_{H}^2+ \frac{\beta(1+2\beta)}{|u^1|^{2\beta+2}}\varrho |\nabla_H u^1|^2 |\nabla_H u^2|^2+ 2\varrho \big((TXu^1)Yu^1-(TYu^1)Xu^1\big). \label{aux2-Bochner}
\end{align}
Observe that since the first two terms on the right-hand side are nonnegative, it is the sign of the remaining commutator expression that determines when $|\nabla_H u^1|^2$ is a subsolution to the second order differential operator stated on the left-hand side of~\eqref{aux2-Bochner}.  
\smallskip

\noindent {\bf Step 3.} The computations for $L(\frac12 \frac{|\nabla_H u^2|^2}{|u^1|^{2\beta}})$ turns out to be more involved than the corresponding ones for $u^1$, but nevertheless they follow the similar way as in Step 2. 

Denote by $v:=\frac12 |\nabla_H u^2|^2$. Then,
\begin{align*}
 &L\left(\frac{v}{|u^1|^{2\beta}}\right) \\
 &=\varrho \left(\frac{v}{|u^1|^{2\beta}}\right)_{xx} -\varrho_x \left(\frac{v}{|u^1|^{2\beta}}\right)_{x}+\varrho^3\left(\frac{v}{|u^1|^{2\beta}}\right)_{yy} \\
 &=\varrho \left(\frac{v_x}{|u^1|^{2\beta}}-2\beta\frac{v u^1u^1_x}{|u^1|^{2\beta+2}}\right)_{x}-\varrho_x \frac{v_x}{|u^1|^{2\beta}}+2\beta \varrho_x \frac{vu^1u^1_x}{|u^1|^{2\beta+2}}+\varrho^3\left(\frac{v_y}{|u^1|^{2\beta}}-2\beta\frac{v u^1u^1_y}{|u^1|^{2\beta+2}}\right)_{y} \\
 &=\frac{1}{|u^1|^{2\beta}}L(v)-2\beta\varrho\frac{u^1}{|u^1|^{2\beta+2}}\langle\nabla_H u^1, \nabla_H |\nabla_H u^2|^2 \rangle \\
 &\phantom{AA} -2\beta v\left(\varrho\left(\frac{u^1u^1_x}{|u^1|^{2\beta+2}}\right)_x+\varrho^3\left(\frac{u^1u^1_y}{|u^1|^{2\beta+2}}\right)_y - \varrho_x\frac{u^1u^1_x}{|u^1|^{2\beta+2}} \right)\\
 &=\frac{1}{|u^1|^{2\beta}}L(v)-2\beta\varrho\frac{u^1}{|u^1|^{2\beta+2}}\langle\nabla_H u^1, \nabla_H |\nabla_H u^2|^2 \rangle \\
 &\phantom{A}\, -2\beta \frac{v}{|u^1|^{2\beta+2}} \Big(\varrho (u^1_x)^2 + \varrho u^1u^1_{xx} -(2\beta+2)\varrho (u^1_x)^2 +\varrho^3 (u^1_y)^2 + \varrho^3 u^1u^1_{yy} -(2\beta+2)\varrho^3 (u^1_y)^2-\varrho_xu^1u^1_x \Big)
\end{align*}
Since the last term above equals
\[
 -2\beta \frac{v}{|u^1|^{2\beta+2}} \big(u^1 L(u^1) -(1+2\beta)\varrho |\nabla_H u^1|^2\big),
\]
we arrive at the following formula:
\begin{align}
 L\left(\frac12 \frac{|\nabla_H u^2|^2}{|u^1|^{2\beta}}\right)&=\frac{1}{|u^1|^{2\beta}}{\color{blue} L\left(\frac12 |\nabla_H u^2|^2\right)}-2\beta\varrho\frac{u^1}{|u^1|^{2\beta+2}}\langle\nabla_H u^1, \nabla_H |\nabla_H u^2|^2 \rangle \nonumber \\
& -2\beta \frac{u^1}{|u^1|^{2\beta+2}}\left(\frac12 |\nabla_H u^2|^2\right) L(u^1)+2\beta(1+2\beta)\varrho \frac{1}{|u^1|^{2\beta+2}}\left(\frac12 |\nabla_H u^2|^2\right) |\nabla_H u^1|^2. \label{aux3-Bochner}
\end{align}
Before we continue with the analysis of~\eqref{aux3-Bochner} let us comment on the conditions under which this formula holds in the weak sense. Namely, we need to establish that $\int_{B_r}  L\left(\frac12 \frac{|\nabla_H u^2|^2}{|u^1|^{2\beta}}\right) \phi<\infty$ for any ball $B_r\Subset \Om$ and any $\phi\in C_0^\infty$. Upon applying integration by parts twice in the definition of $L$ we arrive at three integrals involving the expression as in the assumption~\eqref{ass-lem-HW0} as well as the partial derivatives of $\phi$ and $\varrho$. Then, by H\"older inequalities and the assumption~\eqref{ass11-thm-H22} we deduce that these integrals are bounded. We omit tedious technical details of this estimate.

In order to handle the distinguished blue expression in~\eqref{aux3-Bochner}, we apply the formula~\eqref{L-Bochner-1} for the operator $L$ to function $v$ and get that
\begin{align}
L(\frac12 |\nabla_H u^2|^2)&=\varrho \|\nabla_H^2 u^2\|_{H}^2+{\color{blue} \langle \nabla_H u^2, \nabla_H L(u^2) \rangle-\frac{\varrho_{x}}{\varrho}u^2_x L(u^2)}+\varrho \left(\frac{\varrho_{x}}{\varrho}\right)_{x}|\nabla_H u^2|^2 \nonumber \\
&+2\varrho \big((TXu^2)Yu^2-(TYu^2)Xu^2\big). \label{L-Bochner-2}
\end{align}
Our next goal is to compute $\nabla_H L(u^2)$. By the second equation of the system~\eqref{L-w2-system-EL}, we find  that
\begin{equation*}
 L(u^2)=\frac{\varrho}{|u^1|^{2\beta}} \langle \nabla_H |u^1|^{2\beta},  \nabla_H u^2 \rangle
 =2\beta\varrho\frac{u^1}{|u^1|^{2}} \langle \nabla_H u^1,  \nabla_H u^2 \rangle
\end{equation*}
and, therefore,
\begin{align*}
\frac{1}{2\beta}( L(u^2) )_x&=\varrho_x \frac{u^1}{|u^1|^{2}} \langle \nabla_H u^1,  \nabla_H u^2 \rangle
-\varrho \frac{u_x^1}{|u^1|^{2}} \langle \nabla_H u^1,  \nabla_H u^2 \rangle + \varrho \frac{u^1}{|u^1|^{2}} \langle \nabla_H u^1,  \nabla_H u^2 \rangle_x\\
\frac{1}{2\beta}( L(u^2) )_y&=-\varrho \frac{u_y^1}{|u^1|^{2}} \langle \nabla_H u^1,  \nabla_H u^2 \rangle + \varrho \frac{u^1}{|u^1|^{2}} \langle \nabla_H u^1,  \nabla_H u^2 \rangle_y.
\end{align*}
This leads to the following observation for the distinguished terms in~\eqref{L-Bochner-2}: 
\begin{equation}
 \langle \nabla_H u^2, \nabla_H L(u^2) \rangle-\frac{\varrho_{x}}{\varrho}u^2_x L(u^2)=2\beta\frac{\varrho}{|u^1|^{2}}\Big( u^1\Big\langle \nabla_H u^2,  \nabla_H\langle \nabla_H u^1,  \nabla_H u^2 \rangle \Big\rangle-\langle \nabla_H u^1,  \nabla_H u^2 \rangle^2  \Big). \label{aux4-Bochner}
\end{equation}
Finally, we apply~\eqref{aux4-Bochner} in~\eqref{L-Bochner-2} and then substitute the resulting expression in~\eqref{aux3-Bochner}. In consequence we get the following formula  
\begin{align}
  &L\Big(\frac12 \frac{|\nabla_H u^2|^2}{|u^1|^{2\beta}}\Big) \nonumber \\
  &\phantom{AAA}=\frac{1}{|u^1|^{2\beta}} 
   {\color{blue} \Big[ \varrho \|\nabla_H^2 u^2\|_{H}^2+\varrho \left(\frac{\varrho_{x}}{\varrho}\right)_{x}|\nabla_H u^2|^2+2\varrho \big((TXu^2)Yu^2-(TYu^2)Xu^2\big)} \nonumber\\
  &\phantom{AAAAAAAAA} {\color{blue} +2\beta\frac{\varrho}{|u^1|^{2}}\Big( u^1\Big\langle \nabla_H u^2,  \nabla_H\langle \nabla_H u^1,  \nabla_H u^2 \rangle \Big\rangle-\langle \nabla_H u^1,  \nabla_H u^2 \rangle^2  \Big)    \Big] }\nonumber\\
&\phantom{AAA} +\frac{\varrho}{|u^1|^{2\beta+2}} \Big(\beta^2 \frac{|\nabla_H u^2|^4}{|u^1|^{2\beta}}+\beta(1+2\beta) |\nabla_H u^1|^2|\nabla_H u^2|^2-2\beta u^1 \langle\nabla_H u^1, \nabla_H |\nabla_H u^2|^2 \rangle \Big). \label{L-Bochner-full-2}
\end{align}
Here we also use the first equation of the system~\eqref{L-w2-system-EL} to substitute $L(u^1)=- \beta \varrho \frac{|\nabla_H u^2|^2 u^1}{|u^1|^{2\beta+2}}$.
\smallskip

\noindent {\bf Step 4.} We are in a position to complete the proof of the Bochner identity for the Grushin-harmonic mappings. Upon combining the identities~\eqref{aux2-Bochner} for $u^1$ and~\eqref{L-Bochner-full-2} for $u^2$, respectively, we arrive at the following formula: 
\begin{align}
 &L\Big(\frac12 \|D_Hu\|^2\Big)+\alpha|x|^{\alpha-2}\|D_Hu\|^2+3\beta\varrho \frac{u^1}{|u^1|^{2\beta+2}}
 \langle \nabla_H u^1, \nabla_H |\nabla_H u^2|^2 \rangle \nonumber \\
 &\phantom{AAA}= \varrho\left(\|\nabla_H^2 u^1\|_{H}^2+\frac{\|\nabla_H^2 u^2\|_{H}^2}{|u^1|^{2\beta}} \right) \nonumber \\
&\phantom{AAA}+2\varrho \Bigg(\big((TXu^1)Yu^1-(TYu^1)Xu^1\big)+\frac{\big((TXu^2)Yu^2-(TYu^2)Xu^2\big)}{|u^1|^{2\beta}} \Bigg) \nonumber \\
&\phantom{AAA}+2\beta\frac{\varrho}{|u^1|^{2\beta+2}}\Big( u^1\Big\langle \nabla_H u^2,  \nabla_H\langle \nabla_H u^1,  \nabla_H u^2 \rangle \Big\rangle-\langle \nabla_H u^1,  \nabla_H u^2 \rangle^2  \Big) \nonumber \\
&\phantom{AAA}+\beta \frac{\varrho}{|u^1|^{2\beta+2}}|\nabla_H u^2|^2 \Big(\beta\|D_Hu\|^2+(2+3\beta)|\nabla_H u^1|^2 \Big). 
\end{align}
Upon denoting the Hessian term by $\|D^2_H u\|^2:=\|\nabla_H^2 u^1\|_{H}^2+\frac{\|\nabla_H^2 u^2\|_{H}^2}{|u^1|^{2\beta}}$ and rearranging the last two terms together with the corresponding term on the left-hand side, we complete the proof of the Bochner identity for the Grushin-harmonic mappings.
\end{proof}

Before we prove Corollary~\ref{cor-Bochner} let us comment that related is the following Caccioppoli-type inequality for the differential operator in~\eqref{cor-subh}. It is convenient to formulate such an inequality  for the weak form of $L$, see~\eqref{def-L-bochner}. Namely, we consider functions $v\in H^{1,2}_{loc}(\Om, \mu, \R)$ for an open set $\Om\subset G_2$ such that the following inequality holds in the weak sense 
 \[
  \varrho^2 \divGt \left(\frac{\nabla_H v}{\varrho}\right)+\alpha |x|^{\alpha-2}2v+\beta \frac{|x|^\alpha}{|u^1|^{2}}4v^2\geq 0,
 \]
 where $u^1$ is a given first coordinate function of the Grushin-harmonic map $u:\Om \to G_2^\beta$. 
 
\begin{lem}\label{lem-Cacc}
 Let $\Om\subset G_2\setminus S$ be an open set and $v\in HW^{1,2}_{loc}(\Om,\mu, \R)$ satisfy inequality~\eqref{cor-subh} in the weak sense, i.e.
 \begin{equation}\label{cor-Cacc-est}
  -\int_{\Om} \left \langle \nabla_H(\varrho^2 \phi)\,,\,\frac{\nabla_H v}{\varrho} \right \rangle+2\alpha \int_{\Om}|x|^{\alpha-2}v \phi + 4\beta \int_{\Om} \frac{|x|^\alpha}{|u^1|^{2}} v^2 \phi \geq 0,\quad \hbox{for all }\phi \in HW^{1,2}_{0}(\Om).
 \end{equation}
 Then, it holds for balls $B(r)\subset B(R)\Subset \Om$ that
 \begin{equation}\label{cor-Cacc-est2}
 \int_{B(r)} |\nabla_H v|^2\varrho \leq 18 \int_{B(R)} \Big(\,\frac{4\alpha}{|x|^2}+ \frac{C^2}{(R-r)^2}\,\Big)v^2\varrho+2\beta \frac{v^3}{|u^1|^{2}}\varrho.
 \end{equation}
\end{lem}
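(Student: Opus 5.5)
Plan: test inequality~\eqref{cor-Cacc-est} with $\phi=\eta^2 v/\varrho$, where $\eta\in C_0^\infty(B(R))$, $0\le\eta\le1$, $\eta\equiv1$ on $B(r)$ and $|\nabla_H\eta|\le C/(R-r)$. This choice is admissible: since $B(R)\Subset\Om\subset G_2\setminus S$, the weight $\varrho=|x|^\alpha$ is smooth and bounded away from zero and infinity on $B(R)$, so $\phi\in HW^{1,2}_0(\Om)$ whenever $v\in HW^{1,2}_{loc}$. With this test function $\varrho^2\phi=\varrho\eta^2 v$, so the first integral becomes
\[
-\int \Big\langle \nabla_H(\varrho\eta^2 v),\frac{\nabla_H v}{\varrho}\Big\rangle
=-\int \eta^2|\nabla_H v|^2-2\int \eta v\langle\nabla_H\eta,\nabla_H v\rangle-\int \eta^2 v\,\frac{\varrho_x v_x}{\varrho}.
\]
The other two terms become $2\alpha\int |x|^{-2}\eta^2v^2$ and $4\beta\int |x|^{\alpha}|u^1|^{-2}\eta^2 v^3/\varrho$. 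One could instead test with $\phi=\eta^2 v\varrho^{-1}\cdot\varrho$ to produce the weight $\varrho$ appearing in~\eqref{cor-Cacc-est2}. I will choose the power of $\varrho$ in the test function, $\phi=\eta^2 v\varrho^{k}$ with $k\in\{-1,0\}$, so that the gradient term carries exactly the weight $\varrho$. Taking $k=0$ gives $\varrho^2\phi=\varrho^2\eta^2v$, and the gradient term is $\int\eta^2|\nabla_Hv|^2\varrho$, matching the left side of~\eqref{cor-Cacc-est2}. I will work with this choice.

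With $k=0$, expanding $\nabla_H(\varrho^2\eta^2v)$ produces three terms: $\varrho^2\eta^2\nabla_Hv$, then $2\varrho^2\eta v\nabla_H\eta$, then $2\varrho\varrho_x\eta^2 v\,e_1$. Dividing by $\varrho$ and rearranging gives
\[
\int\eta^2|\nabla_Hv|^2\varrho\le 2\int\varrho\eta|v||\nabla_H\eta||\nabla_Hv|+2\int|\varrho_x|\eta^2|v||v_x|+2\alpha\int|x|^{\alpha-2}\eta^2v^2+4\beta\int\frac{|x|^\alpha}{|u^1|^2}\eta^2v^3 .
\]
Next apply Young's inequality $2ab\le\delta a^2+\delta^{-1}b^2$ to the two cross terms, each with $\delta=1/3$ (or similar), so that each absorbs at most a third of $\int\eta^2|\nabla_Hv|^2\varrho$. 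Use $|\varrho_x|=\alpha|x|^{\alpha-1}$, so that $|\varrho_x|^2/\varrho=\alpha^2|x|^{\alpha-2}\varrho\cdot|x|^{-\alpha}$; here $\varrho_x^2\le\alpha\varrho^2/|x|^2$ because $\alpha\le1$. After absorption the right side is bounded by constants times
\[
\int\Big(\frac{\alpha}{|x|^2}+|\nabla_H\eta|^2\Big)v^2\varrho+\beta\int\frac{v^3}{|u^1|^2}\varrho ,
\]
where $|x|^{\alpha-2}=\varrho/|x|^2$ and $|x|^\alpha=\varrho$.

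Finally, use $|\nabla_H\eta|\le C/(R-r)$ and $\eta\equiv1$ on $B(r)$, and collect the constants so they fit into the stated $18$, $4\alpha$ and $2\beta$. The main obstacle is this constant bookkeeping together with the sign of the cubic term. Since $v^3$ may be negative, the inequality keeps $v^3$ as is rather than $|v|^3$. I expect no analytic difficulty, because the weight is regular away from $S$; the stated numbers must simply come out of the specific Young parameters chosen. If the constants do not match exactly, I would adjust $\delta$. The constant $18=2\cdot 9$ suggests $\delta=1/3$, after which the coefficients in front of the remaining terms multiply by $3$.
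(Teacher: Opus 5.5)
Your proof follows the same route the paper indicates (it only names the test function $\phi=\eta^2 v$ and omits the details), and your bookkeeping works. Expand $\nabla_H(\varrho^2\eta^2 v)=\varrho^2\eta^2\nabla_H v+2\varrho^2\eta v\nabla_H\eta+2\varrho\varrho_x\eta^2 v\,e_1$ and absorb both cross terms with $\delta=1/3$, using $\varrho_x^2/\varrho=\alpha^2\varrho/|x|^2$. The resulting coefficients are $9$, $9\alpha^2+6\alpha\le 15\alpha$ and $12\beta$. These fit under $18$, $18\cdot 4\alpha$ and $18\cdot 2\beta$. Note that the factor $18$ must be read as multiplying the cubic term as well. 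The cubic coefficient starts at $4\beta$ and absorption only enlarges it, so a bare $2\beta$ is impossible.

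The one thing you get wrong is the sign of $v$. Your closing remark that $v^3$ ``may be negative'' and can simply be kept is exactly where the argument breaks. After absorption you hold $12\beta\int\eta^2 v^3\varrho|u^1|^{-2}$. Replacing this by $36\beta\int_{B(R)}v^3\varrho|u^1|^{-2}$ uses $\eta^2v^3\le v^3$ and $\int\eta^2v^3\varrho|u^1|^{-2}\ge 0$, and both need $v\ge 0$. Moreover, a one-sided weak inequality can only be tested with $\phi\ge 0$; the ``for all $\phi$'' in the statement must mean nonnegative $\phi$, otherwise the hypothesis would be an equation. Testing with $\phi=\eta^2 v$ therefore again requires $v\ge 0$.

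So add the hypothesis $v\ge 0$. It is implicit in the paper, since the lemma is meant for $v=\tfrac12\|D_Hu\|^2$. With it your argument is complete; if $\int_{B(R)}v^3\varrho|u^1|^{-2}=\infty$ there is nothing to prove.
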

We remark that the integrals on the right hand side above are finite for balls satisfying $B(R)\cap S=\emptyset$ such that $u^1(B(R))\cap S=\emptyset$ and under the assumption 
\begin{equation}\label{ass2-cor-Cacc-est}
\int_{B(R)}\frac{v^3}{|u^1|^{2}}\ud \mu<\infty.
\end{equation}
Otherwise one has to ensure that $\int_{B(R)}|x|^{\alpha-2}v^2<\infty$. Moreover, the additional integrability condition~\eqref{ass2-cor-Cacc-est} makes the estimate less handy and, therefore, in what follows we will not exploit it further. We omit the proof of the estimate~\eqref{cor-Cacc-est2}, as it relies on the standard techniques based on choosing the appropriate test function, namely $\phi:=\eta^2 v$ for $\eta\in HW^{1,2}_{0}(\Om)$ and such that $0\leq \eta\leq 1$ in $\Om$.
\begin{proof}[Proof of Corollary~\ref{cor-Bochner}]
By the assumption~\eqref{cor-commutators}, in order to show the subharmonicity result in~\eqref{cor-subh} we first need  to find the appropriate lower bound for the last term in the Bochner formula~\eqref{id-Bochner} such that added to $\varrho\|D_H^2u\|^2$ is nonnegative.

First, observe that 
\begin{align*}
& \nabla_H^2 u^1\cdot \nabla_Hu^2=(u^2_xu^1_{xx}+\varrho_x\varrho u^1_yu^2_y+\varrho^2u^1_{yx}u^2_y, \varrho u^2_xu^1_{xy}+\varrho^3u^2_yu^1_{yy}),\\
& \nabla_H^2 u^2\cdot \nabla_Hu^1=(u^1_xu^2_{xx}+\varrho_x\varrho u^1_yu^2_y+\varrho^2u^1_yu^2_{yx}, \varrho u^1_xu^2_{xy}+\varrho^3u^1_yu^2_{yy}),
\end{align*}
and so, by the direct computations we find the following representation of the expression below in terms of the horizontal hessian matrices of the component functions of the map $u$:
\begin{align*}
& \nabla_H \langle \nabla_H u^1,\nabla_H u^2 \rangle = \nabla_H^2 u^1\cdot \nabla_Hu^2+\nabla_H^2 u^2\cdot \nabla_Hu^1 \\
& \nabla_H |\nabla_H u^2|^2=\nabla_H \langle \nabla_H u^2,\nabla_H u^2 \rangle = 2\nabla_H^2 u^2\cdot \nabla_Hu^2.
\end{align*}
Therefore, by direct estimations and the Cauchy-Schwarz-Newton inequality applied to the last term in the Bochner formula~\eqref{id-Bochner} we find 
\begin{align*} 
&2u^1\Big\langle \nabla_H u^2,  \nabla_H\langle \nabla_H u^1,  \nabla_H u^2 \rangle \Big\rangle-3u^1\langle \nabla_H u^1, \nabla_H |\nabla_H u^2|^2 \rangle +|\nabla_H u^2|^2 \Big(\beta\|D_Hu\|^2+(2+3\beta)|\nabla_H u^1|^2 \Big) \\
&\,\, \geq -2|u^1||\nabla_H u^2| \|\nabla_H^2 u^1\| |\nabla_H u^2|-2|u^1| |\nabla_H u^2| \|\nabla_H^2 u^2\| |\nabla_H u^1|- 3|u^1|\, (2|\nabla_H u^1| \|\nabla_H^2 u^2\| |\nabla_Hu^2|) \\
&\phantom{AA} +\beta |\nabla_H u^2|^2 \|D_Hu\|^2+(2+3\beta)|\nabla_H
 u^1|^2|\nabla_H u^2|^2 \\
&\,\, \geq -2\Big( \frac{|u^1|^{1+\beta}}{\sqrt{\beta}}\|\nabla_H^2 u^1\|\Big) \Big(\sqrt{\beta}\frac{ |\nabla_H u^2|^2}{|u^1|^\beta}\Big)- 2\Big( \frac{4|u^1|}{\sqrt{2+4\beta}} \|\nabla_H^2 u^2\| \Big) \Big(\sqrt{2+4\beta}|\nabla_H u^1| |\nabla_H u^2|\Big) \\
&\,\,\phantom{AA} +\beta \frac{|\nabla_H u^2|^4}{|u^1|^{2\beta}} +(2+4\beta)|\nabla_H  u^1|^2|\nabla_H u^2|^2 \\
&\,\, \geq -\frac{|u^1|^{2(1+\beta)}}{\beta}\|\nabla_H^2 u^1\|^2- \frac{16}{2+4\beta}|u^1|^{2(1+\beta)} \frac{\|\nabla_H^2 u^2\|^2}{|u^1|^{2\beta}}\\
&\,\, \geq -\max\left\{\frac{1}{\beta}, \frac{8}{1+2\beta}\right\}|u^1|^{2(1+\beta)} \|D_H^2 u\|^2.
\end{align*}
Hence, by adding the Hessian term in the Bochner formula~\eqref{id-Bochner} we get the following lower bound for the last term in~\eqref{id-Bochner}:
\[
 \varrho \|D_H^2 u\|^2 -\left(\frac{\beta \varrho}{|u^1|^{2\beta+2}}\right)\max\left\{\frac{1}{\beta}, \frac{8}{1+2\beta}\right\}|u^1|^{2(1+\beta)} \|D_H^2 u\|^2\geq \left(1-\max\left\{1, \frac{8\beta}{1+2\beta}\right\}\right)\varrho \|D_H^2 u\|^2 \geq 0,
\]
if and only if $\beta \leq \frac16$. This together with Remark~\ref{rem52} completes the proof of the subharmonicity~\eqref{cor-subh}.
\end{proof}

\section{Weak Harnack inequalities for component functions of harmonic mappings}\label{sect-w-Harnack}

The goal of this section is to establish supremum estimates (weak Harnack estimates) on Grushin balls for the component functions of (Grushin) strongly harmonic mappings. It turns out that the key assumption is the condition~\eqref{orth-cond} which allows us to control the angle between the gradients of these component functions, see Lemma~\ref{lem-Cac-str} and the discussion following its statement. Moreover, let us point out that our weak Harnack estimates allow the balls to intersect the singular set $S$. 

Recall that by $v^{\pm}$ we denote the positive, respectively, negative part of the function $v$.

\begin{theorem}[Weak Harnack inequality for $u^2$]\label{weak-harnack-u2}
    Let $\alpha \in (0,1)$, $\beta \ge 0$, $\Om \subset G_2^{\alpha}$ be an open connected set and let $u:\Om \to G_2^{\beta}$ be a strongly harmonic map satisfying~\eqref{orth-cond} with a function $K \in L^2_{\textrm{loc}}(\Om,\mu)$.   

\noindent Set 
\[
{\mathcal P}_{\alpha,\beta}:=(2+\alpha)(1+\beta).
\]

Then for each $p>1$ and any Grushin ball $B_r = B(x_0, r) \subset \Om$ there exists a constant $C = C(\alpha,\beta,p)>0$ such that for any $\lambda \in (0,1)$ the following weak Harnack estimate holds:
\begin{equation}\label{weak harnack for u^2, p ineq}
    \sup_{\lambda B_r}|u^2_{\pm}| \le \frac{C}{1-\lambda}\Big(\vint_{B_r}|u^1|^{p{\mathcal P}_{\alpha,\beta}} \ud \mu\Big)^{\frac{\beta}{p{\mathcal P}_{\alpha,\beta}}}\Big(\vint_{B_r}\left(\frac{|u^2_{\pm}|}{|u^1|^{\beta}}\Big)^{p{\mathcal P}_{\alpha,\beta}} \ud \mu\right)^{\frac{1}{p{\mathcal P}_{\alpha,\beta}}}.
\end{equation}

Moreover, for every $p>1$ it holds that
\begin{equation}\label{weak harnack for u^2, nice ineq}
    \sup_{\lambda B_r}|u^2_{\pm}| \le \frac{C}{1-\lambda}\left(\vint_{B_r} \| u\|_{G_2^{\beta}}^{(1+\beta)p(2+\alpha)} \ud \mu \right)^{\frac{1}{p(2+\alpha)}}.
\end{equation}

\end{theorem}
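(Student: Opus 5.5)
Our plan is a Moser iteration on Grushin balls, built on the Caccioppoli estimate of Corollary~\ref{Caccioppoli inequality for positive part remark}, with the weight $|u^1|^{-2\beta}$ carried along. The Sobolev step uses the $2$-Ahlfors regularity of $\mu$ (Lemma~\ref{lem-Ahl-Muck}) together with the $A_p$ property, as in Franchi--Serapioni type results. These give a Sobolev inequality on Grushin balls with some gain exponent $\kappa>1$. Working with $\kappa=\frac{2+\alpha}{2}$-type gains is what produces the exponent $2+\alpha$ in ${\mathcal P}_{\alpha,\beta}$. For a cut-off $\eta$ supported in $B_{\rho}$, $\eta\equiv 1$ on $B_{\rho'}$, $|\nabla_H\eta|\lesssim (\rho-\rho')^{-1}$, it reads
\[
\Big(\vint_{B_\rho}|\eta w|^{2\kappa}\ud\mu\Big)^{\frac1{2\kappa}}\lesssim \rho\Big(\vint_{B_\rho}|\nabla_H(\eta w)|^2\ud\mu\Big)^{\frac12}.
\]

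\textbf{Step 1: power Caccioppoli estimate.} For $q\geq 1$ we test the second equation of~\eqref{weak-system-EL} with $\phi^2_\delta=(|u^1|+\delta)^{-2\beta-2}u^1 (u^2_{\pm})^{2q-1}\eta^2$. This is the same modification as in the proof of Corollary~\ref{Caccioppoli inequality for positive part remark}, with truncation $\min\{u^2_\pm,M\}$ so that it is admissible by Remark~\ref{rem-sing-str}(3). The term containing $\langle\nabla_H u^1,\nabla_H u^2\rangle$ is controlled by~\eqref{orth-cond} exactly as in Lemma~\ref{lem-Cac-str}: the gradient of the weight $(|u^1|+\delta)^{-2\beta-2}u^1$ produces a factor $(2\beta+1)\langle\nabla_H u^1,\nabla_H u^2\rangle$ that cancels against the equation's coupling term up to the sign. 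Letting $\delta\to0$ and $M\to\infty$, we aim for
\[
\int \frac{|\nabla_H (u^2_\pm)^{q}|^2}{|u^1|^{2\beta}}\eta^2\ud\mu\lesssim q^2\int\frac{(u^2_\pm)^{2q}}{|u^1|^{2\beta}}|\nabla_H\eta|^2\ud\mu .
\]

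\textbf{Step 2: iteration.} We set $w=(u^2_\pm)^q$ and remove the weight. Writing $|\nabla_H(\eta w)|^2\le |u^1|^{2\beta}\cdot |u^1|^{-2\beta}|\nabla_H(\eta w)|^2$ and applying H\"older to the Sobolev inequality, at the cost of $\|u^1\|_{L^{s}}$-type factors, gives a reverse-H\"older chain
\[
\|u^2_\pm\|_{L^{2q\kappa'}(B_{\rho'})}\le \big(Cq/(\rho-\rho')\big)^{1/q}\,\big\|\,|u^1|^{\beta}\big\|^{1/q}\,\big\|u^2_\pm|u^1|^{-\beta}\big\|_{L^{2q\theta}(B_\rho)},
\]
with exponents chosen so that the H\"older pairing between $|u^1|^\beta$ and $u^2_\pm|u^1|^{-\beta}$ is exactly the splitting $p{\mathcal P}_{\alpha,\beta}$ appearing in~\eqref{weak harnack for u^2, p ineq}. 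The choice $p{\mathcal P}_{\alpha,\beta}=p(2+\alpha)(1+\beta)$ balances the factor $(1+\beta)$ of $|u^1|^\beta\cdot|u^1|^{-\beta}$ against the Sobolev gain. Iterating on radii $\rho_k=\lambda r+2^{-k}(1-\lambda)r$ with $q_k=\kappa^k q_0$, the products $\prod (Cq_k2^k)^{1/q_k}$ converge. Normalising by $\mu(B_r)\approx r^2$ yields averages and the prefactor $\frac{C}{1-\lambda}$. Starting from a small $q_0$ corresponding to $p>1$ rather than $p\ge2$ requires the usual interpolation/absorption argument on the intermediate radii.

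\textbf{Step 3: consequence~\eqref{weak harnack for u^2, nice ineq}.} By~\eqref{defn-subr-unorm}, $|u^1|\le\|u\|_{G_2^\beta}$ and $|u^2|/|u^1|^\beta\le\|u\|_{G_2^\beta}$. Substituting these and using $\beta+1$ in the exponents gives the product $\big(\vint\|u\|^{pP}\big)^{\frac{\beta+1}{pP}}=\big(\vint\|u\|^{(1+\beta)p(2+\alpha)}\big)^{\frac1{p(2+\alpha)}}$.

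The main obstacle is Step~1: keeping the power test function admissible near $\{u^1=0\}$ and checking that the~\eqref{orth-cond} term, which only gives $K\in L^2_{loc}$, can be absorbed uniformly in $q$. If it cannot be absorbed outright, we plan to handle it by Young's inequality plus the Sobolev inequality, treating $K^2$ as a potential in the iteration; this is where the precise exponent $(2+\alpha)$ is likely forced.
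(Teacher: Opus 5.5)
Your overall frame matches the paper: Moser iteration on shrinking Grushin balls, a weighted power Caccioppoli estimate built on Corollary~\ref{Caccioppoli inequality for positive part remark}, the Franchi--Serapioni embedding, and the deduction of~\eqref{weak harnack for u^2, nice ineq} from~\eqref{weak harnack for u^2, p ineq} via $|u^1|,\,|u^2_\pm|/|u^1|^\beta\le\|u\|_{G_2^\beta}$. However, Step~2, which carries the whole estimate, does not work as written.

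First, your Sobolev inequality has the gradient in $L^2$. Since you only control $\int|u^1|^{-2\beta}|\nabla_H(\eta w)|^2\ud\mu$, H\"older lets you remove the weight only at the price of $\|u^1\|_{L^\infty}$, never a finite-exponent average of $u^1$. The paper instead applies Observation~\ref{Grushin sobolev embedding thm} with gradient exponent $2/\psi<2$, where $\psi\in(1,\frac{4+\alpha}{2+\alpha})$, and then uses
\[
\Big(\vint_{B_l}|\nabla_H\omega|^{2/\psi}\ud\mu\Big)^{\psi/2}\le\Big(\vint_{B_l}\frac{|\nabla_H\omega|^2}{|u^1|^{2\beta}}\ud\mu\Big)^{1/2}\Big(\vint_{B_l}|u^1|^{\frac{2\beta}{\psi-1}}\ud\mu\Big)^{\frac{\psi-1}{2}},\qquad \omega=(u^2_\pm)^{1+q/2}\eta .
\]

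Second, and more seriously, your displayed chain cannot be iterated. The Caccioppoli estimate produces $\int(u^2_\pm)^{2q}|u^1|^{-2\beta}|\nabla_H\eta|^2\ud\mu$, with the weight to the first power whatever $q$ is. After taking $q$-th roots you get $\|\,|u^1|^{-\beta/q}u^2_\pm\|_{L^{2q}}$, not $\|u^2_\pm|u^1|^{-\beta}\|_{L^{2q\theta}}$. In either form it is a different quantity from the unweighted norm on the left. Since no negative integrability of $u^1$ is available, you cannot return to an unweighted norm of $u^2_\pm$ at the next step.

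The missing idea is to split off the weighted factor and freeze it:
\[
\vint_{B_l}\frac{(u^2_\pm)^{q+2}}{|u^1|^{2\beta}}\ud\mu\le\Big(\vint_{B_l}(u^2_\pm)^{\frac{q}{\gamma}\frac{2+\alpha}{\psi(2+\alpha)-2}}\ud\mu\Big)^{\gamma\frac{\psi(2+\alpha)-2}{2+\alpha}}\Big(\vint_{B_l}\Big(\frac{(u^2_\pm)^2}{|u^1|^{2\beta}}\Big)^{s}\ud\mu\Big)^{1/s},\qquad s=\frac{2+\alpha}{2+\alpha-\gamma(\psi(2+\alpha)-2)},
\]
with $\gamma>1$. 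Then $(u^2_\pm)^2|u^1|^{-2\beta}$ sits in a fixed space $L^s$ and enters each step only as a coefficient $D_l^{1/(q+2)}$, just as the $u^1$-norm enters as $C_l^{1/(q+2)}$. The iteration runs on the unweighted norms $\|u^2_\pm\|_{L^{2\gamma^l}}$. The accumulated exponents of $C_l,D_l$ stay below $\frac12$, and $\vint_{B_r}(u^2_\pm)^2\ud\mu\le C_0D_0$ by H\"older, which closes the scheme and gives $\sup_{\lambda B_r}u^2_\pm\lesssim(1-\lambda)^{-1}(C_0D_0)^{1/2}$.

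Taking $\frac{2\beta}{\psi-1}=p{\mathcal P}_{\alpha,\beta}$ and $s=\frac12p{\mathcal P}_{\alpha,\beta}$ yields~\eqref{weak harnack for u^2, p ineq}. The condition $\gamma>1$ is then exactly $p>1$, so no interpolation from a small $q_0$ is involved. The exponent $2+\alpha$ comes from the embedding range $k\le\frac{2+\alpha}{2+\alpha-2/\psi}$, not from $K$. Indeed $K$ plays no quantitative role for $u^2$: \eqref{orth-cond} is only used to let $\delta\to0$ in the Caccioppoli inequality.

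A minor point on Step~1: the exact cancellation you describe holds in the divided system~\eqref{weak-system-EL2}, whose coupling coefficient is $2\beta+1$. In~\eqref{weak-system-EL} the same test function amounts to testing the divergence form with an extra factor $u^1$.
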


\begin{remark}
 Let us observe that already in the case $\beta=0$ we obtain a new type of the weak Harnack estimates for the uncoupled harmonic mappings from the Grushin plane to the Euclidean plane, studied in our previous work~\cite{aww}.
\end{remark}

In the proof of the weak Harnack inequality, we appeal to the following variant of the Sobolev embedding theorem in the Grushin setting whose statement and the proof is based on the corresponding result in~\cite{fs87}.
\begin{obs}[cf. Theorem 4.6 in~\cite{fs87}]\label{Grushin sobolev embedding thm}
    Let $\alpha \in (0,1)$ and let $\Omega \subset G_2^{\alpha}$ be open. Let $v \in HW^{1,p}_0(\Om,\mu)$, $1 < p \le 2$. Then for each Grushin ball $B(x_0,r) \subset \Om$ it holds that
    \begin{equation*}
        \Big(\vint_{B(x_0,r)} |v|^{pk} \ud \mu \Big)^{\frac{1}{pk}} \le C(\alpha,p)r\Big(\vint_{B(x_0,r)} |\nabla_H v|^p \ud \mu\Big)^{\frac{1}{p}}
    \end{equation*}
for $k \in [1,\frac{2+\alpha}{2+\alpha-p}]$.
\end{obs}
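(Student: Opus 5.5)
The plan is to adapt the classical Euclidean proof of the Sobolev inequality for compactly supported functions to the Grushin plane $G_2^\alpha$, using two facts: the measure $\mu$ is $2$-Ahlfors regular on Grushin balls (Lemma~\ref{lem-Ahl-Muck}), and the Grushin geometry has homogeneous dimension $2+\alpha$ with respect to the Lebesgue measure. I would follow Theorem 4.6 in~\cite{fs87}: first prove a representation formula, then a fractional integral estimate, then rescale. Since $HW^{1,p}_0$ is defined as a closure of $C_0^\infty$ (Remark~\ref{rem-C0-density}, with Proposition~\ref{prop-dens} adapted to $p\le 2$, which works because $|x|^{\pm\alpha}$ is $A_p$), it is enough to prove the inequality for $v\in C_0^\infty(B(x_0,r))$ and then pass to the limit via Fatou's lemma. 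By Hölder's inequality it further suffices to treat the endpoint $k=\frac{2+\alpha}{2+\alpha-p}$, since averages over $B(x_0,r)$ are monotone in the exponent.

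\textbf{Step 1 (representation).} For $v\in C_0^\infty(B)$ with $B=B(x_0,r)$, I would prove the pointwise bound
\[
|v(z)|\le C\int_{B}\frac{d_{CC}(z,w)\,|\nabla_H v(w)|}{|B(z,d_{CC}(z,w))|_{\mathcal L}}\,\ud w ,
\]
where $|\cdot|_{\mathcal L}$ denotes Lebesgue measure. Equivalently, and more naturally here, one can work with $\mu$ and Ahlfors regularity. The bound comes from integrating along horizontal curves: from $z$ one moves in the $x$ direction and uses horizontal paths of length comparable to $d_{CC}$, averaging over a cone of such curves, as in the ball--box description of~\cite{wu,fl}. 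This is where the Hölder (non-Hörmander) vector fields cause trouble. I would take the Franchi--Serapioni result~\cite{fs87} as the source of the representation formula, since that work covers diagonal vector fields $|x|^\alpha\partial_y$ with $\alpha\in(0,1)$.

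\textbf{Step 2 (fractional integration).} Rewrite the kernel with respect to $\mu$. Because $\ud w=|x|^\alpha\,\ud\mu$ and, by Lemma~\ref{lem-Ahl-Muck}, $\mu(B(z,\rho))\approx\rho^2$, the operator is dominated by $I(f)(z)=\int_B d_{CC}(z,w)^{-1}f(w)\,\ud\mu(w)$ with weight factors $|x_w|^\alpha/|B|_{\mathcal L}\cdot\rho^2$. This is a Riesz potential of order $1$ on a space of dimension $2$ for $\mu$, which gives $L^p\to L^{2p/(2-p)}$ for $p<2$. That exponent is better than the claimed $\frac{(2+\alpha)p}{2+\alpha-p}$. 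To stay safe, I would use the local estimate $|B(z,\rho)|_{\mathcal L}\gtrsim \rho^{2+\alpha}$, which is uniform near $S$ and holds whenever $\rho\gtrsim|x|$. This yields the kernel bound $\rho^{-(1+\alpha)}$ with respect to Lebesgue measure, i.e.\ homogeneous dimension $Q=2+\alpha$. The Hardy--Littlewood--Sobolev or Hedberg argument then gives $L^p\to L^{Qp/(Q-p)}$, and this also covers $p=2$ with a finite exponent. I would transfer between $\ud w$ and $\ud\mu$ using the bounded weight $|x|^{\alpha}\le C r^\alpha$ on $B$, and I expect the $r$-powers to balance.

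\textbf{Step 3 (scaling) and the main obstacle.} The dilation $\delta_t(x,y)=(tx,t^{1+\alpha}y)$ satisfies $d_{CC}(\delta_t z,\delta_t w)=t\,d_{CC}(z,w)$, sends $\mu$ to $t^{2}\mu$, and sends $\nabla_H$ to $t^{-1}\nabla_H$. Using it, I would normalise to $r=1$ and track the averages; this produces exactly the factor $r$. However, balls not centred on $S$ are not dilation-invariant in their position, so the constant must be shown uniform over centres. I expect this uniformity, i.e.\ making Step 2 uniform for balls both near and far from $S$, to be the main difficulty. I would handle it by splitting into the two ball--box regimes $r\lesssim|x_0|$ (essentially Euclidean, with weight $\approx|x_0|^\alpha$) and $r\gtrsim|x_0|$ (comparable to a ball centred on $S$).
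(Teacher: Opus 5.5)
\textbf{There is a genuine gap in Step 2.} The route you commit to there (``to stay safe'') fails for two separate reasons.

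First, the ball--box description (\cite{wu,fl}) gives $|B(z,\rho)|_{\mathcal L}\approx \rho^2(|x_z|+\rho)^\alpha$. Replacing the kernel $d_{CC}(z,w)/|B(z,d_{CC}(z,w))|_{\mathcal L}$ by $d_{CC}(z,w)^{-(1+\alpha)}$ therefore discards the factor $(|x_z|+\rho)^\alpha$, which is large at scales $\rho\ll |x_z|$. The Hedberg/HLS argument needs the upper bound $|B(z,\rho)|_{\mathcal L}\lesssim \rho^{2+\alpha}$, and that bound fails off $S$.

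In fact the majorant $Tf(z)=\int_B d_{CC}(z,w)^{-(1+\alpha)}f(w)\,\ud w$ is not bounded from $L^p(B,\ud x\ud y)$ to $L^{(2+\alpha)p/(2+\alpha-p)}(B,\ud x\ud y)$. To see this, take $f=\chi_{B(z_0,\epsilon)}$ with $|x_{z_0}|=a$ fixed and let $\epsilon\to 0$. Then $Tf\gtrsim \epsilon^{1-\alpha}a^\alpha$ on $B(z_0,\epsilon)$, and the ratio of norms is $\gtrsim (a/\epsilon)^{\alpha(1+\alpha)/(2+\alpha)}$.

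Second, even a valid Lebesgue-measure inequality could not be transferred to $\mu$ on the left-hand side. That transfer needs an upper bound for the density $|x|^{-\alpha}$ on $B$, and none exists when $B\cap S\neq\emptyset$. The bound $|x|^\alpha\le Cr^\alpha$ only converts the gradient side. As a result, the uniformity over centres that you flag as the main obstacle in Step 3 is never established.

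\textbf{The repair is the version you state first and then drop.} Along horizontal curves $x$ changes at most at unit speed, so $|x_w-x_z|\le d_{CC}(z,w)=:\rho$. Combined with the ball--box estimate, this gives $\rho|x_w|^\alpha/|B(z,\rho)|_{\mathcal L}\lesssim \rho^{-1}\approx \rho/\mu(B(z,\rho))$, uniformly in $z$ and $w$. Hence $|v|$ is dominated by the order-one Riesz potential of $|\nabla_H v|$ on $(G_2^\alpha,d_{CC},\mu)$. By Lemma~\ref{lem-Ahl-Muck} this space is $2$-Ahlfors regular with uniform constants.

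Fractional integration on such a space then gives the averaged inequality, with exactly the factor $r$, for the exponent $2p/(2-p)$ when $1<p<2$. This exponent exceeds $(2+\alpha)p/(2+\alpha-p)$. The case $p=2$ follows by applying the result with $p_1=\frac{2+\alpha}{1+\alpha}$, for which $2p_1/(2-p_1)=2(2+\alpha)/\alpha$, and then using H\"older. No dilations or case split are needed.

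\textbf{Comparison with the paper.} The paper rederives none of this. It applies Theorem 4.6 of~\cite{fs87} directly to the smooth approximants from Proposition~\ref{prop-dens}. There the admissible range $k\le (1-\frac{p}{q(2+\alpha)})^{-1}$ is governed by the Lebesgue homogeneous dimension $2+\alpha$ and by the Muckenhoupt index $q$ of $|x|^{-\alpha}$. Since this weight is $A_1$, one may take $q=1$, which is where the $2+\alpha$ in the statement comes from. The corrected form of your argument instead uses the intrinsic dimension $2$ of $\mu$ and yields a stronger conclusion.
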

\begin{proof} The proof follows the steps of the one of~\cite[Theorem 4.6]{fs87} and is based on adjusting it to our setting.  Fix a Grushin ball $B(x_0,r) \subset \Om$ and let $v\in HW^{1,2}_0(\Om,\mu)$. By Proposition~\ref{prop-dens} there exists a sequence $(v_n)$ such that each $v_n \in C^{\infty}_0 \cap HW^{1,2}(\Om,\mu)$ and $v_n \to v$ in $HW^{1,2}(\Om,\mu)$. Let $G_1=1$, $G_2 = 1+\alpha$ denote exponents as in the proof~\cite[Theorem 4.6]{fs87}. Similarly, to~\cite{fs87} the admissible range for the exponent $k$ is $[1,(1-\frac{p}{q(2+\alpha)})^{-1}]$, where $q>1$ is the parameter from the $q$-Muckenhoupt condition, see~\cite[Lemma 1.1]{aww} and~\cite[Theorem 4.6]{fs87}. For the detailed discussion about the admissible range of $k$ see the statements of Lemmas 4.3 and 4.4 in \cite{fs87}, as well as, the condition on top of page 547 in \cite{fs87} which reads:
    \begin{equation*}
      p-q\Big(1-\frac{1}{k}\Big)(G_1+G_2)>0,
     \end{equation*}
    and we recover the admissible range for $k$. Hence, Theorem 4.6 in \cite{fs87} yields for each $n \in \mathbb{N}$ and $k \in [1,(1-\frac{p}{q(2+\alpha)})^{-1}]$ with $q>1$:
    \begin{equation}\label{Grushin sobolev embedding proof 1}
        \Big(\vint_{B(x_0,r)} |v_n|^{pk} \ud \mu \Big)^{\frac{1}{pk}} \le C(\alpha,p,q)r\Big(\vint_{B(x_0,r)} |\nabla_H v_n|^p \ud \mu\Big)^{\frac{1}{p}},
    \end{equation}
 where we used the fact that functions $v_n$ are Lipschitz in the Carnot--Carath\'eodory metric, see Remark 2.2 and Observation 2.1 in \cite{aww}. Furthermore, if $q=1$, then the careful analysis of the proof of the Muckenhoupt property in~ \cite{aww}, see formula  (85) therein, gives us the $A_1$ property of $|x|^{-\alpha}$. Hence, the constant $C= C(\alpha,p,q)$ in \eqref{Grushin sobolev embedding proof 1} actually depends only on $\alpha$ and $p$, i.e., $C=C(\alpha,p)$. Thus, by the continuity of the $L^p$ and $L^{pk}$ norms, the assertion of Observation~\ref{Grushin sobolev embedding thm} follows for each $v_n$ and $k \in [1,\frac{2+\alpha}{2+\alpha-p}]$. We complete the proof by passing to the limit with $n\to \infty$.
\end{proof}
%

%
%
\begin{proof}[Proof of Theorem~\ref{weak-harnack-u2}]
We present the proof only for the positive part of $u^2$, i.e., for $u^2_+$, as the proof of the $u^2_-$ case is analogous.

Before we proceed with the proof, we note that throughout the reasoning we will use multiple constants depending on $\alpha, \psi,\gamma$, where $\psi$ and $\gamma$ are the numerical constants, that are denoted by $C(\alpha,\psi,\gamma)$. Their values may vary from line to line, but are always positive and finite. 

Our proof is based on the Moser iteration technique and follows closely the steps of the proof of Theorem 3.34 in~ \cite{hkm}. Let $u=(u^1,u^2)$ be a strongly Grushin harmonic map satisfying \eqref{orth-cond} with $K \in L^2_{\textrm{loc}}(\Om,\mu)$. Set:
\begin{equation*}
  r_l := \lambda + (1-\lambda)2^{-l}, \quad l=0,1,2,\ldots.
\end{equation*}
Let further $B_l:=B(x_0, r r_l)$ be a Grushin ball and $\eta_l \in C_0^{\infty}(B_l)$ be a test function such that
\[
0 \le \eta_l \le 1,\quad {\rm{supp}}(\eta_l) \subset B_{l}, \quad \eta_l \equiv 1 \hbox{ on } B_{l+1},\hbox { and }\,|\nabla_H \eta_l| \le 4\frac{2^l}{r(1-\lambda)},
\]
see, for instance, Observation 2.1 in \cite{aww} for a proof of existence of such a test function. Fix $q \ge 0$ and let
\begin{equation}\label{moser iteration, function for iteration step}
    \omega_l:=(u^2_{+})^{1+\frac{q}{2}}\eta_l \in HW^{1,2}_0(\Om,\mu).
\end{equation}
Then we have that 
\begin{equation*}
 \nabla_H \omega_l = \left(1+\frac{q}{2}\right)(u^2_+)^{\frac{q}{2}}(\nabla_H u^2_+) \eta_l + (u^2_{+})^{1+\frac{q}{2}} (\nabla_H \eta_l).
\end{equation*}
Hence, by the Minkowski inequality and by Lemma~\ref{lem-Cac-str} we obtain the following estimate:
\begin{equation}\label{weak harnack for u^2 1}
\begin{aligned}
    \left(\int_{B_l}\frac{|\nabla_H \omega_l|^2}{|u^1|^{2\beta}} \ud \mu\right)^{\frac{1}{2}} &\le \frac{2+q}{2}\left(\int_{B_l}(u^2_+)^q\frac{|\nabla_H u^2_+|^2}{|u^1|^{2\beta}} \eta_l^2 \ud \mu\right)^{\frac{1}{2}} + \left(\int_{B_l}(u^2_+)^{q}\frac{(u^2_+)^2}{|u^1|^{2\beta}}|\nabla_H \eta_l|^2 \ud \mu\right)^{\frac{1}{2}}\\
    &\le (3+q)\left(\int_{B_l}(u^2_+)^{q}\frac{(u^2_+)^2}{|u^1|^{2\beta}}|\nabla_H \eta_l|^2 \ud \mu\right)^{\frac{1}{2}}.
\end{aligned}
\end{equation}
Let $\psi \in \left(1, \frac{4+\alpha}{2+\alpha}\right)$ be a constant. By the Jensen inequality together with the $2$-Ahlfors regularity of the measure $\mu$ and Observation~\ref{Grushin sobolev embedding thm} applied with $p=\frac{2}{\psi}$ and $k=\frac{2+\alpha}{2+\alpha-\frac{2}{\psi}}$, we obtain:
\begin{equation}\label{weak harnack for u^2 2}
\begin{aligned}
   & \Big(\vint_{B_{l+1}}(u^2_+)^{q\frac{(2+\alpha)}{\psi(2+\alpha)-2}}\ud \mu\Big)^{\frac{q+2}{q}\frac{\psi (2+\alpha)-2}{2(\alpha+2)}} \\
     &\phantom{AAA}\le \Big(\vint_{B_{l+1}}(u^2_+)^{(2+q)\frac{(2+\alpha)}{\psi(2+\alpha)-2}}\, \ud \mu\Big)^{\frac{\psi (2+\alpha)-2}{2(\alpha+2)}}\\
    &\phantom{AAA}\le C(l,\alpha,\psi) \Big(\vint_{B_l}(\omega_l)^{\frac{2(2+\alpha)}{\psi(2+\alpha)-2}}\, \ud \mu\Big)^{\frac{\psi (2+\alpha)-2}{2(\alpha+2)}}
    \le  C(l,\alpha,\psi)r r_l\Big(\vint_{B_l} |\nabla_H \omega_l|^{\frac{2}{\psi}} \,\ud \mu\Big)^{\frac{\psi}{2}} \\
    &\phantom{AAA}\le C(l,\alpha,\psi)r r_l\Big(\vint_{B_l}\frac{|\nabla_H \omega_l|^2}{|u^1|^{2\beta}} \,\ud \mu\Big)^{\frac{1}{2}} \Big(\vint_{B_l} |u^1|^{\frac{2\beta}{\psi - 1}} \ud \mu\Big)^{\frac{\psi - 1}{2}}, 
\end{aligned}
\end{equation}
where in the last step we also employ the H\"older inequality. The same inequality applied again for $\gamma \in (0,\frac{2+\alpha}{\psi(2+\alpha)-2})$ leads to the following estimate:
\begin{equation}\label{weak harnack for u^2 3}
\begin{aligned}
    \vint_{B_l}(u^2_+)^{q}\frac{(u^2_+)^2}{|u^1|^{2\beta}} \ud \mu &\le \Big(\vint_{B_l} (u^2_+)^{q\Big(\frac{(2+\alpha)}{\psi(2+\alpha)-2}\Big)\frac{1}{\gamma}} \ud \mu\Big)^{\frac{\gamma(\psi (2+\alpha)-2)}{\alpha+2}}\Big(\vint_{B_l}\Big(\frac{(u^2_+)^2}{|u^1|^{2\beta}} \Big)^{\frac{2+\alpha}{2+\alpha - \gamma(\psi(2+\alpha)-2)}}\ud \mu\Big)^{\frac{2+\alpha - \gamma(\psi(2+\alpha)-2)}{2+\alpha}}.
\end{aligned}
\end{equation}
Observe that, since the constant $\psi \in \left(1,\frac{4+\alpha}{2+\alpha}\right)$, it holds that $\frac{2+\alpha}{\psi(2+\alpha)-2}>1$. Moreover, recall that we need to assume that $\gamma>1$ for the Moser iteration technique to work. For the sake of simplicity of the presentation let us introduce the following notation:
\begin{equation}
    C_l:=\Big(\vint_{B_l} |u^1|^{\frac{2\beta}{\psi - 1}} \ud \mu\Big)^{\psi - 1}, \qquad D_l:=\Big(\vint_{B_l}\left(\frac{(u^2_+)^2}{|u^1|^{2\beta}} \Big)^{\frac{2+\alpha}{2+\alpha - \gamma(\psi(2+\alpha)-2)}}\ud \mu\right)^{\frac{2+\alpha - \gamma(\psi(2+\alpha)-2)}{2+\alpha}}.\label{defn-ClDl}
\end{equation}
Furthermore, observe that by the Ahlfors $2$-regularity of measure $\mu$ and by the inclusion $B_{l} \subset B_{l-1}$ one can directly verify the following inequalities:
\begin{equation}\label{weak harnack for u^2 relation between B,C}
    C_l \le 4^{\psi-1}C_{l-1}, \qquad D_l \le 4^{{\frac{2+\alpha - \gamma(\psi(2+\alpha)-2)}{2+\alpha}}}D_{l-1}.
\end{equation}
Moreover, for notation purposes, we set:
\begin{equation*}
    A_{l}:= C(l,\alpha,\psi)(3+q)(1-\lambda)^{-1}2^l, \qquad A:= C(l,\alpha,\psi)(1-\lambda)^{-1}. 
\end{equation*}
In consequence, inequalities \eqref{weak harnack for u^2 1} - \eqref{weak harnack for u^2 3}, as well as properties of the test functions $\eta_l$ yield the following inequality which is a cornerstone for running the Moser iteration scheme:
\begin{equation}\label{weak harnack for u^2 4}
    \Big(\vint_{B_{l+1}}(u^2_+)^{\frac{q(2+\alpha)}{\psi(2+\alpha)-2}}\ud \mu\Big)^{\frac{\psi (2+\alpha)-2}{q(\alpha+2)}}
    \le A_l^{\frac{2}{q+2}}C_l^{\frac{1}{q+2}}D_l^{\frac{1}{q+2}} \Big(\vint_{B_l} (u^2_+)^{\frac{1}{\gamma}\left(\frac{q(2+\alpha)}{\psi(2+\alpha)-2}\right)} \ud \mu\Big)^{\gamma\left(\frac{\psi (2+\alpha)-2}{q(\alpha+2)}\right)\frac{q}{q+2}}.
\end{equation}
Here, $\gamma \in (1,\frac{2+\alpha}{\psi(2+\alpha)-2})$. Based on~\eqref{weak harnack for u^2 4} we define the  exponents $q_l$ such that $q_l$ satisfy the following condition: 
\[
 q_l \frac{2+\alpha}{\psi(2+\alpha)-2}=2\gamma^{l+1},\quad l=0,1,\ldots.
\]
Thus, we may inductively define exponents $k_0 = 2$ and $k_{l+1} := \gamma k_l$, obtaining that $k_l=2\gamma^ l$ for  $l=0,1,\ldots$. Under this notation the inequality~\eqref{weak harnack for u^2 4} reads:
\begin{equation}\label{weak harnack for u^2 5}
    \Big(\vint_{B_{l+1}}(u^2_+)^{k_{l+1}} \ud \mu \Big)^{\frac{1}{k_{l+1}}} \le A_l^{\frac{2}{q_l+2}}C_l^{\frac{1}{q_l+2}}D_l^{\frac{1}{q_l+2}}\Big(\vint_{B_{l}}(u^2_+)^{k_{l}} \ud \mu\Big)^{\frac{1}{k_{l}} \frac{q_l}{q_l+2}},\quad l=0,1,2,\ldots,
\end{equation}
where here we also use that $r_l \le 1$. Having estimate~\eqref{weak harnack for u^2 5}, we are in a position to iterate this inequality upon introducing the following auxiliary expression which simplifies the presentation:
\[
 P_{i,l}:=\frac{1}{q_i}\prod_{n=i}^l\frac{q_n}{q_n+2}=\frac{1}{q_i}\frac{\gamma^{i+1}}{\gamma^{i+1}+\frac{2+\alpha}{\psi(2+\alpha)-2}}\cdot \frac{\gamma^{i+2}}{\gamma^{i+2}+\frac{2+\alpha}{\psi(2+\alpha)-2}}\cdot \cdots \cdot \frac{\gamma^{l+1}}{\gamma^{l+1}+\frac{2+\alpha}{\psi(2+\alpha)-2}}.
\]
In this notation, the iterations of inequality~\eqref{weak harnack for u^2 5} lead to the following estimate: 
\begin{align}\label{weak harnack for u^2 6 robocza}
 \Big(\vint_{B_{l+1}}(u^2_+)^{k_{l+1}} \ud \mu \Big)^{\frac{1}{k_{l+1}}} &\!\! \le \! C(\alpha,\psi)^{\sum_{i=0}^l P_{i,l}}\Big(\prod_{i=0}^l(3+q_i)^{2P_{i,l}}\Big)4^{\psi\sum_{i=1}^l iP_{i,l}}
    (4^{{\frac{2+\alpha - \gamma(\psi(2+\alpha)-2)}{2+\alpha}}})^{\sum_{i=1}^l iP_{i,l}} \times \nonumber \\
  & \times (C_0 D_0)^{\sum_{i=0}^l P_{i,l}}  (1-\lambda)^{-2\sum_{i=0}^l  P_{i,l}}\left(\vint_{B_{r}}(u^2_+)^{2} \ud \mu\right)^{\frac{1}{2} \prod_{n=0}^l \frac{q_n}{q_n+2}}.
\end{align}
Observe that the term $4^{\psi\sum_{i=1}^l i P_{i,l} }$ arises as product of the appropriate terms from~\eqref{weak harnack for u^2 relation between B,C}, the factor $2^l$ in the definition of $A_l$ and as a consequence of the iterating procedure. Moreover, we may further simplify the estimate~\eqref{weak harnack for u^2 6 robocza}, by noting that 
\begin{equation*}
    \frac{1}{2}\Big(\prod_{n=i+1}^l \frac{q_n}{q_n+2} - \prod_{n=i}^l \frac{q_n}{q_n+2}\Big) = \frac{1}{q_i}\Big(\prod_{n=i}^l \frac{q_n}{q_n+2}\Big)\,\,\hbox{ and }\,\, \frac{1}{2}\left(1-\frac{q_l}{q_l+2}\right) = \frac{1}{q_l+2},
\end{equation*}
for $i=0,1,2,\ldots,l-1$. Hence
\begin{equation*}
    0<\sum_{i=0}^l \frac{1}{q_i}\Big(\prod_{n=i}^l\frac{q_n}{q_n+2}\Big) = \frac{1}{2}\Big(1 - \prod_{n=0}^l \frac{q_n}{q_n +2}\Big) < \frac{1}{2}.
\end{equation*}
Furthermore, we directly obtain the following inequalities:
\[
\prod_{i=0}^l(3+q_i)^{\frac{2}{q_i}} \le C(\alpha,\psi,\gamma)\,\, \hbox{ and }\,\,\sum_{i=1}^{\infty}\frac{i}{q_i} \le C(\alpha,\psi,\gamma).
\]
All together the above estimates allow us to simplify~\eqref{weak harnack for u^2 6 robocza} as follows:
\begin{equation}\label{weak harnack for u^2 6}
\begin{aligned}
    \Big(\vint_{B_{l+1}}(u^2_+)^{k_{l+1}} \ud \mu \Big)^{\frac{1}{k_{l+1}}} &\le C(\alpha,\psi,\gamma)  (1-\lambda)^{-1} \left(C_0 D_0\right)^{\sum_{i=0}^l P_{i,l}} \Big(\vint_{B_{r}}(u^2_+)^{2} \ud \mu\Big)^{\frac{1}{2} \prod_{n=0}^l \frac{q_n}{q_n+2}}\\
    &=C(\alpha,\psi,\gamma) (1-\lambda)^{-1} \left(C_0 D_0\right)^{\frac{1}{2}(1-\prod_{n=0}^l\frac{q_n}{q_n+2})} \left(\vint_{B_{r}}(u^2_+)^{2} \ud \mu\right)^{\frac{1}{2} \prod_{n=0}^l \frac{q_n}{q_n+2}}.
\end{aligned}
\end{equation}

In order to complete the proof of Theorem~\ref{weak-harnack-u2} we need to further estimate the right-hand side of~\eqref{weak harnack for u^2 6}.  By the H\"older inequality we get the following estimate:
\begin{equation}\label{weak harnack for u^2 proof corrections 1}
\begin{aligned}
    \vint_{B_{r}}(u^2_+)^{2} \ud \mu = \vint_{B_{r}}\frac{(u^2_+)^{2}}{|u^1|^{2\beta}} |u^1|^{2\beta} \ud \mu \le \Big(\vint_{B_{r}}\left(\frac{(u^2_+)^{2}}{|u^1|^{2\beta}}\Big)^{\frac{1}{2-\psi}} \ud \mu\right)^{2-\psi} \Big(\vint_{B_{r}}|u^1|^{\frac{2\beta}{\psi-1}} \ud \mu\Big)^{\psi-1}.
\end{aligned}
\end{equation}
Since $\frac{(\psi-1)(2+\alpha)}{\psi(2+\alpha)-2}<1<\gamma$, it holds that $\frac{1}{2-\psi}<\frac{2+\alpha}{2+\alpha-\gamma(\psi(2+\alpha)-2)}$ and, thus, by the Jensen inequality applied to the first integral on the right-hand side of~\eqref{weak harnack for u^2 proof corrections 1} and~\eqref{defn-ClDl} applied with $l=0$, we obtain that:
\begin{equation*}
    \vint_{B_{r}}(u^2_+)^{2} \ud \mu \le C_0D_0.
\end{equation*}
This discussion allows us to express the estimate~\eqref{weak harnack for u^2 6} in the following form:
\begin{align*}
    \Big(\vint_{B_{l+1}}&(u^2_+)^{k_{l+1}} \ud \mu \Big)^{\frac{1}{k_{l+1}}} \\
    &\le C(\alpha,\psi,\gamma) (1-\lambda)^{-1} \Big(\vint_{B_r}\left(\frac{(u^2_+)^2}{|u^1|^{2\beta}} \Big)^{\frac{2+\alpha}{2+\alpha - \gamma(\psi(2+\alpha)-2)}}\ud \mu\right)^{\frac{2+\alpha - \gamma(\psi(2+\alpha)-2)}{2(2+\alpha)}}\Big(\vint_{B_{r}}|u^1|^{\frac{2\beta}{\psi-1}} \ud \mu\Big)^{\frac{\psi-1}{2}},
\end{align*}
and by letting $l \to \infty$ we arrive at the following inequality:
\begin{equation}\label{weak harnack for u^2 final ineq 2}
    \sup_{B(x_0,\lambda r)} u^2_+ \le \frac{C(\alpha,\psi,\gamma)}{1-\lambda} \Big(\vint_{B_r}\left(\frac{(u^2_+)^2}{|u^1|^{2\beta}} \Big)^{\!\frac{2+\alpha}{2+\alpha - \gamma(\psi(2+\alpha)-2)}}\ud \mu\right)^{\!\frac{2+\alpha - \gamma(\psi(2+\alpha)-2)}{2(2+\alpha)}}\left(\vint_{B_{r}}|u^1|^{\frac{2\beta}{\psi-1}} \ud \mu\right)^{\!\frac{\psi-1}{2}}.
\end{equation}
Finally, we are in a position to finish the proof of the theorem and show weak Harnack estimates~\eqref{weak harnack for u^2, p ineq} and~\eqref{weak harnack for u^2, nice ineq}.
%
In order to show~\eqref{weak harnack for u^2, p ineq} we need to find $\psi, \gamma>1$ such that the following equation holds:
\begin{equation*}
    \frac{2+\alpha}{2+\alpha-\gamma(\psi(2+\alpha)-2)} = \frac{p(2+\alpha)(1+\beta)}{2}=\frac12 p{\mathcal P}_{\alpha, \beta}.
\end{equation*}
Since $p>1>\frac{1-\beta}{1+\beta}$, it holds that the following $\gamma>1$:
\[
  \gamma := \frac{(2+\alpha)(1+\beta)-\frac{2}{p}}{\frac{2\beta}{p}+\alpha(1+\beta)}.
\]
Furthermore, since $\beta \geq 0$, we trivially have that $p>1>\frac{\beta}{\beta+1}$ and it follows that $p>\frac{p}{p(\beta+1)-\beta}$. This observation allows us to define $\psi>1$ such that the following inequality is satisfied:
\begin{equation*}
  \gamma= \frac{(2+\alpha)(1+\beta)-\frac{2}{p}}{\frac{2\beta}{p}+\alpha(1+\beta)}<\frac{2+\alpha}{\psi(2+\alpha)-2}=\frac{p(2+\alpha)(1+\beta)}{2\beta +p\alpha(1+\beta)},
\end{equation*}
and so the assertion~\eqref{weak harnack for u^2, p ineq} holds.  
Since it trivially holds that
\begin{equation*}
    \frac{|u^2_+|}{|u^1|^\beta} \le \|u\|_{G_2^{\beta}} \,\,\hbox{ and }\,\, |u^1| \le \|u\|_{G_2^{\beta}}
\end{equation*}
the second assertion~\eqref{weak harnack for u^2, nice ineq} follows as well for $\beta>0$ completing the proof of Theorem~\ref{weak-harnack-u2} in this case. The discussion for the case $\beta=0$ goes along the same lines and therefore, we omit its proof. 
\end{proof}

    
\begin{cor} 
    Let $\alpha \in (0,1)$ and $\beta \ge 0$. Suppose that $u=(u^1,u^2) \in HW^{1,2}(\Om,\mu)$ be a strongly harmonic mapping defined on a domain $\Om\subset G_2^\alpha$ and satisfying~\eqref{orth-cond} with a function $K \in L^2_{\textrm{loc}}(\Om,\mu)$. Moreover, assume that for some $p>1+\beta$ we have $\| u \|_{G_2^{\beta}} \in L^{p(2+\alpha)}_{\textrm{loc}}(\Om,\mu)$. Then $u \in L^{\infty}_{\textrm{loc}}(\Om)$.
\end{cor}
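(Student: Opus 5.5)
The plan is to deduce the corollary directly from the second estimate \eqref{weak harnack for u^2, nice ineq} of Theorem~\ref{weak-harnack-u2}, applied separately to $u^2_+$ and $u^2_-$, after matching the integrability exponents.

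First, the first component needs no work. By Definition~\ref{str-solution}, a strong harmonic mapping already satisfies $u^1\in L^\infty(\Om)$, so it suffices to show $u^2\in L^\infty_{loc}(\Om)$. (If one preferred not to rely on this, the trivial bound $|u^1|\le \|u\|_{G_2^\beta}$ would not give boundedness by itself, so the definition is what we use.)

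Second, we match exponents. Theorem~\ref{weak-harnack-u2} holds for every exponent $p'>1$ and controls $\sup_{\lambda B_r}|u^2_\pm|$ by the $L^{(1+\beta)p'(2+\alpha)}(B_r,\mu)$-average of $\|u\|_{G_2^\beta}$. Given the hypothesis $\|u\|_{G_2^\beta}\in L^{p(2+\alpha)}_{loc}(\Om,\mu)$ with $p>1+\beta$, we choose
\[
p':=\frac{p}{1+\beta}>1,
\]
so that $(1+\beta)p'(2+\alpha)=p(2+\alpha)$. Then, for any Grushin ball $B_r=B(x_0,r)\Subset\Om$ and, say, $\lambda=\tfrac12$, the estimate \eqref{weak harnack for u^2, nice ineq} gives
\[
\sup_{\frac12 B_r}|u^2_{\pm}|\le 2C(\alpha,\beta,p')\Big(\vint_{B_r}\|u\|_{G_2^\beta}^{p(2+\alpha)}\ud\mu\Big)^{\frac{1}{p'(2+\alpha)}}<\infty .
\]
The right-hand side is finite because $\mu(B_r)\approx r^2>0$ by Lemma~\ref{lem-Ahl-Muck} and $\overline{B_r}$ is a compact subset of $\Om$. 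Since $|u^2|=u^2_++u^2_-$, this bounds $u^2$ on $\frac12 B_r$.

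Third, we pass from balls to compact sets. Given a compact $K\subset\Om$, for each point of $K$ we pick a Grushin ball $B_r\Subset\Om$ centred there. The half-balls $\frac12 B_r$ form an open cover of $K$ (the $d_{CC}$ balls are open, since $d_{CC}$ induces the Euclidean topology for $\alpha\in[0,1)$), so finitely many of them cover $K$. Taking the maximum of the finitely many bounds shows $u^2\in L^\infty(K)$, and hence $u\in L^\infty_{loc}(\Om)$.

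The only delicate point is that the hypotheses of Theorem~\ref{weak-harnack-u2} must hold on each ball: strong harmonicity, \eqref{orth-cond} with $K\in L^2_{loc}$, $\alpha\in(0,1)$ and $\beta\ge0$ are all assumed in the corollary. We also need $\|u\|_{G_2^\beta}$ to be defined $\mu$-a.e., that is, $u^1\neq0$ a.e. This is implicit in the integrability hypothesis, which is meaningless otherwise. Beyond this bookkeeping, I expect no real obstacle: the argument is the exponent substitution $p'=p/(1+\beta)$ followed by a covering argument.
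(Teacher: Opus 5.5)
Your proof is correct and follows the paper's argument. Boundedness of $u^1$ comes from Definition~\ref{str-solution}, $u^2_\pm$ is bounded on balls by the second estimate of Theorem~\ref{weak-harnack-u2}, and a covering argument finishes. The only difference is that you apply the theorem directly with exponent $p/(1+\beta)>1$; the paper instead picks an intermediate exponent $s\in(1,p/(1+\beta))$ and then uses Jensen's inequality to reach $L^{p(2+\alpha)}$, a step your choice makes unnecessary.
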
 
\begin{proof} It is enough to show the assertion on a ball, since then the covering argument gives the assertion on compact subsets of domain $\Om$.

The case of $\beta =0$ follows directly from the boundedness of $u^1$ asserted in Definition~\ref{str-solution} and inequality~\eqref{weak harnack for u^2, nice ineq}.

Let $\beta>0$ and consider $s>1>\frac{\beta}{\beta+1}$. It follows that $s>\frac{s}{s(\beta+1)-\beta}$. Moreover, since $p>1+\beta$, the exponent $s$ can be chosen so that
\[ 
1<\frac{s}{s(\beta+1)-\beta}<s<\frac{p}{\beta+1}.
\]
%
Hence, we may apply the estimate~\eqref{weak harnack for u^2, nice ineq} with $s>1$ and the Jensen inequality with the exponent $\frac{p}{s(\beta+1)}>1$ and obtain the assertion of the corollary:
\[
    (\sup_{\frac12 B_r}|u^2_{\pm}|)^{\frac{p}{s(1+\beta)}} \lesssim \left(\vint_{B_r} \| u\|_{G_2^{\beta}}^{(1+\beta)s(2+\alpha)} \ud \mu \right)^{\frac{1}{s(2+\alpha)}\frac{p}{s(1+\beta)}}\lesssim \left(\vint_{B_r} \| u\|_{G_2^{\beta}}^{p(2+\alpha)} \ud \mu \right)^{\frac{1}{s(2+\alpha)}}<\infty.
\]
%
\end{proof}
%
%

Following very similar reasoning as above we may also prove the weak Harnack estimates for $u^1_{\pm}$ the positive, respectively, negative part of the component function $u^1$ of a Grushin strongly harmonic mapping $u=(u^1, u^2)$.

\begin{theorem}[Weak Harnack inequality for $u^1$]\label{weak harnack for u^1 thm}
 Let $\alpha \in (0,1)$, $\beta \ge 0$, $\Om \subset G_2^{\alpha}$ be an open connected set and let $u:\Om \to G_2^{\beta}$ be a strongly harmonic map satisfying~\eqref{orth-cond} with a function $K \in L^{p(2+\alpha)}_{\textrm{loc}}(\Om,\mu)$.   

Then for each $p>1$ and any Grushin ball $B_r = B(x_0, r) \subset \Om$ there exists a constant $C = C(\alpha,\beta,p)>0$ such that for any $\lambda \in (0,1)$ the following weak Harnack estimate holds:
\begin{equation}\label{weak harnack for u^1 main inequality}
\sup_{\lambda B_r} |u^1_{\pm}| \le \frac{C\phi(K,r)}{(1-\lambda)^{\frac{2}{1+\beta}}}
\left(\vint_{B_{r}} \| u \|_{G_2^{\beta}}^{p(2+\alpha)(1+\beta)}\right)^{\frac{1}{p(2+\alpha)(1+\beta)}},
\end{equation}
where
\[
\phi(K,r):=\Big(1+ r^2\Big(\vint_{B_{r}} K^{p(2+\alpha)} \ud \mu \Big)^{\frac{1}{p(2+\alpha)}}\Big)^{\frac{1}{2(1+\beta)}}.
\]
\end{theorem}

We omit the proof and only comment that the key difference between the above statement and the estimate~\eqref{weak harnack for u^2, nice ineq} is appearance of the expression $\phi(K,r)$ in~\eqref{weak harnack for u^1 main inequality}. It arises as consequence of defining the corresponding integral expressions $C_l$ and $D_l$ in~\eqref{defn-ClDl} as follows:
\begin{equation*}
   C_l :=1+ r^2\left(\vint_{B_{l}} K^{p(2+\alpha)} \ud \mu \right)^{\frac{1}{p(2+\alpha)}}, \quad D_l:= \left(\vint_{B_{l}} \| u \|_{G_2^{\beta}}^{p(2+\alpha)(1+\beta)}\ud \mu \right)^{\frac{2}{p(2+\alpha)}},\qquad \textrm{for }l=0,1,2,\ldots.
\end{equation*}

\end{document}